\documentclass{amsart}

\usepackage{amsmath}
\usepackage{amsfonts}
\usepackage{amssymb}
\usepackage{amsthm}
\usepackage{graphicx}
\usepackage{dsfont}
\usepackage{pdfcomment}
\usepackage{tikz}
\usepackage{tikz-3dplot}
\usepackage{subfigure}
\usepackage{soul}
\usepackage{enumerate}
\usepackage{cancel}

\usepackage{hyperref}
\usepackage{cleveref}
\usepackage{esvect}

\usetikzlibrary{decorations.markings}
\newtheorem{theorem}{Theorem}[section]

\newtheorem{corollary}[theorem]{Corollary}

\newtheorem{definition}[theorem]{Definition}

\allowdisplaybreaks

\newtheorem{lemma}[theorem]{Lemma}

\newtheorem{proposition}[theorem]{Proposition}
\newtheorem{remark}[theorem]{Remark}

\DeclareMathOperator{\Real}{Re}
\DeclareMathOperator{\Imaginary}{Im}

\DeclareMathOperator{\hf}{H^{\frac{1}{2}}(\partial \mathbb{D})}
\DeclareMathOperator{\dev}{dev}
\DeclareMathOperator{\Li}{Li_2}
\renewcommand{\Re}{\Real}
\renewcommand{\Im}{\Imaginary}
\newcommand{\ii}{\mathbf{i}}

\newcommand{\UT}{\mathcal{T}}
\newcommand{\WT}{\mathcal{T}_{\mathrm{WP}}}

\title{Infinite circle patterns in the Weil-Petersson class}
\author{Wai Yeung Lam}
\address{Mathematics Research Unit, Université du Luxembourg, L-4364 Esch-sur-Alzette} \email{wai.lam@uni.lu}
\begin{document}

\begin{abstract}

Analogous to Weil-Petersson quasicircles, we investigate infinite circle patterns in the Euclidean plane parameterized by discrete harmonic functions of finite Dirichlet energy. The space of such circle patterns forms an infinite-dimensional Hilbert manifold homeomorphic to the Sobolev space of half-differentiable functions on the unit circle. The Hilbert manifold is equipped with a Riemannian metric induced from the Hessian of a hyperbolic volume functional. We relate this Riemannian metric to the symplectic form on the Sobolev space of half-differentiable functions via an analogue of the Hilbert transform. Every such circle pattern induces a quasiconformal homeomorphism from the unit disk to itself, whose boundary extension belongs to the Weil-Petersson class of the universal Teichmüller space. Our results shed light on Jordan domains packed by infinite circle patterns of hyperbolic type, a subject highlighted by He and Schramm.
\end{abstract}
\maketitle
\section{Introduction}

Conformal maps between surfaces are fundamental in low-dimensional topology and have wide-ranging applications. They are characterized as mappings that locally preserve angles, sending infinitesimal circles to themselves. The uniformization theorem states that every non-compact simply-connected Riemann surface is conformally equivalent to either the complex plane or the unit disk. The former type is called parabolic and the latter is called hyperbolic. In particular, every simply-connected proper domain $\Omega \subsetneq \mathbb{C}$ is of hyperbolic type and conformally equivalent to the unit disk $\mathbb{D}$, admitting a Riemann mapping $g: \mathbb{D} \to \Omega$ as a biholomorphic map.

A refinement of this theory concerns the regularity of a Jordan domain's boundary. The boundary $\partial \Omega$ is a Weil-Petersson quasicircle if the logarithmic derivative of the Riemann mapping has finite Dirichlet energy \cite{Takhtajan2005}. That is, the conformal factor $u := \Re \log g'$ is a harmonic function on the unit disk satisfying
\[
\mathcal{E}_{\mathbb{D}}(u) := \iint_{\mathbb{D}} |\nabla u|^2 \, dx dy < \infty.
\]
Weil-Petersson quasicircles correspond to the Weil-Petersson class in the universal Teichmüller space via conformal welding. They have been extensively studied in the contexts of Teichmüller theory, mathematical physics, and geometric function theory. Recently, they have also been linked to the Loewner energy of Jordan curves \cite{Wang2019} and the renormalized volume of hyperbolic 3-manifolds \cite{Bridgeman2025}. For a comprehensive survey of Weil-Petersson curves, we refer to Bishop's recent works \cite{Bishop2022,Bishop2025}. 

In the discrete setting, circle patterns replace infinitesimal circles with configurations of finite-sized circles having prescribed intersection angles. Thurston proposed viewing the map induced by two circle patterns with the same combinatorics as a discrete conformal map \cite{Stephenson2005}. He conjectured that this approach provides a natural discretization of the Riemann mapping theorem, a fact later verified by Rodin and Sullivan \cite{Rodin1987}. Their proof of convergence relies on the rigidity of infinite circle patterns of \emph{parabolic type}.

In contrast, infinite circle patterns of \emph{hyperbolic type} are flexible and admit a rich deformation theory. He and Schramm \cite[Theorem 1.2]{HeSchramm1995} proved that for any fixed triangulation of an open disk of hyperbolic type, every simply connected proper domain in the plane admits an infinite circle packing with the prescribed combinatorics, where circles only accumulate at the boundary. While He and Schramm highlighted the importance of domains packed by circles of hyperbolic type, their characterization has remained undeveloped. In this paper, we investigate the deformation space of infinite circle patterns of hyperbolic type and establish its relation to the universal Teichmüller space.

 Formally, a circle pattern in the plane is a realization of the vertices of a planar graph $G=(V,E,F)$ such that each face has a circumcircle passing through its vertices. It also induces a realization of the dual cell decomposition $G^*=(V^*,E^*,F^*)$ where the dual vertices $V^*\cong F$ are realized as the circumcenters of the faces. For every dual edge $\phi \psi$ joining two circumcenters, we measure the intersection angle $\Theta_{\phi \psi} = \Theta_{\psi \phi}$ between the corresponding circumcircles, thereby defining a map $\Theta : E^* \to [0, 2\pi)$. Up to Euclidean transformations, a circle pattern is uniquely determined by the Euclidean radii of the circles $R:F \to \mathbb{R}_{>0}$ together with the intersection angles $\Theta$. Furthermore, a circle pattern is called \emph{Delaunay} if all intersection angles lie in $[0, \pi)$. The Delaunay condition is equivalent to the local convexity of an associated ideal polyhedral surface in hyperbolic 3-space \cite{Bobenko2004}.

Throughout the paper, we study infinite circle patterns on topological open disks. Every such circle pattern determines a flat Euclidean structure on the disk by gluing circumdisks along their common chords. By varying the Euclidean radii of the circles while keeping the intersection angles fixed, we obtain different flat Euclidean metrics on the underlying topological disk. Each such metric supports a circle pattern with the same combinatorics and the prescribed intersection angles, but the geometric realization in the plane may differ. Our goal is to investigate the deformation space of circle patterns. We recall that a graph is called \emph{transient} if every simple random walk on the graph escapes to infinity with non-zero probability.

\begin{definition}\label{def:infintheta}
	Suppose $(V,E,F)$ is a cell decomposition of a topological open disk $\Omega$ in the sense that the 1-skeleton graph $G=(V,E)$ has one infinite end. We further assume that $G$ is transient with uniformly bounded vertex degree and face degree and, for some small constant $\epsilon_0>0$, supports a function $\Theta : E^* \to (\epsilon_0,\pi-\epsilon_0)$ satisfying
	\begin{enumerate}
	\item[(A1)] For every closed loop $\gamma$ in the dual graph that bounds exactly one dual face,
	\[
	\sum_{\phi\psi \in \gamma} \Theta_{\phi\psi} = 2\pi.
	\]
	\item[(A2)] For every closed loop $\gamma$ in the dual graph that bounds more than one dual face,
	\[
	\sum_{\phi\psi \in \gamma} \Theta_{\phi\psi} > 2\pi + \epsilon_0.
	\]
	\end{enumerate}
	We denote by $\tilde{P}(\Theta)$ the collection of functions $R:F \to \mathbb{R}_{>0}$ where there exists a circle pattern in the plane with intersection angles $\Theta$ and Euclidean radii $R$. 
\end{definition}

\begin{definition}\label{def:metric}
	 By gluing circumdisks, every function $R \in \tilde{P}(\Theta)$ determines a smooth Euclidean metric $\sigma(\Theta,R)$ on the topological disk $\Omega$ which supports a circle pattern with intersection angles $\Theta$ and Euclidean radii $R$. We call $\sigma(\Theta,R)$ the associated circle pattern metric.
\end{definition}

For every circle pattern metric $\sigma(\Theta,R)$, there exists a local isometry to the Euclidean plane
\[
\dev: (\Omega, \sigma(\Theta,R)) \to \mathbb{R}^2
\]
called the \emph{developing map}. This map is unique up to Euclidean transformations but may or may not be globally injective. We say the circle pattern is \emph{embedded in} $\mathbb{R}^2$ if the developing map is injective.

The prescribed intersection angle $\Theta$ plays the role of a discrete conformal structure. By varying the radii, the set $\tilde{P}(\Theta)$ enumerates the collection of discrete conformal maps. It is known that the set $\tilde{P}(\Theta)$ is non-empty. 

\begin{theorem}[Discrete uniformization \cite{HeSchramm1993,Ge2025}]\label{thm:discrete_uniformization} 
	Let $(V,E,F)$ be a cell decomposition of a topological open disk $\Omega$ and $\Theta : E^* \to (\epsilon_0,\pi - \epsilon_0)$ satisfying Definition \ref{def:infintheta}. Then there exists a function $R^{\dagger}\in \tilde{P}(\Theta)$ such that the developing map $\dev$ is an isometry onto the unit open disk $\mathbb{D} \subset \mathbb{R}^2$. 

	Two such functions $R^{\dagger}, R^{\ddagger} \in \tilde{P}(\Theta)$ correspond to circle patterns that differ by a Möbius transformation preserving the unit disk.
\end{theorem}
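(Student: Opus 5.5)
The plan is to split the statement into existence and uniqueness. Existence comes from a compact exhaustion of the disk together with a limiting argument, in the style of He--Schramm. Uniqueness comes from a maximum-principle/rigidity argument.

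\emph{Existence.} Exhaust $\Omega$ by an increasing sequence of finite, simply connected subcomplexes $K_1 \subset K_2 \subset \cdots$ whose union is the whole cell decomposition. For each $K_n$ I solve a finite Dirichlet-type problem: find radii on the faces of $K_n$, with the prescribed angles $\Theta$, such that the circles corresponding to the boundary dual edges are internally orthogonal to (or tangent to) the unit circle. This is the hyperbolic version of the circle pattern problem, and there are two possible routes.
\begin{enumerate}
\item Variational: minimize the strictly convex hyperbolic volume functional of Bobenko--Springborn type in the variables $\log R$.
\item Realization: realize the pattern as an ideal hyperbolic polyhedron, following Rivin.
\end{enumerate}
Conditions (A1) and (A2) are exactly the Rivin/Bobenko--Springborn coherence conditions that guarantee a minimizer exists and is unique. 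Each finite solution $R^{(n)}$ is then embedded in $\mathbb{D}$. I normalize so that a fixed face $f_0$ has its circumcenter at $0$.

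Next I need uniform control. A Ring Lemma holds because degrees are bounded and angles lie in $(\epsilon_0,\pi-\epsilon_0)$, so adjacent radii have bounded ratios. I also need a lower bound on $R^{(n)}_{f_0}$, independent of $n$. This comes from a monotonicity argument in the style of the discrete Schwarz--Pick lemma: enlarging the domain decreases hyperbolic radii, so the Euclidean radius of $f_0$ is nonincreasing in $n$. It is also bounded below, because transience prevents collapse; the discrete extremal length of paths to $\partial K_n$ stays bounded. A diagonal subsequence then gives a limiting pattern $R^\dagger$ that is embedded in $\mathbb{D}$ with locally finite circles.

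I expect the main obstacle to be proving that the image of the limit is all of $\mathbb{D}$, rather than a proper subdomain, and that the circles accumulate only at $\partial\mathbb{D}$. I would attack this with He--Schramm's transience criterion. If the carrier missed a neighborhood of some boundary point, one could build a finite-energy flow or test function contradicting the hyperbolic (transient) type. Equivalently, vertex extremal length to infinity is finite, so the carrier cannot be the whole plane. Combined with the boundary condition at each finite stage, this forces the carrier to exhaust $\mathbb{D}$. Condition (A2) with its uniform margin $\epsilon_0$ is needed so that the Ring Lemma and the hyperbolic-polyhedral convexity persist in the limit.

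\emph{Uniqueness.} Take two patterns $R^\dagger$ and $R^\ddagger$ filling $\mathbb{D}$. Compose one with a Möbius automorphism of $\mathbb{D}$ so that both share the circumcenter of $f_0$ at $0$ and have the same rotation. Compare their hyperbolic radii through the discrete maximum principle, as in He--Schramm's rigidity for hyperbolic packings. Suppose $\sup_f \rho^\dagger_f/\rho^\ddagger_f>1$, where $\rho$ denotes the hyperbolic radius. The ratio cannot attain its maximum in the interior, because the angle-sum equations from (A1) are monotone in the neighbouring radii. It also cannot approach its supremum at infinity without contradicting the embedding, via a Schwarz--Pick argument applied to the piecewise-affine map between the carriers. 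Hence the ratio is identically $1$, and the patterns agree up to the normalizing Möbius map.
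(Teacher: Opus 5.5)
The paper gives no proof of this theorem. It imports the result from He--Schramm (circle packings) and Ge--Yu--Zhou (general $\Theta$ satisfying (A1), (A2)). Your architecture is the standard one behind those results: maximal finite patterns with horocyclic boundary, the Ring lemma, discrete Schwarz--Pick monotonicity, and transience to keep $R^{(n)}_{f_0}$ away from $0$ (e.g.\ via $\log(1/R^{(n)}_{f_0})\lesssim \mathrm{VEL}(f_0,\partial K_n)\le \mathrm{VEL}(f_0,\infty)<\infty$ up to constants).

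The genuine gap is the step you flag yourself: showing that the limit fills $\mathbb{D}$. The tool you propose for it cannot work. Transience only separates a carrier equal to $\mathbb{C}$ from a proper one; it says nothing about \emph{which} proper domain is filled. The same transient graph with the same $\Theta$ carries embedded, locally finite patterns whose carrier is a proper subdomain of $\mathbb{D}$ missing neighbourhoods of boundary points. Trivially, $\tfrac12 R^{\dagger}$ is one; suitably scaled embedded deformations as in Figure~\ref{fig:circle_patterns_deformed} are others; and for packings, He--Schramm's theorem quoted in the introduction realizes every simply connected proper domain. So no flow or test-function argument based on the type of the graph can exclude such a limit. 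Moreover, a finite-energy flow to infinity is \emph{equivalent} to transience, so it could never contradict it.

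Everything therefore rests on your unexplained clause ``combined with the boundary condition at each finite stage''. You must show, using the horocycles, that each compact $Q\subset\mathbb{D}$ is covered, for some fixed $m$, by the $K_m$-part of the $n$-th pattern for all large $n$. Equivalently, you must show that the limit, which inherits maximality among patterns in $\mathbb{D}$ from the finite Schwarz--Pick inequalities, cannot have a carrier boundary point inside $\mathbb{D}$. This is not cheap. The obvious use of maximality is to push forward by a M\"obius map from a proper subdisk onto $\mathbb{D}$, which strictly increases hyperbolic radii. That only excludes carriers contained in such a subdisk, not, for instance, a slit disk.

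The uniqueness sketch has the same kind of hole. The interior maximum principle for $\rho^{\dagger}/\rho^{\ddagger}$ is fine. But the whole content of rigidity for infinite patterns is the case where the supremum is approached only along faces escaping to $\partial\mathbb{D}$, and nothing in your sketch controls that case. Hyperbolic radii of a disk-filling pattern need not be bounded below; the paper has to assume such a bound in Theorem~\ref{thm:projectionWP}. The phrase ``a Schwarz--Pick argument applied to the piecewise-affine map between the carriers'' does not say which inequality is being contradicted. This is exactly where He--Schramm need a genuinely global tool, their fixed-point index argument. Alternatively, you would have to prove that every pattern filling $\mathbb{D}$ dominates the maximal limit from your existence part. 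That is the reverse of the easy Schwarz--Pick inequality, and again the nontrivial direction.
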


The assumption on the graph being transient with bounded vertex degree ensures that we have a discrete uniformization to the unit open disk, instead of the entire plane. Figure \ref{fig:circle_patterns_disk} shows a circle pattern under the discrete uniformization. It is obtained from a circle packing together with its dual circle packing (See \cite{Stephenson2005}). The intersection angles are constant and all equal to $\pi/2$. The discrete uniformization for circle packings is established by He and Schramm \cite{HeSchramm1993} and extended to circle patterns with general intersection angles by Ge, Yu, and Zhou \cite{Ge2025}.

\begin{figure}
	\centering
	\includegraphics[width=0.6\textwidth]{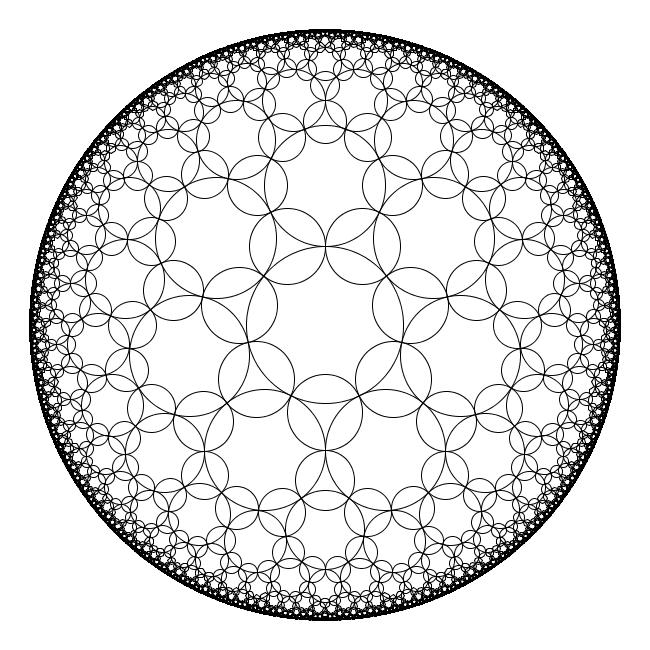}
	\caption{A uniformized circle pattern with constant intersection angle $\Theta\equiv \pi/2$ filling the unit disk. It is induced from a circle packing together with its dual circle packing.}
	\label{fig:circle_patterns_disk}
\end{figure}

\begin{figure}[h!]
\begin{minipage}{0.48\textwidth}
	\centering
	\includegraphics[width=\textwidth]{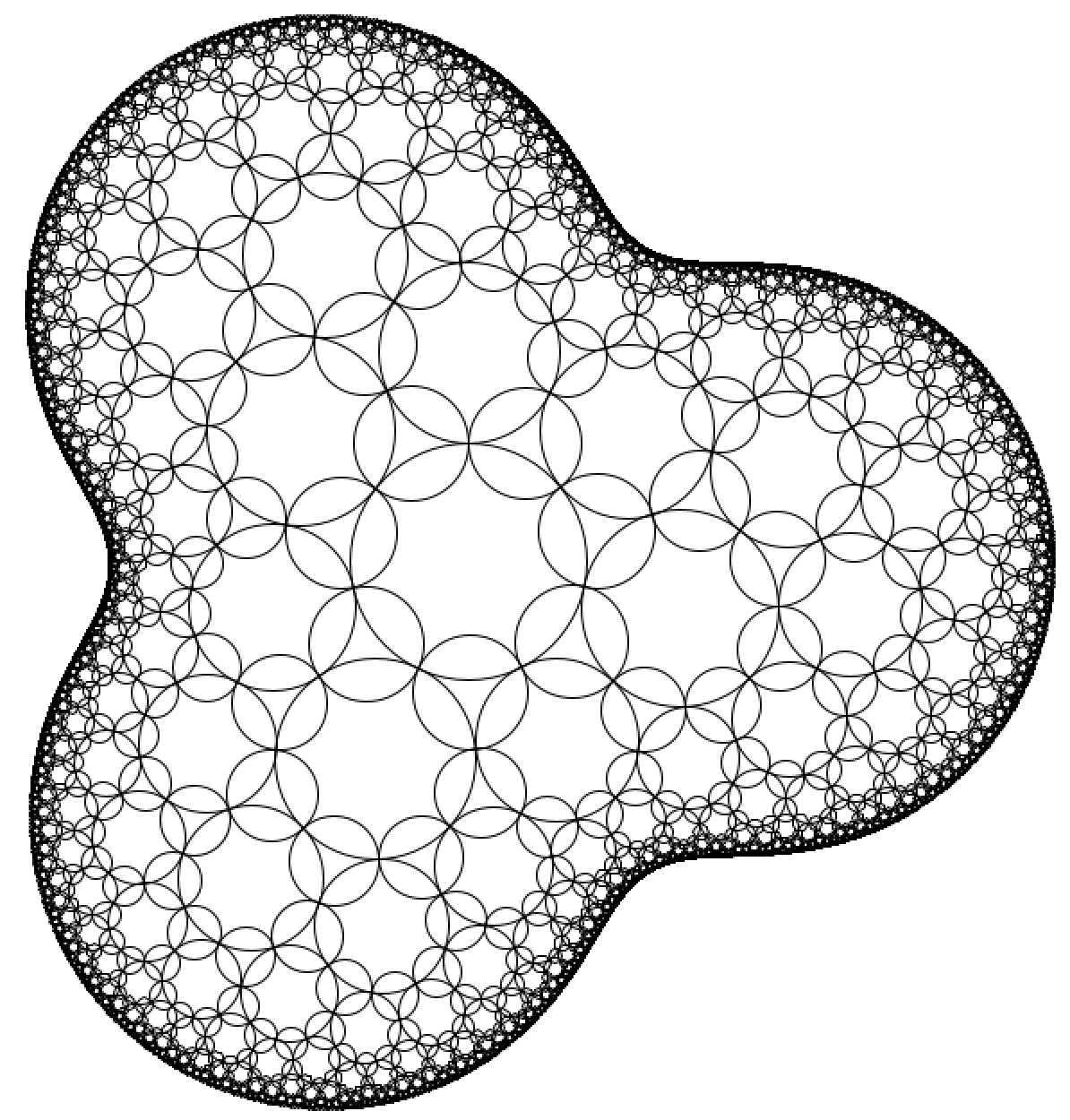}
  \end{minipage}\hfill
  \begin{minipage}{0.48\textwidth}
	\centering
	\includegraphics[width=\textwidth]{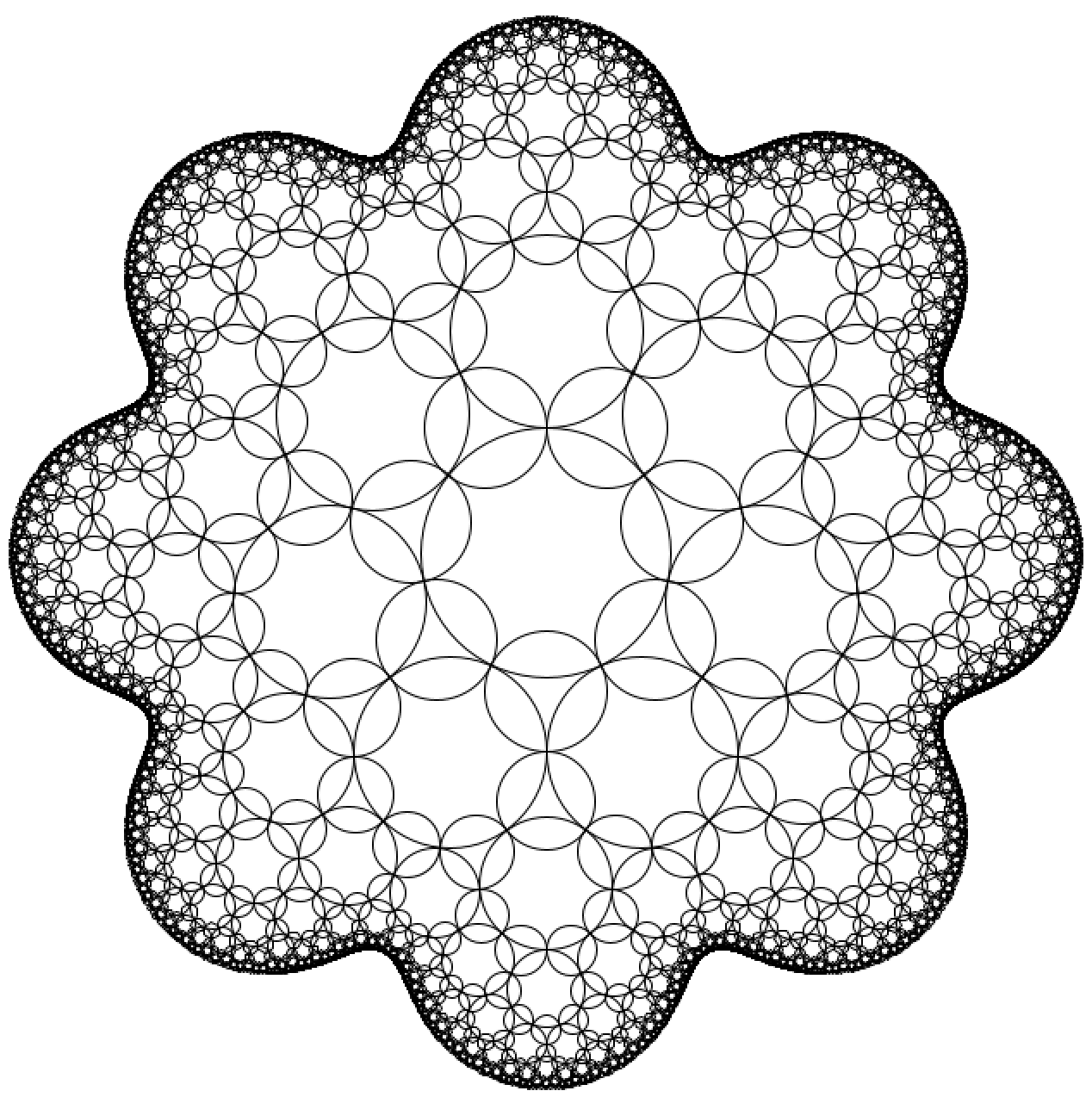}
  \end{minipage}
\begin{minipage}{0.4\textwidth}
	\centering
	\includegraphics[width=\textwidth]{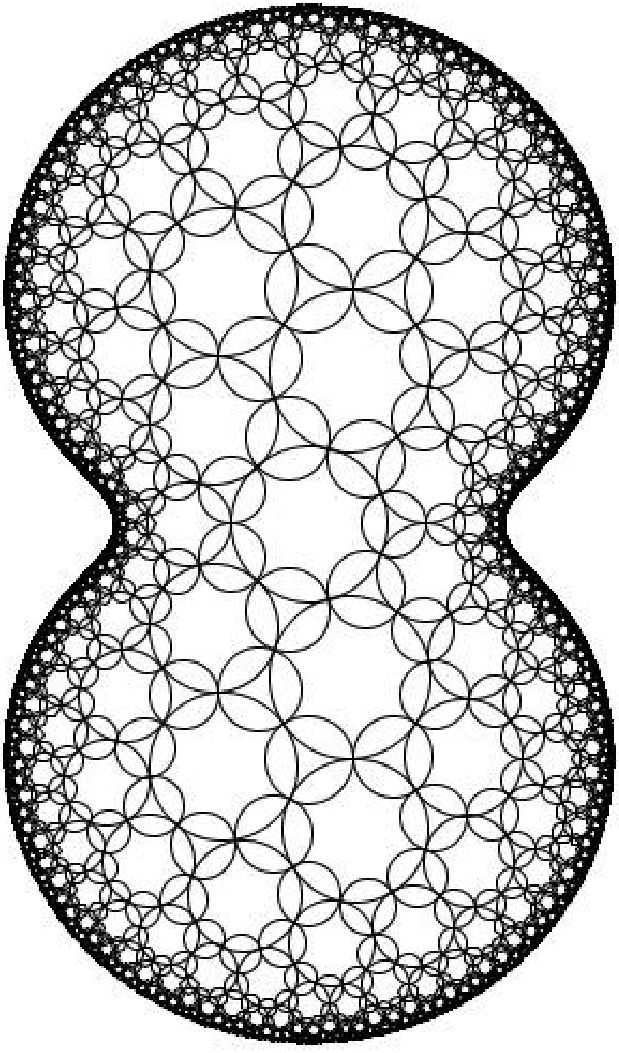}
  \end{minipage}\hfill
  \begin{minipage}{0.43\textwidth}
	\centering
	\includegraphics[width=\textwidth]{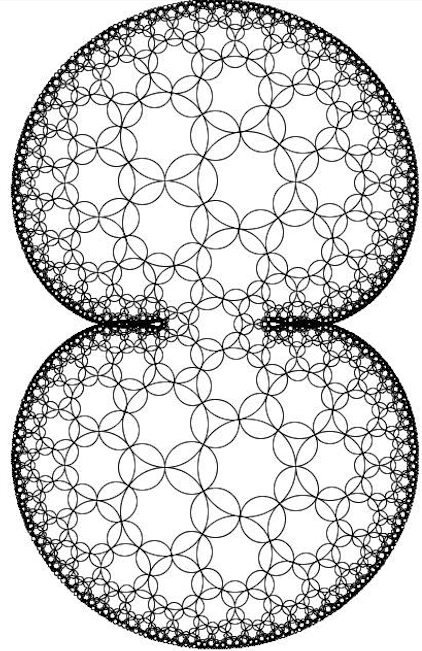}
  \end{minipage}

	\caption{Circle patterns in the deformation space $P(\Theta,R^{\dagger})$ with constant intersection angle $\Theta \equiv \pi/2$. Each arises from a superposition of a circle packing and its dual circle packing. The circle pattern on the bottom right is not embedded in $\mathbb{R}^2$.}
	\label{fig:circle_patterns_deformed}
\end{figure}

We denote the Hilbert space of Dirichlet-finite functions on the dual graph by
\[
\mathbf{D}(F) := \left\{ u: F \to \mathbb{R} \mid \mathcal{E}(u) := \sum_{\phi\psi \in E^*} |u_\phi - u_\psi|^2 < \infty \right\}
\]
where $\mathcal{E}(u)$ is the combinatorial Dirichlet energy of $u$. Given a function $u:F \to \mathbb{R}$, we modify the radii of the uniformized circle pattern by setting new radii for every face $\phi \in F$
\[R_\phi := e^{u_\phi} R^{\dagger}_\phi.\] 

\begin{definition}\label{def:wpclasscircle}
	Given a uniformized circle pattern as in Theorem \ref{thm:discrete_uniformization}, we define the space of circle patterns in the Weil-Petersson class
	\[
	P(\Theta,R^{\dagger}):= \left\{ u \in \mathbf{D}(F) \;\middle|\; e^{u} R^{\dagger} \in \tilde{P}(\Theta) \right\}.
	\]
\end{definition}

Elements in $P(\Theta,R^{\dagger})$ correspond to circle patterns that arise as deformations of the uniformized circle pattern on the unit disk and may or may not be embedded in $\mathbb{R}^2$. A function $u \in P(\Theta,R^{\dagger})$ represents the logarithmic change in radii and acts as a discrete conformal factor for the change in metrics. We establish several fundamental properties for the space $P(\Theta, R^{\dagger})$ as an infinite-dimensional Hilbert manifold and relate it to the Sobolev space of half-differentiable functions on the unit circle.

We investigate the global topology of $P(\Theta,R^{\dagger})$ by analyzing its projection onto the space of discrete harmonic functions $\mathbf{HD}(F)$. A function $h \in \mathbf{D}(F)$ is called discrete harmonic if it satisfies the discrete Laplace equation: for every face $\phi \in F$ \[\sum_{\psi} (h_\psi - h_\phi) = 0\] where the sum is over all dual vertices $\psi$ adjacent to $\phi$. Since the underlying graph is transient, the space of Dirichlet-finite functions admits the Royden decomposition into orthogonal subspaces
\[
\mathbf{D}(F) = \mathbf{HD}(F) \oplus \mathbf{D}_0(F),
\]
where $\mathbf{D}_0(F)$ is the closure of the space of finitely supported functions. Let $p_F : \mathbf{D}(F) \to \mathbf{HD}(F)$ denote the corresponding orthogonal projection onto the harmonic component.

\begin{theorem}\label{thm:homHDF}
    The space $P(\Theta,R^{\dagger})$ forms an infinite-dimensional Hilbert submanifold in $\mathbf{D}(F)$ and is homeomorphic to $\mathbf{HD}(F)$ via the projection $p_F$. 
\end{theorem}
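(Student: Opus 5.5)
The approach is variational. The radii $e^{u}R^{\dagger}$ define a circle pattern with angles $\Theta$ exactly when, at every face $\phi\in F$, the angles of the kites around the circumcenter close up. Following Bobenko--Springborn, write this as
\[
\Phi(u)_\phi := \sum_{\psi\sim\phi} 2 f_{\Theta_{\phi\psi}}\bigl(u_\psi-u_\phi+\lambda_{\phi\psi}\bigr) - 2\pi = 0,
\qquad \lambda_{\phi\psi}=\log R^{\dagger}_\psi-\log R^{\dagger}_\phi ,
\]
where $f_\theta$ is smooth, odd about the appropriate shift, and strictly increasing with bounded derivative. Since $R^{\dagger}$ is itself a solution, $\Phi(0)=0$, and we can rewrite
\[
\Phi(u)_\phi=\sum_{\psi\sim\phi} 2\bigl(f_{\Theta}(u_\psi-u_\phi+\lambda)-f_\Theta(\lambda)\bigr).
\]
This is a monotone nonlinear discrete Laplacian. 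Its linearization at $u$ is a weighted Laplacian with weights $c_{\phi\psi}(u)=2f'_{\Theta}(\cdot)>0$. Because $\Theta\in(\epsilon_0,\pi-\epsilon_0)$, these weights are bounded above uniformly. They are bounded below on any set where the edge differences $|u_\psi-u_\phi+\lambda_{\phi\psi}|$ are bounded. This holds for $u\in\mathbf{D}(F)$, since the energy bounds every edge difference, provided $\lambda$ is uniformly bounded. I would first prove that bound for $R^{\dagger}$ using the ring lemma, which applies thanks to the bounded degrees and (A1)--(A2).

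\textbf{Step 1 (smooth structure).} View $\Phi$ as a $C^1$ map $\mathbf{D}(F)\to \mathbf{D}_0(F)^*$ via $w\mapsto\sum_\phi\Phi(u)_\phi w_\phi$ for finitely supported $w$, extended by density. The Lipschitz bound on $f_\Theta$ together with Cauchy--Schwarz on edges gives boundedness of this pairing. The derivative $D\Phi(u)$, restricted to $\mathbf{D}_0(F)$, is the weighted Dirichlet form $\sum c_{\phi\psi}(u)(w_\psi-w_\phi)(v_\psi-v_\phi)$. By the weight bounds this form is coercive on $\mathbf{D}_0(F)$, so Lax--Milgram makes $D\Phi(u)|_{\mathbf{D}_0}$ an isomorphism onto $\mathbf{D}_0(F)^*$. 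The implicit function theorem, applied in the splitting $\mathbf{D}=\mathbf{HD}\oplus\mathbf{D}_0$, then shows that near each $u\in P(\Theta,R^\dagger)$ the zero set $\Phi^{-1}(0)$ is a graph over $\mathbf{HD}(F)$. Hence it is a Hilbert submanifold, it is infinite-dimensional because $G$ is transient, and $p_F$ is a local homeomorphism on it. One point needs care here: $\Phi(u)=0$ as a functional must coincide with the pointwise equations, and the realized pattern must be a genuine circle pattern. The latter follows because the pointwise closing conditions on a simply connected disk let us develop the pattern.

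\textbf{Step 2 (global bijectivity of $p_F$).} Fix $h\in\mathbf{HD}(F)$. I would show there is a unique $w\in\mathbf{D}_0(F)$ with $\Phi(h+w)=0$. Uniqueness follows from strict monotonicity: if $\Phi(h+w_1)=\Phi(h+w_2)=0$, test with $w_1-w_2\in\mathbf{D}_0$. Each summand $(f(a)-f(b))(a-b)$ is nonnegative and vanishes only if $a=b$, so $w_1-w_2$ is constant. A constant in $\mathbf{D}_0$ on a transient graph must be $0$. For existence I would use exhaustion. Take finite subgraphs $F_n\uparrow F$ and let $w_n$ minimize the convex Bobenko--Springborn functional on $F_n$ with $w_n=0$ outside $F_n$; minimizers exist by coercivity on finite sets. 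The goal is then a uniform energy bound for $w_n$. Testing the equation with $w_n$ gives
\[
\sum_{\text{edges}}\bigl(f(\delta h+\delta w_n+\lambda)-f(\delta h+\lambda)\bigr)\delta w_n=-\sum_{\text{edges}}\bigl(f(\delta h+\lambda)-f(\lambda)\bigr)\delta w_n .
\]
The right side is at most $C\,\mathcal{E}(h)^{1/2}\mathcal{E}(w_n)^{1/2}$. The left side is at least $c\sum\min(|\delta w_n|^2,|\delta w_n|)$, because $f'$ decays only exponentially and $\delta h,\lambda$ are bounded.

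\textbf{Main obstacle.} The left-side lower bound is not quadratic for large differences. I expect the principal difficulty to be bootstrapping it into a bound on $\mathcal{E}(w_n)$, i.e.\ ruling out that $w_n$ develops huge jumps. I would first try a maximum principle: comparing with the solutions for $h=\pm\sup|h|$-type barriers should keep $\|w_n\|_\infty$ controlled, hence all differences bounded. With bounded differences the weights are bounded below and the quadratic estimate closes. A weak limit $w_n\rightharpoonup w$ in $\mathbf{D}_0$, together with pointwise convergence, passes to the limit in the local equations.

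\textbf{Step 3 (homeomorphism).} The resulting inverse $h\mapsto h+w(h)$ is continuous because it is locally the implicit-function graph from Step 1. Hence $p_F$ is a homeomorphism $P(\Theta,R^\dagger)\to\mathbf{HD}(F)$.
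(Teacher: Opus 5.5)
Your Step~1, the uniqueness argument in Step~2, and Step~3 match the paper: the rank theorem for $\iota^{\star}\circ\mathcal{K}^*$ with surjectivity from Riesz, strict convexity of $\mathcal{W}^*_h$ (your strict monotonicity), and ``a bijective local homeomorphism is a homeomorphism''. The gap is the existence half of Step~2, exactly where you flag it, and the maximum-principle repair does not work. Comparison with the constant solutions only gives $\inf_{\partial F_n}h\le h+w_n\le \sup_{\partial F_n}h$ on $F_n$. An element of $\mathbf{HD}(F)$ need not be bounded, just as $H^{\frac12}(\partial\mathbb{D})$ contains unbounded functions such as $\Re\sum_{n\ge 2}z^n/(n\log n)$ from the introduction. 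The only uniform control you have on $h$ is $\sup_{\phi\psi}|h_\phi-h_\psi|\le\sqrt{\mathcal{E}(h)}$. Even for bounded $h$, the resulting estimate depends on $\|h\|_\infty$ and not on $\mathcal{E}(h)$, so you cannot reach general $h$ by density either. Closedness of the image of $p_F$ needs an a priori bound on $\mathcal{E}(w)$ in terms of $\mathcal{E}(h)$ alone. Your inequality stalls because you split off the $h$-linear term and bounded it globally by Cauchy--Schwarz. The large-jump edges then enter linearly on both sides (on the right through $\mathcal{E}(w_n)^{1/2}$, which for large jumps is of order $\sum|\delta w_n|$), so the estimate closes only when $\mathcal{E}(h)$ is small.

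The missing idea is to keep each edge summand whole. Your tested identity is $\sum_e g_e=0$ with $g_e=\bigl(f(\delta h_e+\delta w_e+\lambda_e)-f(\lambda_e)\bigr)\delta w_e$. As $\delta w_e\to\pm\infty$, $g_e$ grows with slopes $f(+\infty)-f(\lambda_e)$ and $f(\lambda_e)-f(-\infty)$. With $f$ normalized as the half-angle, these are the uniformized half-angles $\alpha^\dagger_{\psi\phi}$ and $\alpha^\dagger_{\phi\psi}$, hence $>\eta$ by the Ring lemma, \emph{independently of $h$}. The approach to these limits is uniform, since $|\delta h_e|\le\sqrt{\mathcal{E}(h)}$ and $\lambda_e$ lies in a compact set. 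So there is $T=T(\mathcal{E}(h),\Theta,\epsilon_0)$ with $g_e\ge\frac{\eta}{2}|\delta w_e|$ whenever $|\delta w_e|\ge T$. For $|\delta w_e|<T$ one has $g_e\ge c\,|\delta w_e|^2-L|\delta h_e||\delta w_e|$, where $c>0$ is the minimum of $f'$ on the relevant compact range and $L$ is the Lipschitz constant of $f$. Let $E_+$ be the set of edges with $|\delta w_{n}|\ge T$ and $E_-$ its complement. Summing gives
\[
\frac{\eta}{2}\sum_{E_+}|\delta w_n|+c\,\mathcal{E}_-(w_n)\le L\sqrt{\mathcal{E}(h)}\,\sqrt{\mathcal{E}_-(w_n)},
\]
which bounds $\mathcal{E}_-(w_n)$ and $\sum_{E_+}|\delta w_n|$, hence $\mathcal{E}_+(w_n)\le\bigl(\sum_{E_+}|\delta w_n|\bigr)^2$, uniformly in $n$. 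This is exactly the paper's mechanism. Lemma~\ref{lem:plus} (via the dilogarithm identity of Lemma~\ref{lem:liformula}) gives the edgewise bound $2\eta|\delta w|-M$, with $h$ entering only through $M$. Lemma~\ref{lem:minus} handles small jumps. Together they make $\mathcal{W}^*_h$ coercive on all of $\mathbf{D}_0(F)$, so the paper minimizes directly, with no exhaustion or maximum principle. With this estimate your exhaustion and weak-limit argument would also close. A minor point: transience alone does not make $\mathbf{HD}(F)$ infinite-dimensional, since $\mathbb{Z}^3$ is transient with only constant harmonic Dirichlet functions. Here it comes from planarity and bounded degree, e.g.\ via $\mathbf{HD}(F)\cong\mathbf{HD}(\mathbb{D})$.
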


The proof establishes that for every discrete harmonic function $h \in \mathbf{HD}(F)$, there exists a unique $v \in \mathbf{D}_0(F)$ such that the sum $h + v \in P(\Theta,R^\dagger)$. This is demonstrated using a variational method involving the volume of hyperbolic polyhedra associated with the circle patterns. Such a variational approach was pioneered by Colin de Verdière \cite{Colin1991} and developed by Rivin \cite{Rivin}, Leibon \cite{Leibon2002}, Bobenko and Springborn \cite{Bobenko2004}, and Guo and Luo \cite{Luo2009} for finite circle patterns on closed surfaces. In contrast, we extend their framework to infinite circle patterns and apply it to study the deformation space.

Elements in $P(\Theta,R^{\dagger})$ are analogous to harmonic functions in that they satisfy a discrete Laplace equation with respect to certain geometric edge weights. They admit conjugate harmonic functions defined on the primal graph $(V,E)$, leading to a dual parametrization of the space of circle patterns. Instead of specifying the change in radii, this dual parametrization describes the change in the central angles $\alpha^{\dagger}$ at circumcenters subtended by the common chords of adjacent circles. Similar to $\mathbf{D}(F)$, the space of Dirichlet-finite functions on the primal graph $\mathbf{D}(V)$ admits the Royden decomposition into orthogonal subspaces
\[
\mathbf{D}(V) = \mathbf{HD}(V) \oplus \mathbf{D}_0(V).
\]
We remark that two functions $u, \tilde{u} \in P(\Theta,R^{\dagger})$ represent circle patterns that differ by a constant scaling if and only if $u - \tilde{u}$ is a constant function. This defines an equivalence relation on $P(\Theta,R^{\dagger})$ and motivates our consideration of the quotient space $P(\Theta,R^{\dagger})/\mathbb{R}$, which parameterizes circle patterns up to global scaling.

\begin{theorem}\label{thm:hilbertcentral}
	There is a Hilbert submanifold $P(\Theta,\alpha^{\dagger}) \subset \mathbf{D}(V)$ parameterizing circle patterns with intersection angles $\Theta$ via deformations of central angles. The space $P(\Theta,\alpha^{\dagger})$ is homeomorphic to $\mathbf{HD}(V)$ via the projection $p_V$.

	Furthermore, there is a homeomorphism \[ \mathcal{C}^{\Theta}: P(\Theta, R^{\dagger})/ \mathbb{R} \to P(\Theta, \alpha^{\dagger})/\mathbb{R}\] that identifies pairs of deformations yielding the same geometric circle pattern up to global scalings and rotations. Here $P(\Theta, R^{\dagger})/ \mathbb{R}$ and $P(\Theta, \alpha^{\dagger})/\mathbb{R}$ denote the quotient spaces by constant functions. We call $\mathcal{C}^{\Theta}$ the conjugation map.
\end{theorem}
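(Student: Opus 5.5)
The plan is to mirror the proof of Theorem \ref{thm:homHDF} on the primal graph, and then build the conjugation map from a discrete Cauchy--Riemann relation.

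\emph{Parametrization by central angles.} For a circle pattern with angles $\Theta$, each primal edge $e=ij\in E$ is dual to $\phi\psi\in E^*$. The circumcenters $\phi,\psi$ and the vertices $i,j$ form a kite. Its angles at the vertices are determined by $\Theta_{\phi\psi}$ together with the half central angles $\alpha_{e,\phi}$ and $\alpha_{e,\psi}$. At each vertex $i$ the kite angles sum to $2\pi$. Equivalently, the rotation of the tangent directions along a dual loop is encoded by a function $w:V\to\mathbb{R}$. I define the deformation of the uniformized pattern by turning angles $w$: the turning (rotation) angle of each kite edge changes by $w_i$ at vertex $i$.

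I then set
\[
P(\Theta,\alpha^\dagger)=\{w\in \mathbf{D}(V) \mid \text{the resulting angle data closes up to a circle pattern}\}.
\]
Closure requires that around each face $\phi$ the angles of the kites close. This is a nonlinear equation $\sum_{e\ni\phi} F_\Theta(w_i-w_j)=\pi(\deg\phi-2)$, or the analogous closing condition, which is the critical point equation of a convex functional. The natural candidate is the Legendre dual of the hyperbolic volume functional used for $P(\Theta,R^\dagger)$: a volume of ideal prisms written in terms of angles, using Milnor--Lobachevsky functions.

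The proof of the first part is then: for $h\in\mathbf{HD}(V)$, minimize the dual functional over $h+\mathbf{D}_0(V)$. I obtain existence from coercivity in the Dirichlet norm, using the angle bounds $\epsilon_0$ and (A2) to bound second derivatives uniformly. I obtain uniqueness from strict convexity. Hilbert manifold structure and continuity of the inverse of $p_V$ come from the implicit function theorem applied to the Hessian. That Hessian is a weighted Laplacian with weights comparable to $1$, which by bounded degree is an isomorphism on $\mathbf{D}_0(V)$. These steps mirror Theorem \ref{thm:homHDF}.

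\emph{Conjugation map.} Given $u\in P(\Theta,R^\dagger)$, the circle pattern determines new central angles. Around each dual face, i.e.\ around each primal vertex $i$, the pattern has a well-defined rotation of its developing map relative to the uniformized one. I define $w_i$ as this rotation angle. It is well defined up to a global additive constant, corresponding to global rotation, because $\Omega$ is simply connected.

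The key identity is a discrete Cauchy--Riemann equation. Across each edge $ij$ dual to $\phi\psi$, $w_i-w_j$ equals a function of $u_\phi-u_\psi$ and $\Theta_{\phi\psi}$. Specifically it equals the difference of half-kite angles, which is Lipschitz with Lipschitz constant bounded in terms of $\epsilon_0$ and vanishes at $0$. Consequently $\mathcal{E}(w)\le C\,\mathcal{E}(u)$, so $w\in\mathbf{D}(V)$. Applying the symmetric construction from $w$ back to $u$ gives the inverse map. Continuity in both directions follows from the Lipschitz bounds together with the continuity of the solution maps from the first part.

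The main obstacle is the closedness and correct identification of the images. One must show that the $w$ obtained from $u$ actually lies in $P(\Theta,\alpha^\dagger)$, i.e.\ that it is a Dirichlet-finite function whose geometric circle pattern agrees with the one given by $u$. One must also show that both quotients by $\mathbb{R}$ correspond to the same geometric equivalence: scaling on one side and rotation on the other. I would handle this by checking on finite exhaustions, where both constructions are local and the pattern is determined by the kite data, and then passing to limits in $\mathbf{D}$ using the uniform energy comparison.
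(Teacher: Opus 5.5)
There are two genuine gaps, both in the first part. \textbf{First, you have misidentified the nonlinear constraint.} If the sides $\phi i,\psi i$ turn by $w_i$ and the sides $\phi j,\psi j$ turn by $w_j$, then for an oriented edge $ij$ with left face $\phi$ and right face $\psi$ the half central angles become $\alpha^\dagger_{\phi\psi}+\frac{w_j-w_i}{2}$ and $\alpha^\dagger_{\psi\phi}-\frac{w_j-w_i}{2}$. Around each circumcenter these increments telescope, so your face condition $\sum_{e\ni\phi}(\cdots)=\pi(\deg\phi-2)$ holds identically. The angle sum at each vertex is $\sum\Theta=2\pi$ by (A1). (Your face condition is really the defining equation of $P(\Theta,R^\dagger)$ in disguise: it is exactly what makes $w$ well defined from $u$.)

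What can actually fail is the \emph{scaling} holonomy. In each kite $R_\psi/R_\phi=\sin\alpha_{\phi\psi}/\sin\alpha_{\psi\phi}$, so the kites glue consistently if and only if, for every $i\in V$,
\[
\sum_{j\sim i}\log\frac{\sin\bigl(\alpha^\dagger_{\phi\psi}+\frac{w_j-w_i}{2}\bigr)}{\sin\bigl(\alpha^\dagger_{\psi\phi}-\frac{w_j-w_i}{2}\bigr)}=0 .
\]
This system is indexed by $V$, as it must be to arise as the Euler--Lagrange equation of a functional on $h+\mathbf{D}_0(V)$ tested against indicators of vertices; a face-indexed system cannot arise that way. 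Its left-hand side is the derivative, in the direction of the indicator of $i$, of the Lobachevsky functional of Definition~\ref{def:Wfunctional}. That functional has Hessian $-\sum c_{ij}(\cdot)^2$ with $c_{ij}=\frac12(\cot\alpha_{\phi\psi}+\cot\alpha_{\psi\phi})$.

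With the correct equation, your conjugation step becomes purely local and needs no exhaustion. The increment $w_j-w_i=-\mathcal{K}^*_{\phi\psi}(u)$ closes around faces iff $u\in P(\Theta,R^\dagger)$, and it is Lipschitz in $u_\phi-u_\psi$ with constant $\cot\frac{\epsilon_0}{2}$. Conversely, $u_\psi-u_\phi=\mathcal{K}_{ij}(w)$ closes around vertices iff $w$ satisfies the display. This reverse direction is only locally Lipschitz, cf.\ Lemma~\ref{lem:Kbounded}.

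\textbf{Second, the existence step does not mirror Theorem~\ref{thm:homHDF}, and this is the main new difficulty.} Log-radii are unconstrained, but the deformed half-angles must lie in $(0,\pi)$. The functional is concave only on the closed convex set $(h+\mathbf{D}_0(V))\cap\overline{\mathcal{A}}_{\alpha^\dagger}$. Moreover, $c_{ij}$ is \emph{not} uniformly bounded above: it blows up as an angle tends to $0$. Only the lower bound $c_{ij}\ge\tan\frac{\epsilon_0}{2}$ is uniform. That bound gives coercivity and hence a maximizer on the closed set, but two further steps are needed.
\begin{enumerate}
\item[(a)] The set must be nonempty. A function $h\in\mathbf{HD}(V)$ can have $|h_j-h_i|\ge 2\eta$ on finitely many edges, so $h$ itself may be inadmissible. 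You need a finitely supported correction making $h$ $\epsilon$-Lipschitz for $\epsilon$ small compared to $\eta$; the paper does this with a McShane extension (Lemma~\ref{lem:lipschitz}).
\item[(b)] The maximizer must avoid the boundary where some angle vanishes. Finite energy and $\alpha^\dagger\in(\eta,\pi-\eta)$ allow only finitely many saturated edges, and on those $\mathcal{K}_{ij}$ blows up like $-\log\sin\alpha$. Hence the derivative of the functional along a segment toward an interior point tends to $+\infty$ at the start, while the remaining edges contribute a bounded amount by Cauchy--Schwarz (Proposition~\ref{prop:maxinterior}).
\end{enumerate}
Without (a) and (b) you obtain no critical point in the admissible set, and hence no surjectivity of $p_V$.
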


To establish that the projection of $P(\Theta,\alpha^{\dagger})$ onto $\mathbf{HD}(V)$ is bijective, we employ a similar variational method involving hyperbolic volume. This dual parametrization is more involved since angle parameters are constrained to lie in the interval $(0, \pi)$. For this, we have to show that for every $\epsilon>0$, every discrete harmonic function in $\mathbf{HD}(V)$ admits a perturbation in $\mathbf{D}_0(V)$ such that the resulting function is $\epsilon$-Lipschitz on the primal graph $(V,E)$.

Both Hilbert manifolds $P(\Theta,R^{\dagger})$ and $P(\Theta,\alpha^{\dagger})$ can be parameterized by smooth harmonic functions on the unit disk with finite Dirichlet energy. The uniformized circle pattern on the unit disk determines geodesic cell decompositions for both the primal and dual graphs, where primal vertices are realized as intersection points of circles and dual vertices are circumcenters. We show that these geodesic decompositions possess uniform bounds on angles and side-length ratios. Hence, the geodesic decompositions are \emph{good} as considered by Angel, Barlow, Gurel-Gurevich, and Nachmias\cite{Angel2016} as well as Chelkak \cite{Chelkak2016}. Consequently, by adding diagonals, any discrete function can be extended to a piecewise linear function on the disk whose classical Dirichlet energy is comparable to its combinatorial energy. Using the Royden decomposition for the space of classical Dirichlet-finite functions, we project these piecewise linear extensions to the space of smooth harmonic functions $\mathbf{HD}(\mathbb{D})$. Building on results by Hutchcroft \cite{Hutchcroft2019}, we establish that this construction yields homeomorphisms respectively from $P(\Theta,R^{\dagger})$ and $P(\Theta,\alpha^{\dagger})$ to $\mathbf{HD}(\mathbb{D})$.

\begin{theorem}\label{thm:homeoHD}
	Under the uniformized circle pattern on the unit disk, we denote the position of the intersection points of the circles 
	\[z_V:V \to \mathbb{D}\]
	and the position of the circumcenters \[ z_F:F \to \mathbb{D}.\] Then there are homeomorphisms from the Hilbert manifolds to the Hilbert vector space of classical harmonic Dirichlet functions on the unit disk
	\begin{align*}
		\Phi_V: P(\Theta,\alpha^{\dagger})  \to \mathbf{HD}(\mathbb{D}), \quad \Phi_F: P(\Theta,R^{\dagger}) \to \mathbf{HD}(\mathbb{D})
	\end{align*}
	such that for every $u \in P(\Theta,\alpha^{\dagger})$ and $v \in P(\Theta,R^{\dagger})$, we have
	\[
	\Phi_V(u) \circ z_V - u \in \mathbf{D}_0(V), \quad \Phi_F(v) \circ z_F - v \in \mathbf{D}_0(F).		
	\]
	\end{theorem}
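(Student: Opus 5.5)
The plan is to reduce everything to a linear statement about discrete harmonic functions and then compose with the homeomorphisms $p_F: P(\Theta,R^{\dagger})\to \mathbf{HD}(F)$ from Theorem \ref{thm:homHDF} and $p_V: P(\Theta,\alpha^{\dagger})\to \mathbf{HD}(V)$ from Theorem \ref{thm:hilbertcentral}. Concretely, I would construct bounded linear isomorphisms $T_F:\mathbf{HD}(F)\to \mathbf{HD}(\mathbb{D})$ and $T_V:\mathbf{HD}(V)\to\mathbf{HD}(\mathbb{D})$ with $T_F(h)\circ z_F - h\in \mathbf{D}_0(F)$ and the analogous property on $V$. I would then set $\Phi_F := T_F\circ p_F$ and $\Phi_V := T_V\circ p_V$. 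Since $v - p_F(v)\in\mathbf{D}_0(F)$ by the Royden decomposition, we get
\[
\Phi_F(v)\circ z_F - v = \bigl(T_F(p_F v)\circ z_F - p_F v\bigr) - (v-p_F v)\in \mathbf{D}_0(F),
\]
and similarly on $V$. Each $\Phi$ is a homeomorphism because it is a composition of homeomorphisms.

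To build $T_F$ (the case of $T_V$ is identical), I would use the geodesic cell decomposition of $\mathbb{D}$ with vertices $z_F$ coming from the uniformized pattern. The first step is to verify that this decomposition is \emph{good}: angles are bounded below and ratios of adjacent edge lengths are bounded. This should follow from $\Theta\in(\epsilon_0,\pi-\epsilon_0)$, from (A1)--(A2), and from the bounded degrees, via a ring-lemma type argument for circle patterns giving comparable radii of adjacent circles. After triangulating the faces by adding diagonals, every $f:F\to\mathbb{R}$ has a piecewise linear extension $Lf$ on $\mathbb{D}$ with $\mathcal{E}_{\mathbb{D}}(Lf)\asymp \mathcal{E}(f)$. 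Moreover, $L$ maps finitely supported functions to compactly supported Lipschitz functions, and hence maps $\mathbf{D}_0(F)$ into the classical $\mathbf{D}_0(\mathbb{D})$. I would then define $T_F h := P_{\mathbb{D}}(Lh)$, where $P_{\mathbb{D}}$ is the classical Royden (Dirichlet) projection onto $\mathbf{HD}(\mathbb{D})$. This map is bounded and linear modulo constants; constants can be handled by fixing values at a base point.

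The key property and the main obstacle is $T_F h\circ z_F - h\in\mathbf{D}_0(F)$. We know that $Lh - T_Fh\in \mathbf{D}_0(\mathbb{D})$. So it suffices to show that for $g\in\mathbf{D}_0(\mathbb{D})$ smooth on the interior, the restriction $g\circ z_F$ lies in $\mathbf{D}_0(F)$, and that for harmonic $H$ the difference $L(H\circ z_F)-H$ also lies in $\mathbf{D}_0(\mathbb{D})$. Here I would invoke Hutchcroft's results for good embeddings of transient graphs. These identify the Royden boundary data: discrete harmonic Dirichlet functions correspond to classical ones through the boundary values on $\partial\mathbb{D}$, and restriction/extension respects the $\mathbf{D}_0$ spaces. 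The point needing care is that sampling a harmonic Dirichlet function at $z_F$ has finite combinatorial energy. This would be handled by mean-value/gradient estimates on the hyperbolically-sized cells, using that the cells near a point have diameter comparable to its distance to the boundary.

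Injectivity and surjectivity of $T_F$ come from constructing the inverse $S_F(H):=p_F(H\circ z_F)$, which is bounded by the same energy comparison. The two composition identities $S_F T_F=\mathrm{id}$ and $T_F S_F=\mathrm{id}$ then follow from the two $\mathbf{D}_0$-compatibilities above, because a function that is both harmonic and in $\mathbf{D}_0$ is constant (and constants are fixed by the base-point normalization). Continuity in both directions follows from boundedness of these linear maps, which completes the construction.
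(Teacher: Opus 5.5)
Your proposal follows the paper's proof. The paper also checks that the primal and dual straight-line decompositions of the uniformized pattern are good embeddings via the Ring lemma (Proposition \ref{prop:primalgood}). It takes the bounded linear isomorphisms $\Psi_V,\Psi_F$ with the $\mathbf{D}_0$-compatibility directly from Hutchcroft's theorem (Theorem \ref{thm:Hutchcroft}), and sets $\Phi_V=\Psi_V\circ p_V$, $\Phi_F=\Psi_F\circ p_F$ using exactly your difference argument.

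One caution about your sketch of what happens inside Hutchcroft's theorem, which you do not actually need. The claim that restricting any smooth $g\in\mathbf{D}_0(\mathbb{D})$ to $z_F$ lands in $\mathbf{D}_0(F)$ is false: a locally finite sum of bumps equal to $1$ at infinitely many pairwise non-adjacent circumcenters, with summable and arbitrarily small energies, lies in $\mathbf{D}_0(\mathbb{D})$, yet its samples have infinite combinatorial energy. Also, cells are only bounded \emph{above} in hyperbolic size, not below; the paper needs the lower bound as an extra hypothesis in Theorem \ref{thm:projectionWP}. So rely on the cited theorem for the $\mathbf{D}_0$-compatibility, as the paper does.
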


We further discuss the boundary value of our discrete functions on the unit circle, which leads to an analogue of the Hilbert transform. It is a classical result that every Dirichlet-finite harmonic function on the unit disk has a well-defined boundary value as a half-differentiable function on the unit circle. This induces a bounded linear isomorphism to the Sobolev space of half-differentiable functions on the unit circle $\mathcal{B}:\mathbf{HD}(\mathbb{D}) \to \hf$. 

\begin{corollary}\label{cor:boundvalue}
    The boundary value mappings
	\begin{align*}
			\mathcal{B}_V:=& \mathcal{B} \circ \Phi_V: P(\Theta,\alpha^{\dagger}) \to \hf, \\ \mathcal{B}_F:=& \mathcal{B} \circ \Phi_F: P(\Theta,R^{\dagger}) \to \hf
	\end{align*} are homeomorphisms and send constant functions to constant functions. 
\end{corollary}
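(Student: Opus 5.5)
The corollary follows by composing homeomorphisms. By Theorem \ref{thm:homeoHD}, $\Phi_V$ and $\Phi_F$ are homeomorphisms onto $\mathbf{HD}(\mathbb{D})$. The classical trace map $\mathcal{B}:\mathbf{HD}(\mathbb{D}) \to \hf$ is a bounded linear bijection. Concretely, $h=\sum_n a_n r^{|n|}e^{in\theta}$ has Dirichlet energy comparable to $\sum_n |n||a_n|^2$, and this is the homogeneous $H^{1/2}$ seminorm of $\sum a_n e^{in\theta}$. Both spaces carry norms adding $|a_0|^2$, so $\mathcal{B}$ is an isometric isomorphism up to constants. By the open mapping theorem, or directly via the Poisson extension, its inverse is bounded. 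Hence $\mathcal{B}$ is a homeomorphism, and so are $\mathcal{B}_V=\mathcal{B}\circ\Phi_V$ and $\mathcal{B}_F=\mathcal{B}\circ\Phi_F$.

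It remains to show that constants go to constants, and it is enough to prove $\Phi_V(c)=c$ and $\Phi_F(c)=c$.

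First, the constant functions lie in the domains.
\begin{itemize}
\item For $u\equiv c$ on $F$, the radii $e^{c}R^\dagger$ give the uniformized pattern scaled by $e^c$. This pattern exists, and the constant function has zero energy, so $c\in P(\Theta,R^\dagger)$.
\item On the dual side, the construction of $P(\Theta,\alpha^\dagger)$ in Theorem \ref{thm:hilbertcentral} is set up so that constants correspond to the unchanged central angles $\alpha^\dagger$, up to the rotation/scaling normalization. In particular the conjugation map $\mathcal{C}^\Theta$ identifies the classes of constants, so constants lie in $P(\Theta,\alpha^\dagger)$.
\end{itemize}

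Next, the characterization in Theorem \ref{thm:homeoHD} forces $\Phi_F(c)$. It gives $\Phi_F(c)\circ z_F - c\in \mathbf{D}_0(F)$. The constant harmonic function $H\equiv c$ on $\mathbb{D}$ satisfies $H\circ z_F - c = 0\in\mathbf{D}_0(F)$.

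Suppose $H_1,H_2\in\mathbf{HD}(\mathbb{D})$ both satisfy $H_i\circ z_F - c\in\mathbf{D}_0(F)$. Then $(H_1-H_2)\circ z_F\in \mathbf{D}_0(F)$, and I need to conclude $H_1=H_2$. This uniqueness is the one point needing care, and I expect it is implicit in the construction of $\Phi_F$ via Hutchcroft's results. The restriction map $\mathbf{HD}(\mathbb{D})\to \mathbf{D}(F)/\mathbf{D}_0(F)\cong \mathbf{HD}(F)$ is injective: a nonzero harmonic Dirichlet function cannot restrict to an element of $\mathbf{D}_0$ on a good embedding. For this I would cite the comparability of discrete and continuous energies, together with the correspondence of $\mathbf{D}_0$ classes under piecewise linear extension. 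This is exactly what makes $\Phi_F$ well defined as a function.

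Granting this injectivity, $\Phi_F(c)=c$, and the same argument gives $\Phi_V(c)=c$. Since $\mathcal{B}(c)=c$, both $\mathcal{B}_V$ and $\mathcal{B}_F$ send constant functions to constant functions.
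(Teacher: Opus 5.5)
Your proposal is correct and matches the paper's argument: compose the homeomorphisms $\Phi_V,\Phi_F$ of Theorem \ref{thm:homeoHD} with the classical isomorphism $\mathcal{B}$, and note that each factor fixes constants. The uniqueness you flag is exactly the uniqueness clause of Theorem \ref{thm:Hutchcroft}(1) applied to $h\equiv c$, and constants lie in $P(\Theta,\alpha^{\dagger})$ directly, since at $v\equiv c$ equation \eqref{eq:nonv_vertex} reduces to $\sum_j \log(R^{\dagger}_\psi/R^{\dagger}_\phi)=0$, a telescoping sum around each vertex $i$.
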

Particularly, the conjugation map $\mathcal{C}^{\Theta}$ induces a nonlinear homeomorphism $\mathfrak{H}^{\Theta}: \hf/\mathbb{R} \to \hf/\mathbb{R}$ on the quotient spaces defined as the composition
	\[
	\mathfrak{H}^{\Theta} := \overline{\mathcal{B}}_V \circ \mathcal{C}^{\Theta} \circ \overline{\mathcal{B}}_F^{-1}
	\]
which is an analogue of the Hilbert transform. It captures various information of our space of circle patterns.

\begin{figure}
	\centering
	\includegraphics[width=1
	\textwidth]{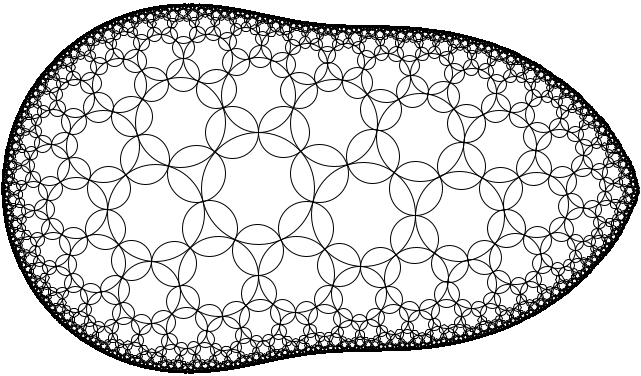}
	\caption{A circle pattern corresponding to $u \in P(\Theta,R^{\dagger})$ whose boundary value $u_{\partial \mathbf{D}} \in H^{\frac{1}{2}}(\partial \mathbb{D})$ is infinite at one point on $z=1$. Specifically, the boundary value is $u_{\partial \mathbb{D}}(z) = \Re \left( \sum_{n=2}^{\infty} \frac{z^n}{n \log n} \right)$.}
	\label{fig:unbounded}
\end{figure}

We equip the Hilbert manifolds $P(\Theta,R^{\dagger})$ and $P(\Theta,\alpha^{\dagger})$ with Riemannian metrics induced from the discrete Dirichlet energy with respect to geometric edge weights or equivalently, the Hessian of the hyperbolic volume functional. Namely, each circle pattern corresponding to $u \in P(\Theta,R^{\dagger})$ induces a graph Laplacian $\Delta_{c^*}$ with respect to geometric edge weight $c^*:E^* \to \mathbb{R}$. The tangent space $T_u P(\Theta,R^{\dagger})$ is identified with the space of discrete harmonic functions $\ker \Delta_{c^*}$. We define the Riemannian metric on $T_{[u]}\left(P(\Theta,R^{\dagger})/\mathbb{R}\right)$ to be the metric induced by the discrete Dirichlet energy 
\[
\mathcal{E}_{c^*}(v) := \sum_{\phi\psi \in E^*} c^*_{\phi\psi} (v_\phi - v_\psi)^2.
\]
Similarly, the space $P(\Theta,\alpha^{\dagger})/\mathbb{R}$ is also equipped with a Riemannian metric induced from the discrete Dirichlet energy with respect to geometric edge weights, which arise from the Hessian of the other functional.
\begin{corollary}
The conjugation map $\mathcal{C}^{\Theta} \colon P(\Theta, R^{\dagger})/ \mathbb{R} \to P(\Theta, \alpha^{\dagger})/\mathbb{R}$ is an isometry relative to the Riemannian metrics induced by the discrete Dirichlet energies with geometric edge weights.
\end{corollary}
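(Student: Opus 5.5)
The plan is to reduce the statement to a pointwise identity between the two geometric Dirichlet energies. It should follow from the conjugate-harmonic relation between infinitesimal changes of the radii and infinitesimal changes of the central angles. Fix $u \in P(\Theta,R^{\dagger})$ and write $\alpha$ for the central angles of the corresponding circle pattern. For each primal edge $ij \in E$ with dual edge $\phi\psi$, the common chord of the two adjacent circles subtends angles $\alpha_{ij}^{\phi}$ and $\alpha_{ij}^{\psi}$ at the two circumcenters. The intersection angle $\Theta_{\phi\psi}$ is fixed, so the triangle formed by the two circumcenters and an endpoint of the chord has the fixed angle $\pi-\Theta_{\phi\psi}$ at that endpoint. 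The law of sines then gives $\alpha$ as a smooth function of $\log R_\phi - \log R_\psi$.

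Differentiating, a tangent vector $\dot u \in T_u P(\Theta,R^{\dagger}) = \ker \Delta_{c^*}$ induces variations of the central angles of the form
\[
\dot\alpha^{\phi}_{ij} = c^*_{\phi\psi}\,(\dot u_\psi - \dot u_\phi)
\]
up to a sign convention. Here $c^*$ is exactly the Hessian coefficient of the hyperbolic volume functional. I expect this to be the standard computation of Bobenko--Springborn for ideal polyhedra.

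The key step is to show that the derivative of $\mathcal{C}^{\Theta}$ at $[u]$ is the discrete conjugation $\dot u \mapsto \dot w$. Here $\dot w : V \to \mathbb{R}$ is defined by requiring that $\dot w_j - \dot w_i$ equal the above rotation increment along the primal edge $ij$ crossing $\phi\psi$. This $\dot w$ is well defined because $\Delta_{c^*}\dot u = 0$: around each face $\phi$ the increments sum to zero, since the central angles at $\phi$ sum to $2\pi$. Moreover $\dot w$ is harmonic for the primal geometric weights $c_{ij} = 1/c^*_{\phi\psi}$. This follows because closedness of $d\dot u$ around primal vertices, where $\Theta$ sums to $2\pi$ by (A1), translates into $\Delta_c \dot w = 0$. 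I would then verify, using the explicit construction of $\mathcal{C}^{\Theta}$ from Theorem~\ref{thm:hilbertcentral} (which sends $u$ to the deformation of central angles realizing the same pattern), that its differential is this conjugation, modulo constants on both sides. Here the quotient by $\mathbb{R}$ matters: constant $\dot u$ is scaling, and constant $\dot w$ is rotation, which does not change the central angles.

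The energy identity is then edgewise:
\[
\mathcal{E}_{c}(\dot w) = \sum_{ij} c_{ij}(\dot w_j-\dot w_i)^2 = \sum_{\phi\psi} \frac{1}{c^*_{\phi\psi}}\,(c^*_{\phi\psi})^2(\dot u_\psi-\dot u_\phi)^2 = \mathcal{E}_{c^*}(\dot u).
\]
Polarizing this identity gives equality of the inner products, so $d\mathcal{C}^{\Theta}$ is an isometry on every tangent space. This requires the primal weights on $P(\Theta,\alpha^{\dagger})$, defined via the Hessian of the other functional (its Legendre dual), to be exactly $1/c^*$. That holds because Hessians of Legendre-dual functions are inverse to each other, edge by edge.

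I expect the main obstacle to be the infinite-dimensional bookkeeping rather than the algebra. One must check that $\dot w$ lies in the tangent space $T P(\Theta,\alpha^{\dagger})$, i.e.\ in $\mathbf{D}(V)$ with the right decomposition, and that $\mathcal{C}^{\Theta}$ is differentiable with this derivative and not merely a homeomorphism. For this I would use the uniform bounds on $\Theta \in (\epsilon_0,\pi-\epsilon_0)$ and on the angles. These make $c^*$ bounded above and below along $P(\Theta,R^{\dagger})$ on compact neighbourhoods, so the energies $\mathcal{E}_{c^*}$ and $\mathcal{E}$ are comparable. Finite energy then transfers across conjugation, and differentiability follows from the implicit function theorem used to build the Hilbert manifold charts.
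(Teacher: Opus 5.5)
Your proposal is correct and is essentially the paper's argument (Proposition~\ref{prop:conjugation_isometry}): linearizing \eqref{eq:conjugation} identifies $d\mathcal{C}^{\Theta}$ with discrete harmonic conjugation $\mathfrak{u}_{\psi}-\mathfrak{u}_{\phi}=c_{ij}(\mathfrak{v}_j-\mathfrak{v}_i)$, and the reciprocity $c_{ij}=1/c^*_{\phi\psi}$ then gives $\mathcal{E}_{c^*}(\mathfrak{u})=\mathcal{E}_{c}(\mathfrak{v})$ edge by edge, exactly as in your display. Two small remarks: you do not need a Legendre-duality argument for the reciprocity, because the two weights are $\tfrac12(\cot\alpha_{\phi\psi}+\cot\alpha_{\psi\phi})$ and its inverse evaluated on the same kite; also, in the paper's convention the angle of the triangle $\phi\psi i$ at $i$ is $\Theta_{\phi\psi}$ rather than $\pi-\Theta_{\phi\psi}$, which changes nothing since only its constancy is used.
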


The Sobolev space of half-differentiable functions modulo constant functions $H^{\frac{1}{2}}(\partial \mathbb{D})/\mathbb{R}$ is a Hilbert manifold. Its Riemannian metric is determined by the classical Dirichlet energy of the smooth harmonic extension on the unit disk. We explore the relation of the Riemannian metric on $H^{\frac{1}{2}}(\partial \mathbb{D})/\mathbb{R}$ and that on $P(\Theta,R^\dagger)/\mathbb{R}$
under the boundary value mapping $\mathcal{B}_F$.

We consider a bilinear pairing $b: \mathbf{D}(F) \times \mathbf{D}(V) \to \mathbb{R}$.
Given any two functions $u \in \mathbf{D}(F)$ and $v \in \mathbf{D}(V)$, there is a natural pairing
\[
b(u,v):= \frac{1}{2}\sum_{ij \in \vec{E}} (u_{\psi}-u_{\phi})(v_j-v_i) =\sum_{ij \in E} (u_{\psi}-u_{\phi})(v_j-v_i)
\]
where $\phi$ denotes the left face of the oriented edge from $i$ to $j$ and $\psi$ denotes the right face. The first summation is over all the oriented edges while the second summation is over all the unoriented edges.

\begin{theorem}\label{thm:discretesym}
	Given the uniformized circle pattern on the unit disk. Suppose $u \in \mathbf{D}(F)$ and $v \in \mathbf{D}(V)$ have boundary values $u_{\partial \mathbb{D}}, v_{\partial \mathbb{D}} \in \hf $ respectively. Then
	\begin{equation*} \label{eq:dssym}
		b(u,v) = 2 \pi \cdot \omega(u_{\partial \mathbb{D}},v_{\partial \mathbb{D}})
	\end{equation*}
	where $\omega$ is the symplectic form on $\hf$.
\end{theorem}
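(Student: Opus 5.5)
The plan is to reduce the identity to the case of finitely supported perturbations and smooth harmonic data, and then compare a discrete Stokes formula with the continuous one.

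\textbf{Step 1: the pairing vanishes on the null parts.} We first show that $b(u,v)=0$ whenever $u\in\mathbf{D}_0(F)$ or $v\in\mathbf{D}_0(V)$. Take $u$ finitely supported. Regroup the sum $\sum_{ij\in E}(u_\psi-u_\phi)(v_j-v_i)$ around each face. For a fixed face $\phi$, the contribution is $u_\phi$ times the sum of $\pm(v_j-v_i)$ over the oriented boundary edges of $\phi$. This telescopes to zero because the boundary of a face is a closed loop. This is discrete Stokes: $b(u,v)=\langle d u, \star d v\rangle$, and $d\circ d=0$.

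For general $u\in\mathbf{D}_0(F)$ we pass to limits using Cauchy--Schwarz:
\[
|b(u,v)|\le \mathcal{E}(u)^{1/2}\,\mathcal{E}(v)^{1/2}.
\]
This holds because each primal edge corresponds to exactly one dual edge. So $b$ is a bounded bilinear form, and approximation in the Dirichlet norm suffices. The symmetric argument handles $v\in\mathbf{D}_0(V)$.

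\textbf{Step 2: reduction to the continuous harmonic extensions.} Using the Royden decompositions and Step 1, we may replace $u$ and $v$ by
\[
U\circ z_F \quad\text{and}\quad W\circ z_V,
\]
where $U,W\in\mathbf{HD}(\mathbb{D})$ are the harmonic extensions of $u_{\partial\mathbb{D}}$ and $v_{\partial\mathbb{D}}$. This relies on the construction behind Theorem~\ref{thm:homeoHD}: the boundary value of a discrete function is defined through the projection of its piecewise linear extension, and functions in $\mathbf{D}_0$ have zero boundary value. Both sides of the identity are continuous bounded bilinear forms. For the left side this is the estimate above combined with the comparability of energies on good decompositions; for the right side it is boundedness of $\omega$ on $\hf$. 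By density it therefore suffices to take $U$ and $W$ to be trigonometric polynomials, i.e.\ harmonic polynomials, which are smooth up to $\partial\mathbb{D}$.

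\textbf{Step 3: the computation for smooth data.} For smooth $U,W$ on $\overline{\mathbb{D}}$, Stokes gives
\[
\iint_{\mathbb{D}} dU\wedge dW=\int_{\partial\mathbb{D}} U\,dW,
\]
which is $2\pi\,\omega(U|_{\partial},W|_{\partial})$ up to the normalization of $\omega$. The discrete pairing $b(U\circ z_F, W\circ z_V)$ is a sum over quadrilaterals $\phi, i, \psi, j$. These quadrilaterals are the kites formed by two circumcenters and two intersection points, and they tile $\mathbb{D}$. On each kite, the product $(U(z_\psi)-U(z_\phi))(W(z_j)-W(z_i))$ equals the integral of $dU\wedge dW$ over the kite, with error controlled by $\|D^2\|\cdot\mathrm{diam}^3$. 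This uses that $dU\wedge dW$ evaluated on the two diagonals is twice the kite area times the Jacobian, which accounts for the factor convention.

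Summing over kites, the total error is bounded by
\[
\sum \mathrm{diam}^3\le \sup(\mathrm{diam})\cdot\sum \mathrm{area}\to 0 .
\]
To make the error actually vanish rather than merely be small, we truncate to the finite union of kites inside a region $D_n$ exhausting $\mathbb{D}$ and apply the discrete Stokes identity exactly. On $D_n$, the discrete sum equals a boundary sum $\sum U(z_\phi)(W(z_j)-W(z_i))$ along $\partial D_n$. This is a Riemann sum for $\int_{\partial D_n} U\,dW$, since the kites have bounded angles and ratios and, by the good-decomposition property, their diameters shrink near $\partial\mathbb{D}$. Letting $n\to\infty$ gives the claim.

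\textbf{Main obstacle.} The delicate part is Step 3 near the boundary. Cells accumulate at $\partial\mathbb{D}$, so $\partial D_n$ must be chosen as a polygonal curve of kite edges that converges to $\partial\mathbb{D}$ with bounded total turning. The boundary Riemann sums must also converge to the Stieltjes integral. This is where the uniform angle and ratio bounds on the uniformized pattern are essential.
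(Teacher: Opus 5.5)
Your outline follows the paper: Step~1 is Lemma~\ref{lem:bD0}, Step~2 is the final assembly together with the density argument of Lemma~\ref{lem:bHDD}, and the truncation at the end of Step~3 is the paper's treatment of smooth data. Step~3, however, contains a false claim and omits the one estimate the proof depends on.

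\textbf{The kite-by-kite comparison is false and should be deleted.} Take linear $U,W$ with gradients $a,b$, and kite diagonals $d_1=z_\psi-z_\phi$, $d_2=z_j-z_i$. The discrete term is $\langle a,d_1\rangle\langle b,d_2\rangle$, whereas
\[
\iint_{\mathrm{kite}} dU\wedge dW=\tfrac12\det(a,b)\det(d_1,d_2)=\tfrac12\bigl(\langle a,d_1\rangle\langle b,d_2\rangle-\langle a,d_2\rangle\langle b,d_1\rangle\bigr).
\]
The discrepancy is the symmetric part $\tfrac12\bigl(\langle a,d_1\rangle\langle b,d_2\rangle+\langle a,d_2\rangle\langle b,d_1\rangle\bigr)$. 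This is of the order of the kite's area, not $\mathrm{diam}^3$. For example, on a kite with $d_1=(s,0)$, $d_2=(0,t)$, the pair $(U,W)=(x,y)$ gives $st$ against $st/2$, while $(y,x)$ gives $0$ against $-st/2$. So no factor convention repairs it, and your factor $2$ would in fact produce $4\pi\omega$. These symmetric parts cancel only after summation, which is exactly what the discrete Stokes identity encodes. Moreover, the pattern is fixed, so $\sup\mathrm{diam}$ does not tend to $0$.

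\textbf{The truncation argument needs a length bound you do not supply.} The Riemann-sum comparison gives an error of at most $\epsilon\,|z_j-z_i|/L$ per boundary edge, once all edges of $\partial D_n$ are shorter than some $\delta(\epsilon)$. You therefore need two things: (a) a uniform bound $L$ on the Euclidean \emph{length} of $\partial D_n$, and (b) that its edges eventually become short. You do not supply (a). The condition you name instead, bounded total turning, would imply it, but it is far stronger than needed. It is also not something a path in the $1$-skeleton with ever more vertices can be expected to satisfy.

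The paper obtains (a) as follows. Intersect the circle $|z|=\rho_n$ with the $1$-skeleton. Each arc between consecutive crossings lies in a single face; replace it by the shorter boundary path of that face. Compactness of the space of cyclic $k$-gons, $k\le K$, with central angles in $[2\eta,\pi-2\eta]$ gives a chord-arc constant $C_1$. Hence $\mathrm{length}(\partial D_n)\le 2\pi C_1$ (Propositions~\ref{prop:Angel} and~\ref{prop:seqcomplex}).

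Property (b), which you assert, follows from an area count. A face with an edge of length $\ell$ has area $\gtrsim\ell^2$, so only finitely many edges exceed $\delta$ (Lemma~\ref{cor:smalllengths}).

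Note also that a boundary sum over primal edges arises when $D_n$ is a union of primal faces, not a union of whole kites. With (a) and (b) in hand, your last paragraph goes through. The final limit is $\int_{\partial\mathbb{D}}U\,dW-\int_{\partial D_n}U\,dW=\iint_{\mathbb{D}\setminus D_n}dU\wedge dW\to 0$.
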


If $u_{\partial \mathbb{D}},v_{\partial \mathbb{D}}$ are smooth functions on the unit circle, the symplectic form $\omega$ can be written as
\[
\omega(u_{\partial \mathbb{D}},v_{\partial \mathbb{D}}) = \frac{1}{2\pi} \int_{\partial \mathbb{D}} u_{\partial \mathbb{D}} \, d(v_{\partial \mathbb{D}}).
\]
Generally for functions in $\hf$, the symplectic form is defined via the Fourier coefficients of the functions.

We express the Riemannian metric on $P(\Theta,R^{\dagger})/\mathbb{R}$ in terms of the symplectic form $\omega$ on $\hf$ and the nonlinear homeomorphism $\mathfrak{H}^{\Theta}$. 

\begin{corollary}\label{cor:HilbertWP}
Let $u \in P(\Theta,R^{\dagger})$ with geometric edge weights $c^*:E^* \to \mathbb{R}$. Then for any $v,w \in T_{u}(P(\Theta,R^{\dagger})) \cong \ker \Delta_{c^*}$, we have
\[
\sum c^*_{\phi\psi} (v_\phi - v_\psi)(w_\phi - w_\psi) = 2\pi \cdot \omega \left(v_{\partial \mathbb{D}}, d\mathfrak{H}^{\Theta}_{u_{\partial \mathbb{D}}}(w_{\partial \mathbb{D}})\right).
\]
\end{corollary}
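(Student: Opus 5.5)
The plan is to reduce the identity to Theorem \ref{thm:discretesym} by identifying the conjugate of a tangent vector with the differential of the conjugation map. Fix $u \in P(\Theta,R^{\dagger})$ with geometric edge weights $c^*$ on $E^*$. For an edge $ij\in E$ with left face $\phi$ and right face $\psi$, the dual weights on the primal graph come from the same circle pattern and satisfy $c_{ij} = 1/c^*_{\phi\psi}$. This is the reciprocity of cotangent-type weights for circle patterns: the Hessians of the two volume functionals are inverse to each other edge by edge.

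\textbf{Step 1: the conjugate of a tangent vector.} Given $w \in \ker \Delta_{c^*}$, the $1$-form $c^*_{\phi\psi}(w_\psi - w_\phi)$ on dual edges, transferred to primal edges, is closed around every primal vertex. The sum around a primal vertex $i$ is the sum over the dual edges of the face of $G^*$ around $i$, and this vanishes because harmonicity at each face makes the corresponding discrete $1$-form coclosed. The vanishing is built into the statement $T_uP(\Theta,R^\dagger) \cong \ker\Delta_{c^*}$. Since $\Omega$ is simply connected, there is a function $\tilde w$ on $V$, unique up to constants, with
\[
\tilde w_j - \tilde w_i = c^*_{\phi\psi}(w_\psi - w_\phi).
\]
Its energy is $\mathcal{E}_{c}(\tilde w) = \mathcal{E}_{c^*}(w) < \infty$. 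Because the dual decomposition is good with uniformly bounded angles and side ratios, the weights are uniformly comparable to $1$, so $\tilde w \in \mathbf{D}(V)$.

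I then claim $d\mathcal{C}^{\Theta}_{[u]}([w]) = [\tilde w]$. Differentiate a smooth family $u(t)$ with $u'(0)=w$. The variation of the central angles is the primal conjugate of the variation of the log radii: $d\alpha_{ij} = c^*_{\phi\psi}(dR$-log difference$)$, which is exactly the defining Hessian relation of the volume functional. The conjugation map of Theorem \ref{thm:hilbertcentral} records the angle deformation relative to $\alpha^{\dagger}$ through its primitive on $V$, so the derivative is $\tilde w$. The sign and orientation convention must match the left/right convention in $b$, and I expect adjusting it to produce the sign in the final formula.

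\textbf{Step 2: boundary values.} By Corollary \ref{cor:boundvalue}, $\mathcal{B}_F$ and $\mathcal{B}_V$ are homeomorphisms. The differentials of the boundary maps are the boundary-value maps on tangent vectors, because $\Phi_F,\Phi_V$ differ from the identity by elements of $\mathbf{D}_0$, which have zero trace, and $\mathcal{B}$ is linear and bounded. Hence
\[
d\mathfrak{H}^{\Theta}_{u_{\partial\mathbb{D}}}(w_{\partial\mathbb{D}}) = \tilde w_{\partial\mathbb{D}}.
\]

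\textbf{Step 3: apply Theorem \ref{thm:discretesym}.} Take $v \in \ker\Delta_{c^*}\subset \mathbf{D}(F)$ and $\tilde w\in\mathbf{D}(V)$. Then
\[
b(v,\tilde w) = \sum_{ij\in E}(v_\psi - v_\phi)(\tilde w_j - \tilde w_i) = \sum c^*_{\phi\psi}(v_\psi - v_\phi)(w_\psi - w_\phi),
\]
which is the left-hand side. Theorem \ref{thm:discretesym} gives $b(v,\tilde w) = 2\pi\,\omega(v_{\partial\mathbb{D}},\tilde w_{\partial\mathbb{D}})$, and substituting Step 2 yields the corollary.

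\textbf{Main obstacle.} The delicate step is rigorously differentiating $\mathfrak{H}^{\Theta}$, since the homeomorphisms are only known to be topological. I would first try to prove smoothness through the implicit function theorem applied to the variational characterization, in which the Hessian $\Delta_{c^*}$ is invertible on $\mathbf{D}_0$. I would then check that the derivative of the $\mathbf{D}_0$-correction lies in $\mathbf{D}_0$, so that it does not change boundary values.
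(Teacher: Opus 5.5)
Your proposal is correct and follows the paper's proof. The paper also takes the conjugate $\tilde w$ with $w_\psi - w_\phi = c_{ij}(\tilde w_j - \tilde w_i)$ from Proposition~\ref{prop:conjugation_isometry}; this is exactly the linearization of \eqref{eq:conjugation}, so no sign adjustment is needed. It then rewrites the left-hand side as $b(v,\tilde w)$ using $c^*_{\phi\psi}=1/c_{ij}$, applies Theorem~\ref{thm:discretesym}, and simply asserts $\tilde w_{\partial\mathbb{D}} = d\mathfrak{H}^{\Theta}_{u_{\partial\mathbb{D}}}(w_{\partial\mathbb{D}})$.

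So your differentiability discussion is more careful than the paper's, and the step is easier than you fear. The maps $\mathcal{B}_F,\mathcal{B}_V$ are restrictions of the bounded linear maps $\mathcal{B}\circ\Psi_F\circ p_F$ and $\mathcal{B}\circ\Psi_V\circ p_V$. The conjugate $\mathcal{C}^{\Theta}(u)$ is the primitive on $V$ of the edge function $-\mathcal{K}^*_{\phi\psi}(u)$, i.e.\ $v_j-v_i=-\mathcal{K}^*_{\phi\psi}(u)$, which depends smoothly on $u$.

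There is one slip in Step~1. The cycle condition that makes $\tilde w$ exist is around primal faces, where it reads $(\Delta_{c^*}w)_\phi=0$. Around a primal vertex, it is the weighted sum $\sum_j c_{ij}(\tilde w_j-\tilde w_i)=\sum(w_\psi-w_\phi)$ that telescopes, and this gives the $c$-harmonicity of $\tilde w$, not its existence.
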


The circle patterns in $P(\Theta,R^{\dagger})$ are closely related to the Weil-Petersson class within the universal Teichmüller space. Indeed, every circle pattern in $P(\Theta,R^{\dagger})$ determines a natural quasiconformal mapping of the unit disk as follows. 

Let $u: F \to \mathbb{R}$ be a function such that $e^u R^{\dagger} \in \tilde{P}(\Theta)$. There exists a piecewise linear map 
\[
f_u: (\Omega,\sigma(\Theta,R^{\dagger})) \to (\Omega,\sigma(\Theta,e^u R^{\dagger}))
\]
from the uniformized circle pattern to the deformed circle pattern, which sends circumcenters to circumcenters and intersection points to intersection points. We regard $f_u$ as a discrete Riemann mapping. When $f_{u}$ is quasiconformal, there is a classical Riemann mapping from the unit disk $\mathbb{D}\cong(\Omega,\sigma(\Theta,R^{\dagger}))$ to the deformed circle pattern
\[
g_u: \mathbb{D} \to (\Omega,\sigma(\Theta,e^u R^{\dagger})).
\]
We consider the difference between the discrete Riemann mapping and the classical Riemann mapping via a self-homeomorphism of the unit disk
\[
g_u^{-1} \circ f_u: \mathbb{D} \to \mathbb{D}.
\]
We relate the Beltrami differential of this map to the logarithmic change in radii $u$. Since $g_u$ is conformal, we have the Beltrami differential $\mu(g_u^{-1} \circ f_u) = \mu(f_u)$.

\begin{theorem}\label{thm:projectionWP}
    Let $u: F \to \mathbb{R}$ be such that the Euclidean radii $e^u R^{\dagger}$ define a circle pattern in $\tilde{P}(\Theta)$. Then
    \begin{enumerate}
        \item The map $f_u$ is quasiconformal if and only if the logarithmic scaling factor $u$ has bounded discrete gradient, that is,
        \[
        \sup_{\phi \psi \in E^*} |u_{\phi}-u_{\psi}| < \infty.
        \] 
        \item If $u$ has finite combinatorial Dirichlet energy (i.e., $u \in P(\Theta,R^{\dagger})$), then the Beltrami differential of $f_u$ is $L^2$-integrable with respect to the hyperbolic metric on $\mathbb{D}$. Conversely, if the Beltrami differential of $f_u$ is $L^2$-integrable in the hyperbolic metric and the uniformized circle pattern admits a uniform lower bound on its hyperbolic radii, then $u$ has finite combinatorial Dirichlet energy.
    \end{enumerate}
\end{theorem}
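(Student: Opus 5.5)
The plan is to reduce both parts to a local computation on each "kite": the quadrilateral formed by two adjacent circumcenters $\phi,\psi$ and the two intersection points $i,j$ on their common chord. Under $f_u$, which is piecewise linear, we may triangulate each kite into the two triangles $(\phi,i,j)$ and $(\psi,i,j)$, or into the four triangles $(\phi,\psi,i)$, $(\phi,\psi,j)$, and so on. The deformed kite is determined by the intersection angle $\Theta_{\phi\psi}$, which is fixed, and the two radii $e^{u_\phi}R^\dagger_\phi$ and $e^{u_\psi}R^\dagger_\psi$. Up to similarity, the deformed kite therefore depends only on $\Theta_{\phi\psi}$ and the ratio $r=\log(R_\phi/R_\psi)$. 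The uniformized kite likewise depends on $\Theta_{\phi\psi}$ and $r^\dagger=\log(R^\dagger_\phi/R^\dagger_\psi)$. The new ratio is $r=r^\dagger+(u_\phi-u_\psi)$.

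First I would record the geometric input from the uniformized pattern. Because $\Theta\in(\epsilon_0,\pi-\epsilon_0)$ and vertex and face degrees are bounded, the uniformized cell decomposition is ``good'', as stated before Theorem \ref{thm:homeoHD}. This means $|r^\dagger|\le C$ and all angles of the kites are bounded away from $0$ and $\pi$. Next, an explicit affine-map computation on a kite triangle gives the dilatation $|\mu|$ of $f_u$ there as a continuous function $m(\Theta,r^\dagger,r)$. This function vanishes iff $r=r^\dagger$, and it tends to $1$ as $|r-r^\dagger|\to\infty$: in that limit one circle becomes negligible, so the deformed kite degenerates while the original kite does not. On the compact set $\Theta\in[\epsilon_0,\pi-\epsilon_0]$, $|r^\dagger|\le C$, smoothness and nondegeneracy give the two-sided estimate
\[
c\,\min\{1,|u_\phi-u_\psi|\}\le \|\mu\|_{L^\infty(\text{kite})}\le C'\min\{1,|u_\phi-u_\psi|\}.
\]
Together with the statement that $\|\mu\|_\infty<1$ iff $|u_\phi-u_\psi|$ is bounded, this proves part (1). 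A small caveat: when the deformed pattern is not embedded, $f_u$ is only locally quasiconformal into the flat surface. Quasiconformality is local, so this does not matter.

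For part (2), integrate $|\mu|^2$ against the hyperbolic area $\rho^2\,dA$ on each kite $K_{\phi\psi}$. The kite has Euclidean diameter comparable to $R^\dagger_\phi$. I would show that $\rho\asymp 1/R^\dagger_\phi$ on $K_{\phi\psi}$, i.e.\ that hyperbolic radii of the uniformized circles are bounded above, and that the hyperbolic density is roughly constant on each kite. The upper bound on hyperbolic radii follows because each circumdisk lies in $\mathbb{D}$, so its hyperbolic radius is bounded in terms of the hyperbolic size of a circle through points of $\mathbb{D}$. Fed with only that upper bound, the estimate yields $\int_{K}|\mu|^2\rho^2\,dA\le C\min\{1,|u_\phi-u_\psi|^2\}$. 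Summing over $E^*$ gives $\|\mu\|^2_{L^2(\rho^2)}\le C\mathcal{E}(u)<\infty$.

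The converse needs the lower bound $\int_K|\mu|^2\rho^2\ge c\min\{1,|u_\phi-u_\psi|^2\}$, and this is where the hypothesis of a uniform lower bound on hyperbolic radii enters. It makes $\rho^2\,\mathrm{area}(K)$ bounded below, which gives $\sum\min\{1,|u_\phi-u_\psi|^2\}<\infty$. Since only finitely many terms can then be at least $1$, each remaining term is $|u_\phi-u_\psi|^2$ and the combinatorial Dirichlet energy is finite.

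I expect the main obstacle to be comparing the hyperbolic density with $1/R^\dagger$ uniformly on kites near the boundary. A circumdisk tangent to, or nearly touching, $\partial\mathbb{D}$ has $\rho$ that is not comparable to a single constant across the disk. I would handle this by using only the part of the kite near the chord $ij$, which stays a definite fraction of $R^\dagger$ inside, and by applying the Koebe-type bound $\rho(z)\asymp 1/\mathrm{dist}(z,\partial\mathbb{D})$.
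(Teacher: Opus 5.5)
Your overall strategy matches the paper's. You bound $|\mu(f_u)|$ on each triangle of a kite by a function of $t=u_\phi-u_\psi$, bound the hyperbolic area of the triangles uniformly, and sum over $E^*$. Your local estimate $c\min\{1,|t|\}\le|\mu|\le C'\min\{1,|t|\}$ holds for all $t\in\mathbb{R}$, and it is better than the paper's Lemma~\ref{lem:smalldv}. That lemma is stated only for $|t|\le\sqrt{\mathcal{E}(u)}$, which is not available a priori in the converse of (2). It also does not by itself give ``quasiconformal $\Rightarrow$ bounded gradient'' in (1). Your observation that $|\mu|\to1$ as $|t|\to\infty$ covers both. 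One correction: use the triangles $\phi\psi i$ and $\psi\phi j$, split along the diagonal $\phi\psi$. This is how $f_u$ is defined. Moreover, when a half-angle exceeds $\pi/2$ the kite is not convex, and the triangles $(\phi,i,j)$, $(\psi,i,j)$ overlap instead of tiling it.

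The step that fails as written is the uniform upper bound on the hyperbolic area of the kites, which is the only geometric input the forward direction of (2) needs.
\begin{itemize}
\item Containment of a circumdisk in $\mathbb{D}$ gives no bound on its hyperbolic radius: $\{|z|<1-\delta\}\subset\mathbb{D}$ has hyperbolic radius $\log\frac{2-\delta}{\delta}\to\infty$.
\item Your fallback of integrating only over the part of the kite near the chord $ij$ can give a lower bound for $\iint_K|\mu|^2\rho^2\,dA$, but never an upper bound.
\end{itemize}
The missing idea is that every circle is enclosed by a ring of neighbouring circles, all of which lie in $\mathbb{D}$. The paper uses this in Lemma~\ref{lem:area}. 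There the central hyperbolic radius is maximised when all neighbours degenerate to horocycles, which yields a bound depending only on $\epsilon_0$ and the degree bound.

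An elementary alternative in your spirit works as follows. The stars of triangles around $z_\phi$, $z_\psi$ and $z_i$ are pieces of the cell decomposition of $\mathbb{D}$. By the angle bounds and the Ring lemma, each contains a Euclidean ball of radius $cR^\dagger_\phi$ about its centre point. For instance, the triangles around $z_i$ have angles $\Theta$ summing to $2\pi$, and their heights from $z_i$ are comparable to $R^\dagger_\phi$. Since $z\mapsto 1-|z|$ is concave, $1-|z|\ge cR^\dagger_\phi$ on all of $\triangle\phi\psi i$. Hence $\rho\lesssim 1/R^\dagger_\phi$ there, and the hyperbolic area of the triangle is uniformly bounded. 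With this in place, the rest of your argument goes through, including the converse under the lower bound on hyperbolic radii.
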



The uniformized circle pattern in Figure \ref{fig:circle_patterns_disk} admits a uniform lower bound on the hyperbolic radii because of symmetry. Generally, to guarantee a uniform lower bound on the hyperbolic radii in the uniformized circle pattern, we believe that the graph $G$ has to satisfy the \emph{strong isoperimetric inequality} (See \cite{Soardi1994}). 

The previous theorem implies that when $u$ has bounded gradient, the map $g_u^{-1} \circ f_u$ is a quasiconformal homeomorphism of the unit disk. Such a mapping can be extended to a quasisymmetric homeomorphism of the unit circle, which defines an element in the universal Teichmüller space $\UT$.

\begin{corollary}
 We consider a broader class of circle patterns
    \[
    P_{\mathrm{QC}}(\Theta,R^{\dagger}) := \left\{ u: F \to \mathbb{R} \;\middle|\; e^{u} R^{\dagger} \in \tilde{P}(\Theta) \text{ and } \sup_{\phi \psi \in E^*} |u_{\phi}-u_{\psi}| < \infty \right\}
    \]
which contains $P(\Theta,R^{\dagger})$. There is a well-defined mapping
 \begin{align*}
        \pi: P_{\mathrm{QC}}(\Theta,R^{\dagger}) & \to \UT \\
        u &\mapsto g_u^{-1} \circ f_{u} |_{\partial \mathbb{D}}
    \end{align*}
and the image of $P(\Theta,R^{\dagger})$ is contained in the Weil-Petersson class $\WT \subset \UT$. 
\end{corollary}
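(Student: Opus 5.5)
The plan is to derive the corollary directly from Theorem \ref{thm:projectionWP}, together with standard facts about the universal Teichmüller space and about the Weil-Petersson class $\WT$. The argument has two parts: well-definedness of $\pi$ on $P_{\mathrm{QC}}(\Theta,R^{\dagger})$, and the containment $\pi(P(\Theta,R^{\dagger}))\subset\WT$.

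\emph{Well-definedness.} Take $u\in P_{\mathrm{QC}}(\Theta,R^{\dagger})$. By part (1) of Theorem \ref{thm:projectionWP}, the piecewise linear map $f_u$ is quasiconformal. Hence the deformed metric $\sigma(\Theta,e^uR^\dagger)$ on $\Omega$ is quasiconformally equivalent to $\mathbb{D}$, so it is a simply connected Riemann surface of hyperbolic type. The Riemann mapping $g_u$ therefore exists, and it is unique up to precomposition with a Möbius automorphism of $\mathbb{D}$. The composition $g_u^{-1}\circ f_u$ is then a quasiconformal self-map of $\mathbb{D}$ with Beltrami coefficient $\mu(f_u)$. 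By the Ahlfors--Beurling extension theory, it extends to a quasisymmetric homeomorphism of $\partial\mathbb{D}$. The ambiguity in $g_u$ changes this boundary map only by postcomposition with an element of $\mathrm{PSL}_2(\mathbb{R})$ acting on the circle. This is exactly the equivalence defining $\UT=\mathrm{QS}(\partial\mathbb{D})/\mathrm{PSL}_2(\mathbb{R})$, so the class $\pi(u)$ is well defined. A different choice of normalization of $R^\dagger$, which by Theorem \ref{thm:discrete_uniformization} differs by a disk automorphism, would only change the basepoint in the same way. The inclusion $P(\Theta,R^{\dagger})\subset P_{\mathrm{QC}}(\Theta,R^{\dagger})$ is immediate: finite Dirichlet energy forces $|u_\phi-u_\psi|^2\le\mathcal{E}(u)$ for every dual edge, so the gradient is bounded.

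\emph{Weil-Petersson containment.} Take $u\in P(\Theta,R^{\dagger})$. Part (2) of Theorem \ref{thm:projectionWP} gives
\[
\iint_{\mathbb{D}}|\mu(f_u)|^2\,\frac{4\,dx\,dy}{(1-|z|^2)^2}<\infty .
\]
Since $\mu(g_u^{-1}\circ f_u)=\mu(f_u)$, the self-map $g_u^{-1}\circ f_u$ is quasiconformal and its Beltrami coefficient is square integrable with respect to the hyperbolic metric. I will invoke the characterization of the Weil-Petersson class due to Cui and to Takhtajan--Teo: a quasisymmetric circle homeomorphism lies in $\WT$ if and only if it admits a quasiconformal extension to $\mathbb{D}$ whose Beltrami coefficient belongs to $L^2$ of the hyperbolic area. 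With this characterization, $\pi(u)\in\WT$.

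\emph{Main obstacle.} The main point needing care is matching conventions. First, the characterization of $\WT$ must be applied to a self-map of $\mathbb{D}$ rather than to the exterior disk; this is handled by conjugating with $z\mapsto 1/\bar z$, which preserves the hyperbolic $L^2$ condition. Second, the convention for which side carries the Beltrami coefficient must match. Since the cited characterization only requires the existence of some extension with $L^2$ hyperbolic Beltrami coefficient, $g_u^{-1}\circ f_u$ serves as that extension. Apart from this, no further estimates are needed beyond Theorem \ref{thm:projectionWP}.
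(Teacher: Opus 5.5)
Your proposal is correct and follows the paper's argument. Part (1) of Theorem~\ref{thm:projectionWP} gives quasiconformality of $f_u$, hence a Riemann map $g_u$ and a quasisymmetric boundary map $g_u^{-1}\circ f_u|_{\partial\mathbb{D}}$ defined up to postcomposition by disk automorphisms; part (2), together with $\mu(g_u^{-1}\circ f_u)=\mu(f_u)$, then gives membership in $\WT$. The paper defines $\WT$ directly by the existence of a quasiconformal extension to $\mathbb{D}$ with hyperbolically $L^2$ Beltrami coefficient, so the appeal to Cui/Takhtajan--Teo and the $z\mapsto 1/\bar z$ conjugation are unnecessary. Your aside about changing $R^{\dagger}$ is also not needed, since the uniformized pattern in $\mathbb{D}$ stays fixed throughout; such a change would act on the boundary map by precomposition with a disk automorphism, not by postcomposition.
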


We conjecture that the mapping $\pi$ yields a global homeomorphism between $P(\Theta,R^{\dagger})$ and the Weil-Petersson class $\WT$. Support for this conjecture lies in the shared topological structure of the two spaces: Shen \cite{Shen2018} established that $\WT$ is parameterized by the Sobolev space $H^{1/2}(\partial \mathbb{D})$, and Corollary \ref{cor:boundvalue} demonstrates that our space $P(\Theta,R^{\dagger})$ possesses this exact same structure. Moreover, our conjecture serves as an infinite-dimensional analogue of the Kojima-Mizushima-Tan conjecture, which postulates that every closed Riemann surface admits a unique complex projective structure supporting a circle pattern with prescribed intersection angles \cite{Kojima2003,Lam2021}. Resolving our conjecture will likely require two key developments. First, the discrete Hilbert transform $\mathfrak{H}^{\Theta}$ is expected to play a crucial role, mimicking the classical Hilbert transform in conformal welding. Second, one must establish the regularity of the boundary; specifically, we expect that for any circle pattern in $P(\Theta,R^{\dagger})$, the circles accumulate on a curve with the regularity of a Weil-Petersson quasicircle. As these steps demand a different set of analytical tools, we defer their investigation to future work.

\subsection{Further related work}

Our results involve the crossroads of the potential theory on infinite graphs, discrete geometry, hyperbolic geometry and Teichmüller theory. They provide a novel geometric correspondence for discrete harmonic functions on infinite planar graphs and establish a deep connection to the universal Teichmüller space.

\noindent \textbf{Random walks via geometric representations of graphs.}
Discrete harmonic functions with finite Dirichlet energy are fundamental objects that govern the behavior of simple random walks and characterize graph boundaries. A powerful approach relies on geometric representations of planar graphs, most notably through uniformized circle packings and square tilings \cite{BenjaminiSchramm1996circle,BenjaminiSchramm1996square,Georgakopoulos2016,AngelHut2016}. In these frameworks, a specific geometric realization is constructed to capture the probabilistic properties of the underlying graph.

Our work significantly expands this perspective. Instead of studying a single uniformized circle packing, we establish that the entire Hilbert space of Dirichlet-finite discrete harmonic functions parametrizes an infinite-dimensional deformation space of circle patterns. This constructs a new geometric correspondence and connection to the universal Teichmüller space. Particularly, our framework offers a novel discrete model for random geometry, providing new geometric tools for studying random conformal structures and Liouville quantum gravity (See the book by Berestycki and Powell \cite{berestycki_powell_2025}).

\noindent\textbf{Boundary response matrices of planar graphs.}
The nonlinear homeomorphism $\mathfrak{H}^{\Theta}$ serves as an analogue to the Dirichlet-to-Neumann operator, which is also known as the boundary response matrix of finite planar graphs. In the finite setting, this matrix exhibits profound connections to both cluster algebras and combinatorics \cite{Kenyon2012}. Notably, both the mapping $\mathfrak{H}^{\Theta}$ and the finite response matrix are invariant under cluster mutations of the underlying circle patterns via Miquel's six-circle theorem \cite{KLRR2022}. On the combinatorial side, Kenyon and Wilson proved an extension of Kirchhoff's matrix-tree theorem in which the response matrix enumerates specific subgraphs \cite{Kenyon2011}. Similar combinatorial formulations exist for closed surfaces, where they correspond to the period matrix of a Riemann surface \cite{Lam2025quasi}. Motivated by these phenomena, we anticipate that the mapping $\mathfrak{H}^{\Theta}$ admits a similar combinatorial formulation enumerating specific subgraphs in the infinite graph.

\section{Background}

\subsection{Classical Dirichlet-finite functions and half-differentiable functions}
We follow the framework established in \cite{NagSullivan1995,Sergeev2010,Hutchcroft2019}.

Every locally $L^2$-integrable, weakly differentiable function $u:\mathbb{D} \to \mathbb{R}$ has Dirichlet energy defined by
\[
\mathcal{E}(u) := \iint_{\mathbb{D}}  |\nabla u|^2 \,dx\,dy \in [0, \infty].
\]
We denote by $\mathbf{D}(\mathbb{D})$ the space of such functions with finite Dirichlet energy. This space is equipped with the inner product
\[
\langle u, v \rangle_{\mathbf{D}(\mathbb{D})} := \iint_{O} u v \,dx\,dy + \iint_{\mathbb{D}}  \nabla u \cdot \nabla v \,dx\,dy,
\] 
where $O$ is a fixed precompact open subset of $\mathbb{D}$. With this inner product, $\mathbf{D}(\mathbb{D})$ is a Hilbert space. Different choices of the precompact open set $O$ yield equivalent norms. 

We denote by $\mathbf{HD}(\mathbb{D}) \subset \mathbf{D}(\mathbb{D})$ the closed subspace of harmonic functions. Given a harmonic function $u \in  \mathbf{HD}(\mathbb{D})$ and a point $\xi \in \partial \mathbb{D}$, we define the radial limit $u^*(\xi):= \lim_{r \to 1^{-}} u(r\xi)$ whenever it exists. It is known that $u^*(\xi)$ exists almost everywhere on $\partial \mathbb{D}$ and is a half-differentiable function.

\begin{definition}
    The Sobolev space of half-differentiable functions $\hf$ is the Hilbert space consisting of functions $u \in L^2(\partial \mathbb{D}, \mathbb{R})$ that possess a generalized derivative of order $\frac{1}{2}$. In terms of Fourier series, this is given by
    \[
    \hf := \left\{ u \in L^2(\partial \mathbb{D}, \mathbb{R}) \;\middle|\; u(z) = \sum_{n \in \mathbb{Z}} u_n z^n, \quad \sum_{n \in \mathbb{Z}} |n| |u_n|^2  < \infty \right\}.
    \]
    On the quotient space $\hf/\mathbb{R}$, we have the inner product
    \[ 
    \langle u,v \rangle_{\hf} = \sum_{n \in \mathbb{Z}} |n| u_n \bar{v}_n, 
    \]
    and a symplectic form 
    \[
    \omega(u, v) = \frac{1}{\mathbf{i}}\sum_{n \in \mathbb{Z}} n u_n \bar{v}_n.
    \]
    These are related by the Hilbert transform $\mathfrak{H}: \hf \to \hf$, defined by 
    \[
    \mathfrak{H}(u)= -\mathbf{i} \sum_{n > 0} u_n z^n + \mathbf{i} \sum_{n < 0} u_n z^n,
    \]
    which satisfies the identity $\langle u, v \rangle_{\hf} = \omega(u, \mathfrak{H}(v))$.
\end{definition}

Since $u$ is real-valued, $u_{-n} = \bar{u}_n$ for all integers $n$. Via the Poisson integral, every element $u \in \hf$ admits a unique harmonic extension over the unit disk with Dirichlet energy equal to $2 \pi \langle u, u \rangle_{\hf}$. 

\begin{proposition}
    The spaces $\hf$ and $\mathbf{HD}(\mathbb{D})$ are isomorphic modulo constants via the restriction map and the Poisson integral.
\end{proposition}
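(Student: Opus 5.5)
The plan is to prove the Proposition with two explicit maps. One is the Poisson extension $P:\hf\to\mathbf{HD}(\mathbb{D})$. The other is the boundary map $\mathcal{B}$, which sends $u\in\mathbf{HD}(\mathbb{D})$ to its radial limit $u^*$. I will show that both are bounded for the Dirichlet seminorm and the $\hf$ seminorm, and that they are mutually inverse modulo constants.

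\textbf{Step 1 (Poisson extension and energy).} For $u=\sum_n u_n z^n\in\hf$ with $u_{-n}=\bar u_n$, set
\[
Pu(re^{\ii\theta})=\sum_{n\in\mathbb{Z}}u_n r^{|n|}e^{\ii n\theta}.
\]
Since $\sum|n||u_n|^2<\infty$, the coefficients are bounded and the series converges locally uniformly on $\mathbb{D}$, so $Pu$ is harmonic. Writing $|\nabla Pu|^2=|\partial_r Pu|^2+r^{-2}|\partial_\theta Pu|^2$ and using Parseval on each circle $|z|=r$ gives
\[
\mathcal{E}(Pu)=\int_0^1\sum_n 2\pi\cdot 2n^2 r^{2|n|-1}|u_n|^2\,dr=2\pi\sum_n|n||u_n|^2=2\pi\langle u,u\rangle_{\hf}.
\]
This is the energy identity quoted before the Proposition. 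In particular $P$ maps $\hf$ into $\mathbf{HD}(\mathbb{D})$. It is linear, and it is isometric up to the factor $2\pi$ on the quotients by constants.

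\textbf{Step 2 (Boundary values of $\mathbf{HD}(\mathbb{D})$).} Take $h\in\mathbf{HD}(\mathbb{D})$. Since $\mathbb{D}$ is simply connected, $h=\Re\sum_{n\ge0}a_nz^n$, so $h$ has Fourier expansion $\sum h_n r^{|n|}e^{\ii n\theta}$ on each circle. Running the computation of Step 1 in reverse gives $2\pi\sum|n||h_n|^2=\mathcal{E}(h)<\infty$. Hence the formal boundary series $h^\partial:=\sum h_n z^n$ lies in $\hf$, and by uniqueness of coefficients $P(h^\partial)=h$. The remaining point is to identify $h^\partial$ with the radial limit $h^*$ almost everywhere. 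Because $h^\partial\in L^2(\partial\mathbb{D})$, $h$ is the Poisson integral of an $L^2$ function. Fatou's theorem then gives nontangential, in particular radial, limits equal to $h^\partial$ almost everywhere. So $\mathcal{B}h=h^*=h^\partial$, and $\mathcal{B}\circ P=\mathrm{id}$ on $\hf$.

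\textbf{Step 3 (Conclusion).} Steps 1 and 2 give $P\circ\mathcal{B}=\mathrm{id}$ and $\mathcal{B}\circ P=\mathrm{id}$. The energy identity shows that both maps are bounded, in fact isometric up to $2\pi$, on the quotients modulo constants. Constants in $\hf$ correspond exactly to constants in $\mathbf{HD}(\mathbb{D})$, since the zeroth coefficient is the only one not seen by either seminorm. Therefore the two spaces are isomorphic modulo constants. If one keeps the full norm with the $\iint_O$ term, the constant mode is also controlled: $|h(0)|=|u_0|$, and all norms on that one-dimensional piece are equivalent.

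I expect the main obstacle to be Step 2, identifying the radial limit with the $L^2$ boundary function. Fatou's theorem for Poisson integrals of $L^1$ data handles it directly, once one notes that finite energy forces $\sum|h_n|^2<\infty$. This follows because $\sum_{n\neq0}|h_n|^2\le\sum|n||h_n|^2$.
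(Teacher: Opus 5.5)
Your argument is correct, and it is the classical one the paper relies on. The paper states this proposition as background without proof, recording only the energy identity $\mathcal{E}(Pu)=2\pi\langle u,u\rangle_{\hf}$ and the a.e.\ existence of radial limits (citing Nag--Sullivan and Sergeev), and your Parseval computation together with Fatou's theorem supplies exactly these facts. The only loose spot is your closing aside on the full norm: there, equivalence really rests on a Poincar\'e-type bound $\iint_O|h-h(0)|^2\,dx\,dy\le C_O\,\mathcal{E}(h)$ (immediate from your Fourier computation on a disk $\{|z|<\rho\}\supset O$), not on a one-dimensional argument, but this lies outside the ``modulo constants'' claim.
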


The space of smooth functions on the unit circle, $C^{\infty}(\partial \mathbb{D})$, is dense in $\hf$ (see \cite[Section 9.1]{Sergeev2010}). For $u,v \in C^{\infty}(\partial \mathbb{D})$, we can express the symplectic form and inner product as
\[
\omega(u,v) = \frac{1}{2\pi} \int_{\partial \mathbb{D}} u \frac{dv}{d\theta} \,d\theta = \frac{1}{2\pi} \int_{\partial \mathbb{D}} u \,dv
\]
and
\[
\langle u ,v \rangle_{\hf} = \frac{1}{2\pi} \int_{\partial \mathbb{D}} u \,d\mathfrak{H}(v).
\]

\subsubsection{Weil-Petersson class in the universal Teichm\"{u}ller space}

The universal Teichm\"{u}ller space, denoted by $\UT$, plays a central role in Teichm\"{u}ller theory as it contains the Teichm\"{u}ller spaces of all hyperbolic Riemann surfaces \cite{Lehto1987,Sergeev2014}. It is the collection of all \emph{sense-preserving quasisymmetric homeomorphisms} of the unit circle modulo M\"{o}bius transformations. A sense-preserving homeomorphism of the unit circle can be defined using quasiconformal mappings on the unit disk $\mathbb{D}$. 

\begin{definition}
    A homeomorphism $h: \partial \mathbb{D} \to \partial \mathbb{D}$ is sense-preserving and quasisymmetric if it can be extended to a quasiconformal homeomorphism $\tilde{h}:\mathbb{D} \to \mathbb{D}$; that is, there exists a constant $K \geq 1$ such that the Beltrami differential $\mu(z) = \partial_{\bar{z}}\tilde{h} / \partial_z\tilde{h}$ satisfies
    $$
    |\mu(z)| \leq \frac{K-1}{K+1} < 1
    $$
    almost everywhere in $\mathbb{D}$.
\end{definition}

Inside $\UT$, there is a distinguished subspace called the Weil-Petersson class, denoted by $\WT$ \cite{Takhtajan2005,Cui2000}. 
\begin{definition}
	The Weil-Petersson class $\WT \subset \UT$ is the collection of quasisymmetric homeomorphisms of the unit circle that can be extended to quasiconformal homeomorphisms of the unit disk whose Beltrami differential $\mu$ is $L^2$-integrable with respect to the hyperbolic metric, i.e.
\[
\iint_{\mathbb{D}} |\mu(z)|^2 \frac{dx dy}{(1-|z|^2)^2} < \infty.
\]
\end{definition}

Though the following result is not used in the paper, it motivates our conjecture that the mapping $\pi$ in Theorem \ref{thm:projectionWP} yields a homeomorphism between $P(\Theta,R^{\dagger})$ and $\WT$. 
\begin{proposition}[Shen \cite{Shen2018}]
	A sense-preserving homeomorphism $h$ on the unit circle belongs
	to the Weil-Petersson class if and only if $h$ is absolutely continuous and $\log h'$ belongs to the Sobolev class $H^{\frac{1}{2}}(\partial \mathbb{D})$.
\end{proposition}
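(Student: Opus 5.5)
The plan is to prove the two directions separately, routing both through the conformal-welding description of $\WT$. The analytic input is the standard characterization due to Cui and Takhtajan--Teo. A quasisymmetric $h$ lies in $\WT$ if and only if $h = g^{-1}\circ f|_{\partial\mathbb{D}}$ with the following properties: $f$ is conformal on $\mathbb{D}$, $g$ is conformal on the exterior disk $\mathbb{D}^*$, both extend quasiconformally to the sphere, and $\log f'$ (equivalently $\log g'$) has finite Dirichlet energy. Equivalently, the pre-Schwarzian satisfies $\iint_{\mathbb{D}} |f''/f'|^2\,dx\,dy<\infty$. I will also use two auxiliary facts. First (Nag--Sullivan), precomposition with a quasisymmetric homeomorphism is a bounded automorphism of $\hf/\mathbb{R}$. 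Second, Weil--Petersson quasicircles are chord-arc curves.

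\emph{Necessity.} Let $h\in\WT$ and write $h = g^{-1}\circ f$ on $\partial\mathbb{D}$ as above. The curve $f(\partial\mathbb{D})$ is chord-arc, so $f$ and $g$ are absolutely continuous on the circle and their boundary derivatives are nonvanishing almost everywhere. Consequently $h$ is absolutely continuous. Differentiating the welding identity $g\circ h = f$ on $\partial\mathbb{D}$ and taking real parts of logarithms of the tangential derivatives gives
\[
\log h' = \log|f'| - \log|g'|\circ h + \bigl(\text{an explicit } \arg\text{-correction}\bigr).
\]
More precisely, the tangential derivative of $f(e^{\ii\theta})$ equals $\ii e^{\ii\theta}f'(e^{\ii\theta})$, and its modulus $|f'|$ is exactly what matters for $\log h'$. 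Now $\log|f'|$ is the real part of a Dirichlet-finite holomorphic function, so its boundary trace lies in $\hf$. The same holds for $\log|g'|$ on $\mathbb{D}^*$, and Nag--Sullivan invariance then puts $\log|g'|\circ h$ in $\hf$ as well. Hence $\log h'\in\hf$.

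\emph{Sufficiency.} Suppose $h$ is absolutely continuous and $b:=\log h'\in\hf$. First I will show that $h$ is quasisymmetric. Since $\hf\subset \mathrm{VMO}(\partial\mathbb{D})$, the function $e^{b}$ is an $A_\infty$ weight with vanishing constant, so $h$ is strongly quasisymmetric and in fact symmetric. Next I will construct an explicit extension. Let $B$ be the Poisson (harmonic) extension of $b$ to $\mathbb{D}$, and define $\tilde h$ by integrating $e^{B}$ along the radial/angular variable, in the spirit of a Beurling--Ahlfors-type extension in logarithmic coordinates. Alternatively, one can use the Douady--Earle extension.

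The main task, and what I expect to be the main obstacle, is the pointwise bound on the Beltrami coefficient of $\tilde h$:
\[
|\mu_{\tilde h}(z)| \lesssim (1-|z|)\,|\nabla B(z)| ,
\]
valid at least near the boundary, with the $A_\infty$/VMO property giving $|\mu|\le k<1$ uniformly. Granting this bound, the required integrability follows directly:
\[
\iint_{\mathbb{D}} \frac{|\mu|^2}{(1-|z|^2)^2}\,dx\,dy \lesssim \iint_{\mathbb{D}} |\nabla B|^2\,dx\,dy = 2\pi\langle b,b\rangle_{\hf} < \infty ,
\]
so $h\in\WT$. To obtain the Beltrami bound, I would first try writing $\tilde h$ as a convolution of $e^{b}$ with a smooth approximate identity at scale $1-|z|$. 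The quantity $\partial_{\bar z}\tilde h/\partial_z\tilde h$ is then a ratio of averages of $e^{b}$ against a derivative kernel, and this ratio is controlled by the local oscillation of $b$ at that scale. That local oscillation is in turn bounded by $(1-|z|)|\nabla B|$ up to a maximal-type average, and the maximal-type average still has finite hyperbolic $L^2$ norm by the Dirichlet estimate.
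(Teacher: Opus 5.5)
The paper gives no proof of this proposition. It quotes Shen and explicitly says the result is not used, so your argument has to stand on its own.

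\textbf{Necessity.} Your necessity half is fine as a sketch. Two corrections:
\begin{itemize}
\item There is no $\arg$-correction. Write $h(e^{\ii\theta})=e^{\ii\tilde h(\theta)}$. Taking moduli in $\ii e^{\ii\theta}f'(e^{\ii\theta})=\ii e^{\ii\tilde h(\theta)}g'(e^{\ii\tilde h(\theta)})\,\tilde h'(\theta)$ gives $\log\tilde h'=\log|f'|-\log|g'|\circ h$ exactly.
\item To evaluate $\log|g'|\circ h$ almost everywhere, and to identify it with the Nag--Sullivan image, you need $h^{-1}=f^{-1}\circ g$ to be absolutely continuous too. This follows by the same chord-arc argument.
\end{itemize}

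\textbf{Sufficiency: the gap.} The estimate you defer is the whole content of this direction, and neither ingredient you offer for it works as stated.
\begin{itemize}
\item Suppose the extension keeps $|z|$ fixed and only reparametrizes the angle by integrating $e^{B}$. In logarithmic coordinates $\sigma+\ii\theta$ it has the form $\sigma+\ii\Phi(\sigma,\theta)$, and its Beltrami coefficient satisfies $|\mu|\ge|1-\Phi_\theta|/(1+\Phi_\theta)$. Since $\Phi_\theta\to e^{b}$ at the boundary, $\mu$ does not decay and is not hyperbolically $L^2$ unless $b\equiv 0$. The radial direction must be stretched as well.
\item The pointwise bound $|\mu(z)|\lesssim(1-|z|)|\nabla B(z)|$ fails for the extensions you name. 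For a Beurling--Ahlfors-type extension, $\mu(z)$ is a nonlinear ratio of two kernel averages of $e^{b}$ over an arc of length $\asymp 1-|z|$. This ratio in general does not vanish at critical points of $B$.
\item Your fallback (``local oscillation, up to a maximal-type average'') skips exactly the nonlinear step: passing from the oscillation of $e^{b}$ to the oscillation of $b$.
\end{itemize}

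\textbf{How to close it.} Your last paragraph can be made into a proof, and neither $B$ nor maximal functions are needed.
\begin{itemize}
\item Lift $h$ to $H$ with $h(e^{\ii x})=e^{\ii H(x)}$ and $H'=w=e^{b}$. Take the Beurling--Ahlfors extension $F=U+\ii V$, where $U=\frac{1}{2y}\int_{x-y}^{x+y}H$ and $V=\frac{1}{y}\bigl(\int_x^{x+y}H-\int_{x-y}^{x}H\bigr)$.
\item $F$ is equivariant under $x\mapsto x+2\pi$, so it descends to $\mathbb{D}$ via $z=e^{\ii(x+\ii y)}$. It is quasiconformal because $H$ is quasisymmetric (your $A_\infty$ step). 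Hence only the region $y<\delta$ matters.
\item Let $I=[x-y,x+y]$, and let $w_I,b_I$ denote averages over $I$. Differentiating directly gives $2\partial_{\bar z}F=\int_I k(s)w(s)\,ds$ with $|k|\lesssim y^{-1}$ and $\int_I k=0$. It also gives $\Re(2\partial_zF)\ge w_I$. Therefore, for $y<\delta$,
\[
|\mu|\lesssim\frac{1}{|I|}\int_I\Bigl|\frac{w}{w_I}-1\Bigr|\le\frac{2}{|I|}\int_I\bigl|e^{b-b_I}-1\bigr|\lesssim\Bigl(\frac{1}{|I|}\int_I|b-b_I|^2\Bigr)^{1/2}.
\]
The middle step uses Jensen's inequality. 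The last step uses John--Nirenberg on short arcs, which applies because $b\in\mathrm{VMO}$.
\item Write $\frac{1}{|I|}\int_I|b-b_I|^2=\frac{1}{2|I|^2}\iint_{I\times I}|b(s)-b(r)|^2\,ds\,dr$ and apply Fubini. A pair $(s,r)$ lies in $I\times I$ only if $y\ge|s-r|/2$, and then only for $x$ in a set of length at most $2y$. This yields
\[
\iint_{y<\delta}|\mu|^2\,\frac{dx\,dy}{y^2}\lesssim\iint\frac{|b(s)-b(r)|^2}{|s-r|^2}\,ds\,dr<\infty,
\]
which is the Douglas form of the $H^{\frac12}$ seminorm.
\item Near $\partial\mathbb{D}$ the measure $y^{-2}\,dx\,dy$ is comparable to hyperbolic area, and $|\mu|$ is unchanged by the conformal change of coordinates. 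So $h\in\WT$.
\end{itemize}
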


\subsection{Discrete harmonic functions on graphs}

\subsubsection{Dirichlet-finite functions on graphs and equivalent edge weights} \label{sec:Dirichlet}

Let $c :E \to \mathbb{R}_{>0}$ be some positive edge weights. We define the Dirichlet energy of a function $u:V \to \mathbb{R}$ with respect to the edge weight $c$ as
\[\mathcal{E}_c(u) := \sum_{ij} c_{ij} (u_i-u_j)^2 \in [0, \infty].\]
A function has vanishing Dirichlet energy if and only if it is constant. Fixing an arbitrary vertex $o \in V$, we define the inner product for functions $u,v:V \to \mathbb{R}$
\[
\langle u,v \rangle_c = u_o v_o + \sum_{ij} c_{ij} (u_i-u_j)(v_i-v_j)
\]  
and the norm of $u$
\[
||u||_{c} := \sqrt{\langle u,u \rangle_c} \in [0, \infty].
\]
We denote by $\mathbf{D}_c(V)$ the collection of functions $u:V \to \mathbb{R}$ such that $||u||^2_c$ is finite. The space $\mathbf{D}_c(V)$ is known to be a Hilbert space \cite[Theorem 3.15] {Soardi1994}. This space contains a subspace of functions with finite support, whose closure is denoted by $\mathbf{D}_{c,0}(V) \subset \mathbf{D}_c(V)$. 
\begin{definition}
	Two positive edge weights $c,\tilde{c} :E \to \mathbb{R}_{>0}$ are said to be equivalent if there exists a constant $K>0$ such that for all edges $ij \in E$
	\[
	\frac{c_{ij}}{K} < \tilde{c}_{ij} \leq K c_{ij}.
	\]
\end{definition}

The statement below follows directly from the definitions. The proof is omitted.
\begin{proposition}\label{prop:equivalentweight}
	Suppose $c,\tilde{c}$ are equivalent edge weights, then the norms $||\cdot||_c$ and $||\cdot||_{\tilde{c}}$ are equivalent, i.e. there exists $K > 0$ such that for $u:V \to \mathbb{R}$  
	\[
	\frac{||u||_{c}}{K} \leq ||u||_{\tilde{c}} \leq K ||u||_{c}.
	\]
	This implies that $\mathbf{D}_c(V)=	\mathbf{D}_{\tilde{c}}(V)$  and $\mathbf{D}_{c,0}(V)=\mathbf{D}_{\tilde{c},0}(V)$. Furthermore, the graph is transient with respect to $c$ if and only if it is transient with respect to $\tilde{c}$.
\end{proposition}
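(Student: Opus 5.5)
Everything in this proposition follows from one elementary comparison: for every edge $ij$ and every function $u:V\to\mathbb{R}$, $\tilde c_{ij}(u_i-u_j)^2$ lies between $c_{ij}(u_i-u_j)^2/K$ and $K c_{ij}(u_i-u_j)^2$. Summing over the edges gives
\[
K^{-1}\mathcal{E}_c(u)\le \mathcal{E}_{\tilde c}(u)\le K\,\mathcal{E}_c(u).
\]
The term $u_o^2$ is the same in both norms, so
\[
\|u\|_{\tilde c}^2\le \max(1,K)\,\|u\|_c^2 ,
\qquad
\|u\|_c^2\le \max(1,K)\,\|u\|_{\tilde c}^2 .
\]
Taking square roots and replacing $K$ by $\max(1,K)^{1/2}$ yields the stated norm equivalence. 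These inequalities also hold when one side is infinite, since the comparison is termwise.

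The set equalities follow at once. Since $\|u\|_c<\infty$ exactly when $\|u\|_{\tilde c}<\infty$, we get $\mathbf{D}_c(V)=\mathbf{D}_{\tilde c}(V)$. The finitely supported functions form the same subspace under both weights. Equivalent norms define the same topology and the same Cauchy and convergent sequences, so the closures of this subspace coincide: $\mathbf{D}_{c,0}(V)=\mathbf{D}_{\tilde c,0}(V)$.

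For transience, I would use the standard criterion from potential theory (Soardi's book, already cited): a network is transient if and only if $\mathbf{1}\notin \mathbf{D}_{c,0}(V)$. Equivalently, transience holds if and only if there is no sequence of finitely supported $f_n$ with $\mathcal{E}_c(f_n-\mathbf 1)\to0$ and $f_n(o)\to1$. A second, equivalent route is through the effective resistance to infinity: by Dirichlet's principle, the capacity of $o$ is the infimum of $\mathcal{E}_c(f)$ over finitely supported $f$ with $f(o)=1$, so $K^{-1}\operatorname{cap}_c(o)\le \operatorname{cap}_{\tilde c}(o)\le K\operatorname{cap}_c(o)$. Either way, the criterion is stated only in terms of $\mathbf{D}_{c,0}(V)$ or of energies, both of which we have shown are preserved up to constants. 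So $\mathbf 1\in\mathbf{D}_{c,0}(V)$ if and only if $\mathbf 1\in\mathbf{D}_{\tilde c,0}(V)$, equivalently positive capacity is preserved, and hence transience with respect to $c$ and $\tilde c$ coincide.

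No step is genuinely hard. The only care needed is to invoke a transience criterion expressed through Dirichlet energy or $\mathbf{D}_0$, rather than one expressed through the random walk itself.
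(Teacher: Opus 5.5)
Your proposal is correct. The paper omits this proof, saying only that it follows directly from the definitions, and your termwise comparison $K^{-1}\mathcal{E}_c(u)\le\mathcal{E}_{\tilde c}(u)\le K\,\mathcal{E}_c(u)$, together with the shared $u_o^2$ term, is exactly that direct verification. For transience you correctly use an energy-based criterion ($\mathbf{1}\notin\mathbf{D}_{c,0}(V)$, or $\operatorname{cap}_c(o)>0$). This is the one place where ``directly from the definitions'' quietly relies on a standard fact from Soardi's book.
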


	The choice of the vertex $o$ defining the inner product does not matter. One can show that different choices yield equivalent norms.

Associated with the edge weight $c$, we consider the graph Laplacian $\Delta_c: \mathbf{D}_c(V) \to \mathbf{D}_c(V)$ defined by
\[
(\Delta_c u)_i = \sum_j c_{ij} (u_j - u_i).
\]
When the graph is transient, we have the Royden decomposition \cite[Theorem 3.69] {Soardi1994}
\begin{equation}\label{eq:Royden}
	\mathbf{D}_c(V) = \mathbf{D}_{c,0}(V) \oplus \mathbf{HD}_{c}(V).
\end{equation}
Here $\mathbf{HD}_{c}(V):= \ker \Delta_c \subset \mathbf{D}_c(V)$ is the space of discrete harmonic functions with respect to the edge weight $c$, i.e. a function $u \in \mathbf{HD}_{c}(V)$ if and only if
\[
\sum_j c_{ij} (u_j - u_i) =0 \quad \forall i \in V
\]
where the sum is over all neighbors $j$ of $i$. The direct sum in \eqref{eq:Royden} is orthogonal with respect to a bilinear form
\[
\mathcal{E}_c(u,v) = \sum_{ij} c_{ij}  (u_i-u_j)(v_i-v_j),
\]
which is a semi-inner product since $\mathcal{E}_c(u,u)=0$ if and only if $u$ is constant. Thus, modulo constant functions, the bilinear form $\mathcal{E}_c(\cdot, \cdot)$ descends to an inner product on the quotient space $\mathbf{D}_{c}(V)/\mathbb{R}$. 

The constant edge weight $c \equiv  1$ is called the combinatorial edge weight. It is often used to study the combinatorics of the graph.

\begin{definition}
	With the combinatorial edge weight $c\equiv  1$, we define the space of Dirichlet-finite functions $\mathbf{D}(V):=	\mathbf{D}_c(V)$ as well as closed subspace $\mathbf{D}_{0}(V):=\mathbf{D}_{c,0}(V)$ and 	$\mathbf{HD}(V):= \mathbf{HD}_{c}(V)$.
\end{definition}

As we show in the following sections, every circle pattern induces a natural edge weight $c$ and a graph Laplacian. Such geometric edge weights will be shown to be equivalent to the combinatorial weight. 
\subsubsection{Conjugate discrete harmonic functions}

Given a topological cell decomposition $(V,E,F)$ of a disk, there is a dual cell decomposition $(V^*,E^*,F^*)$ with a natural bijection between primal faces and dual vertices ($F \cong V^*$), primal vertices and dual faces ($V \cong F^*$), and primal and dual edges ($E \cong E^*$). 
We adopt the following index conventions:
\begin{itemize}
    \item Primal vertices in $V$ are indexed by Latin letters $i, j, k, \dots$.
    \item Dual vertices in $V^*$ (corresponding to faces in $F$) are indexed by Greek letters $\phi, \psi, \varphi, \dots$.
    \item A primal edge connecting $i,j$ is denoted by $ij$, and the corresponding dual edge connecting $\phi, \psi$ is denoted by $\phi\psi$.
\end{itemize}

Suppose the primal graph is equipped with edge weights $c: E \to \mathbb{R}_{>0}$, denoted by $c_{ij}=c_{ji}$. We define the dual edge weights $c^*: E^* \to \mathbb{R}_{>0}$ on the dual graph by the reciprocal of the primal weights
\[
c^*_{\phi\psi} := \frac{1}{c_{ij}},
\]
where $\phi\psi$ is the dual edge crossing $ij$. The graph Laplacian on the dual graph with respect to $c^*$ is defined for functions $u: V^* \to \mathbb{R}$ by
\[
(\Delta_{c^*} u)_\phi = \sum_{\psi} c^*_{\phi\psi} (u_\psi - u_\phi)
\]
where the sum is over all neighbors $\psi$ of $\phi$ in the dual graph.

Analogous to the primal case, we define the space of Dirichlet-finite functions $\mathbf{D}_{c^*}(F)$ as the space of functions on the dual vertices with finite Dirichlet energy with respect to the dual edge weight $c^*$. We also define the closed subspace $\mathbf{D}_{c^*,0}(F)$ as the closure of finitely supported functions, and $\mathbf{HD}_{c^*}(F)$ as the space of discrete harmonic functions with respect to $\Delta_{c^*}$.

There is a duality between harmonic functions on the primal and dual graphs. A function $v: V \to \mathbb{R}$ is harmonic with respect to $c$ if and only if there exists a conjugate function $u: V^* \to \mathbb{R}$ that is harmonic with respect to $c^*$. These functions are related by the discrete Cauchy-Riemann equations
\[
u_\psi - u_\phi = c_{ij} (v_j - v_i),
\]
where the orientation is chosen such that $v_j-v_i$ corresponds to traversing the edge $ij$, and $u_\psi-u_\phi$ corresponds to traversing the dual edge $\phi\psi$ crossing from left to right. Importantly, they further satisfy the identity
\[\mathcal{E}_c(v,v) = \mathcal{E}_{c^*}(u,u).\] 
This establishes an isomorphism between the spaces of primal and dual harmonic functions with finite Dirichlet energy modulo constant functions
\[
\mathbf{HD}_{c}(V)/\mathbb{R} \cong \mathbf{HD}_{c^*}(F)/\mathbb{R}.
\]

\subsection{Circle patterns in the Euclidean plane}

Let $G = (V, E, F)$ be a cell decomposition of a disk.  
A \emph{circle pattern} is a realization of the vertices in the Euclidean plane such that each face has a circumcircle passing through its vertices.  
Up to Euclidean transformations, a circle pattern is uniquely determined by the Euclidean radii of the circles together with the intersection angles of neighbouring circles.  

\subsubsection{Intersection angles of circle patterns}

Suppose a circle pattern is given. For every edge, we measure the intersection angle between the circumcircles of its two adjacent faces, thereby defining a map
\[
\Theta : E^* \to [0, 2\pi).
\]
For an oriented edge $ij$, the angle $\Theta_{\phi\psi}$ is measured from the counterclockwise tangent of $C_{\phi}$ to the counterclockwise tangent of $C_{\psi}$ at the endpoint $j$, where $C_{\phi}$ and $C_{\psi}$ denote the circumcircles of the faces $\phi, \psi$ adjacent to $ij$, respectively.  
When $\Theta_{\phi\psi} = \pi$, the two circles coincide but have opposite orientations, and the corresponding faces are folded up.  
When $\Theta_{\phi\psi} = 0$, the two circles coincide with the same orientation. In this case, we remove the common edge to merge the two faces.  
Without loss of generality, we assume that all intersection angles $\Theta_{\phi\psi}$ lie in $(0, 2\pi)$.

The intersection angles $\Theta$ define a \emph{discrete conformal structure}: two circle patterns are said to have the same discrete conformal structure if their intersection angles agree. 

A circle pattern is called \emph{Delaunay} if all intersection angles lie in $(0, \pi)$.  
This condition is equivalent to the local convexity of the associated pleated surface in hyperbolic space.

\subsubsection{Euclidean radii of circle patterns}

Given the intersection angles $\Theta$, we review the necessary and sufficient conditions for a function $R : F \to \mathbb{R}_{>0}$ to be the Euclidean radii for some circle pattern in the plane. We also explain the construction of the associated circle pattern metric $\sigma(\Theta, R)$.
	\begin{figure}
		\centering
			\begin{minipage}{0.535\textwidth}
		\centering
		\includegraphics[width=0.99\textwidth]{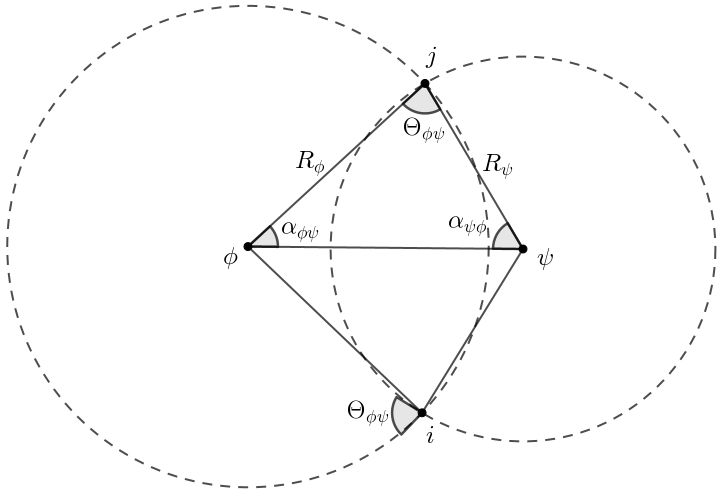}
	\end{minipage}
	\begin{minipage}{0.455\textwidth}
		\centering
		\includegraphics[width=0.99\textwidth]{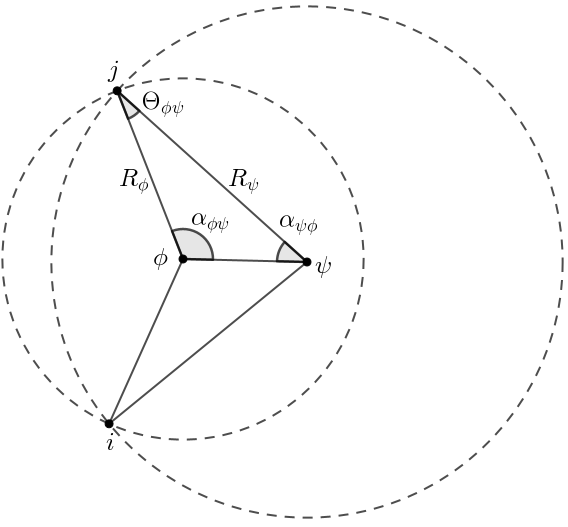}
	\end{minipage}
    \caption{
	Every dual edge $\phi \psi$ is associated with a kite formed by the two circumcenters of the faces $\phi, \psi$ and the two intersection points of their circumcircles. The side lengths of the kite are the radii $R_{\phi}, R_{\psi}$, and the intersection angle between the circles is $\Theta_{\phi\psi}$. We denote the half-angles at the circumcenters by $\alpha_{\phi \psi}$ and $\alpha_{\psi \phi}$.}
\label{fig:angle_at_center}
	\end{figure}

For each unoriented dual edge $\phi \psi$, consider the two circles of radii $R_{\phi}$ and $R_{\psi}$ corresponding to the two faces $\phi, \psi$ and intersect at angle $\Theta_{\phi\psi}=\Theta_{\psi\phi}$.  
The two intersection points together with the circumcenters determine a Euclidean kite, in which two consecutive sides have length $R_{\phi}$ and the other two consecutive sides have length $R_{\psi}$ (See Figure \ref{fig:angle_at_center}).  
Let $\alpha_{\phi \psi}$ and $\alpha_{\psi \phi}$ denote the half-angles at the circumcenters of the faces $\phi$ and $\psi$, respectively.  
They satisfy
\[
\Theta_{\phi\psi} = \pi - \alpha_{\phi \psi} - \alpha_{\psi \phi}.
\]
The half-angles are determined by the radii and intersection angles.

\begin{lemma}\label{lemma:cotweight}
	Suppose the two circles with radii $R_{\phi},R_{\psi}$ intersect with angle $\Theta_{\phi\psi} \in (\epsilon_0,\pi-\epsilon_0)$. They determine the two half-angles $\alpha_{\phi \psi}, \alpha_{\psi \phi} \in (0,\pi)$ at the circumcenters via 
\begin{align*}
	 \alpha_{\phi \psi} &= \cot^{-1}\left(\frac{R_{\phi} - R_{\psi} \cos \Theta_{\phi\psi}}{R_{\psi} \sin \Theta_{\phi\psi}}\right)=\frac{1}{2\ii} \ln \left( \frac{1 - \frac{R_\psi}{R_\phi} e^{-\ii\Theta_{\phi \psi}}}{1 - \frac{R_\psi}{R_\phi} e^{\ii\Theta_{\phi \psi}}} \right), \\
	\alpha_{\psi \phi} &= \cot^{-1}\left(\frac{R_{\psi} - R_{\phi} \cos \Theta_{\phi\psi}}{R_{\phi} \sin \Theta_{\phi\psi}}\right)=\frac{1}{2\ii} \ln \left( \frac{1 - \frac{R_\phi}{R_\psi} e^{-\ii\Theta_{\phi \psi}}}{1 - \frac{R_\phi}{R_\psi} e^{\ii\Theta_{\phi \psi}}} \right).
\end{align*}
\end{lemma}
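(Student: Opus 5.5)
We will work directly with the kite of Figure~\ref{fig:angle_at_center}. Place the circumcenter of $C_\phi$ at the origin and let $p$ be one of the two intersection points of $C_\phi$ and $C_\psi$. The kite is symmetric about the segment joining the two centers, so $\alpha_{\phi\psi}$ is the angle at the origin between the center $O_\psi$ and $p$. The angle at $p$ between the two radii $pO_\phi$ and $pO_\psi$ equals $\pi-\Theta_{\phi\psi}$, because the tangents are perpendicular to the radii and $\Theta$ is measured between the counterclockwise tangents. We will first confirm this with the orientation convention for the Delaunay range $\Theta\in(0,\pi)$. Then the triangle $O_\phi\, p\, O_\psi$ has sides $R_\phi$ and $R_\psi$ meeting at $p$ with angle $\pi-\Theta_{\phi\psi}$. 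Its angle at $O_\phi$ is $\alpha_{\phi\psi}$ and its angle at $O_\psi$ is $\alpha_{\psi\phi}$. The angle sum of this triangle gives the relation $\Theta_{\phi\psi}=\pi-\alpha_{\phi\psi}-\alpha_{\psi\phi}$ stated above.

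To get the cotangent formula, we use coordinates with $p$ at the origin, $O_\phi$ on the positive real axis at $R_\phi$, and $O_\psi=R_\psi e^{\ii(\pi-\Theta)}=-R_\psi e^{-\ii\Theta}$ (up to reflection). The angle at $O_\phi$ is the argument of
\[
\frac{O_\psi-O_\phi}{p-O_\phi}=\frac{R_\phi+R_\psi e^{-\ii\Theta}}{R_\phi}.
\]
We will adjust the sign convention so that this reads $1-\frac{R_\psi}{R_\phi}e^{\pm\ii\Theta}$ with the correct interior angle. Taking real over imaginary part gives
\[
\cot\alpha_{\phi\psi}=\frac{R_\phi-R_\psi\cos\Theta}{R_\psi\sin\Theta}.
\]
This can equivalently be obtained from the law of sines and law of cosines in the triangle: project $O_\psi$ onto the line $O_\phi p$; the foot lies at distance $R_\phi-R_\psi\cos\Theta$ from $O_\phi$, and the height is $R_\psi\sin\Theta$. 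Since $\sin\Theta>0$ for $\Theta\in(\epsilon_0,\pi-\epsilon_0)$, the angle $\alpha_{\phi\psi}$ lies in $(0,\pi)$ and is uniquely given by the branch $\cot^{-1}:\mathbb{R}\to(0,\pi)$. The formula for $\alpha_{\psi\phi}$ follows by exchanging $\phi$ and $\psi$.

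For the logarithmic form, write $z=1-\frac{R_\psi}{R_\phi}e^{\ii\Theta}$. Then $\bar z=1-\frac{R_\psi}{R_\phi}e^{-\ii\Theta}$, and
\[
\frac{1}{2\ii}\ln(\bar z/z)=-\arg z.
\]
We have $\Im z=-\frac{R_\psi}{R_\phi}\sin\Theta<0$, so $z$ lies in the open lower half-plane. Hence $-\arg z\in(0,\pi)$ with the principal branch, and
\[
\cot(-\arg z)=-\frac{\Re z}{\Im z}=\frac{R_\phi-R_\psi\cos\Theta}{R_\psi\sin\Theta},
\]
which matches the cotangent expression. The only real obstacle is this branch bookkeeping: making sure the principal logarithm of $\bar z/z$ returns $-2\arg z$ rather than a value shifted by $2\pi$. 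This holds because $\arg z\in(-\pi,0)$, so $-2\arg z\in(0,2\pi)$, and the quotient is taken with the convention $\frac{1}{2\ii}\ln(\bar z/z):=\frac{1}{2\ii}(\ln\bar z-\ln z)$. We will state this convention explicitly. The second step that needs care is checking that the angle at $p$ is $\pi-\Theta$ and not $\Theta$ under the paper's orientation convention.
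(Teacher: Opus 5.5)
Your overall plan is sound, but the central geometric identification is wrong as stated. The interior angle at the intersection point $p$ of the triangle $O_\phi\,p\,O_\psi$ is $\Theta_{\phi\psi}$, not $\pi-\Theta_{\phi\psi}$. The counterclockwise tangents at $p$ point along $\ii(p-O_\phi)$ and $\ii(p-O_\psi)$, i.e.\ the two radius vectors rotated by the same angle $\pi/2$. So the angle between them equals the angle between $O_\phi-p$ and $O_\psi-p$, which is the interior angle at $p$.

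This is also forced by the relation $\Theta_{\phi\psi}=\pi-\alpha_{\phi\psi}-\alpha_{\psi\phi}$; the paper uses it repeatedly, e.g.\ the triangle $\phi\psi i$ has angles $\alpha_{\phi\psi},\alpha_{\psi\phi},\Theta_{\phi\psi}$. With $\pi-\Theta$ at $p$, the angle sum would give $\Theta_{\phi\psi}=\alpha_{\phi\psi}+\alpha_{\psi\phi}$, contradicting the relation you claim to derive. Your coordinate computation with $O_\psi=-R_\psi e^{-\ii\Theta}$ then yields $\cot\alpha_{\phi\psi}=(R_\phi+R_\psi\cos\Theta)/(R_\psi\sin\Theta)$, which is not the stated formula.

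The deferred ``adjustment of the sign convention'' is therefore not bookkeeping: it is the replacement of $\pi-\Theta$ by $\Theta$, which is the actual geometric content. Your projection argument (foot at distance $R_\phi-R_\psi\cos\Theta$ from $O_\phi$) is correct only because it silently uses the angle $\Theta$ at $p$. With $p=0$, $O_\phi=R_\phi$, $O_\psi=R_\psi e^{\ii\Theta}$ one gets $(O_\psi-O_\phi)/(p-O_\phi)=1-\frac{R_\psi}{R_\phi}e^{\ii\Theta}=:z$. Hence $\alpha_{\phi\psi}=-\arg z\in(0,\pi)$ directly, and both formulas follow at once.

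Once corrected, your argument is a close variant of the paper's. For the cotangent, the paper uses the law of sines $R_\phi\sin\alpha_{\phi\psi}=R_\psi\sin(\Theta_{\phi\psi}+\alpha_{\phi\psi})$ instead of a projection. For the logarithmic form, it uses the closing condition of the kite to obtain $e^{2\ii\alpha_{\phi\psi}}=\bar z/z$, whereas you read off $\alpha_{\phi\psi}=-\arg z$.

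Your explicit branch discussion is a real improvement. The paper's proof only determines $e^{2\ii\alpha_{\phi\psi}}$, and with the principal logarithm of the quotient the formula returns $\alpha_{\phi\psi}-\pi$ whenever $\alpha_{\phi\psi}>\pi/2$. That happens when $R_\phi<R_\psi\cos\Theta_{\phi\psi}$, which certainly can occur for $\Theta_{\phi\psi}<\pi/2$. Your convention $\frac{1}{2\ii}(\ln\bar z-\ln z)=-\Im\ln z$ is the correct reading. It is also the one implicitly used later, when the half-angles are identified with imaginary parts of $\ln(1-\cdot)$ in computing $d\mathcal{W}^*$.
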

\begin{proof}
	We have 
	\[
	R_{\phi} \sin \alpha_{\phi \psi} = R_{\psi} \sin \alpha_{\psi \phi} = R_{\psi} \sin(\Theta_{\phi\psi}+\alpha_{\phi \psi})
	\]
	and hence
	\[
	\cot \alpha_{\phi \psi} = \frac{R_{\phi} -R_{\psi} \cos \Theta_{\phi\psi}}{R_{\psi} \sin \Theta_{\phi\psi}}.
	\]
    On the other hand, we make use of complex numbers. The vector sum of the segments $\phi j$ and $j \psi$ is the same as the vector sum of $\phi i$ and $i \psi$, which yields the identity
	\[
R_\phi e^{\ii\alpha_{\phi \psi}} + R_\psi e^{\ii(\alpha_{\phi \psi}+\Theta_{\phi \psi}-\pi)}= R_\phi e^{-\ii\alpha_{\phi \psi}} + R_\psi e^{-\ii(\alpha_{\phi \psi} + \Theta_{\phi \psi}-\pi)}.
	\]
	It leads to another formulation of the angle $\alpha_\phi$.
\end{proof}

Given an arbitrary function $R : F \to \mathbb{R}_{>0}$, one can construct a piecewise Euclidean metric $\sigma$ by gluing these kites along their edges, which might exhibit conical singularities at intersection points of circles and circumcenters. We observe that there is no conical singularity at intersection points of circles if and only if the sum of relevant intersection angles $\Theta$'s is $2\pi$, which has been imposed in the assumptions of $\Theta$. To ensure that there is no conical singularity at a circumcenter, the angle sum at the centers has to be $2\pi$ and it imposes constraints on the radii. 

Explicitly, for a face $\phi$ with circumradius $R_\phi$, let $R_\psi$ be the circumradii of its neighbouring faces $\psi$, and let $\Theta_{\phi\psi}$ be the intersection angles of their circumcircles. The angle sum at the circumcenter of $\phi$ being $2\pi$ is equivalent to the sum of half-angles being $\pi$, i.e.
\begin{align}\label{eq:nonuR}
	\sum_{\psi } \cot^{-1}\!\left( \frac{R_\phi - R_\psi \cos \Theta_{\phi\psi}}{R_\psi \sin \Theta_{\phi\psi}} \right) = \pi
\end{align}
where the sum is over all neighboring faces $\psi$ of $\phi$.

\begin{proposition}
Given the intersection angles $\Theta$ satisfying Definition \ref{def:infintheta}, a function $R: F \to \mathbb{R}_{>0}$ determines a smooth Euclidean metric $\sigma(\Theta, R)$ supporting a circle pattern with radii $R$ and intersection angles $\Theta$ if and only if $R$ satisfies \eqref{eq:nonuR} for every face $\phi \in F$. 
\end{proposition}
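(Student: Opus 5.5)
The argument is a local one: glue the kites and check that the result is a smooth flat metric exactly when the cone angles at all kite vertices equal $2\pi$.

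\emph{Construction.} For each dual edge $\phi\psi$, Lemma~\ref{lemma:cotweight} shows that the data $R_\phi, R_\psi, \Theta_{\phi\psi}$ with $\Theta_{\phi\psi}\in(\epsilon_0,\pi-\epsilon_0)$ determine a unique Euclidean kite up to isometry. It has side lengths $R_\phi,R_\phi,R_\psi,R_\psi$, half-angles $\alpha_{\phi\psi},\alpha_{\psi\phi}\in(0,\pi)$ at the two centers, and angle $\Theta_{\phi\psi}$ at each of the two intersection points. These kites correspond bijectively to the edges $ij\in E$, and combinatorially they tile the disk $\Omega$. Two kites sharing a center $\phi$ are glued isometrically along their common side of length $R_\phi$. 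This side joins the center of $\phi$ to a vertex $i$ of $\phi$, and its length is $R_\phi$ on both sides, so the gluing is well defined. The result is a piecewise Euclidean metric $\sigma$ on $\Omega$. It is flat away from the vertices of the kites, and the edges are isometric gluings of straight segments, so a neighborhood of any interior edge point is a flat Euclidean disk. Thus $\sigma$ is smooth except possibly at cone points located at primal vertices $i\in V$ and at circumcenters $\phi\in F$.

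\emph{Cone angles.} At a primal vertex $i$, the kites meeting there are those of the edges $ij$ incident to $i$. Their angles at $i$ are $\Theta_{\phi\psi}$ over the dual edges $\phi\psi$ bounding the dual face $i$. By (A1) these sum to $2\pi$, so there is no singularity at $i$. At a circumcenter $\phi$, each neighbouring $\psi$ contributes an angle $2\alpha_{\phi\psi}$, so the cone angle is $2\sum_\psi\alpha_{\phi\psi}$. By Lemma~\ref{lemma:cotweight}, $\alpha_{\phi\psi}=\cot^{-1}\bigl((R_\phi-R_\psi\cos\Theta_{\phi\psi})/(R_\psi\sin\Theta_{\phi\psi})\bigr)$, with the branch of $\cot^{-1}$ taking values in $(0,\pi)$. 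Therefore the cone angle at $\phi$ equals $2\pi$ if and only if \eqref{eq:nonuR} holds at $\phi$. This gives the "if" direction: when \eqref{eq:nonuR} holds everywhere, $\sigma$ is a smooth flat metric on $\Omega$.

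\emph{The circles are genuine and $R\in\tilde P(\Theta)$.} Around $\phi$, the kites fit together with total angle $2\pi$, so their sides emanating from the center of $\phi$ form a star of segments of length $R_\phi$. This star is a flat disk of radius $R_\phi$ about the center. The vertices of face $\phi$ lie on its boundary circle, and adjacent circles meet at angle $\Theta$ by the kite construction. Composing with the developing map gives a circle pattern in the plane with radii $R$ and angles $\Theta$. Hence $R\in\tilde P(\Theta)$ and $\sigma=\sigma(\Theta,R)$.

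\emph{Only if.} Suppose $\sigma(\Theta,R)$ is a smooth metric supporting such a pattern. Then the circumcenter of $\phi$ is a regular point, and the kites around it are those determined by $R$ and $\Theta$. Their angles $2\alpha_{\phi\psi}$ must therefore sum to $2\pi$, which is exactly \eqref{eq:nonuR}.

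The main point needing care is the identification of $\alpha_{\phi\psi}$ with the $(0,\pi)$-valued branch of $\cot^{-1}$. This identification is what makes the cone-angle condition equivalent to \eqref{eq:nonuR}; without it, equivalence would hold only modulo $\pi$. The Delaunay range $\Theta_{\phi\psi}\in(0,\pi)$ secures it, because the kite is then convex, or at worst a non-degenerate dart.
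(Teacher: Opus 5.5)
Your proposal is correct and follows the same reasoning the paper gives just before the statement. You glue the kites, use (A1) to rule out cone points at the intersection points, and note that the circumcenter of $\phi$ is a regular point exactly when the half-angles $\alpha_{\phi\psi}\in(0,\pi)$ sum to $\pi$, which is \eqref{eq:nonuR}. One small slip, which does not affect the argument: the kites around $\phi$ form a star-shaped neighbourhood of the center, not a flat disk of radius $R_\phi$. All you need is that the segments from the center to the vertices of $\phi$ are straight of length $R_\phi$, so their images under the developing map lie on a circle of radius $R_\phi$.
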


The collection of functions $R:F \to \mathbb{R}_{>0}$ that satisfies \eqref{eq:nonuR} for every face $\phi \in F$ is denoted by $\tilde{P}(\Theta)$. Under the developing map, the circle patterns may or may not be embedded in the plane.

\subsubsection{Infinitesimal deformations of circle patterns}\label{sec:infinitesimal_deformations}

We consider infinitesimal deformations of circle patterns preserving the intersection angles $\Theta$ by analyzing the linearization of Equation \eqref{eq:nonuR} with respect to the radii $R$ \cite{Lam2015a}. Such an infinitesimal deformation corresponds to a first order change $\dot{R}$ of the radii $R$ satisfying the linearized equation. It shall be more convenient to consider the logarithmic change of the radii $u: F \to \mathbb{R}$ defined as
\[
u_\phi = \frac{\dot{R}_\phi}{R_\phi}.
\]
If $R(t)$ is a smooth $1$-parameter family of functions in $\tilde{P}(\Theta)$, then the logarithmic change of the radii is given by
\[
u_\phi = \frac{d}{dt}\bigg|_{t=0} \log R_\phi(t).
\]

\begin{proposition}\label{prop:infincot}
Given a circle pattern corresponding to $R \in \tilde{P}(\Theta)$. A function $u: F \to \mathbb{R}$ is the logarithmic change of the radii under an infinitesimal deformation of circle patterns preserving the intersection angles $\Theta$ if and only if it satisfies for every face $\phi \in F$,
\begin{equation}\label{eq:linearu_inf}
    \sum_{\psi \sim \phi} c_{\phi\psi}^* (u_\phi - u_\psi) = 0,
\end{equation}
where $c_{\phi\psi}^*$ are the dual edge weights defined as
\[
c_{\phi\psi}^* = \frac{2}{\cot \alpha_{\phi \psi} + \cot \alpha_{\psi \phi}} = \frac{2 R_{\phi} R_{\psi} \sin \Theta_{\phi\psi}}{R_{\phi}^2 + R_{\psi}^2 - 2 R_{\phi} R_{\psi} \cos \Theta_{\phi\psi}}.
\]
\end{proposition}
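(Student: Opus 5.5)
The plan is to differentiate the angle-sum condition \eqref{eq:nonuR} along a one-parameter family $R(t)\in\tilde{P}(\Theta)$ with $\Theta$ held fixed. For every face $\phi$ we have $\sum_{\psi\sim\phi}\alpha_{\phi\psi}(R_\phi,R_\psi)=\pi$, and the right-hand side is constant. An infinitesimal deformation therefore corresponds to $\dot R$ satisfying $\sum_{\psi}\dot\alpha_{\phi\psi}=0$ for every $\phi$. Conversely, an infinitesimal deformation is by definition a solution of this linearized system, so the "if and only if" reduces to computing $\dot\alpha_{\phi\psi}$ in terms of $u=\dot R/R$.

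The key step is the computation of $\dot\alpha_{\phi\psi}$. I will use the logarithmic formula of Lemma \ref{lemma:cotweight}. It shows that $\alpha_{\phi\psi}$ depends only on $x:=\log(R_\psi/R_\phi)$. Write $q=e^{x}$, so that
\[
\alpha_{\phi\psi}=\tfrac{1}{2\ii}\bigl(\log(1-qe^{-\ii\Theta})-\log(1-qe^{\ii\Theta})\bigr).
\]
Differentiating gives
\[
\frac{d\alpha_{\phi\psi}}{dx}=\frac{1}{2\ii}\Bigl(\frac{-qe^{-\ii\Theta}}{1-qe^{-\ii\Theta}}+\frac{qe^{\ii\Theta}}{1-qe^{\ii\Theta}}\Bigr).
\]
Putting this over the common denominator $|1-qe^{\ii\Theta}|^2=1-2q\cos\Theta+q^2$ leaves the numerator $q(e^{\ii\Theta}-e^{-\ii\Theta})=2\ii q\sin\Theta$. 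Hence
\[
\frac{d\alpha_{\phi\psi}}{dx}=\frac{q\sin\Theta}{1-2q\cos\Theta+q^2}=\frac{R_\phi R_\psi\sin\Theta_{\phi\psi}}{R_\phi^2+R_\psi^2-2R_\phi R_\psi\cos\Theta_{\phi\psi}}=\tfrac12 c^*_{\phi\psi}.
\]
Since $\dot x=u_\psi-u_\phi$, this yields $\dot\alpha_{\phi\psi}=\tfrac12 c^*_{\phi\psi}(u_\psi-u_\phi)$.

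Summing over $\psi\sim\phi$ then gives exactly \eqref{eq:linearu_inf}, up to sign and the harmless factor $\tfrac12$.

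It remains to identify the cotangent form of the weight. The triangle $\phi\,j\,\psi$ has sides $R_\phi$ and $R_\psi$ with included angle $\pi-\Theta_{\phi\psi}$, so the chord $\ell_{ij}$ satisfies $\ell^2=R_\phi^2+R_\psi^2-2R_\phi R_\psi\cos\Theta$. The distance from $\phi$ to the chord is $R_\phi\cos\alpha_{\phi\psi}$ and its half-length is $R_\phi\sin\alpha_{\phi\psi}$, so $\cot\alpha_{\phi\psi}=d_\phi/(\ell/2)$. The two circumcenters lie on opposite sides of the chord in the Delaunay case, so
\[
\cot\alpha_{\phi\psi}+\cot\alpha_{\psi\phi}=\frac{2|z_\phi-z_\psi|}{\ell}.
\]
Twice the kite area computed two ways gives $|z_\phi-z_\psi|\,\ell=2R_\phi R_\psi\sin\Theta$. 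Together these give $2/(\cot\alpha_{\phi\psi}+\cot\alpha_{\psi\phi})=\ell^2/(R_\phi R_\psi\sin\Theta)$ up to the factor $2$ …; rather than chase constants by hand, I will check the identity against the $\cot^{-1}$ formula of Lemma \ref{lemma:cotweight} directly.

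Positivity of $c^*$ follows because $\Theta\in(0,\pi)$. I do not expect a real obstacle here. The only care needed is the bookkeeping of constants and orientation, and observing that each edge contributes to two face equations with the symmetric weight $c^*_{\phi\psi}=c^*_{\psi\phi}$.
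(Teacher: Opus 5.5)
Your core argument is correct and follows the same route as the paper: differentiate the angle-sum condition \eqref{eq:nonuR} edge by edge. The paper performs exactly this differentiation in the proof of Proposition \ref{prop:selfadjoint}, using the $\cot^{-1}$ form rather than the logarithmic form of Lemma \ref{lemma:cotweight}. Its result amounts to $\partial\alpha_{\phi\psi}/\partial u_\phi=-\tfrac12 c^*_{\phi\psi}$, which agrees with your $\dot\alpha_{\phi\psi}=\tfrac12 c^*_{\phi\psi}(u_\psi-u_\phi)$. Your computation of $d\alpha_{\phi\psi}/dx$ is right, including the constant.

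The one step that fails as written is your geometric check of the cotangent form. The triangle $\phi\, j\, \psi$ has angles $\alpha_{\phi\psi},\alpha_{\psi\phi},\Theta_{\phi\psi}$, so the angle at $j$ is $\Theta_{\phi\psi}$. The side opposite $j$ is the segment joining the two circumcenters, not the chord $ij$. Hence the law of cosines gives $R_\phi^2+R_\psi^2-2R_\phi R_\psi\cos\Theta_{\phi\psi}=|z_\phi-z_\psi|^2$, not $\ell^2$.

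Substituting $\ell^2$ for this quantity is what produced your mismatch, and the mismatch is not a factor of $2$. It replaces $c^*_{\phi\psi}$ by its reciprocal $c_{ij}$. With the correct identification, your two relations close the argument exactly:
\[
\frac{2}{\cot\alpha_{\phi\psi}+\cot\alpha_{\psi\phi}}=\frac{\ell}{|z_\phi-z_\psi|}=\frac{\ell\,|z_\phi-z_\psi|}{|z_\phi-z_\psi|^2}=\frac{2R_\phi R_\psi\sin\Theta_{\phi\psi}}{R_\phi^2+R_\psi^2-2R_\phi R_\psi\cos\Theta_{\phi\psi}}.
\]

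Also, the circumcenters need not lie on opposite sides of the chord in the Delaunay case. One half-angle can exceed $\pi/2$ when $\Theta_{\phi\psi}<\pi/2$. The relation $\cot\alpha_{\phi\psi}+\cot\alpha_{\psi\phi}=2|z_\phi-z_\psi|/\ell$ still holds if you use the signed distances $R_\phi\cos\alpha_{\phi\psi}$ and $R_\psi\cos\alpha_{\psi\phi}$.

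The quickest fix is the fallback you proposed. Adding the two $\cot$ formulas of Lemma \ref{lemma:cotweight} gives $\cot\alpha_{\phi\psi}+\cot\alpha_{\psi\phi}=(R_\phi^2+R_\psi^2-2R_\phi R_\psi\cos\Theta_{\phi\psi})/(R_\phi R_\psi\sin\Theta_{\phi\psi})$ in one line.
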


The dual edge weights $c_{\phi\psi}^*$ are the reciprocals of the \emph{cotangent weights}
\[
c_{\phi\psi} = \frac{\cot \alpha_{\phi \psi} + \cot \alpha_{\psi \phi}}{2} = \frac{1}{c_{\phi\psi}^*},
\]
which is associated with the graph Laplacian derived from a finite-element discretization \cite{Pinkall1993}. The function $u$ satisfying \eqref{eq:linearu_inf} everywhere is a discrete harmonic function on the dual graph with respect to the dual edge weights $c^*$.

\subsubsection{Volume of hyperbolic polyhedra associated with edges}

An essential tool for obtaining solutions to Equation \eqref{eq:nonuR} is a variational method via the hyperbolic volume of polyhedra in hyperbolic 3-space.

For every edge with adjacent circles of radii $R_{\phi}, R_{\psi}$ intersecting at an angle $\Theta_{\phi\psi}$, we associate a hyperbolic polyhedron (Figure \ref{fig:hypvolume}) as follows. We consider the upper half-space model of hyperbolic 3-space $\mathbb{H}^3$ with boundary $\mathbb{R}^2 \cup \{\infty\}$. We place the two circles in the plane $\mathbb{R}^2$ and denote their intersection points by $z_i, z_j$. These circles in the plane act as the boundaries of two hemispheres in the upper half-space. We denote by $\tilde{z}_\phi, \tilde{z}_\psi \in \mathbb{H}^3$ the vertical lifts of the respective Euclidean circumcenters to these hemispheres. This yields two hyperbolic tetrahedra: the first has three ideal vertices $z_i, z_j, \infty$ and one finite vertex $\tilde{z}_\phi$, while the second has the same ideal vertices but finite vertex $\tilde{z}_\psi$. Gluing these two tetrahedra along the common ideal triangle $z_i z_j \infty$ forms a hyperbolic polyhedron whose signed hyperbolic volume is given by
\[
\mathcal{V}(R_{\phi}, R_{\psi}, \Theta_{\phi\psi}) = \Lambda\left(\alpha_{\phi \psi}\right) + \Lambda\left(\alpha_{\psi \phi}\right),
\]
where $\Lambda(x) = -\int_0^x \log |2\sin t| \, dt$ is Milnor's Lobachevsky function \cite{Milnor1994}, and the half-angles $\alpha_{\phi \psi}, \alpha_{\psi \phi}$ are determined by $R_{\phi}, R_{\psi}$, and $\Theta_{\phi\psi}$ as in Lemma \ref{lemma:cotweight}. 
\begin{figure}
	\centering
	\includegraphics[width=0.5\textwidth]{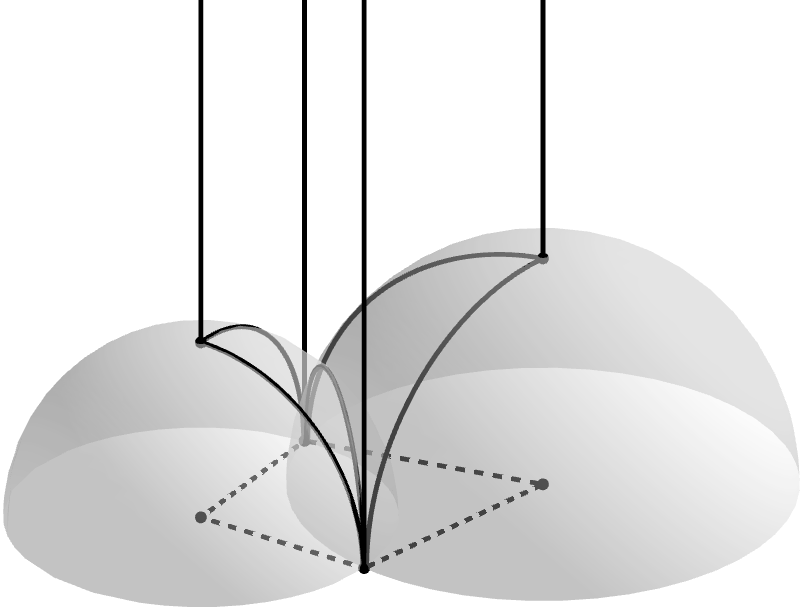}
	\caption{The hyperbolic polyhedron associated with an edge is obtained by gluing two hyperbolic tetrahedra. The signed hyperbolic volume of the polyhedron is given by the sum of the Lobachevsky functions of the half-angles at the circumcenters.}
	\label{fig:hypvolume}
\end{figure}

Although the total hyperbolic volume associated with an infinite circle pattern is infinite, we can still consider the relative volume between two such circle patterns. By incorporating a slight modification to ensure Fréchet differentiability, this relative volume yields a well-defined functional $\mathcal{W}:\mathbf{D}(V) \to \mathbb{R}$ (see Definition \ref{def:Wfunctional}). In Section \ref{sec:Wstar}, we also introduce a functional $\mathcal{W}^*: \mathbf{D}(F) \to \mathbb{R}$ (Definition \ref{def:Wstarfunctional}), which is related to $\mathcal{W}$ via a Legendre transform and connects to the Bloch-Wigner dilogarithm \cite{NeumannZagier1985}. We refer the reader to the work of Bobenko and Springborn \cite{Bobenko2004} for a detailed discussion of this relationship in the finite setting.

\section{Geometric edge weights equivalent to combinatorial edge weights}\label{sec:circle_patterns_wp}
In order to study the deformation space of circle patterns, we consider geometric edge weights induced from circle patterns as motivated by Proposition \ref{prop:infincot}. The goal of this section is to establish the equivalence between the geometric edge weights and the combinatorial edge weights using the Ring lemma, so that the discussion in Section \ref{sec:Dirichlet} is applicable. 
	
	\begin{definition}\label{def:geoedgeweight}
		Given a uniformized circle pattern with Euclidean radii $R^{\dagger}$ and intersection angles $\Theta$ satisfying the assumptions (A1) and (A2). For every $u \in \mathbf{D}(F)$ we define the geometric edge weight $c:E \to \mathbb{R}_{>0}$ on the primal graph by
	    	\begin{align*}
				c_{ij} &= \frac{R_{\phi}^2 + R_{\psi}^2 - 2 R_{\phi} R_{\psi} \cos \Theta_{\phi\psi}}{2 R_{\phi} R_{\psi} \sin \Theta_{\phi\psi}} \\
 &= \tan \frac{\Theta_{\phi\psi}}{2} + \frac{(R_{\phi}-R_{\psi})^2}{2 R_{\phi} R_{\psi} \sin \Theta_{\phi\psi}} \\
&= \frac{\cot \alpha_{\phi \psi} + \cot \alpha_{\psi \phi}}{2}
			\end{align*}
		where $R = e^{u} R^{\dagger}$ while $\alpha_{\phi \psi}$ and $\alpha_{\psi \phi}$ are the half-angles in the associated kites (See Figure \ref{fig:angle_at_center}). We also define the dual edge weight $c^*:E^* \to \mathbb{R}_{>0}$ by
		\[c^*_{\phi\psi} := \frac{1}{c_{ij}},\]
		where $\phi\psi$ is the dual edge crossing $ij$.
	\end{definition}

The Ring lemma is the main tool to establish various estimates throughout the paper. It is an inequality relating the Euclidean radii of neighbouring circles, which is known to hold for circle packings \cite{Stephenson2005,He1999} and has been extended to circle patterns with prescribed intersection angles $\Theta$ satisfying (A1) and (A2) \cite{Ge2025}.

\begin{proposition}[Ring lemma \cite{Stephenson2005,Ge2025}]\label{lem:ring}
		Given the angle function $\Theta$ satisfying the assumptions (A1) and (A2). There exists a constant $C=C(\Theta, \epsilon_0)>1$ such that for any embedded circle pattern with intersection angle $\Theta$, the Euclidean radii $R:F \to \mathbb{R}_{>0}$ satisfies for every dual edge $\phi\psi$
		\[
		\frac{1}{C} \leq \frac{R_{\phi}}{R_{\psi}} \leq C.
		\]
\end{proposition}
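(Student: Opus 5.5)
The Ring lemma is cited from \cite{Stephenson2005,Ge2025}, so the aim here is to recall how the argument goes for circle patterns with angles $\Theta$ satisfying (A1) and (A2). I would argue by contradiction together with a compactness argument on the local combinatorics.

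\textbf{Finitely many local models.} Vertex and face degrees are uniformly bounded. The angles take values in $(\epsilon_0,\pi-\epsilon_0)$, which is precompact. Hence, up to combinatorial isomorphism, there are only finitely many configurations of the combinatorial $k$-neighbourhood of a dual edge $\phi\psi$. For each such configuration the admissible angle data ranges over a compact set.

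\textbf{Contradiction and normalization.} Suppose the ratio $R_\phi/R_\psi$ is unbounded over all embedded patterns and all dual edges. Then there is a sequence of embedded patterns and dual edges $\phi_n\psi_n$ with $R_{\phi_n}=1$ and $R_{\psi_n}\to 0$. Passing to a subsequence, the local combinatorics is fixed and the angles $\Theta$ near the edge converge to limiting values that still satisfy (A1), and (A2) with margin $\epsilon_0$.

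\textbf{Main step: ruling out degeneration.} By Lemma \ref{lemma:cotweight}, a small circle $C_\psi$ meeting $C_\phi$ at an angle bounded away from $0$ and $\pi$ has a kite whose half-angle $\alpha_{\phi\psi}\to 0$. Consequently the chord $z_iz_j$ shrinks to a point $p$ on $C_\phi$.

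Consider the dual face loop around the primal vertex $i$, and more generally the chain of circles surrounding $C_\psi$. Since $C_\psi$ is shrinking, some neighbouring circles must also shrink toward $p$. Embeddedness means the circles around a vertex fill the full $2\pi$ there with no overlap beyond the prescribed angles. By the closing condition (A1), this forces the flower of $\psi$ to degenerate in a controlled way.

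I would then follow the argument of Ge--Yu--Zhou. Look at the set of circles whose radii tend to $0$; they all collapse to the point $p$. Take a closed dual loop $\gamma$ separating the collapsing cluster from the non-collapsing circles. In the limit, the circles adjacent to $\gamma$ on the non-collapsing side all pass through $p$. The angles between consecutive such circles then sum to exactly $2\pi$ around $p$. When $\gamma$ encloses more than one dual face, this contradicts the strict inequality $\sum_\gamma\Theta>2\pi+\epsilon_0$ in (A2).

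The degenerate case is when the cluster is a single dual face. Then (A1) is consistent with the limit, but the cluster must contain $\psi$ together with its whole flower. Bounded degree and embeddedness prevent a single circle from being surrounded by circles that do not themselves shrink. This is the \emph{main obstacle}: making the separating-loop limit rigorous, including handling tangential and limiting positions uniformly. This is exactly where the $\epsilon_0$ margin in (A2) is essential. The resulting constant $C$ depends only on $\Theta$, $\epsilon_0$ and the degree bounds.
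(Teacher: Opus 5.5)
The paper gives no proof of this statement; it is imported from \cite{Stephenson2005} and \cite{Ge2025}, so deferring to Ge--Yu--Zhou is exactly what the paper does. Read as a proof, however, your outline has a genuine gap at the step you yourself call the main obstacle, and the case analysis around it is misdirected.

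Your ``degenerate case'' cannot occur. A dual loop bounding exactly one dual face is the ring of faces around a single primal vertex. It encloses no dual vertex, so it never separates a nonempty set of collapsing faces. Even for a single face $\psi$ with dual neighbours $\varphi_1,\dots,\varphi_k$, the surrounding loop (the faces other than $\psi$ incident to the vertices of $\psi$) bounds $k\ge 3$ dual faces. Then (A1) at those vertices together with (A2) gives $\sum_m(\pi-\Theta_{\psi\varphi_m})>\pi+\epsilon_0/2$. Lemma \ref{lemma:cotweight} gives $\alpha_{\psi\varphi_m}\to\pi-\Theta_{\psi\varphi_m}$ as $R_{\varphi_m}/R_\psi\to\infty$, so this contradicts $\sum_m\alpha_{\psi\varphi_m}=\pi$. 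That is the correct reason a small circle must have a comparably small neighbour. The obstruction is the strict inequality in (A2), not (A1) or embeddedness as your sketch suggests.

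The substantive step is then only asserted. Knowing that every small circle has a comparably small neighbour does not bound the collapsing cluster. With only $R_{\phi_n}=1$ normalized, the set of shrinking circles may leave every fixed combinatorial ball, so your compactness limit need not contain any finite separating loop $\gamma$. Moreover, circles on a candidate $\gamma$ may themselves shrink or blow up, so their limits need not be genuine circles through $p$.

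Even granting a loop of nondegenerate limit circles through $p$, closing up of the directions from $p$ to the centres yields only a signed relation $\sum_\gamma\pm\Theta\in 2\pi\mathbb{Z}$. To reach $\sum_\gamma\Theta=2\pi$ and contradict (A2) you need two further facts:
\begin{enumerate}
\item the centres along $\gamma$ form a polygon winding once about $p$, and this must survive the limit;
\item $p$ is the intersection point on the inner side of every edge of $\gamma$, so that all signs agree.
\end{enumerate}
Choosing a scale at which the cluster is finite with nondegenerate boundary, and proving this orientation statement, is where the actual work lies. Your proposal leaves both open.
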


We deduce an extension of the Ring lemma to more general circle patterns, including those that may not be embedded or whose associated piecewise Euclidean metric contains conical singularities.
\begin{corollary}\label{cor:generalring}
	There exists a constant $C=C(\Theta, \epsilon_0)>1$ such that for any circle pattern corresponding to $u \in \mathbf{D}(F)$, its Euclidean radii $R= e^{u} R^{\dagger}$ satisfy for every dual edge $\phi\psi$
		\[
		\frac{1}{C e^{\sqrt{\mathcal{E}(u)}}} \leq \frac{1}{C e^{\sup_{\phi\psi \in E^{*}} |u_{\phi}-u_{\psi}|}} \leq \frac{e^{u_{\phi}}R^{\dagger}_{\phi}}{e^{u_{\psi}}R^{\dagger}_{\psi}} \leq C e^{\sup_{\phi\psi \in E^{*}} |u_{\phi}-u_{\psi}|} \leq C e^{\sqrt{\mathcal{E}(u)}}
		\]
	where $\mathcal{E}(u)= \sum_{\phi\psi \in E^{*}} |u_{\phi}-u_{\psi}|^2$ is the Dirichlet energy of $u$.
\end{corollary}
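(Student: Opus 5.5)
The plan is to apply the Ring lemma (Proposition \ref{lem:ring}) only to the uniformized pattern $R^{\dagger}$, and then bound the ratio $e^{u_\phi}/e^{u_\psi}$ by elementary estimates on $u$. The point is that the deformed pattern $e^{u}R^{\dagger}$ need not be embedded, and need not even be a genuine flat metric, so the Ring lemma cannot be applied to it directly. The uniformized pattern, however, is embedded. By Theorem \ref{thm:discrete_uniformization} its developing map is an isometry onto $\mathbb{D}$, and its intersection angles are exactly $\Theta$, which satisfy (A1) and (A2).

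First, Proposition \ref{lem:ring} applied to $R^{\dagger}$ gives a constant $C=C(\Theta,\epsilon_0)>1$ with
\[
C^{-1}\le R^{\dagger}_\phi/R^{\dagger}_\psi\le C
\]
for every dual edge $\phi\psi$.

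Second, write
\[
\frac{e^{u_\phi}R^{\dagger}_\phi}{e^{u_\psi}R^{\dagger}_\psi}=e^{u_\phi-u_\psi}\cdot\frac{R^{\dagger}_\phi}{R^{\dagger}_\psi}.
\]
The first factor lies between $e^{-M}$ and $e^{M}$, where $M:=\sup_{\phi\psi\in E^*}|u_\phi-u_\psi|$. Multiplying this by the bounds from the first step gives the two middle inequalities.

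Third, for the outer inequalities, observe that every single term of the Dirichlet sum is bounded by the whole sum:
\[
|u_\phi-u_\psi|^2\le\sum_{\phi'\psi'\in E^*}|u_{\phi'}-u_{\psi'}|^2=\mathcal{E}(u).
\]
Hence $M\le\sqrt{\mathcal{E}(u)}$, and monotonicity of the exponential completes the chain.

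There is no real obstacle here. The only point requiring care is to check that the hypotheses of Proposition \ref{lem:ring} hold for the uniformized pattern, namely that it is embedded and has angles $\Theta$. Both follow directly from Theorem \ref{thm:discrete_uniformization}, and the resulting constant depends only on $\Theta$ and $\epsilon_0$, not on $u$.
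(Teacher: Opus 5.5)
Your proposal is correct and matches the paper's proof: you apply the Ring lemma only to the embedded uniformized pattern $R^{\dagger}$, and then multiply by the bound $e^{\pm\sup|u_\phi-u_\psi|}$. The final step uses $\sup_{\phi\psi\in E^*}|u_\phi-u_\psi|\le\sqrt{\mathcal{E}(u)}$. Your remark that the deformed pattern need not be embedded, so the Ring lemma cannot be applied to it directly, is exactly why the paper routes the argument through $R^{\dagger}$.
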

\begin{proof}
Applying the Ring lemma to the uniformized circle pattern $R^{\dagger}$, we have $\frac{1}{C} \leq \frac{R^{\dagger}_{\phi}}{R^{\dagger}_{\psi}} \leq C$. For any dual edge $\phi\psi$, the definition of the energy $\mathcal{E}(u)$ implies \[ |u_{\phi} - u_{\psi}| \leq \sup_{\phi\psi \in E^{*}} |u_{\phi}-u_{\psi}| \leq \sqrt{\mathcal{E}(u)}.\] Combining these, we deduce the claimed inequalities.	
\end{proof}

\begin{proposition}\label{prop:bound}
	For every $u \in \mathbf{D}(F)$, its geometric edge weights satisfy
	\[
 \tan \frac{\epsilon_0}{2} \leq c_{ij} \leq \tan \frac{\pi- \epsilon_0}{2} + \frac{4C^2 e^{2\sup_{\phi\psi \in E^{*}} |u_{\phi}-u_{\psi}|}}{ \sin \epsilon_0} \leq \tan \frac{\pi- \epsilon_0}{2} + \frac{4C^2 e^{2 \sqrt{\mathcal{E}(u)}}}{ \sin \epsilon_0}.
	\]
On the other hand, for any half-angle at centers
	\[
	|\cot \alpha_{ij,\phi}| \leq \frac{e^{\sqrt{\mathcal{E}(u)}}C+1}{\sin \epsilon_0} 
	\]
	and
	\[
	0< \eta < \alpha_{ij,\phi} < \pi - \eta
	\]
	where \[
	\eta := \cot^{-1} (\frac{e^{\sqrt{\mathcal{E}(u)}}C+1}{\sin \epsilon_0})
	\] 
	is a small constant.
\end{proposition}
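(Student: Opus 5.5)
The plan is to read everything off the explicit formulas in Definition \ref{def:geoedgeweight} and Lemma \ref{lemma:cotweight}. The only geometric input is the bound on neighbouring radius ratios from Corollary \ref{cor:generalring}. Fix $u \in \mathbf{D}(F)$, set $R = e^{u}R^{\dagger}$, and write $M := \sup_{\phi\psi\in E^*}|u_\phi-u_\psi| \le \sqrt{\mathcal{E}(u)}$. By Corollary \ref{cor:generalring}, for every dual edge the ratio $t := R_\psi/R_\phi$ lies in $[C^{-1}e^{-M}, Ce^{M}]$.

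\textbf{Edge weights.} Use the second expression
\[
c_{ij} = \tan\frac{\Theta_{\phi\psi}}{2} + \frac{(R_\phi-R_\psi)^2}{2R_\phi R_\psi \sin\Theta_{\phi\psi}} .
\]
The second summand is nonnegative, and $\tan$ is increasing on $(0,\pi/2)$ with $\Theta_{\phi\psi}/2 \in (\epsilon_0/2, (\pi-\epsilon_0)/2)$. This gives the lower bound $c_{ij} \ge \tan(\epsilon_0/2)$ and bounds the first summand above by $\tan\frac{\pi-\epsilon_0}{2}$. For the second summand, $\sin\Theta_{\phi\psi} > \sin\epsilon_0$, and
\[
\frac{(R_\phi-R_\psi)^2}{R_\phi R_\psi} = t + \frac{1}{t} - 2 \le 2Ce^{M}.
\]
This already fits within the crude bound $4C^2e^{2M}/\sin\epsilon_0$, since $C>1$. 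Finally, replace $M$ by $\sqrt{\mathcal{E}(u)}$ using monotonicity of the exponential.

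\textbf{Half-angles.} From Lemma \ref{lemma:cotweight},
\[
\cot\alpha_{\phi\psi} = \frac{R_\phi - R_\psi\cos\Theta_{\phi\psi}}{R_\psi\sin\Theta_{\phi\psi}} = \frac{t^{-1} - \cos\Theta_{\phi\psi}}{\sin\Theta_{\phi\psi}} .
\]
Hence
\[
|\cot\alpha_{\phi\psi}| \le \frac{t^{-1}+1}{\sin\epsilon_0} \le \frac{Ce^{\sqrt{\mathcal{E}(u)}}+1}{\sin\epsilon_0},
\]
using $t^{-1} \le Ce^{M}$ from the two-sided ratio bound. The same argument with the roles of $\phi,\psi$ swapped handles $\alpha_{\psi\phi}$. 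Since $\alpha \in (0,\pi)$ and $\cot$ is a decreasing bijection $(0,\pi)\to\mathbb{R}$, the bound $|\cot\alpha| \le K$ with $K = (Ce^{\sqrt{\mathcal{E}(u)}}+1)/\sin\epsilon_0$ is equivalent to $\cot^{-1}K \le \alpha \le \pi - \cot^{-1}K$, i.e.\ $\eta \le \alpha \le \pi-\eta$. Strict inequality follows if we strengthen one of the estimates strictly, e.g.\ $\sin\Theta > \sin\epsilon_0$ on the open interval.

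\textbf{Main obstacle.} Once Corollary \ref{cor:generalring} is available, there is none of substance. The only care needed is bookkeeping: the ratio bound must be used in both directions, and the half-angle formula applies to circle patterns that are possibly non-embedded or singular. This is harmless, because the formulas of Lemma \ref{lemma:cotweight} are purely local to each kite and do not use the angle-sum condition \eqref{eq:nonuR}.
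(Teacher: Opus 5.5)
Your proposal is correct and follows essentially the same route as the paper. The paper also gets the lower bound from the nonnegative second summand in the $\tan\frac{\Theta_{\phi\psi}}{2}$ form of $c_{ij}$, the upper bound and the cotangent bound from the radius-ratio estimate of Corollary \ref{cor:generalring} with $\sin\Theta_{\phi\psi}>\sin\epsilon_0$, and the bounds on $\alpha$ from monotonicity of $\cot$ on $(0,\pi)$.
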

\begin{proof}
	It follows from the definition that
	\[
	c_{ij} \geq  \tan \frac{\Theta_{\phi\psi}}{2}  \geq \tan \frac{\epsilon_0}{2}.
	\]
	The equality is attained if and only if the two circles have the same radii. On the other hand, Corollary \ref{cor:generalring} implies that 
	\[
	c_{ij} =\tan \frac{\Theta_{\phi\psi}}{2} + \frac{(R_{\phi}-R_{\psi})^2}{2 R_{\phi} R_{\psi} \sin \Theta_{\phi\psi}} \leq \tan \frac{\pi- \epsilon_0}{2} + \frac{4C^2 e^{2\sup_{\phi\psi \in E^{*}} |u_{\phi}-u_{\psi}|}}{ \sin \epsilon_0}.
	\]
	On the other hand, from Lemma \ref{lemma:cotweight}, 
	\[
		|\cot \alpha_{ij,\phi}| = |\frac{\frac{R_\phi}{R_\psi} - \cos \Theta_{\phi\psi}}{ \sin \Theta_{\phi\psi}}| <\frac{e^{\sqrt{\mathcal{E}(u)}}C+1}{\sin \epsilon_0}.
	\]
	The estimate on the angles $\alpha$ follows from the fact that $\cot(\cdot)$ is monotone decreasing on $(0,\pi)$.
\end{proof}
To conclude, we relate the geometric edge weights to the combinatorial edge weights which are constantly equal to $1$.
\begin{corollary}\label{cor:equivalence}
	For every $u \in \mathbf{D}(F)$, its geometric edge weights $c$ and $c^*$ are respectively equivalent to the combinatorial edge weights.
\end{corollary}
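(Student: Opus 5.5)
This is immediate from Proposition \ref{prop:bound}, taking the constant in the definition of equivalent edge weights to be the maximum of the lower and upper bounds supplied there. Fix $u \in \mathbf{D}(F)$, so $\mathcal{E}(u) < \infty$. By Proposition \ref{prop:bound}, every primal edge $ij$ satisfies
\[
\tan \frac{\epsilon_0}{2} \leq c_{ij} \leq M(u) := \tan \frac{\pi- \epsilon_0}{2} + \frac{4C^2 e^{2 \sqrt{\mathcal{E}(u)}}}{ \sin \epsilon_0}.
\]
The constants $C = C(\Theta,\epsilon_0)$ from Corollary \ref{cor:generalring} and $\epsilon_0$ are independent of the edge. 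Hence $M(u)$ is a finite constant uniform over all edges, depending only on $u$, $\Theta$ and $\epsilon_0$.

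I then set
\[
K := \max\Bigl\{ M(u),\ \cot \tfrac{\epsilon_0}{2} \Bigr\} + 1.
\]
This gives $1/K < \tan(\epsilon_0/2) \leq c_{ij} \leq M(u) \leq K$ for every edge $ij$. That is precisely the definition of equivalence with the combinatorial weight $\tilde c \equiv 1$, with the strict inequality on one side arranged by the extra $+1$.

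For the dual weights, $c^*_{\phi\psi} = 1/c_{ij}$, so inverting the bounds gives
\[
\frac{1}{M(u)} \leq c^*_{\phi\psi} \leq \cot \frac{\epsilon_0}{2}.
\]
The same constant $K$ therefore works for $c^*$ against the combinatorial weight $1$ on $E^*$.

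The only point needing care is that the upper bound in Proposition \ref{prop:bound} is uniform over edges. This holds because $\sup_{\phi\psi}|u_\phi - u_\psi| \leq \sqrt{\mathcal{E}(u)}$, and because the Ring lemma constant applied to the uniformized pattern $R^\dagger$ is global. There is no substantive obstacle beyond this. As a consequence, Proposition \ref{prop:equivalentweight} then gives $\mathbf{D}_c(V) = \mathbf{D}(V)$ and $\mathbf{D}_{c^*}(F) = \mathbf{D}(F)$, and likewise for the $\mathbf{D}_0$ spaces and for transience.
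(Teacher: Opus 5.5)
Your proof is correct and follows essentially the paper's route: the paper states the corollary as an immediate consequence of Proposition \ref{prop:bound}, whose edge-independent bounds $\tan\frac{\epsilon_0}{2} \le c_{ij} \le M(u)$ give equivalence of $c$ with the combinatorial weight, and inverting them gives the same for $c^* = 1/c$. Your choice of $K$ also correctly handles the strict inequality in the definition of equivalent weights.
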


\section{Hilbert manifold $P(\Theta,R^{\dagger})$} \label{sec:hilbert}

Given a uniformized circle pattern with intersection angle $\Theta$ and radius $R^{\dagger}$, we verify Theorem \ref{thm:homHDF} that the space $P(\Theta,R^{\dagger})$ is a Hilbert submanifold in $\mathbf{D}(F)$ and homeomorphic to the Hilbert vector space $\mathbf{HD}(F)$. Furthermore, the tangent space at $u \in P(\Theta,R^{\dagger})$ is $T_{u} P(\Theta,R^{\dagger}) = \mathbf{HD}_{c^*}(F)$, the space of discrete harmonic functions with respect to the geometric edge weight
\[
c^*_{\phi \psi} = \frac{2 e^{u_{\phi}+u_{\psi}} R_{\phi} R_{\psi} \sin \Theta_{\phi\psi}}{e^{2u_{\phi}} R_{\phi}^2 + e^{2u_{\psi}} R_{\psi}^2 - 2 e^{u_{\phi}+u_{\psi}}  R_{\phi} R_{\psi} \cos \Theta_{\phi\psi}}.
\]
The main tool is a variational method involving the relative volume $\mathcal{W}^*$ as defined in Definition \ref{def:Wstarfunctional}. Throughout, the superscript $*$ emphasise the map is defined for the dual cell decomposition $(V^*,E^*,F^*)$. In later sections, analogous functions are denoted without superscript $*$ to indicate that they are defined for the cell decomposition $(V,E,F)$.

\subsection{Hilbert submanifold in $\mathbf{D}(F)$}

By Corollary \ref{cor:equivalence}, all the geometric edge weights $c^*$ induced from circle patterns in $P(\Theta,R^\dagger)$ are equivalent to the combinatorial edge weights. Hence, we have 
\[
\mathbf{D}_{c^*}(F) = \mathbf{D}(F), \quad \mathbf{D}_{c^*,0}(F) = \mathbf{D}_{0}(F).
\] 
We denote the inclusion map by
\[
\iota: \mathbf{D}_{0}(F) \to \mathbf{D}(F)
\]
and the induced mapping on the dual spaces
\[
\iota^{\star}: \mathbf{D}^{\star}(F) \to \mathbf{D}^{\star}_{0}(F),
\]
where $\mathbf{D}^{\star}(F)$ and $\mathbf{D}^{\star}_{0}(F)$ are respectively the spaces of bounded linear functionals on $\mathbf{D}(F)$ and $\mathbf{D}_{0}(F)$.

\begin{definition}
	For every oriented dual edge $\phi \psi \in \vec{E}^*$, we define the change of curvature $\mathcal{K}_{\phi \psi}^*: \mathbf{D}(F) \to \mathbb{R}$ 
	\begin{align*}
		\mathcal{K}^{*}_{\phi \psi}(u) =& 2\cot^{-1}\!\left( \frac{\frac{R^{\dagger}_\phi}{R^{\dagger}_\psi} - \cos \Theta_{\phi\psi}}{\sin \Theta_{\phi\psi}} \right) - 2\cot^{-1}\!\left( \frac{e^{u_{\phi}-u_{\psi}} \frac{R^{\dagger}_\phi}{R^{\dagger}_\psi} -   \cos \Theta_{\phi\psi}}{ \sin \Theta_{\phi\psi}} \right)\\
		=& 2\alpha^{\dagger}_{\phi \psi}- 2\alpha_{\phi \psi}
	\end{align*}
	where $\alpha^{\dagger}_{\phi \psi}$ and $\alpha_{\phi \psi}$ are respectively the half-angles at the circumcenter $\phi$ in the kites corresponding to the circle patterns with radii $R^{\dagger}$ and $R = e^{u} R^{\dagger}$.
\end{definition}

\begin{lemma}\label{lem:gedge}
Given $u \in \mathbf{D}(F)$, there is a constant $C_1=C_1(\mathcal{E}(u),\Theta,\epsilon_0)> 0$ such that for every edge $\phi\psi \in E^*$
\[
|\mathcal{K}^{*}_{\phi \psi}(u)| \leq C_1 |u_{\phi}-u_{\psi}|.
\]
\end{lemma}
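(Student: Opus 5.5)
Fix a dual edge $\phi\psi$ and view $\mathcal{K}^*_{\phi\psi}(u)$ as a function of the single variable $t = u_\phi - u_\psi$. Set $\rho = R^{\dagger}_\phi/R^{\dagger}_\psi$, $\Theta=\Theta_{\phi\psi}$, and
\[
g(t) := 2\cot^{-1}\!\left(\frac{e^{t}\rho - \cos\Theta}{\sin\Theta}\right),
\]
so that $\mathcal{K}^*_{\phi\psi}(u) = g(0) - g(t)$. The plan is to apply the mean value theorem, $|g(0)-g(t)| \le |t|\,\sup_{s\in[-|t|,|t|]}|g'(s)|$, and then bound $g'$ uniformly in the edge, using the Ring lemma for $\rho$ and the energy bound for $|t|$.

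First compute the derivative. With $x(s) = (e^{s}\rho-\cos\Theta)/\sin\Theta$, we have $g'(s) = -2x'(s)/(1+x(s)^2)$ and $x'(s) = e^{s}\rho/\sin\Theta$. Hence
\[
|g'(s)| = \frac{2e^{s}\rho\sin\Theta}{\sin^2\Theta + (e^{s}\rho-\cos\Theta)^2} = \frac{2e^{s}\rho\sin\Theta}{1+e^{2s}\rho^2-2e^{s}\rho\cos\Theta}.
\]
This is exactly the dual weight $c^*$ of Proposition \ref{prop:infincot}, evaluated at the radii ratio $e^{s}\rho$. This is expected, since differentiating the half-angle produces the cotangent weights.

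Next bound this quantity. The denominator satisfies $1+r^2-2r\cos\Theta \ge \sin^2\Theta \ge \sin^2\epsilon_0$, because it equals $\sin^2\Theta + (r-\cos\Theta)^2$. For the numerator we need an upper bound on $r = e^{s}\rho$. By Proposition \ref{lem:ring} applied to the uniformized pattern, $\rho\le C$. Since $|s|\le|t|\le\sqrt{\mathcal{E}(u)}$, this gives $r\le Ce^{\sqrt{\mathcal{E}(u)}}$. Therefore
\[
|g'(s)| \le \frac{2Ce^{\sqrt{\mathcal{E}(u)}}}{\sin\epsilon_0}.
\]
Alternatively, one can note $|g'(s)| = 1/c_{ij}$ at the intermediate pattern and use $c_{ij}\ge\tan(\epsilon_0/2)$ from Proposition \ref{prop:bound}. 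This gives the cleaner bound $|g'|\le\cot(\epsilon_0/2)$, independent of $u$.

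Finally, set $C_1 := 2Ce^{\sqrt{\mathcal{E}(u)}}/\sin\epsilon_0$, or simply $2\cot(\epsilon_0/2)$. The mean value theorem then yields $|\mathcal{K}^*_{\phi\psi}(u)|\le C_1|u_\phi-u_\psi|$.

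The only step needing care is checking that the branch of $\cot^{-1}$ with values in $(0,\pi)$ is smooth along the whole segment, so that the mean value theorem applies. This holds because $\cot^{-1}:\mathbb{R}\to(0,\pi)$ is smooth and $x(s)$ is finite for every $s$ (as $\sin\Theta\ge\sin\epsilon_0>0$). There is therefore no genuine obstacle; the estimate is essentially the uniform bound on the cotangent weights.
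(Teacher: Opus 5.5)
Your proof is correct and uses the same argument as the paper: the mean value theorem in the variable $t=u_\phi-u_\psi$, with $g(0)=0$.

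The difference is only in how you bound the derivative. The paper bounds $|\partial_t p|$ abstractly by compactness on the box $[-\sqrt{\mathcal{E}(u)},\sqrt{\mathcal{E}(u)}]\times[1/C,C]\times[\epsilon_0,\pi-\epsilon_0]$, which is why its constant depends on $\mathcal{E}(u)$ and on the Ring lemma. You instead identify $|g'(s)|$ explicitly with the dual weight $c^*=1/c\le\cot(\Theta/2)\le\cot(\epsilon_0/2)$. That gives the sharper, $u$-independent constant $C_1=\cot(\epsilon_0/2)$; the extra factor $2$ in your final choice of $C_1$ is unnecessary but harmless.
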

\begin{proof}
 Let $C$ be the constant in the Ring lemma \ref{lem:ring}. Given $u \in \mathbf{D}(F)$, we consider the analytic function for $t \in [-\sqrt{\mathcal{E}(u)},\sqrt{\mathcal{E}(u)}]$, $r \in [\frac{1}{C},C]$ and $\theta \in [\epsilon_0,\pi-\epsilon_0]$
\[
p(t,r,\theta) = 2\cot^{-1}\left(\frac{r - \cos \theta}{\sin \theta}\right) - 2\cot^{-1}\left(\frac{e^{t} r - \cos \theta}{\sin \theta}\right).
\]
We observe that $p(0,r,\theta) = 0$ for all $r$ and $\theta$. The mean value theorem yields
\[
p(t,r,\theta)= p(t,r,\theta)-p(0,r,\theta) = \partial_t p(\xi,r,\theta) \cdot t
\]
for some $\xi \in [0,t]$. 

Since the domain of $p$ is compact, the partial derivative is bounded
\[
C_1 := \sup_{t \in [-\sqrt{\mathcal{E}(u)},\sqrt{\mathcal{E}(u)}], r \in [\frac{1}{C},C], \theta \in [\epsilon_0,\pi-\epsilon_0]} |\partial_t p(t,r,\theta)|
\]
and hence we have
\[
|p(t,r,\theta)| \leq C_1 |t|.
\]
For any dual edge $\phi \psi \in E^*$, we have $|u_{\phi}-u_{\psi}| \leq \sqrt{\mathcal{E}(u)}$, $\frac{R^\dagger_\phi}{R^\dagger_\psi} \in [\frac{1}{C},C]$ and $\Theta_{\phi\psi} \in [\epsilon_0,\pi-\epsilon_0]$, we deduce the claimed inequality via
\[
|p(u_{\phi}-u_{\psi},\frac{R^\dagger_\phi}{R^\dagger_\psi},\Theta_{\phi\psi})| \leq C_1 |u_{\phi}-u_{\psi}|.
\] 
\end{proof}

\begin{proposition}\label{prop:gradient}
	For every $u \in \mathbf{D}(F)$ and oriented edge $\phi \psi \in \vec{E}^*$, we have
	\[
	\mathcal{K}^{*}_{\phi \psi}(u) = - \mathcal{K}^{*}_{\psi \phi}(u).
	\] 
   There is a well-defined map $\mathcal{K}^*:\mathbf{D}(F) \to \mathbf{D}^*(F)$ such that for every $u,v \in \mathbf{D}(F)$
	\[
	\langle \! \langle\mathcal{K}^*(u), v \rangle \! \rangle := \frac{1}{2} \sum_{\phi \psi \in \vec{E}^*} \mathcal{K}^{*}_{\phi \psi}(u) (v_{\phi}-v_{\psi}) = \sum_{\phi \psi \in E^*} \mathcal{K}^{*}_{\phi \psi}(u) (v_{\phi}-v_{\psi})
	\]
	where $\langle \! \langle \cdot, \cdot \rangle \! \rangle$ is the duality pairing between $\mathbf{D}^*(F)$ and $\mathbf{D}(F)$.

    Furthermore, we have $u \in P(\Theta,R^{\dagger})$ if and only if $\iota^{\star} \circ\mathcal{K}^*(u)=0$ is trivial in $\mathbf{D}^*_{0}(F)$.
\end{proposition}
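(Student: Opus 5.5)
The proof has three parts: the antisymmetry of $\mathcal{K}^*$, the boundedness of the induced functional, and the characterisation of $P(\Theta,R^{\dagger})$.

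\textbf{Antisymmetry.} In each kite we have $\Theta_{\phi\psi} = \pi - \alpha_{\phi\psi} - \alpha_{\psi\phi}$. Since $\Theta$ is the same for $R^{\dagger}$ and for $R=e^uR^{\dagger}$, this gives $\alpha^{\dagger}_{\phi\psi}+\alpha^{\dagger}_{\psi\phi} = \alpha_{\phi\psi}+\alpha_{\psi\phi}$. Hence $\mathcal{K}^*_{\phi\psi}(u)+\mathcal{K}^*_{\psi\phi}(u) = 2(\alpha^{\dagger}_{\phi\psi}+\alpha^{\dagger}_{\psi\phi}) - 2(\alpha_{\phi\psi}+\alpha_{\psi\phi}) = 0$. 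One should confirm that the $\cot^{-1}$ branches in the definition agree with the half-angles of Lemma \ref{lemma:cotweight}. They do, because both half-angles lie in $(0,\pi)$ by Proposition \ref{prop:bound}.

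\textbf{Boundedness.} Antisymmetry also shows that each summand $\mathcal{K}^*_{\phi\psi}(u)(v_\phi-v_\psi)$ is unchanged under reversing orientation, so the oriented sum is twice the unoriented sum. For absolute convergence, Lemma \ref{lem:gedge} and Cauchy--Schwarz give
\[
\sum_{\phi\psi\in E^*} |\mathcal{K}^*_{\phi\psi}(u)|\,|v_\phi-v_\psi| \le C_1 \sqrt{\mathcal{E}(u)}\sqrt{\mathcal{E}(v)} \le C_1\sqrt{\mathcal{E}(u)}\,\|v\|.
\]
Thus $v\mapsto \langle\!\langle \mathcal{K}^*(u),v\rangle\!\rangle$ is a bounded linear functional on $\mathbf{D}(F)$, and $\mathcal{K}^*$ is well defined.

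\textbf{Characterisation.} Take $v=\mathbf{1}_\phi$, the indicator of a single face $\phi$; it is finitely supported and lies in $\mathbf{D}_0(F)$. The pairing then picks out the edges at $\phi$ with the correct orientation:
\[
\langle\!\langle \mathcal{K}^*(u),\mathbf{1}_\phi\rangle\!\rangle = \sum_{\psi\sim\phi}\mathcal{K}^*_{\phi\psi}(u) = 2\sum_{\psi\sim\phi}\alpha^{\dagger}_{\phi\psi} - 2\sum_{\psi\sim\phi}\alpha_{\phi\psi}.
\]
Since $R^{\dagger}\in\tilde P(\Theta)$, we have $\sum_{\psi}\alpha^{\dagger}_{\phi\psi}=\pi$. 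So the pairing vanishes exactly when $\sum_\psi \alpha_{\phi\psi}=\pi$, which is \eqref{eq:nonuR} at $\phi$ for $R=e^uR^{\dagger}$.

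Suppose $\iota^\star\mathcal{K}^*(u)=0$. Then the pairing vanishes on every $\mathbf{1}_\phi$, so $e^uR^{\dagger}\in\tilde P(\Theta)$ and hence $u\in P(\Theta,R^{\dagger})$.

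Conversely, suppose $u\in P(\Theta,R^{\dagger})$. Then the pairing vanishes on every $\mathbf{1}_\phi$, hence by linearity on all finitely supported functions. Since $\mathcal{K}^*(u)$ is bounded, it vanishes on the closure $\mathbf{D}_0(F)$ as well, so $\iota^\star\mathcal{K}^*(u)=0$.

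The main obstacle is bookkeeping: matching the sign and orientation convention of the oriented-edge pairing so that testing against $\mathbf{1}_\phi$ yields exactly the angle sum at $\phi$ with a plus sign. The analytic parts reduce to Lemma \ref{lem:gedge} together with density of finitely supported functions in $\mathbf{D}_0(F)$.
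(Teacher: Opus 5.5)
Your proof is correct and follows essentially the same route as the paper. Antisymmetry comes from the triangle angle sum $\alpha_{\phi\psi}+\alpha_{\psi\phi}=\pi-\Theta_{\phi\psi}$ in both patterns, boundedness from Lemma \ref{lem:gedge} plus Cauchy--Schwarz, and the characterisation from testing against face indicators and extending to $\mathbf{D}_0(F)$ by continuity. Your explicit computation that pairing with $\mathbf{1}_\phi$ gives $\sum_{\psi\sim\phi}\mathcal{K}^*_{\phi\psi}(u)$, together with your use of $\sum_{\psi}\alpha^\dagger_{\phi\psi}=\pi$, simply spells out steps the paper leaves implicit.
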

\begin{proof}
We first prove the antisymmetry $\mathcal{K}^*_{\phi \psi}(u) = - \mathcal{K}^*_{\psi \phi}(u)$ for every dual edge $\phi \psi \in \vec{E}^*$. Observe that for the corresponding kite with adjacent circumcenters $\phi$ and $\psi$, the inner angles of the triangles $\phi \psi i$ for the circles with radii $e^{u_{\phi}}R^{\dagger}_{\phi}$ and $e^{u_{\psi}}R^{\dagger}_{\psi}$ are given by 
\[
\alpha_{\phi \psi}, \quad \alpha_{\psi \phi} \quad \Theta_{\phi\psi}.
\]
Since the sum of the inner angles of a triangle is $\pi$, we have for any $u_{\phi},u_{\psi}$
\begin{align*}
\mathcal{K}^*_{\phi \psi}(u) + \mathcal{K}^*_{\psi \phi}(u) &= 2(\alpha^{\dagger}_{\phi \psi} - \alpha_{\phi \psi}) + 2(\alpha^{\dagger}_{\psi \phi} - \alpha_{\psi \phi}) \\
&= 2(\alpha^{\dagger}_{\phi \psi} + \alpha^{\dagger}_{\psi \phi}) - 2(\alpha_{\phi \psi} + \alpha_{\psi \phi}) \\
&= 2(\pi - \Theta_{\phi\psi}) - 2(\pi - \Theta_{\phi\psi}) =0.    
\end{align*}
Thus, we deduce the claim that $\mathcal{K}^*_{\phi \psi}(u) = - \mathcal{K}^*_{\psi \phi}(u)$.

It is obvious from the definition that $\mathcal{K}^*(u)$ is a linear functional on $\mathbf{D}(F)$ and it remains to show that it is bounded. Lemma \ref{lem:gedge} indicates that there exists a constant $C_1=C_1(\mathcal{E}(u),\Theta,\epsilon_0)>0$ such that for every dual edge $\phi \psi \in E^*$
\begin{equation*}
|\mathcal{K}^*_{\phi \psi}(u)| \leq C_1 |u_{\phi}-u_{\psi}|.
\end{equation*} 
Hence, by the Cauchy-Schwarz inequality, for any $v \in \mathbf{D}(F)$, we have
\begin{align*}
|\langle \! \langle\mathcal{K}^*(u), v \rangle \! \rangle| &= |\sum_{\phi \psi \in E^*} \mathcal{K}^{*}_{\phi \psi}(u) (v_{\phi}-v_{\psi})| \\
&\leq C_1 \sum_{\phi \psi \in E^*} |u_{\phi}-u_{\psi}| |v_{\phi}-v_{\psi}| \\
&\leq C_1 \sqrt{\mathcal{E}(u)} \sqrt{\mathcal{E}(v)}
\end{align*}
which implies that $\mathcal{K}^*(u) \in \mathbf{D}^{\star}(F)$.

Finally, we observe that $u \in P(\Theta,R^{\dagger})$ if and only if for every face $\phi \in F$
\[\sum_{\psi} \mathcal{K}^*_{\phi \psi}(u) =0.\]
This is equivalent to saying that for every indicator function associated with a face $\phi \in F$ 
\[
\chi^{\phi}_{\psi} := \begin{cases}  1 & \text{ if } \psi = \phi \\ 0 & \text{ otherwise}
\end{cases}
\]
we have
\[\langle \! \langle\mathcal{K}^*(u), \chi^{\phi} \rangle \! \rangle = \sum_{\psi} \mathcal{K}^*_{\phi \psi}(u) =0.\]
Since the span of the indicator functions is dense in $\mathbf{D}_{0}(F)$, we deduce that $u \in P(\Theta,R^{\dagger})$ if and only if $\iota^{\star} \circ\mathcal{K}^*(u)$ is trivial in $\mathbf{D}^*_{0}(F)$.
\end{proof}

Consequently, the space of circle patterns is the zero set of $\iota^{\star} \circ \mathcal{K}^*$
\[
P(\Theta,R^{\dagger}) = (\iota^{\star} \circ \mathcal{K}^*)^{-1}(0).
\]
We further investigate the differential of the map $\mathcal{K}^*$.
\begin{proposition}\label{prop:selfadjoint}
    For every $u \in \mathbf{D}(F)$, the differential of $\mathcal{K}^*$ at $u$ is given by the linear map $d\mathcal{K}^*_u: \mathbf{D}(F) \to \mathbf{D}^*(F)$ such that for every $v,\tilde{v} \in \mathbf{D}(F)$
    \[
    \langle \! \langle d\mathcal{K}^*_u(v), \tilde{v} \rangle \! \rangle = \sum_{\phi \psi \in E^*} c^*_{\phi\psi} (v_{\phi}- v_{\psi})(\tilde{v}_{\phi}- \tilde{v}_{\psi}) =  \langle \! \langle v, d\mathcal{K}^*_u(\tilde{v}) \rangle \! \rangle 
    \]
    where $c^*_{\phi\psi}$ are the dual edge weights associated with the circle pattern with radii $R = e^{u} R^{\dagger}$ and intersection angles $\Theta$.
\end{proposition}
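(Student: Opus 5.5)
We compute the derivative edge by edge and then check that the termwise derivatives assemble into a bounded operator that is in fact the Fréchet derivative. Fix an oriented dual edge $\phi\psi$ and set $t = u_\phi - u_\psi$, $r = R^\dagger_\phi/R^\dagger_\psi$, $\theta = \Theta_{\phi\psi}$. Then $\mathcal{K}^*_{\phi\psi}(u) = p(t,r,\theta)$, with $p$ the function from the proof of Lemma \ref{lem:gedge}, and the whole dependence on $u$ is through $t$. Writing $x = (e^t r - \cos\theta)/\sin\theta$, we have
\[
\partial_t p = \frac{2}{1+x^2}\cdot \frac{e^t r}{\sin\theta}.
\]
Set $\rho = e^t r = R_\phi/R_\psi$, where $R = e^u R^\dagger$. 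A direct simplification gives
\[
\partial_t p = \frac{2\rho\sin\theta}{\rho^2 - 2\rho\cos\theta + 1} = \frac{2R_\phi R_\psi \sin\Theta_{\phi\psi}}{R_\phi^2+R_\psi^2-2R_\phi R_\psi\cos\Theta_{\phi\psi}} = c^*_{\phi\psi},
\]
by Proposition \ref{prop:infincot} and Definition \ref{def:geoedgeweight}. Note that $c^*_{\phi\psi}$ is symmetric in $\phi$ and $\psi$, which is consistent with the antisymmetry in Proposition \ref{prop:gradient}.

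From this, the directional derivative of $\mathcal{K}^*_{\phi\psi}$ in the direction $v$ is $c^*_{\phi\psi}(v_\phi - v_\psi)$. Hence the candidate differential is the map
\[
L_u(v) := \Big(\tilde v \mapsto \sum_{\phi\psi\in E^*} c^*_{\phi\psi}(v_\phi-v_\psi)(\tilde v_\phi-\tilde v_\psi)\Big).
\]
By Proposition \ref{prop:bound} and Corollary \ref{cor:equivalence}, the weights $c^*$ are uniformly bounded by a constant depending only on $\mathcal{E}(u)$, $\Theta$ and $\epsilon_0$. Cauchy--Schwarz then gives $L_u(v) \in \mathbf{D}^*(F)$, with $\|L_u\| \le \sup c^*$. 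The symmetry $\langle\!\langle L_u(v),\tilde v\rangle\!\rangle = \langle\!\langle v, L_u(\tilde v)\rangle\!\rangle$ is immediate from the formula.

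The main obstacle is proving that $L_u$ is the Fréchet derivative, not merely a termwise Gateaux derivative. We need
\[
\sup_{\mathcal{E}(\tilde v)\le 1} \Big|\langle\!\langle \mathcal{K}^*(u+v)-\mathcal{K}^*(u)-L_u(v),\tilde v\rangle\!\rangle\Big| = o(\|v\|).
\]
For each edge, Taylor's theorem with remainder gives
\[
\big|p(t+s,r,\theta) - p(t,r,\theta) - \partial_t p(t,r,\theta)\,s\big| \le \tfrac12 M s^2,
\]
where $s = v_\phi - v_\psi$. Here $M$ is the supremum of $|\partial_t^2 p|$ over the compact set $|t| \le \sqrt{\mathcal{E}(u)}+1$, $r\in[1/C,C]$, $\theta\in[\epsilon_0,\pi-\epsilon_0]$; it is finite because $p$ is analytic there, by the Ring lemma \ref{lem:ring} exactly as in Lemma \ref{lem:gedge}. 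This requires $\sqrt{\mathcal{E}(v)} \le 1$, so that $|s| \le 1$. Summing and using Cauchy--Schwarz, the error is bounded by
\[
\tfrac12 M \sum |v_\phi-v_\psi|^2\,|\tilde v_\phi-\tilde v_\psi| \le \tfrac12 M \sup|v_\phi-v_\psi| \cdot \mathcal{E}(v)^{1/2}\mathcal{E}(\tilde v)^{1/2} \le \tfrac12 M\,\mathcal{E}(v)\,\mathcal{E}(\tilde v)^{1/2},
\]
which is $o(\|v\|)$. The same uniform bound on $\partial_t^2 p$ also shows that $u\mapsto d\mathcal{K}^*_u$ is continuous, which will be needed later for the submanifold statement.
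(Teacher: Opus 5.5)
Your proposal is correct and follows the same route as the paper: differentiate the single-edge function in $t=u_\phi-u_\psi$, simplify the result to $c^*_{\phi\psi}$, assemble the bilinear form, and read off self-adjointness from $c^*_{\phi\psi}=c^*_{\psi\phi}$. The only difference is that you also show $L_u$ is bounded and is genuinely the Fréchet derivative, and that it depends continuously on $u$, using a uniform bound on $\partial_t^2 p$; the paper leaves this step implicit, although the later rank-theorem argument in Proposition \ref{prop:hilbert} relies on it.
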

\begin{proof}
    For every oriented dual edge $\phi \psi \in \vec{E}^*$, we compute the partial derivative
    \begin{align*}
        \frac{\partial \mathcal{K}^*_{\phi \psi}}{\partial u_{\phi}} &= -\frac{\partial }{\partial u_{\phi}} \cot^{-1}\!\left( \frac{e^{u_{\phi}-u_{\psi}} \frac{R^{\dagger}_\phi}{R^{\dagger}_\psi} -   \cos \Theta_{\phi\psi}}{ \sin \Theta_{\phi\psi}} \right) \\
        &= \frac{2}{1+\left(\frac{e^{u_{\phi}-u_{\psi}} \frac{R^{\dagger}_\phi}{R^{\dagger}_\psi} -   \cos \Theta_{\phi\psi}}{ \sin \Theta_{\phi\psi}}\right)^2} \cdot \frac{e^{u_{\phi}-u_{\psi}} \frac{R^{\dagger}_\phi}{R^{\dagger}_\psi}}{\sin \Theta_{\phi\psi}}\\
        &= c^*_{\phi\psi}
    \end{align*}
    Similarly, we have
    \[
    \frac{\partial \mathcal{K}^*_{\phi \psi}}{\partial u_{\psi}} = -c^*_{\phi\psi}.
    \]
    Thus, for every $v,\tilde{v} \in \mathbf{D}(F)$, we have
    \begin{align*}
        \langle \! \langle d\mathcal{K}^*_u(v), \tilde{v} \rangle \! \rangle &=  \sum_{\phi \psi \in E^*}  \left(  \frac{\partial \mathcal{K}^*_{\phi \psi}}{\partial u_{\phi}} v_{\phi} +  \frac{\partial \mathcal{K}^*_{\phi \psi}}{\partial u_{\psi}} v_{\psi}  \right) (\tilde{v}_{\phi}-\tilde{v}_{\psi})\\
        &=  \sum_{\phi \psi \in E^*} c^*_{\phi\psi} (v_{\phi}- v_{\psi})(\tilde{v}_{\phi}- \tilde{v}_{\psi}).
    \end{align*}
    The self-adjointness of $d\mathcal{K}^*_u$ follows from the symmetry $c^*_{\phi\psi}= c^*_{\psi\phi}$.
\end{proof}

We are now in position to prove that $P(\Theta,R^{\dagger})$ is a Hilbert submanifold of $\mathbf{D}(F)$.

\begin{proposition} \label{prop:hilbert}
	The space $P(\Theta,R^{\dagger})$ is a Hilbert submanifold in the Hilbert space $\mathbf{D}(F)$.
\end{proposition}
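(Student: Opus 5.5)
We will realize $P(\Theta,R^{\dagger})$ as a regular level set and apply the implicit function theorem in Hilbert spaces. By Proposition \ref{prop:gradient}, $P(\Theta,R^{\dagger}) = (\iota^{\star}\circ\mathcal{K}^*)^{-1}(0)$, where $\iota^{\star}\circ\mathcal{K}^*:\mathbf{D}(F)\to\mathbf{D}_0^{\star}(F)$. Two things remain to show. First, this map is $C^1$ (indeed smooth) near every point. Second, at every zero its differential $\iota^{\star}\circ d\mathcal{K}^*_u$ is surjective with a complemented kernel. The submanifold structure then follows by the standard submersion theorem for Banach/Hilbert spaces. The tangent space will be $\ker(\iota^{\star}\circ d\mathcal{K}^*_u)=\mathbf{HD}_{c^*}(F)$.

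\textbf{Step 1: smoothness.} We show that $u\mapsto\mathcal{K}^*(u)$ is Fréchet differentiable with derivative given by Proposition \ref{prop:selfadjoint}. Fix $u$ and restrict to a ball of radius $1$ around it, so that $\mathcal{E}(u+w)$ stays bounded. The function $p(t,r,\theta)$ from Lemma \ref{lem:gedge} is analytic on the compact set $[-M,M]\times[1/C,C]\times[\epsilon_0,\pi-\epsilon_0]$, so its second $t$-derivative is uniformly bounded there. Taylor's theorem applied edgewise then gives
\[
\bigl|\mathcal{K}^*_{\phi\psi}(u+w)-\mathcal{K}^*_{\phi\psi}(u)-c^*_{\phi\psi}(w_\phi-w_\psi)\bigr|\le C_2|w_\phi-w_\psi|^2 .
\]
Pairing with $v$ and applying Cauchy--Schwarz, together with $\sum|w_\phi-w_\psi|^2|v_\phi-v_\psi|\le \sup|w_\phi-w_\psi|\,\sqrt{\mathcal{E}(w)\mathcal{E}(v)}\le \mathcal{E}(w)\sqrt{\mathcal{E}(v)}$, yields a remainder of size $O(\|w\|^2)$ in operator norm. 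The same argument applied to the edge weights $c^*$, which depend smoothly and uniformly on $u_\phi-u_\psi$, shows that $u\mapsto d\mathcal{K}^*_u$ is continuous, and iterating gives $C^\infty$. Composition with the bounded linear map $\iota^{\star}$ preserves this.

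\textbf{Step 2: surjectivity and splitting.} By Proposition \ref{prop:selfadjoint}, $\langle\!\langle d\mathcal{K}^*_u(v),\tilde v\rangle\!\rangle=\mathcal{E}_{c^*}(v,\tilde v)$. By Corollary \ref{cor:equivalence} and Proposition \ref{prop:equivalentweight}, $c^*$ is equivalent to the combinatorial weight, so the inner product $\langle\cdot,\cdot\rangle_{c^*}$ is an equivalent norm on $\mathbf{D}(F)$ and $\mathbf{D}_{c^*,0}(F)=\mathbf{D}_0(F)$. Restricted to $\mathbf{D}_0(F)$, the form $\mathcal{E}_{c^*}$ is coercive because the graph is transient. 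Indeed, on $\mathbf{D}_0$ in a transient graph, the point evaluation $|v_o|^2$ is controlled by the Dirichlet energy, so $\mathcal{E}_{c^*}$ defines an equivalent norm on $\mathbf{D}_0(F)$. Lax--Milgram then implies that every $\ell\in\mathbf{D}_0^{\star}(F)$ is represented as $\mathcal{E}_{c^*}(v,\cdot)$ for some $v\in\mathbf{D}_0(F)$. Hence $\iota^{\star}\circ d\mathcal{K}^*_u$ is already surjective when restricted to $\mathbf{D}_0(F)$, and that restriction is an isomorphism onto $\mathbf{D}_0^{\star}(F)$. The kernel consists of $v$ with $\mathcal{E}_{c^*}(v,\chi^\phi)=0$ for all $\phi$, which is exactly $\mathbf{HD}_{c^*}(F)$. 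The Royden decomposition \eqref{eq:Royden} for $c^*$ shows that $\mathbf{D}(F)=\mathbf{D}_0(F)\oplus\mathbf{HD}_{c^*}(F)$, so the kernel is closed and complemented by $\mathbf{D}_0(F)$.

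\textbf{Step 3: conclusion and obstacle.} The implicit function theorem now gives the result. Near each $u_0\in P(\Theta,R^{\dagger})$, the set is the graph of a smooth map from a neighborhood in $\mathbf{HD}_{c^*}(F)$ to $\mathbf{D}_0(F)$, hence a Hilbert submanifold. It is infinite-dimensional since $\mathbf{HD}(F)$ is, the graph being transient of hyperbolic type. The main obstacle is Step 1. The uniformity of the edgewise Taylor estimates must hold on a whole neighborhood, and this relies on the bound $\sup|u_\phi-u_\psi|\le\sqrt{\mathcal{E}(u)}$ together with the Ring lemma constants of Corollary \ref{cor:generalring}. A secondary check is that the coercivity of $\mathcal{E}_{c^*}$ on $\mathbf{D}_0(F)$ with respect to the full norm, constant term included, follows from transience.
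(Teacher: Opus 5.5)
Your proposal is correct and follows the paper's argument: you write $P(\Theta,R^{\dagger})$ as the zero set of $\iota^{\star}\circ\mathcal{K}^*$, identify the kernel of its differential as $\mathbf{HD}_{c^*}(F)$, obtain surjectivity by Riesz/Lax--Milgram for $\mathcal{E}_{c^*}$ on $\mathbf{D}_0(F)$, use the Royden splitting as the complement, and apply the rank/submersion theorem. Your proof also adds two things the paper leaves implicit: the Fréchet smoothness of $\mathcal{K}^*$ from your edgewise Taylor estimate, and the fact that $\mathcal{E}_{c^*}$ gives a complete norm on $\mathbf{D}_0(F)$ by transience. The latter is needed because merely excluding constants gives only positivity, not completeness.
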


\begin{proof}
    We investigate the differential of $\iota^{\star}\circ \mathcal{K}^*:\mathbf{D}(F) \to \mathbf{D}^*_{0}(F)$. Since $\iota^{\star}$ is linear, we have
    \[d(\iota^{\star} \circ \mathcal{K}^*)_u = \iota^{\star} \circ d\mathcal{K}^*_u.\]
    For any $u \in \mathbf{D}(F)$, the differential is given by for $v \in \mathbf{D}(F), \tilde{v} \in \mathbf{D}_{0}(F)$
    \begin{align*}
    	\langle \! \langle \iota^{\star} \circ d\mathcal{K}^*_u(v), \tilde{v} \rangle \! \rangle &= \langle \! \langle d\mathcal{K}^*_u(v), \tilde{v} \rangle \! \rangle = \sum_{\psi \sim \phi} c^*_{\phi\psi} (v_{\phi}- v_{\psi})(\tilde{v}_{\phi}- \tilde{v}_{\psi}).
    \end{align*}
    Taking $\tilde{v}$ to be the indicator function of a face $\phi \in F$, we deduce that
    \[\left(\iota^{\star} \circ d\mathcal{K}^*_u(v)\right)_{\phi} = \sum_{\psi} c^*_{\phi\psi} (v_{\phi}- v_{\psi}).\]
    Thus, the differential $\iota^{\star} \circ d\mathcal{K}^*_u$ coincides with the graph Laplacian associated with the geometric edge weights $c^*_{\phi\psi}$ and
    \[
    \ker \left(\iota^{\star} \circ d\mathcal{K}^*_u\right) = \mathbf{HD}_{c^*}(F).
    \]

    Next, we show that the image of $\iota^{\star} \circ d\mathcal{K}^*_u$ to $\mathbf{D}^*_{0}(F)$ is surjective, following the Riesz representation theorem. Since the geometric edge weight  $c^*_{\phi\psi}$ is equivalent to the combinatorial edge weight and $\mathbf{D}_0(F)$ does not contain non-trivial constant functions, the Dirichlet energy $\mathcal{E}_{c^*}$ defines an inner product on $\mathbf{D}_0(F)$ whose norm is complete. Thus for any $h \in \mathbf{D}^*_{0}(F)$, the Riesz representation theorem with respect to the inner product $\mathcal{E}_{c^*}$ on $\mathbf{D}_0(F)$ yields a unique function $v \in \mathbf{D}_0(F)$ such that for any $\tilde{v}\in \mathbf{D}_0(F)$,
	\begin{align*}
		\langle \! \langle h, \tilde{v} \rangle \! \rangle  = \mathcal{E}_{c^*}\left(v,\tilde{v} \right)= \sum_{\psi \sim \phi} c^*_{\phi\psi} (v_{\phi}- v_{\psi})(\tilde{v}_{\phi}- \tilde{v}_{\psi}) = \langle \! \langle \iota^{\star} \circ d\mathcal{K}^*_u(v), \tilde{v} \rangle \! \rangle. 
	\end{align*}
    Since this holds for all $\tilde{v} \in \mathbf{D}_0(F)$, we deduce that
    \[\iota^{\star} \circ d\mathcal{K}^*_u(v) = h\]    
    and we obtain the surjectivity of $\iota^{\star} \circ d\mathcal{K}^*_u$ to $\mathbf{D}^*_{0}(F)$.
    
	Since for any $u \in \mathbf{D}(F)$, the differential of $\iota^{\star} \circ \mathcal{K}^*$ at $u$ is a bounded linear map from $\mathbf{D}(F)$ to $\mathbf{D}^*_{0}(F)$ satisfying
	\[
	\mathbf{D}(F) = \mathbf{D}_0(F) \oplus \ker \left( d(\iota^{\star} \circ \mathcal{K}^*)_u \right), \quad \mathbf{D}^*_{0}(F) = d(\iota^{\star} \circ \mathcal{K}^*)_u(\mathbf{D}(F))
	\] 
    the rank theorem \cite[Theorem 2.5.15]{Abraham1988} yields that $P(\Theta,R^{\dagger}) = (\iota^{\star} \circ \mathcal{K}^*)^{-1}(0)$ is a Hilbert manifold and the tangent space \[
	T_u P(\Theta,R^{\dagger})= \ker \left( d(\iota^{\star} \circ \mathcal{K}^*)_u \right)= \mathbf{HD}_{c^*}(F)
	\]
	consists of discrete harmonic functions with respect to the geometric edge weight $c^*_{\phi\psi}$.  
\end{proof}

\subsection{A relative volume functional $\mathcal{W}^*$}\label{sec:Wstar}
We observe that the map $\mathcal{K}^*$ admits a potential functional. Indeed, Proposition \ref{prop:selfadjoint} indicates that the differential $d\mathcal{K}^*_u$ is self-adjoint for every $u \in \mathbf{D}(F)$. Since $\mathbf{D}(F)$ is a vector space and in particular simply connected, by the Poincar\'e lemma, we can integrate the 1-form defined by $\mathcal{K}^*$ to obtain a functional $\mathcal{W}^*: \mathbf{D}(F) \to \mathbb{R}$ whose gradient is $\mathcal{K}^*$. Namely,
$\mathcal{W}^*$ can be defined by integrating along the line segment from the origin:
    \begin{align*}
        \mathcal{W}^*(u) &= \int_{0}^{1} \langle \! \langle \mathcal{K}^*(tu), u \rangle \! \rangle \, dt \\
        &= \sum_{\phi \psi \in E^*} \int_{0}^{1} \mathcal{K}^{*}_{\phi \psi}(tu) (u_{\phi}-u_{\psi}) \, dt.
    \end{align*}

Below, we establish an explicit formula for $\mathcal{W}^*$, which is required in the next section to prove coercivity and apply variational methods to the deformation space $P(\Theta,R^{\dagger})$. We also summarize the primary properties of $\mathcal{W}^*$.

\begin{definition}\label{def:Wstarfunctional}
	 For every dual edge $\phi \psi \in E^*$,  we define an analytic function $\mathcal{W}^*_{\phi\psi}: \mathbf{D}(F) \to \mathbb{R}$ as
	\begin{align*}
		\mathcal{W}^*_{\phi\psi}(u) :=& \Im \Li\left(e^{u_\phi - u_\psi} \frac{R^{\dagger}_\phi}{R^{\dagger}_\psi} e^{\mathbf{i}\Theta_{\phi\psi}}\right) + \Im \Li\left(e^{u_\psi - u_\phi} \frac{R^{\dagger}_\psi}{R^{\dagger}_\phi} e^{\mathbf{i}\Theta_{\psi\phi}}\right)\\ &-\Im \Li\left(\frac{R^{\dagger}_\phi}{R^{\dagger}_\psi} e^{\mathbf{i}\Theta_{\phi\psi}}\right) - \Im \Li\left(\frac{R^{\dagger}_\psi}{R^{\dagger}_\phi} e^{\mathbf{i}\Theta_{\psi\phi}}\right) \\ & + (\alpha^{\dagger}_{\phi\psi} - \alpha^{\dagger}_{\psi\phi})(u_\phi - u_\psi)  \\
        =& \mathcal{W}^*_{\psi\phi}(u)
	\end{align*}
    where $\Im \Li(z)$ is the imaginary part of the dilogarithm function 
	\[
	\Li(z) = -\int_{0}^{z} \frac{\ln(1-w)}{w} \, dw \quad \text{for } z \in \mathbb{C} \setminus [1, +\infty).
	\]
    Then the relative volume functional $\mathcal{W}^*:\mathbf{D}(F) \to \mathbb{R}$ is defined by summing over all dual edges
    \[
    \mathcal{W}^*(u) = \sum_{\phi \psi \in E^*} \mathcal{W}^*_{\phi\psi}(u).
    \]
\end{definition}

\begin{proposition}\label{prop:potential}
   The functional $\mathcal{W}^*$ is well-defined and satisfies the following properties:
     \begin{enumerate}
        \item $\mathcal{W}^*_{\phi\psi}(0) = 0$ for every dual edge $\phi \psi \in E^*$, and hence $\mathcal{W}^*(0) = 0$.
        \item For every $u \in \mathbf{D}(F)$, the differential is $d\mathcal{W}^*(u) = \mathcal{K}^*(u)$.
        \item The origin $u=0$ is the unique critical point of $\mathcal{W}^*$ in $\mathbf{D}(F)$.
        \item We have $u \in P(\Theta,R^{\dagger})$ if and only if $d\mathcal{W}^*(u)|_{\mathbf{D}_0(F)} = 0$. 
        \item For every $u \in \mathbf{D}(F)$, the restriction of the functional $\mathcal{W}^*$ to the affine subspace $u + \mathbf{D}_0(F)$ is strictly convex.
     \end{enumerate}
\end{proposition}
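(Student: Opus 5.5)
The plan is to work edge by edge and then sum, so the first task is to show that the series defining $\mathcal{W}^*$ converges absolutely on $\mathbf{D}(F)$.

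\emph{Edge derivative.} Fix a dual edge $\phi\psi$ and set $t=u_\phi-u_\psi$. I would differentiate $\mathcal{W}^*_{\phi\psi}$ in $t$ using the classical identity
\[
\frac{d}{ds}\Im \Li(e^{s}e^{\ii\theta})=-\arg\bigl(1-e^{s}e^{\ii\theta}\bigr).
\]
Lemma \ref{lemma:cotweight} expresses this argument through the half-angles $\alpha_{\phi\psi}$ and $\alpha_{\psi\phi}$, using $\Theta_{\phi\psi}=\pi-\alpha_{\phi\psi}-\alpha_{\psi\phi}$. Adding the linear term $(\alpha^\dagger_{\phi\psi}-\alpha^\dagger_{\psi\phi})t$, I expect
\[
\frac{d}{dt}\mathcal{W}^*_{\phi\psi}=\mathcal{K}^*_{\phi\psi}(u),
\]
possibly after a sign or normalisation check. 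The same computation at $t=0$ gives item (1), since the four dilogarithm terms cancel and the linear term vanishes.

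\emph{Convergence and item (2).} By Proposition \ref{prop:selfadjoint}, $\frac{d}{dt}\mathcal{K}^*_{\phi\psi}=c^*_{\phi\psi}$. The edge functions $\mathcal{K}^*_{\phi\psi}$ and $\mathcal{W}^*_{\phi\psi}$ are analytic in $(t,r,\theta)$ on the compact range $|t|\le\sqrt{\mathcal{E}(u)}$, $r\in[1/C,C]$ given by the Ring lemma, and $\theta\in[\epsilon_0,\pi-\epsilon_0]$. Taylor expansion, in the same way as Lemma \ref{lem:gedge}, then gives
\[
|\mathcal{W}^*_{\phi\psi}(u)|\le C_2\,|u_\phi-u_\psi|^2 .
\]
Summing over edges, $\mathcal{W}^*$ converges absolutely with $|\mathcal{W}^*(u)|\le C_2\mathcal{E}(u)$. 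For item (2), I would compare $\mathcal{W}^*(u+v)-\mathcal{W}^*(u)-\langle\!\langle\mathcal{K}^*(u),v\rangle\!\rangle$ edgewise. The remainder is bounded by $\sup|c^*|\,|v_\phi-v_\psi|^2$, uniformly on bounded sets by Proposition \ref{prop:bound}. This is $O(\mathcal{E}(v))$, which gives Fréchet differentiability with differential $\mathcal{K}^*(u)$. Alternatively, $\mathcal{W}^*$ agrees with the line integral $\int_0^1\langle\!\langle\mathcal{K}^*(tu),u\rangle\!\rangle\,dt$ given before Definition \ref{def:Wstarfunctional}.

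\emph{Items (4) and (5).} Item (4) is Proposition \ref{prop:gradient} rewritten using item (2). For item (5), the second derivative of $\mathcal{W}^*$ along a direction $v\in\mathbf{D}_0(F)$ is $\mathcal{E}_{c^*}(v)$ by Proposition \ref{prop:selfadjoint}. This is positive for $v\neq0$, since $c^*>0$ and $\mathbf{D}_0(F)$ contains no nonzero constants. So $t\mapsto\mathcal{W}^*(u+tv)$ is strictly convex for every such $v$, which is item (5).

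\emph{Item (3), the main obstacle.} A critical point $u$ satisfies $\mathcal{K}^*_{\phi\psi}(u)=0$ on every edge, because $\mathcal{K}^*(u)$ pairs to zero with every $v\in\mathbf{D}(F)$. Take $v$ to be the indicator of a single vertex: this gives only vertex sums, not individual edges. The key observation I would use is that the functional $\langle\!\langle\mathcal{K}^*(u),\cdot\rangle\!\rangle$ vanishing on all of $\mathbf{D}(F)$ forces the edge $1$-form $\mathcal{K}^*(u)$ to be orthogonal to all gradients. Combined with the bound $|\mathcal{K}^*_{\phi\psi}|\le C_1|u_\phi-u_\psi|$, I would then test against $v=u$ itself:
\[
0=\langle\!\langle\mathcal{K}^*(u),u\rangle\!\rangle=\sum_{\phi\psi} \mathcal{K}^*_{\phi\psi}(u)(u_\phi-u_\psi).
\]
Each summand is nonnegative, because $\mathcal{K}^*_{\phi\psi}$ is strictly increasing in $t$ and vanishes at $t=0$, so it has the sign of $u_\phi-u_\psi$. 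Hence every summand is zero, so every difference $u_\phi-u_\psi$ vanishes and $u$ is constant. The subtlety is that $\mathcal{W}^*$ is invariant under adding constants, so uniqueness must be read modulo constants, i.e.\ on $\mathbf{D}(F)/\mathbb{R}$ or with $u_o$ normalised. Handling this normalisation correctly is the step I expect to need care.
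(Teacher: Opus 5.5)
Your proposal is correct. For items (1), (2), (4) and (5) it is the paper's argument: differentiate the dilogarithm terms edgewise, identify the result with half-angles through Lemma~\ref{lemma:cotweight}, and read (4) and (5) off Propositions~\ref{prop:gradient} and~\ref{prop:selfadjoint}.

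The sign you hedge on does work out. One has $\partial_{u_\phi}\mathcal{W}^*_{\phi\psi}=\alpha_{\psi\phi}-\alpha_{\phi\psi}+\alpha^\dagger_{\phi\psi}-\alpha^\dagger_{\psi\phi}$. This equals $2\alpha^\dagger_{\phi\psi}-2\alpha_{\phi\psi}=\mathcal{K}^*_{\phi\psi}(u)$, because $\alpha_{\phi\psi}+\alpha_{\psi\phi}=\alpha^\dagger_{\phi\psi}+\alpha^\dagger_{\psi\phi}=\pi-\Theta_{\phi\psi}$.

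The paper's proof stops after summing these partial derivatives. It never checks that $\sum_{\phi\psi}\mathcal{W}^*_{\phi\psi}$ converges, nor that the differential is a Fr\'echet one. Your quadratic bound and your remainder estimate fill exactly that gap. You can also drop the Ring-lemma compactness step. Proposition~\ref{prop:bound} gives $c_{ij}\ge\tan\frac{\epsilon_0}{2}$ for every $u$, so $0<c^*_{\phi\psi}\le\cot\frac{\epsilon_0}{2}$ globally. Hence $0\le\mathcal{W}^*_{\phi\psi}(u)\le\frac12\cot\frac{\epsilon_0}{2}\,|u_\phi-u_\psi|^2$, and the Taylor remainder in (2) is at most $\frac12\cot\frac{\epsilon_0}{2}\,\mathcal{E}(v)$, uniformly in $u$.

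Item (3) is where you genuinely differ from the paper. The paper only calls it a restatement of Propositions~\ref{prop:gradient} and~\ref{prop:selfadjoint}, so it implicitly relies on convexity through the Hessian $\mathcal{E}_{c^*}$. You instead pair $\mathcal{K}^*(u)$ with $u$ itself. Each $\mathcal{K}^*_{\phi\psi}$ is strictly increasing in $u_\phi-u_\psi$ and vanishes at $0$, so every summand is nonnegative and must vanish. This is the edgewise form of strict monotonicity of the gradient. It is valid, and it avoids any termwise second-derivative justification.

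Your caveat is also correct and exposes a real defect in the statement. $\mathcal{W}^*$ and $\mathcal{K}^*$ depend only on differences, so $\mathcal{K}^*(u)=0$ for every constant $u$. The critical set is therefore exactly the constants, and (3) holds only in $\mathbf{D}(F)/\mathbb{R}$.

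One sentence should be deleted. You open that paragraph by claiming that a critical point has $\mathcal{K}^*_{\phi\psi}(u)=0$ on every edge \emph{because} $\langle\!\langle\mathcal{K}^*(u),\cdot\rangle\!\rangle$ vanishes on $\mathbf{D}(F)$. That does not follow by itself: the signed indicator of an oriented dual cycle pairs to zero with every gradient without being zero. Edgewise vanishing is a consequence of your $v=u$ test, not an input to it.
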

\begin{proof}
    We first compute the partial derivative of $\mathcal{W}^*_{\phi\psi}$ with respect to $u_{\phi}$. By Lemma \ref{lemma:cotweight}, we have
    \begin{align*}
        &\frac{\partial \mathcal{W}^*_{\phi\psi}}{\partial u_{\phi}}\\ =& \frac{\partial }{\partial u_{\phi}} \Im \Li\left(e^{u_\phi - u_\psi} \frac{R^{\dagger}_\phi}{R^{\dagger}_\psi} e^{\mathbf{i}\Theta_{\phi\psi}}\right) + \frac{\partial }{\partial u_{\phi}} \Im \Li\left(e^{u_\psi - u_\phi} \frac{R^{\dagger}_\psi}{R^{\dagger}_\phi} e^{\mathbf{i}\Theta_{\psi\phi}}\right) + (\alpha^{\dagger}_{\phi\psi} - \alpha^{\dagger}_{\psi\phi}) \\
        =& - \Im \left( \ln\left(1- e^{u_\phi - u_\psi} \frac{R^{\dagger}_\phi}{R^{\dagger}_\psi} e^{\mathbf{i}\Theta_{\phi\psi}}\right) \right) + \Im \left( \ln\left(1- e^{u_\psi - u_\phi} \frac{R^{\dagger}_\psi}{R^{\dagger}_\phi} e^{\mathbf{i}\Theta_{\psi\phi}}\right) \right) + (\alpha^{\dagger}_{\phi\psi} - \alpha^{\dagger}_{\psi\phi})\\
        =& - \alpha_{\phi\psi} + \alpha_{\psi\phi} + (\alpha^{\dagger}_{\phi\psi} - \alpha^{\dagger}_{\psi\phi}) = \mathcal{K}^*_{\phi\psi}(u).
    \end{align*}
    Similarly, we have
    \[\frac{\partial \mathcal{W}^*_{\phi\psi}}{\partial u_{\psi}} = -\mathcal{K}^*_{\phi\psi}(u).\]
    Thus, for every $u,v \in \mathbf{D}(F)$, we have
    \begin{align*}
        \langle \! \langle d\mathcal{W}^*(u), v \rangle \! \rangle &= \sum_{\phi \psi \in E          ^*} \left( \frac{\partial \mathcal{W}^*_{\phi\psi}}{\partial u_{\phi}} v_{\phi} + \frac{\partial \mathcal{W}^*_{\phi\psi}}{\partial u_{\psi}} v_{\psi} \right) \\
        &= \sum_{\phi \psi \in E^*} \mathcal{K}^*_{\phi\psi}(u) (v_{\phi} - v_{\psi}) = \langle \! \langle \mathcal{K}^*(u), v \rangle \! \rangle.
    \end{align*}
    The remaining properties are restatements of Proposition \ref{prop:gradient} and Proposition \ref{prop:selfadjoint}.
\end{proof}

\subsection{Projection to the space of discrete harmonic Dirichlet functions $\mathbf{HD}(F)$}

For any edge weight $c^*$ equivalent to the combinatorial edge weight,  the Royden decomposition yields a decomposition of the space of Dirichlet functions $\mathbf{D}(F)$ as
\[
\mathbf{D}(F) = \mathbf{D}_0(F) \oplus \mathbf{HD}(F) = \mathbf{D}_0(F) \oplus \mathbf{HD}_{c^*}(F).
\]
where the direct sum is orthogonal with respect to the pairing $\mathcal{E}_{c^*}(u,v)$. We let $p_{c^*}:\mathbf{D}(F) \to \mathbf{HD}_{c^*}(F)$ the projection onto the harmonic part, which is a bounded linear operator. In particular, we denote by $p_F:\mathbf{D}(F) \to \mathbf{HD}(F)$ the orthogonal projection with respect to the combinatorial edge weight $c^*\equiv 1$. 

Our goal is to show that the restriction of $p_F$ to the Hilbert manifold $P(\Theta,R^\dagger)\subset \mathbf{D}(F)$ is a homeomorphism to $\mathbf{HD}(F)$. We first show that $p_F|_{P(\Theta,R^\dagger)}$ is a local homeomorphism. Subsequently, using a variational argument involving the functional $\mathcal{W}^*$, we show that $p_F|_{P(\Theta,R^\dagger)}$ is bijective.

\begin{proposition}\label{prop:localhomHDF}
The projection $p_F|_{P(\Theta,R^\dagger)}: P(\Theta,R^\dagger) \to \mathbf{HD}(F)$ is a local homeomorphism.
\end{proposition}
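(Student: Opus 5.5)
We will prove this with the inverse function theorem on Hilbert manifolds, applied to the smooth restriction $p_F|_{P(\Theta,R^\dagger)}$. First, $p_F$ is a bounded linear operator on $\mathbf{D}(F)$, so it is smooth. Its restriction to the submanifold $P(\Theta,R^\dagger)$ is therefore smooth, and its differential at $u$ is $p_F$ restricted to the tangent space. By Proposition \ref{prop:hilbert}, this tangent space is $T_uP(\Theta,R^\dagger)=\mathbf{HD}_{c^*}(F)$, where $c^*$ are the geometric dual edge weights of the pattern $e^uR^\dagger$. It therefore suffices to show that
\[
p_F|_{\mathbf{HD}_{c^*}(F)}:\mathbf{HD}_{c^*}(F)\to \mathbf{HD}(F)
\]
is a bounded linear isomorphism. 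The inverse function theorem \cite{Abraham1988} then gives a local diffeomorphism, and in particular a local homeomorphism.

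For the linear isomorphism we use the two Royden decompositions. By Corollary \ref{cor:equivalence}, $c^*$ is equivalent to the combinatorial weight. Hence Proposition \ref{prop:equivalentweight} gives $\mathbf{D}_{c^*,0}(F)=\mathbf{D}_0(F)$, and
\[
\mathbf{D}(F)=\mathbf{D}_0(F)\oplus\mathbf{HD}(F)=\mathbf{D}_0(F)\oplus\mathbf{HD}_{c^*}(F),
\]
with the same closed complement $\mathbf{D}_0(F)$. The kernel of $p_F$ is $\mathbf{D}_0(F)$, so the kernel of $p_F|_{\mathbf{HD}_{c^*}(F)}$ is $\mathbf{HD}_{c^*}(F)\cap\mathbf{D}_0(F)=\{0\}$, and the map is injective.

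For surjectivity, take $h\in\mathbf{HD}(F)$ and write $h=h'+v$ with $h'=p_{c^*}(h)\in\mathbf{HD}_{c^*}(F)$ and $v\in\mathbf{D}_0(F)$. Then $p_F(h')=p_F(h)-p_F(v)=h$. So $p_F|_{\mathbf{HD}_{c^*}(F)}$ is a continuous linear bijection between Hilbert spaces, and its inverse $h\mapsto p_{c^*}(h)$ is bounded, either explicitly or by the open mapping theorem. Both spaces are closed subspaces of $\mathbf{D}(F)$, since kernels of the Laplacians are closed.

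The step needing most care is applying the inverse function theorem correctly on the submanifold, since the chart structure comes from the rank theorem. Concretely, locally around $u$ the manifold $P(\Theta,R^\dagger)$ is the graph over $T_u=\mathbf{HD}_{c^*}(F)$ of a smooth map into $\mathbf{D}_0(F)$, with zero derivative at $u$. I would take this chart $\chi:U\subset \mathbf{HD}_{c^*}(F)\to P(\Theta,R^\dagger)$ and apply the ordinary inverse function theorem to $p_F\circ\chi:U\to\mathbf{HD}(F)$. Its derivative at the base point is $p_F|_{\mathbf{HD}_{c^*}(F)}$, which we showed is an isomorphism. Smoothness of $\mathcal{K}^*$, needed for the rank theorem, was implicitly used in Proposition \ref{prop:hilbert}. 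If needed, it follows from the analyticity of each edge term and bounds like Lemma \ref{lem:gedge} applied to the derivatives, uniform on energy-bounded sets.
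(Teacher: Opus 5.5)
Your proposal is correct and follows the paper's proof: the paper likewise identifies $T_uP(\Theta,R^\dagger)=\mathbf{HD}_{c^*}(F)$, uses Corollary \ref{cor:equivalence} to get $\mathbf{D}_{c^*,0}(F)=\mathbf{D}_0(F)$, and observes that $p_F|_{\mathbf{HD}_{c^*}(F)}$ is an isomorphism with inverse $p_{c^*}|_{\mathbf{HD}(F)}$, leaving the inverse function theorem implicit. Your chart argument goes further than needed: the graph map $\chi(w)=u+w+g(w)$ has $g$ valued in $\mathbf{D}_0(F)=\ker p_F$, so $p_F\circ\chi(w)=p_F(u)+p_F(w)$ is already the restriction of an affine isomorphism, and no inverse function theorem is required at that step.
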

\begin{proof}
For $u \in P(\Theta,R^\dagger)$, we have $T_u P(\Theta,R^\dagger) = \mathbf{HD}_{c^*}(F)$ where $c^*$ is the geometric edge weight associated with the circle pattern with radii $e^u R^{\dagger}$ and is equivalent to the combinatorial edge weight by Corollary \ref{cor:equivalence}. Thus, $p_F|_{T_u P(\Theta,R^\dagger)}: T_u P(\Theta,R^\dagger) \to \mathbf{HD}(F)$ is an isomorphism whose inverse is $p_{c^*}|_{\mathbf{HD}(F)}$.
\end{proof}

To show that the projection $p_F|_{P(\Theta,R^{\dagger})}$ is a bijection onto $\mathbf{HD}(F)$, we employ a variational method involving the functional $\mathcal{W}^*: \mathbf{D}(F) \to \mathbb{R}$ introduced in the previous section. Specifically, we consider the restriction of $\mathcal{W}^*$ to the affine subspace $h + \mathbf{D}_0(F)$ for an arbitrary fixed $h \in \mathbf{HD}(F)$. To simplify the notation, we consider the following functional on the subspace $\mathbf{D}_0(F)$.

\begin{definition}
	Given a uniformized circle pattern with radii $R^{\dagger}:F \to \mathbb{R}_{>0}$ and intersection angles $\Theta$. For every fixed $h \in \mathbf{HD}(F)$, we define the functional $\mathcal{W}^*_h: \mathbf{D}_0(F) \to \mathbb{R}$ as 
	\begin{align*}
		\mathcal{W}^*_h(u) :=& \sum_{\phi \psi} \bigg[\Im \Li\left(e^{u_\phi - u_\psi} \frac{e^{h_{\phi}}R^{\dagger}_\phi}{e^{h_{\psi}}R^{\dagger}_\psi} e^{\mathbf{i}\Theta_{\phi\psi}}\right) + \Im \Li\left(e^{u_\psi - u_\phi} \frac{e^{h_{\psi}}R^{\dagger}_\psi}{e^{h_{\phi}}R^{\dagger}_\phi} e^{\mathbf{i}\Theta_{\psi\phi}}\right) \\
		& -  \Im \Li\left(\frac{e^{h_{\phi}}R^{\dagger}_\phi}{e^{h_{\psi}}R^{\dagger}_\psi} e^{\mathbf{i}\Theta_{\phi\psi}}\right) - \Im \Li\left(\frac{e^{h_{\psi}}R^{\dagger}_\psi}{e^{h_{\phi}}R^{\dagger}_\phi} e^{\mathbf{i}\Theta_{\psi\phi}}\right) + (\alpha^{\dagger}_{\phi\psi} - \alpha^{\dagger}_{\psi\phi})(u_\phi - u_\psi)]  \\
		=& \sum_{\phi\psi} (\mathcal{W}_{\phi \psi}^*(h+u) - \mathcal{W}_{\phi \psi}^*(h)) \\=& \mathcal{W}^*(h+u) - \mathcal{W}^*(h).
	\end{align*}
\end{definition}

From Proposition \ref{prop:potential}, we know that $u \in \mathbf{D}_0(F)$ is a critical point of $\mathcal{W}_h^*$ if and only if $u + h \in P(\Theta,R^\dagger)$. The strong convexity of $\mathcal{W}_h^*$ on $\mathbf{D}_0(F)$ implies that there exists at most one critical point of $\mathcal{W}_h^*$ on $\mathbf{D}_0(F)$. To show the existence of such a critical point, it suffices to show that $\mathcal{W}_h^*$ is coercive, i.e., $\mathcal{W}_h^*(u) \to +\infty$ as $\|u\| \to +\infty$. Indeed, this implies that $\mathcal{W}_h^*$ attains its minimum at some element in $\mathbf{D}_0(F)$, which is the desired critical point.

In order to show coercivity, we have to analyse carefully the asymptotic growth of $\mathcal{W}_h^*$ when $\|u\|\to +\infty$. Proposition \ref{prop:bound} indicates that the geometric edge weight $c^*_{\phi\psi}$ might go to zero as $|u_{\phi}-u_{\psi}| \to +\infty$ and hence the Hessian of $\mathcal{W}_h^*$ might degenerate. Along such directions, we derive linear lower bounds for the growth of $\mathcal{W}_h^*$. In contrast, if $\|u\|\to +\infty$ while $|u_{\phi}-u_{\psi}|$ remains bounded for every dual edge $\phi \psi \in E^*$, the geometric edge weight $c^*_{\phi\psi}$ remains uniformly bounded. In this case, the Hessian of $\mathcal{W}_h^*$ is uniformly positive definite and we derive quadratic lower bounds for the growth of $\mathcal{W}_h^*$. In the next two lemmas, we make these arguments precise. 

\begin{lemma}\label{lem:liformula}
Let $x \in \mathbb{R}$ and $\theta \in (0, \pi)$. Then
\begin{equation*}
    \Im \operatorname{Li}_2(e^{x+\mathbf{i}\theta}) + \Im \operatorname{Li}_2(e^{-x+\mathbf{i}\theta}) > (\pi - \theta)|x|.
\end{equation*}
\end{lemma}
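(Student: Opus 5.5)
The plan is to reduce the inequality to an exact identity coming from the inversion formula for the dilogarithm, plus a positivity statement. Set
\[
f(x) := \Im \Li\bigl(e^{x+\ii\theta}\bigr) + \Im \Li\bigl(e^{-x+\ii\theta}\bigr).
\]
Then $f$ is even in $x$, and so is the right-hand side $(\pi-\theta)|x|$. It therefore suffices to treat $x \ge 0$.

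First I would recall the inversion formula
\[
\Li(z) + \Li(1/z) = -\frac{\pi^2}{6} - \frac12 \log^2(-z), \qquad z \in \mathbb{C}\setminus[0,\infty),
\]
with the principal branch of $\log$. It applies to $z=e^{x+\ii\theta}$ because $\theta\in(0,\pi)$ keeps $z$ off the real axis. In that case
\[
\log(-z) = x + \ii(\theta-\pi), \qquad \Im \log^2(-z) = 2x(\theta-\pi).
\]
Next I would use $\Li(\bar w)=\overline{\Li(w)}$ with $w=e^{-x+\ii\theta}$, which gives
\[
\Im \Li\bigl(1/z\bigr) = \Im \Li\bigl(e^{-x-\ii\theta}\bigr) = -\Im \Li\bigl(e^{-x+\ii\theta}\bigr).
\]
Taking imaginary parts of the inversion formula then yields the exact identity
\[
\Im \Li\bigl(e^{x+\ii\theta}\bigr) = \Im \Li\bigl(e^{-x+\ii\theta}\bigr) + (\pi-\theta)\,x .
\]
Substituting this into $f$, for $x\ge 0$ we get
\[
f(x) = (\pi-\theta)\,x + 2\,\Im \Li\bigl(e^{-x+\ii\theta}\bigr).
\]
The claim is thus equivalent to $\Im\Li(re^{\ii\theta})>0$ for $r=e^{-x}\in(0,1]$ and $\theta\in(0,\pi)$.

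For this positivity, I would differentiate in the modulus, as in the proof of Proposition \ref{prop:potential}:
\[
\frac{d}{ds}\,\Im \Li\bigl(se^{\ii\theta}\bigr) = -\frac{1}{s}\,\arg\bigl(1-se^{\ii\theta}\bigr).
\]
Since $\Im(1-se^{\ii\theta}) = -s\sin\theta < 0$, the argument lies in $(-\pi,0)$. Hence the derivative is strictly positive for $s>0$. The integrand near $s=0$ behaves like $\sin\theta$, so it is integrable. Integrating from $0$, where $\Li(0)=0$, up to $r$ gives $\Im\Li(re^{\ii\theta})>0$.

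This yields strict inequality for every $x$, including $x=0$, where it reads $2\,\Im\Li(e^{\ii\theta})>0$, the Clausen function. The only delicate point is the branch bookkeeping in the inversion formula, namely confirming that $\arg(-z)=\theta-\pi$ under the principal branch. An independent cross-check is the derivative: $f'(x)=2\beta(x)-(\pi-\theta)$, where $\beta(x) := -\arg(1-e^{x+\ii\theta})$ is the kite half-angle of Lemma \ref{lemma:cotweight}. This tends to $\pi-\theta$ as $x\to\infty$, consistent with the asymptotic slope in the identity.
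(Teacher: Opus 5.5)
Your proposal is correct and follows the same route as the paper's proof: evenness, then the inversion formula yielding $f(x)=(\pi-\theta)x+2\Im\Li(e^{-x+\ii\theta})$, then positivity of $\Im\Li$ on the upper half of the unit disk. Your derivative-in-the-modulus argument adds two things the paper leaves out. It proves that positivity, which the paper only asserts. It also covers $x=0$, where the claim is $2\Im\Li(e^{\ii\theta})>0$; the paper's reduction to $x>0$ together with its open-disk positivity claim does not address this case.
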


\begin{proof}
Let $f(x) = \Im \operatorname{Li}_2(e^{x+\mathbf{i}\theta}) + \Im \operatorname{Li}_2(e^{-x+\mathbf{i}\theta})$. Since $f(x)$ is an even function of $x$, it suffices to consider $x > 0$. We utilize the inversion formula for the dilogarithm
\begin{equation}
    \operatorname{Li}_2(z) + \operatorname{Li}_2(1/z) = -\frac{\pi^2}{6} - \frac{1}{2}(\ln(-z))^2.
\end{equation}
Let $z = e^{x+\mathbf{i}\theta}$. Taking the imaginary part of the identity and observing that $\Im \operatorname{Li}_2(1/z) = \Im \operatorname{Li}_2(e^{-x-\mathbf{i}\theta}) = -\Im \operatorname{Li}_2(e^{-x+\mathbf{i}\theta})$, we obtain
\begin{equation}
    \Im \operatorname{Li}_2(e^{x+\mathbf{i}\theta}) - \Im \operatorname{Li}_2(e^{-x+\mathbf{i}\theta}) = -\Im\left(\frac{1}{2}(x + \mathbf{i}(\theta - \pi))^2\right).
\end{equation}
The right-hand side simplifies to $(\pi - \theta)x$. Substituting this back into the expression for $f(x)$ yields
\begin{equation}
    f(x) = (\pi - \theta)x + 2\Im \operatorname{Li}_2(e^{-x+\mathbf{i}\theta}).
\end{equation}
For $x > 0$, we have $|e^{-x+\mathbf{i}\theta}| < 1$. Since $\Im \operatorname{Li}_2(w) > 0$ for $w$ in the upper half-plane within the unit disk, the term $2\Im \operatorname{Li}_2(e^{-x+\mathbf{i}\theta})$ is strictly positive. Therefore, $f(x) > (\pi - \theta)x$.
\end{proof}

\begin{lemma}\label{lem:plus}
	Given a uniformized circle pattern with radii $R^{\dagger}:F \to \mathbb{R}_{>0}$, intersection angles $\Theta:\ E^* \to (\epsilon_0, \pi - \epsilon_0)$, and fixed $h \in \mathbf{HD}(F)$. Then there exist constants $\eta, M>0$ depending on $\epsilon_0$, $R^{\dagger}$ and $\mathcal{E}(h)$ such that for every $u \in \mathbf{D}_0(F)$ and every dual edge $\phi \psi \in E^*$, we have
	\begin{align*}
		\mathcal{W}^*_{\phi\psi}(u+h) - \mathcal{W}^*_{\phi\psi}(h) \geq 2\eta |u_{\phi}-u_{\psi}| - M.
	\end{align*}
	In particular, if $|u_{\phi}-u_{\psi}| \geq \frac{M}{\eta}$, then
	\begin{align*}
		\mathcal{W}^*_{\phi\psi}(u+h) - \mathcal{W}^*_{\phi\psi}(h) \geq \eta |u_{\phi}-u_{\psi}|.
	\end{align*}
\end{lemma}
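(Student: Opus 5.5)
Fix a dual edge $\phi\psi$ and set $x := \log\frac{e^{h_\phi}R^\dagger_\phi}{e^{h_\psi}R^\dagger_\psi}$, $t := u_\phi-u_\psi$ and $\theta := \Theta_{\phi\psi}$. The plan is to rewrite the edge term as a function of $t$, bound its dilogarithm part below via Lemma \ref{lem:liformula}, and absorb everything else into constants.

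\textbf{Reduction.} By Definition \ref{def:Wstarfunctional}, and using $\Theta_{\psi\phi}=\Theta_{\phi\psi}$, the difference splits as
\[
\mathcal{W}^*_{\phi\psi}(u+h)-\mathcal{W}^*_{\phi\psi}(h) = g(x+t) - g(x) + (\alpha^\dagger_{\phi\psi}-\alpha^\dagger_{\psi\phi})\, t ,
\]
where $g(y) := \Im\Li(e^{y+\ii\theta}) + \Im\Li(e^{-y+\ii\theta})$. Note that $g$ is exactly the function $f$ of Lemma \ref{lem:liformula}.

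\textbf{Bounding the pieces.} Lemma \ref{lem:liformula} gives $g(x+t) > (\pi-\theta)|x+t| \ge (\pi-\theta)|t| - \pi|x|$.

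For $|x|$, Corollary \ref{cor:generalring} applied to $h$ gives $|x| \le \log C + \sqrt{\mathcal{E}(h)}$.

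Hence $g(x)$ ranges over a compact set of arguments: $|x|$ is bounded as above and $\theta\in[\epsilon_0,\pi-\epsilon_0]$. Since $\Im\Li$ is bounded on the unit circle and $g$ is continuous, $g(x)$ is bounded above by a constant $M_1$. (Alternatively, from the proof of Lemma \ref{lem:liformula}, $g(y) = (\pi-\theta)|y| + 2\Im\Li(e^{-|y|+\ii\theta})$, and $|\Im\Li|$ is bounded on the closed unit disk.)

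For the linear term, note that $\alpha^\dagger_{\phi\psi}+\alpha^\dagger_{\psi\phi} = \pi-\theta$ with both half-angles in $(0,\pi)$. Therefore
\[
|\alpha^\dagger_{\phi\psi}-\alpha^\dagger_{\psi\phi}| = \pi-\theta - 2\min(\alpha^\dagger_{\phi\psi},\alpha^\dagger_{\psi\phi}).
\]
Proposition \ref{prop:bound} with $u=0$ gives $\alpha^\dagger \ge \eta_0 := \cot^{-1}\bigl((C+1)/\sin\epsilon_0\bigr) > 0$.

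\textbf{Combining.} Putting these together,
\[
\mathcal{W}^*_{\phi\psi}(u+h)-\mathcal{W}^*_{\phi\psi}(h) \ge (\pi-\theta)|t| - \pi|x| - M_1 - (\pi-\theta-2\eta_0)|t| = 2\eta_0|t| - M .
\]
Here $M := \pi(\log C+\sqrt{\mathcal{E}(h)}) + M_1$, so we take $\eta := \eta_0$.

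The second claim is immediate: if $|t| \ge M/\eta$, then $2\eta|t| - M \ge \eta|t|$.

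\textbf{Main obstacle.} The delicate point is the linear correction term. Its coefficient must be strictly smaller than $\pi-\theta$, uniformly over edges, so that it does not cancel the linear growth coming from Lemma \ref{lem:liformula}. This is exactly where the Ring-lemma-based lower bound on the uniformized half-angles $\alpha^\dagger$ is needed. Everything else reduces to checking that all constants are uniform over edges, which holds because $\Theta$ avoids $\epsilon_0$-neighbourhoods of $0$ and $\pi$ and because $\sup|h_\phi-h_\psi| \le \sqrt{\mathcal{E}(h)}$.
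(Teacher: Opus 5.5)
Your proof is correct and follows essentially the same route as the paper. You apply Lemma \ref{lem:liformula} to get the growth $(\pi-\Theta_{\phi\psi})|x+t|$, and you use $\alpha^\dagger_{\phi\psi}+\alpha^\dagger_{\psi\phi}=\pi-\Theta_{\phi\psi}$ so that the linear correction term leaves $2\min\{\alpha^\dagger_{\phi\psi},\alpha^\dagger_{\psi\phi}\}|t|$. You then bound the half-angles below via Proposition \ref{prop:bound} and absorb the $h$- and $R^\dagger$-dependent terms into a constant by compactness. The only difference is cosmetic: you bound $|x|$ and $g(x)$ separately, whereas the paper packages these terms into a single supremum $M$.
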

\begin{proof}
	Lemma \ref{lem:liformula} implies that
	\begin{align*}
		& \mathcal{W}^*_{\phi\psi}(u+h) - \mathcal{W}^*_{\phi\psi}(h) + \Im \Li\left(e^{h_{\phi}-h_{\psi}} \frac{R^{\dagger}_\phi}{R^{\dagger}_\psi} e^{\mathbf{i}\Theta_{\phi\psi}}\right) + \Im \Li\left(e^{h_{\psi}-h_{\phi}} \frac{R^{\dagger}_\psi}{R^{\dagger}_\phi} e^{\mathbf{i}\Theta_{\psi\phi}}\right)  \\
		\geq & (\pi - \Theta_{\phi\psi}) \left|u_{\phi}-u_{\psi}+h_{\phi}-h_{\psi}+ \log \frac{R^{\dagger}_\phi}{R^{\dagger}_\psi}\right|  
		 + (\alpha^{\dagger}_{\phi\psi} - \alpha^{\dagger}_{\psi\phi})(u_{\phi}-u_{\psi}) \\
		\geq &  (\alpha^{\dagger}_{\phi\psi} + \alpha^{\dagger}_{\psi\phi}) |u_{\phi}-u_{\psi}| - (\pi - \Theta_{\phi\psi}) \left|h_{\phi}-h_{\psi}+ \log \frac{R^{\dagger}_\phi}{R^{\dagger}_\psi}\right| + (\alpha^{\dagger}_{\phi\psi} - \alpha^{\dagger}_{\psi\phi})(u_{\phi}-u_{\psi}) \\
		\geq & 2 (\min\{\alpha^{\dagger}_{\phi\psi}, \alpha^{\dagger}_{\psi\phi}\} )|u_{\phi}-u_{\psi}| - (\pi - \Theta_{\phi\psi}) \left|h_{\phi}-h_{\psi}+ \log \frac{R^{\dagger}_\phi}{R^{\dagger}_\psi}\right|.
	\end{align*}
    Let $C>0$ be the constant in the Ring lemma (Lemma \ref{lem:ring}). Proposition \ref{prop:bound} indicates that for every dual edge $\phi \psi \in E^*$,
	\begin{align*}
	\alpha^{\dagger}_{\phi\psi} > \eta, \quad \alpha^{\dagger}_{\psi\phi} > \eta
	\end{align*}
	where $\eta = \cot^{-1} \left(\frac{C+1}{\sin \epsilon_0}\right)$. Furthermore, we define
	\[
	M= \sup\left( (\pi - \theta)\left|t+\log r \right| + \left|\Im \Li(e^{t}r e^{\mathbf{i}\theta})\right| + \left|\Im \Li(\frac{1}{e^t r} e^{\mathbf{i}\theta})\right| \right)
	\]
    where the supremum is taken over all $\theta \in [\epsilon_0, \pi - \epsilon_0]$, $r \in [\frac{1}{C}, C ]$ and $t \in [-\sqrt{\mathcal{E}(h)}, \sqrt{\mathcal{E}(h)}]$. The constant $M$ is finite since the domain is compact. Thus, we have
	\begin{align*}
		\mathcal{W}^*_{\phi\psi}(u+h) - \mathcal{W}^*_{\phi\psi}(h) \geq 2 \eta |u_{\phi}-u_{\psi}| - M.
	\end{align*}

\end{proof}

\begin{lemma}\label{lem:minus}
	Let $\eta, M>0$ be the constants in the previous lemma. There exists a constant $\lambda=\lambda(\frac{M}{\eta})>0$ such that for any dual edge $\phi \psi \in E^*$ with $|u_{\phi}-u_{\psi}| \leq \frac{M}{\eta}$, we have
	\begin{align*}
		\mathcal{W}^*_{\phi\psi}(u+h) - \mathcal{W}^*_{\phi\psi}(h) &>  \mathcal{K}^*_{\phi\psi}(h) \cdot (u_{\phi} - u_{\psi}) + \frac{\lambda}{2} |u_{\phi}-u_{\psi}|^2.
	\end{align*}
\end{lemma}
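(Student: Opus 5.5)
The plan is a second-order Taylor expansion of the single-edge function in the variable $t = u_\phi - u_\psi$, with the parameters $h$, $R^\dagger$, $\Theta$ confined to a compact set.

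For a fixed dual edge, consider the one-variable function
\[
g(t) := \mathcal{W}^*_{\phi\psi}(h + t\,\chi) - \mathcal{W}^*_{\phi\psi}(h),
\]
where $\chi$ is any function with $\chi_\phi - \chi_\psi = 1$. The edge term $\mathcal{W}^*_{\phi\psi}$ depends on its argument only through the difference across the edge, so $g$ is well defined and $g(u_\phi - u_\psi)$ is the left-hand side of the claim. We have $g(0)=0$.

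By the computation in the proof of Proposition \ref{prop:potential}, $g'(t) = \mathcal{K}^*_{\phi\psi}(h+t\chi)$, so $g'(0) = \mathcal{K}^*_{\phi\psi}(h)$. By the derivative computation in Proposition \ref{prop:selfadjoint}, $g''(t) = c^*_{\phi\psi}(h+t\chi)$. This is the dual weight
\[
\frac{2 \rho \sin\Theta_{\phi\psi}}{1+\rho^2-2\rho\cos\Theta_{\phi\psi}}, \qquad \rho = e^{t+h_\phi-h_\psi}\frac{R^\dagger_\phi}{R^\dagger_\psi}.
\]
Taylor's theorem with Lagrange remainder then gives
\[
g(t) = \mathcal{K}^*_{\phi\psi}(h)\, t + \tfrac12 c^*(\xi)\, t^2
\]
for some $\xi$ between $0$ and $t$.

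It remains to bound $c^*(\xi)$ below uniformly. On the range in question we have $|\xi|\le M/\eta$, $|h_\phi-h_\psi|\le\sqrt{\mathcal{E}(h)}$, $R^\dagger_\phi/R^\dagger_\psi\in[1/C,C]$ by the Ring lemma (Proposition \ref{lem:ring}), and $\Theta\in[\epsilon_0,\pi-\epsilon_0]$. Hence $\rho$ lies in the compact interval
\[
\left[C^{-1}e^{-M/\eta-\sqrt{\mathcal{E}(h)}},\; C e^{M/\eta+\sqrt{\mathcal{E}(h)}}\right].
\]
The weight is a continuous, strictly positive function of $(\rho,\Theta)$ on this compact set, because the denominator is positive, being at least $(1-\rho)^2+2\rho(1-\cos\epsilon_0)$. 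So its minimum $\lambda_0>0$ exists. Equivalently, by Proposition \ref{prop:bound}, $c_{ij}$ is bounded above by a constant depending on $M/\eta$, $C$, $\epsilon_0$ and $\mathcal{E}(h)$, and $\lambda_0$ is its reciprocal.

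To obtain the strict inequality, set $\lambda := \lambda_0/2$. Then $\tfrac12 c^*(\xi) t^2 \ge \tfrac{\lambda_0}{2}t^2 > \tfrac{\lambda}{2}t^2$ whenever $t\neq 0$. When $t=0$ both sides are zero, so the case $u_\phi=u_\psi$ yields only equality; the strict inequality there is an artifact of the statement, and I would note this (or read the claim as $\ge$).

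The only real point to watch is that $\lambda$ depends on $\mathcal{E}(h)$, $C$ and $\epsilon_0$ as well as on $M/\eta$. Since $M$ and $\eta$ already depend on these quantities, this is consistent with the notation $\lambda(M/\eta)$. There is no genuine obstacle: everything reduces to compactness of the parameter domain.
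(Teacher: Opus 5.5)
Your proposal is correct and follows essentially the same route as the paper: a second-order Taylor expansion of the single-edge function in $x=u_\phi-u_\psi$, with first derivative $\mathcal{K}^*_{\phi\psi}(h)$ at $0$, second derivative $c^*_{\phi\psi}$, and a uniform lower bound on $c^*_{\phi\psi}$ from the Ring lemma and Proposition \ref{prop:bound}. Your remark about the case $u_\phi=u_\psi$ is right, since the paper's own argument in fact only concludes $\geq$. You also check explicitly that the remainder point $\xi$ satisfies $|\xi|\le M/\eta$, a step the paper leaves implicit.
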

\begin{proof}
	Let $u \in \mathbf{D}_0(F)$ and $\phi \psi \in E^*$. We write $x = u_{\phi} - u_{\psi}$. It suffices to consider the function
	\begin{align*}
		f(x) =& \Im \Li\left(e^{x} \frac{e^{h_{\phi}}R^{\dagger}_\phi}{e^{h_{\psi}}R^{\dagger}_\psi} e^{\mathbf{i}\Theta_{\phi\psi}}\right) + \Im \Li\left(e^{-x} \frac{e^{h_{\psi}}R^{\dagger}_\psi}{e^{h_{\phi}}R^{\dagger}_\phi} e^{\mathbf{i}\Theta_{\psi\phi}}\right) \\
		& -  \Im \Li\left(\frac{e^{h_{\phi}}R^{\dagger}_\phi}{e^{h_{\psi}}R^{\dagger}_\psi} e^{\mathbf{i}\Theta_{\phi\psi}}\right) - \Im \Li\left(\frac{e^{h_{\psi}}R^{\dagger}_\psi}{e^{h_{\phi}}R^{\dagger}_\phi} e^{\mathbf{i}\Theta_{\psi\phi}}\right) + (\alpha^{\dagger}_{\phi\psi} - \alpha^{\dagger}_{\psi\phi}) x
	\end{align*}
	Note that $f(0)=0$. The first derivative and the second derivative of $f$ satisfy
	\begin{equation*}
    \left. \frac{df}{dx} \right|_{x = u_{\phi} - u_{\psi}} = \mathcal{K}^*_{\phi\psi}(u+h)
\end{equation*}
 and 
\begin{align*}
    \left. \frac{d^2f}{dx^2} \right|_{x = u_{\phi} - u_{\psi}} &= c_{\phi\psi}^*
\end{align*}
where $c_{\phi\psi}^*$ is the geometric edge weight assigned to the dual edge $\phi\psi$ with radii $e^{u+h} R^{\dagger}$ and intersection angles $\Theta$.

By Proposition \ref{prop:bound}, there exists a constant $\lambda=\lambda(\frac{M}{\eta})>0$ such that for every dual edge $\phi \psi \in E^*$ with $|u_{\phi}-u_{\psi}| \leq \frac{M}{\eta}$, we have
\begin{align*}
	c_{\phi\psi}^* &> \lambda.
\end{align*}
Thus, by Taylor's formula, we have for some $\xi$ between $0$ and $u_{\phi} - u_{\psi}$
\begin{align*}
	\mathcal{W}^*_{\phi\psi}(u+h) - \mathcal{W}^*_{\phi\psi}(h) &= f(u_{\phi} - u_{\psi}) \\&= f(0) + \left. \frac{df}{dx} \right|_{x=0} (u_{\phi} - u_{\psi}) + \frac{1}{2} \left. \frac{d^2f}{dx^2} \right|_{x=\xi} |u_{\phi} - u_{\psi}|^2 \\
	&\geq \mathcal{K}^*_{\phi\psi}(h) (u_{\phi} - u_{\psi}) + \frac{\lambda}{2} |u_{\phi}-u_{\psi}|^2.
\end{align*}
\end{proof}

Now we combine the above two lemmas to show the coercivity of the functional $\mathcal{W}^*_h$ on $\mathbf{D}_0(F)$.

\begin{proposition}
	Let $\eta, M>0$ be the constants in the previous lemmas. For any $u \in D_0(V)$, we decompose the dual edge set $E^*$ into two subsets 
	\begin{align*}
	E^*_{+}&=\{\phi \psi \in E^* \mid |u_\phi - u_\psi| \geq \frac{M}{\eta}\}, \\
	E^*_{-}&=\{\phi \psi \in E^* \mid |u_\phi - u_\psi| < \frac{M}{\eta}\} = E^* \setminus E^*_{+}.
	\end{align*}
We decompose the Dirichlet energy $\mathcal{E}(u)= \mathcal{E}_{+}(u)+\mathcal{E}_{-}(u)$, where
	\[
	\mathcal{E}_{+}(u)=\sum_{\phi \psi \in E^*_{+}} |u_{\phi}-u_{\psi}|^2, \quad \mathcal{E}_{-}(u)=\sum_{\phi \psi \in E^*_{-}} |u_{\phi}-u_{\psi}|^2.
	\]
Then there exist constants $\lambda=\lambda(\frac{M}{\eta})>0$ and $\tilde{C}_1=\tilde{C}_1(\mathcal{E}(h),\Theta,\epsilon_0)>0$ such that
\begin{align*}
	\mathcal{W}^*_h(u) &> \eta \sqrt{\mathcal{E}_{+}(u)} - \tilde{C}_1 \sqrt{\mathcal{E}_{-}(u)} + \frac{\lambda}{2} \mathcal{E}_{-}(u).
\end{align*}
In particular, the functional $\mathcal{W}^*_h$ is coercive on $\mathbf{D}_0(F)$.
\end{proposition}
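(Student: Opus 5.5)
The plan is to sum the per-edge estimates of Lemma \ref{lem:plus} and Lemma \ref{lem:minus} over the two edge classes, and then to bound the resulting linear term in $u$ on $E^*_{-}$ by Cauchy--Schwarz. Write
\[
\mathcal{W}^*_h(u)=\sum_{\phi\psi\in E^*_{+}}\bigl(\mathcal{W}^*_{\phi\psi}(u+h)-\mathcal{W}^*_{\phi\psi}(h)\bigr)+\sum_{\phi\psi\in E^*_{-}}\bigl(\mathcal{W}^*_{\phi\psi}(u+h)-\mathcal{W}^*_{\phi\psi}(h)\bigr).
\]
First one checks that this splitting is legitimate for an infinite sum. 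Take $u\in\mathbf{D}_0(F)$, and first let it be finitely supported. Then all but finitely many terms vanish and both sums are finite. The general case will follow by density at the end.

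On $E^*_{+}$, Lemma \ref{lem:plus} gives each term $\geq \eta|u_\phi-u_\psi|$. Summing and using $\sum_k a_k\geq(\sum_k a_k^2)^{1/2}$ for nonnegative $a_k$ gives a lower bound $\eta\sqrt{\mathcal{E}_{+}(u)}$.

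On $E^*_{-}$, Lemma \ref{lem:minus} gives each term $> \mathcal{K}^*_{\phi\psi}(h)(u_\phi-u_\psi)+\frac{\lambda}{2}|u_\phi-u_\psi|^2$. The quadratic parts sum to $\frac{\lambda}{2}\mathcal{E}_{-}(u)$. For the linear part, Lemma \ref{lem:gedge} applied to $h$ gives $|\mathcal{K}^*_{\phi\psi}(h)|\leq C_1|h_\phi-h_\psi|$ with $C_1=C_1(\mathcal{E}(h),\Theta,\epsilon_0)$. Cauchy--Schwarz then bounds the linear part below by $-C_1\sqrt{\mathcal{E}(h)}\sqrt{\mathcal{E}_{-}(u)}$. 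Setting $\tilde C_1:=C_1\sqrt{\mathcal{E}(h)}$ yields the stated inequality. The inequality is strict provided $E^*_{-}$ is nonempty, which always holds when $u$ is finitely supported.

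For coercivity, set $s=\sqrt{\mathcal{E}_{+}(u)}$ and $t=\sqrt{\mathcal{E}_{-}(u)}$, so that $s^2+t^2=\mathcal{E}(u)\to\infty$. The function $t\mapsto -\tilde C_1 t+\frac{\lambda}{2}t^2$ is bounded below by $-\tilde C_1^2/(2\lambda)$ and grows quadratically. Hence the right-hand side is at least $\eta s-\tilde C_1^2/(2\lambda)$ and also at least $-\tilde C_1 t+\frac{\lambda}{2}t^2$. Its maximum of the two tends to $+\infty$ whenever $\max(s,t)\to\infty$. Since $\mathcal{E}$ is a norm on $\mathbf{D}_0(F)$ (no nonzero constants are present, by transience), this is coercivity.

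The main obstacle is passing from finitely supported $u$ to general $u\in\mathbf{D}_0(F)$. Here the edge partition depends on $u$, and the series defining $\mathcal{W}^*$ must converge and be continuous. I would handle this by showing that $\mathcal{W}^*$ is continuous on $\mathbf{D}(F)$: its differential $\mathcal{K}^*$ is locally bounded by the estimate in Proposition \ref{prop:gradient}, so $\mathcal{W}^*$ is locally Lipschitz. Alternatively, one can note that the series converges absolutely because each summand is $O(|u_\phi-u_\psi|+|h_\phi-h_\psi|)^2$ near zero. The per-edge inequalities already hold for every $u$, so one can also argue directly on absolutely convergent series. The only care needed is that every sum involved converges absolutely, so that rearranging into $E^*_{\pm}$ is justified.
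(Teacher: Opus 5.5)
Your argument is correct and is essentially the paper's proof. You split the sum over $E^*_{+}$ and $E^*_{-}$. On $E^*_{+}$ you use Lemma \ref{lem:plus} with $\sum_k a_k\geq(\sum_k a_k^2)^{1/2}$. On $E^*_{-}$ you use Lemma \ref{lem:minus}, then Lemma \ref{lem:gedge} and Cauchy--Schwarz with $\tilde C_1=C_1\sqrt{\mathcal{E}(h)}$. Your coercivity step via the maximum of two lower bounds, and your absolute-convergence justification of the splitting (which the paper leaves implicit), are slightly cleaner than the paper's case split on $\mathcal{E}_{+}(u_n)$.

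One small correction: nonemptiness of $E^*_{-}$ does not give strictness. Edges with $u_\phi=u_\psi$ contribute equality, and at $u=0$ both sides vanish. So in general only $\geq$ holds (strictness needs $u\neq 0$), which is all coercivity requires.
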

\begin{proof}
Lemma \ref{lem:plus} yields that
\begin{align*}
	\sum_{\phi \psi \in E^*_{+}} \left( \mathcal{W}^*_{\phi\psi}(u+h) - \mathcal{W}^*_{\phi\psi}(h) \right) &\geq \eta \sum_{\phi \psi \in E^*_{+}}  |u_{\phi}-u_{\psi}| \\
	&\geq \eta \left(\sum_{\phi \psi \in E^*_{+}} |u_{\phi}-u_{\psi}|^2\right)^{1/2} = \eta \sqrt{\mathcal{E}_{+}(u)}.
\end{align*}
On the other hand, Lemma \ref{lem:minus} and Lemma \ref{lem:gedge} imply that
\begin{align*}
	\sum_{\phi \psi \in E^*_{-}} \left( \mathcal{W}^*_{\phi\psi}(u+h) - \mathcal{W}^*_{\phi\psi}(h) \right) &> \sum_{\phi \psi \in E^*_{-}} \left[ \mathcal{K}^*_{\phi\psi}(h) \cdot (u_{\phi} - u_{\psi}) + \frac{\lambda}{2} |u_{\phi}-u_{\psi}|^2 \right] \\
	&\geq -\sqrt{\sum_{\phi \psi \in E^*_{-}} |\mathcal{K}^*_{\phi\psi}(h)|^2} \sqrt{\mathcal{E}_{-}(u)} + \frac{\lambda}{2} \mathcal{E}_{-}(u) \\
	&\geq -C_1 \sqrt{\mathcal{E}(h)} \sqrt{\mathcal{E}_{-}(u)} + \frac{\lambda}{2} \mathcal{E}_{-}(u) 
\end{align*}
where $C_1$ is the constant in Lemma \ref{lem:gedge} and we take $\tilde{C}_1= C_1 \sqrt{\mathcal{E}(h)}$. Combining the two estimates, we obtain the desired inequality via
\[
\mathcal{W}^*_h(u) = \sum_{\phi \psi \in E^*_{+}} \left( \mathcal{W}^*_{\phi\psi}(u+h) - \mathcal{W}^*_{\phi\psi}(h) \right) + \sum_{\phi \psi \in E^*_{-}} \left( \mathcal{W}^*_{\phi\psi}(u+h) - \mathcal{W}^*_{\phi\psi}(h) \right).
\]

We claim that the functional $\mathcal{W}^*_h$ is coercive on $\mathbf{D}_0(F)$. Indeed, let $u_n \in \mathbf{D}_0(F)$ be a sequence with $\|u_n\| \to +\infty$. If $\mathcal{E}_{+}(u_n) \to +\infty$, then the inequality implies that $\mathcal{W}^*_h(u_n) \to +\infty$. Otherwise, we can assume that $\mathcal{E}_{+}(u_n)$ is bounded. Thus, $\mathcal{E}_{-}(u_n) \to +\infty$ and the inequality implies that $\mathcal{W}^*_h(u_n) \to +\infty$ as well. This proves the claim.
\end{proof}

We are now ready to show that the projection $p_F|_{P(\Theta,R^{\dagger})}$ yields a homeomorphism between $P(\Theta,R^{\dagger})$ and $\mathbf{HD}(F)$.

\begin{proof}[Proof of Theorem \ref{thm:homHDF}]
Proposition \ref{prop:localhomHDF} indicates that the projection $p_F|_{P(\Theta,R^{\dagger})}$ is a local homeomorphism. It remains to show that $p_F|_{P(\Theta,R^{\dagger})}$ is bijective. 

We have established that for any $h \in \mathbf{HD}(F)$, the functional $\mathcal{W}^*_h: \mathbf{D}_0(F) \to \mathbb{R}$ is strictly convex and coercive. Standard variational arguments imply that $\mathcal{W}^*_h$ attains a unique global minimum at some $v \in \mathbf{D}_0(F)$ (See \cite[Corollary 3.23]{Brezis2011}).

Let $u = h + v$. By Proposition \ref{prop:potential}, $v$ being a critical point of $\mathcal{W}^*_h$ is equivalent to
\[
d\mathcal{W}^*_h(v) = 0 \iff  d\mathcal{W}^*(h+v)|_{\mathbf{D}_0(F)} = 0.
\]
This condition precisely means that $u \in P(\Theta, R^{\dagger})$ with $p_F(u) = h$. 

The uniqueness of the minimum $v$ implies the uniqueness of $u$. Suppose there are two points $u_1, u_2 \in P(\Theta, R^{\dagger})$ such that $p_F(u_1) = p_F(u_2) = h$. Then we can write $u_1 = h + v_1$ and $u_2 = h + v_2$ for some $v_1, v_2 \in \mathbf{D}_0(F)$. Both $v_1$ and $v_2$ must be critical points of the strictly convex functional $\mathcal{W}^*_h$, hence $v_1 = v_2$ and $u_1 = u_2$. Therefore, $p_F|_{P(\Theta,R^{\dagger})}$ is a bijection.

Finally, since $p|_{P(\Theta,R^{\dagger})}$ is a local homeomorphism and bijective, it is a global homeomorphism between $P(\Theta,R^{\dagger})$ and $\mathbf{HD}(F)$.
\end{proof}

\section{Conjugate angle parametrization}

Elements in $P(\Theta, R^{\dagger})\subset \mathbf{HD}(F)$ parametrize circle patterns by the change in the logarithmic radii from the uniformized circle pattern. In this section, we introduce an alternative parametrization of circle patterns through the change in the half-angles at the circumcenters.

Recall that given a function $u: F \to \mathbb{R}$ and for every dual edge $\phi \psi \in E^*$, we deform the corresponding kite in the uniformized circle pattern to obtain a new kite with side lengths $e^{u_{\phi}} R^{\dagger}_{\phi}$ and $e^{u_{\psi}} R^{\dagger}_{\psi}$ meeting at an angle $\Theta_{\phi \psi}$ (See Figure \ref{fig:angle_at_center}). The deformed kite is uniquely determined in the plane up to translation and rotation. A flat Euclidean metric is obtained by gluing these deformed kites along their common edges if and only if the angle sum around every circumcenter is $2 \pi$. Particularly in such a case, the rotation holonomy around every circumcenter is trivial.

Analogously, given a function $v: V \to \mathbb{R}$ on the vertices with $|v_j-v_i|$ sufficiently small for $ij \in E$, we deform the kites as follows. For every oriented edge $ij \in E$ with left face $\phi$ and right face $\psi$, we modify the kite in the uniformized circle pattern by specifying the half-angles at the circumcenters
\[
\alpha^{\dagger}_{\phi \psi} \mapsto \alpha_{\phi \psi} = \alpha^\dagger_{\phi \psi} + \frac{v_j - v_i}{2}, \quad \alpha^{\dagger}_{\psi \phi} \mapsto \alpha_{\psi \phi} = \alpha^\dagger_{\psi \phi} - \frac{v_j - v_i}{2}
\]
and further demand that the diagonal $ij$ in the deformed kite gets rotated counterclockwise by an angle of $\frac{v_i+v_j}{2}$ compared to the diagonal in the uniformized kite. In this way, the deformed kite is uniquely determined in the plane up to translation and scaling. With some trigonometry, one can verify that the sides $\phi i$ and $\psi i$ in the deformed kite are rotated by an angle of $v_i$ compared to the corresponding sides in the uniformized circle pattern. Similarly, the sides $\phi j$ and $\psi j$ are rotated by an angle of $v_j$. Hence, with suitable translations and scalings, neighboring deformed kites can be glued together along their common edges. Under this deformation, the intersection angle of the corresponding circles remains unchanged since
\[\alpha_{\phi \psi} + \alpha_{\psi \phi} = \alpha^{\dagger}_{\phi \psi} + \alpha^{\dagger}_{\psi \phi} = \pi-\Theta_{\phi \psi}.\]
When gluing the neighbouring kites, we observe that the angle sum around every circumcenter $\phi$ and intersection point $i$ remains $2\pi$. However, the scaling holonomy around every vertex $i$ might be non-trivial. In order to obtain a flat Euclidean metric by gluing the deformed kites, we must require the scaling holonomy around every vertex to be trivial. Using the sine law on the kites, the ratio of the side lengths of the deformed kite is 
\[
\frac{R_{\psi}}{R_{\phi}} = \frac{\sin \alpha_{\phi \psi}}{\sin \alpha_{\psi \phi}}.
\]
The scaling holonomy around a vertex $i$ is trivial if and only if the product of these ratios of adjacent kites around $i$ equals one. This leads to a system of nonlinear equations for the function $v$ on vertices.

We consider the space of Dirichlet-finite functions $\mathbf{D}(V)$ consisting of functions $v:V \to \mathbb{R}$ with finite combinatorial Dirichlet energy
\[
\sum_{ij \in E} |v_i - v_j|^2 < \infty.
\]
We define the space $P(\Theta, \alpha^{\dagger})$ as the set of functions $v \in \mathbf{D}(V)$ that describe circle patterns with the same intersection angles $\Theta$ but realized through the changes in the half-angles at the circumcenters.

\begin{definition}
   Let $\mathcal{A}_{\alpha^{\dagger}} \subset \mathbf{D}(V)$ be the set of admissible functions 
    \[
   \mathcal{A}_{\alpha^{\dagger}} = \{ v \in \mathbf{D}(V) \mid \alpha^\dagger_{\phi \psi} + \frac{v_j - v_i}{2} > 0, \; \alpha^\dagger_{\psi \phi} - \frac{v_j - v_i}{2} > 0 \; \forall ij \in \vec{E} \}
   \]
    where for each oriented edge $ij \in \vec{E}$, $\phi$ and $\psi$ denote the adjacent left and right faces respectively. We let $\overline{\mathcal{A}}_{\alpha^{\dagger}}$ denote the closure of $\mathcal{A}_{\alpha^{\dagger}}$ in $\mathbf{D}(V)$, wherein at least one of the inequalities become non-strict.
	
	The space $P(\Theta, \alpha^{\dagger}) \subset \mathcal{A}_{\alpha^{\dagger}} $ is defined as the set of functions $v \in \mathbf{D}(V)$ satisfying for every vertex $i \in V$
\begin{align}\label{eq:nonv_vertex}
	\sum_{j} \log \frac{\sin (\alpha^\dagger_{\phi \psi} + \frac{v_j - v_i}{2})}{\sin (\alpha^\dagger_{\psi \phi} - \frac{v_j - v_i}{2}) } = 0,
\end{align}
where the sum is taken over all vertices $j$ adjacent to $i$ and for each edge $ij$, $\phi$ and $\psi$ denote the adjacent left and right faces respectively.
\end{definition}

In the following subsections, we establish results for the space $P(\Theta, \alpha^{\dagger})$ that parallel our earlier findings for $P(\Theta, R^{\dagger})$. Specifically, we demonstrate that $P(\Theta, \alpha^{\dagger})$ is a Hilbert submanifold of $\mathbf{D}(V)$. Furthermore, we show that it is homeomorphic to the space of discrete harmonic functions on vertices, $\mathbf{HD}(V)$, via the projection $p_V: \mathbf{D}(V) \to \mathbf{HD}(V)$ associated with the orthogonal decomposition
\[\mathbf{D}(V) = \mathbf{D}_0(V) \oplus \mathbf{HD}(V).\]

\subsection{Hilbert submanifold in $\mathbf{D}(V)$}

We first show that $P(\Theta, \alpha^{\dagger})$ is a Hilbert submanifold of $\mathbf{D}(V)$. 

\begin{definition}
	For every oriented edge $ij \in \vec{E}$ with left face $\phi$ and right face $\psi$, we define the function $\mathcal{K}_{ij}: \mathcal{A}_{\alpha^{\dagger}} \to \mathbb{R}$ as
	\begin{align*}
		\mathcal{K}_{ij}(v) = \log \frac{\sin (\alpha^\dagger_{\phi \psi} + \frac{v_j - v_i}{2})}{\sin (\alpha^\dagger_{\psi \phi} - \frac{v_j - v_i}{2}) } - \log \frac{\sin \alpha^\dagger_{\phi \psi} }{\sin \alpha^\dagger_{\psi \phi}  }.
	\end{align*}
\end{definition}

\begin{lemma}\label{lem:Kbounded}
Given $v \in \mathcal{A}_{\alpha^{\dagger}}$, there is a constant $C_2=C_2(v,\Theta,\epsilon_0)> 0$ such that for every edge $ij \in E$
\[
|\mathcal{K}_{ij}(v)| \leq C_2 |v_i - v_j|.
\]
\end{lemma}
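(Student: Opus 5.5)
The plan is to combine a uniform mean-value estimate on all but finitely many edges with a finite maximum over the remaining ones. Uniformity cannot come from admissibility alone: membership in $\mathcal{A}_{\alpha^{\dagger}}$ only guarantees edge by edge that the deformed half-angles are positive, not that they stay uniformly away from $0$ and $\pi$. Finite Dirichlet energy supplies the missing uniformity.

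First, I apply Proposition \ref{prop:bound} to the uniformized pattern ($u=0$). This gives a constant $\eta=\cot^{-1}\bigl(\frac{C+1}{\sin\epsilon_0}\bigr)>0$ with $\eta<\alpha^{\dagger}_{\phi\psi},\alpha^{\dagger}_{\psi\phi}<\pi-\eta$ for every oriented edge. Set $x=v_j-v_i$ and consider the one-variable function
\[
g(x)=\log\sin\Bigl(\alpha^{\dagger}_{\phi\psi}+\tfrac{x}{2}\Bigr)-\log\sin\Bigl(\alpha^{\dagger}_{\psi\phi}-\tfrac{x}{2}\Bigr)-\log\frac{\sin\alpha^{\dagger}_{\phi\psi}}{\sin\alpha^{\dagger}_{\psi\phi}} ,
\]
so that $g(0)=0$, $\mathcal{K}_{ij}(v)=g(v_j-v_i)$, and
\[
g'(x)=\tfrac12\Bigl(\cot\bigl(\alpha^{\dagger}_{\phi\psi}+\tfrac{x}{2}\bigr)+\cot\bigl(\alpha^{\dagger}_{\psi\phi}-\tfrac{x}{2}\bigr)\Bigr).
\]
If $|x|\le\eta$, both shifted angles lie in $[\eta/2,\pi-\eta/2]$. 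Hence $|g'|\le\cot(\eta/2)$ on the segment between $0$ and $x$, and the mean value theorem gives $|\mathcal{K}_{ij}(v)|\le\cot(\eta/2)\,|v_i-v_j|$. This constant depends only on $\Theta$ and $\epsilon_0$.

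Second, I handle the edges with $|v_j-v_i|>\eta$. Since $v\in\mathbf{D}(V)$, there are at most $\mathcal{E}(v)/\eta^2$ such edges, so this set $E_v$ is finite. For each $ij\in E_v$, admissibility $v\in\mathcal{A}_{\alpha^{\dagger}}$ ensures both deformed angles lie in $(0,\pi)$: each is positive by admissibility, and each is less than $\pi$ because the two sum to $\pi-\Theta_{\phi\psi}<\pi$. So $\mathcal{K}_{ij}(v)$ is a finite real number, and $|v_i-v_j|>\eta>0$. The ratio $|\mathcal{K}_{ij}(v)|/|v_i-v_j|$ is therefore finite on each edge of $E_v$, and I take its maximum over the finite set $E_v$.

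Setting $C_2=\max\bigl\{\cot(\eta/2),\ \max_{ij\in E_v}|\mathcal{K}_{ij}(v)|/|v_i-v_j|\bigr\}$ proves the lemma. This constant depends on $v$, which the statement allows. The only delicate point is the lack of uniform control near the boundary of $\mathcal{A}_{\alpha^{\dagger}}$, and the finiteness of $E_v$ is exactly what avoids needing it.
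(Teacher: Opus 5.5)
Your proof is correct and follows essentially the same route as the paper: both apply the mean value theorem with $g'(x)=\tfrac12\bigl(\cot(\alpha^\dagger_{\phi\psi}+\tfrac{x}{2})+\cot(\alpha^\dagger_{\psi\phi}-\tfrac{x}{2})\bigr)$, take the uniform bound $\eta$ on $\alpha^\dagger$ from Proposition \ref{prop:bound}, and use finite Dirichlet energy to show that only finitely many edges have $|v_j-v_i|>\eta$, with admissibility handling those edges. The only difference is cosmetic: the paper folds those finitely many edges into a single uniform lower bound $\tilde{\eta}$ on all deformed half-angles and obtains $C_2=\frac{1}{2\sin^2\tilde{\eta}}$, whereas you take the maximum of the ratios $|\mathcal{K}_{ij}(v)|/|v_i-v_j|$ over that finite set directly.
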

\begin{proof}
 The proof is similar to that of Lemma \ref{lem:gedge}. We consider the function
 \[p(x) = \log \frac{\sin (\alpha^\dagger_{\phi \psi} + \frac{x}{2})}{\sin (\alpha^\dagger_{\psi \phi} - \frac{x}{2}) } - \log \frac{\sin \alpha^\dagger_{\phi \psi} }{\sin \alpha^\dagger_{\psi \phi}  }.\]
 We observe that $p(0) = 0$ and $\mathcal{K}_{ij}(v) = p(v_j - v_i)$.

 By the mean value theorem, there exists some $\xi$ between $0$ and $v_j-v_i$ such that
 \[|p(x)| = |p(x) - p(0)| = |p'(\xi)| \cdot |x|.\]
 A direct computation shows that
 \[p'(x) = \frac{1}{2} \left( \cot(\alpha^\dagger_{\phi \psi} + \frac{x}{2}) + \cot(\alpha^\dagger_{\psi \phi} - \frac{x}{2}) \right).\]

We claim that there exists $\tilde{\eta}>0$ such that for every edge $ij \in E$, we have 
\[
\alpha^\dagger_{\phi \psi} + \frac{v_j - v_i}{2} \geq \tilde{\eta}, \quad \alpha^\dagger_{\psi \phi} - \frac{v_j - v_i}{2} \geq \tilde{\eta}.
\]
Indeed, Proposition \ref{prop:bound} implies that there exists a constant $\eta$ such that for the angles in the uniformized circle pattern
\[
\alpha^\dagger_{\phi \psi} > \eta, \quad \alpha^\dagger_{\psi \phi} > \eta.
\]
Because $v$ has finite Dirichlet energy, there are only finitely many edges $ij \in E$ with $\frac{|v_j - v_i|}{2} > \frac{\eta}{2}$. Thus, we can take
\[
\eta_1= \inf_{ij \in E} \{\alpha^\dagger_{\phi \psi} + \frac{v_j - v_i}{2}, \quad \alpha^\dagger_{\psi \phi} - \frac{v_j - v_i}{2} \} > 0.
\]
We take $\tilde{\eta} = \min\{\eta, \eta_1\}$.

Then for every $x$ between $0$ and $v_j - v_i$, we have
 \[
 |p'(x)| = |\frac{1}{2} \frac{\sin \Theta_{\phi \psi}}{\sin(\alpha^\dagger_{\phi \psi} + \frac{x}{2}) \sin(\alpha^\dagger_{\psi \phi} - \frac{x}{2})}| \leq \frac{1}{2} \frac{1}{\sin^2 \tilde{\eta}}
 \]
 where the upper bound is independent of $ij \in E$. This completes the proof.	
\end{proof}

We denote the inclusion map
\[
\iota: \mathbf{D}_{0}(V) \to \mathbf{D}(V)
\]
and the induced mapping on the dual spaces
\[
\iota^{\star}: \mathbf{D}^{\star}(V) \to \mathbf{D}^{\star}_{0}(V),
\]
where $\mathbf{D}^{\star}(V)$ and $\mathbf{D}^{\star}_{0}(V)$ are respectively the spaces of bounded linear functionals on $\mathbf{D}(V)$ and $\mathbf{D}_{0}(V)$.

\begin{proposition}\label{prop:Kedge}
	For every $v\in \mathcal{A}_{\alpha^{\dagger}}$ and oriented edge $ij \in \vec{E}$, we have
	\begin{align*}
		\mathcal{K}_{ij}(v) = -\mathcal{K}_{ji}(v).
	\end{align*}
	There is a well-defined map $\mathcal{K}: \mathcal{A}_{\alpha^{\dagger}} \to \mathbf{D}(V)^*$ such that for every $v \in \mathcal{A}_{\alpha^{\dagger}}$ and $ u \in \mathbf{D}(V)$,
	\begin{align*}
		\langle \! \langle \mathcal{K}(v), u \rangle \! \rangle := \frac{1}{2}\sum_{ij \in \vec{E}} \mathcal{K}_{ij}(v) \cdot (u_i - u_j) =  \sum_{ij \in E} \mathcal{K}_{ij}(v) \cdot (u_i - u_j)
	\end{align*}
	where $\langle \! \langle \cdot, \cdot \rangle \! \rangle$ denotes the dual pairing between $\mathbf{D}(V)^*$ and $\mathbf{D}(V)$.

	Furthermore, we have $v \in P(\Theta, \alpha^{\dagger})$ if and only if $\iota^{\star}\circ \mathcal{K}(v) = 0$.
\end{proposition}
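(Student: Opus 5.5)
The proof follows the template of Proposition \ref{prop:gradient} and has three steps: antisymmetry, boundedness, and characterization of the zero set.

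\textbf{Antisymmetry.} Reversing the orientation of $ij$ swaps the left and right faces. So $\mathcal{K}_{ji}(v)$ is obtained from $\mathcal{K}_{ij}(v)$ by exchanging $\phi\leftrightarrow\psi$ and replacing $v_j-v_i$ by $v_i-v_j$. Under these substitutions the numerator $\sin(\alpha^\dagger_{\phi\psi}+\frac{v_j-v_i}{2})$ becomes the denominator $\sin(\alpha^\dagger_{\psi\phi}-\frac{v_i-v_j}{2})$ read with roles exchanged, and vice versa. Hence both logarithms, the deformed one and the reference one $\log\frac{\sin\alpha^\dagger_{\phi\psi}}{\sin\alpha^\dagger_{\psi\phi}}$, change sign, giving $\mathcal{K}_{ji}(v)=-\mathcal{K}_{ij}(v)$. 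Since $(u_i-u_j)$ is also antisymmetric, each unoriented edge contributes the same term twice to the sum over $\vec E$. This proves the equality of the two expressions for the pairing.

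\textbf{Boundedness.} The pairing is clearly linear in $u$. By Lemma \ref{lem:Kbounded}, $|\mathcal{K}_{ij}(v)|\le C_2|v_i-v_j|$. Cauchy--Schwarz then gives
\[
|\langle\!\langle\mathcal{K}(v),u\rangle\!\rangle|\le C_2\sqrt{\mathcal{E}(v)}\sqrt{\mathcal{E}(u)}.
\]
Since $\sqrt{\mathcal{E}(u)}\le\|u\|$, the series converges absolutely and $\mathcal{K}(v)\in\mathbf{D}^\star(V)$.

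\textbf{Characterization.} Test $\mathcal{K}(v)$ against the indicator $\chi^i$ of a vertex $i$. Only the edges incident to $i$ contribute, and
\[
\langle\!\langle\mathcal{K}(v),\chi^i\rangle\!\rangle=\sum_{j}\mathcal{K}_{ij}(v).
\]
This equals the left side of \eqref{eq:nonv_vertex} minus the reference sum $\sum_j\log\frac{\sin\alpha^\dagger_{\phi\psi}}{\sin\alpha^\dagger_{\psi\phi}}$.

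The main point is that the reference sum vanishes. By the sine law in the uniformized kites, $\frac{\sin\alpha^\dagger_{\phi\psi}}{\sin\alpha^\dagger_{\psi\phi}}=\frac{R^\dagger_\psi}{R^\dagger_\phi}$. As $j$ runs around $i$, the faces $\phi,\psi$ run cyclically through the faces incident to $i$, so the product of these ratios telescopes to $1$. This is exactly the trivial scaling holonomy of the uniformized pattern, i.e.\ $v=0\in P(\Theta,\alpha^\dagger)$. Care is needed here to confirm that the left/right convention makes consecutive edges share a face in the correct order so that the telescoping works.

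Granting this, $v$ satisfies \eqref{eq:nonv_vertex} at every vertex if and only if $\mathcal{K}(v)$ annihilates every $\chi^i$. Since $\mathcal{K}(v)$ is continuous and finitely supported functions are dense in $\mathbf{D}_0(V)$, this is equivalent to $\iota^\star\circ\mathcal{K}(v)=0$. The condition $v\in\mathcal{A}_{\alpha^\dagger}$ is built into both sides, so the equivalence is complete.
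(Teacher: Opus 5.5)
Your proof is correct and follows the same three steps as the paper: antisymmetry by swapping the left and right faces, boundedness via Lemma \ref{lem:Kbounded} and Cauchy--Schwarz, and testing against vertex indicators combined with density of finitely supported functions in $\mathbf{D}_0(V)$. Your explicit check that the reference sum $\sum_j \log\frac{\sin\alpha^\dagger_{\phi\psi}}{\sin\alpha^\dagger_{\psi\phi}} = \sum_j \log\frac{R^\dagger_\psi}{R^\dagger_\phi}$ telescopes to zero fills in a step the paper leaves implicit when it identifies $\sum_j \mathcal{K}_{ij}(v)=0$ with \eqref{eq:nonv_vertex}; the telescoping works because, taking the outgoing edges at $i$ in counterclockwise order, each corner face at $i$ is the left face of one outgoing edge and the right face of the next.
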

\begin{proof}
	Suppose $ij \in \vec{E}$ with left face $\phi$ and right face $\psi$. By definition, we have
	\begin{align*}
		\mathcal{K}_{ji}(v) &= \log \frac{\sin (\alpha^\dagger_{\psi \phi} + \frac{v_i - v_j}{2})}{\sin (\alpha^\dagger_{\phi \psi} - \frac{v_i - v_j}{2}) } - \log \frac{\sin (\alpha^\dagger_{\psi \phi} )}{\sin (\alpha^\dagger_{\phi \psi} ) } \\
		&= - \log \frac{\sin (\alpha^\dagger_{\phi \psi} + \frac{v_j - v_i}{2})}{\sin (\alpha^\dagger_{\psi \phi} - \frac{v_j - v_i}{2}) } + \log \frac{\sin (\alpha^\dagger_{\phi \psi} )}{\sin (\alpha^\dagger_{\psi \phi} ) } \\
		&= -\mathcal{K}_{ij}(v).
	\end{align*}
Similar to the proof of Proposition \ref{prop:gradient}, the map $\mathcal{K}: \mathcal{A}_{\alpha^{\dagger}} \to \mathbf{D}(V)^*$ is well-defined and bounded by Lemma \ref{lem:Kbounded}.

Finally, fix a vertex $i \in V$ and let $\chi_i \in \mathbf{D}_0(V)$ be its indicator function, i.e. $\chi_i$ equals to 1 at vertex $i$ and 0 elsewhere. We have
\begin{align*}
	\langle \! \langle \iota^{\star}\circ \mathcal{K}(v), \chi_i \rangle \! \rangle &= \langle \! \langle \mathcal{K}(v), \chi_i \rangle \! \rangle  = \ \sum_{j} \mathcal{K}_{ij}(v)	
\end{align*}
where the sum is over all vertices $j$ adjacent to $i$. Since the span of the indicator functions is dense in $\mathbf{D}_0(V)^*$, we have $\iota^{\star}\circ \mathcal{K}(v) = 0$ if and only if $v \in P(\Theta, \alpha^{\dagger})$.
\end{proof}

\begin{remark}
    The constant term $-\log \frac{\sin \alpha^\dagger_{\phi \psi}}{\sin \alpha^\dagger_{\psi \phi}}$ in the definition of $\mathcal{K}_{ij}(v)$ serves as a crucial normalization. Without it, the map $\mathcal{K}(v)$ would fail to define a bounded linear functional on the Hilbert space $\mathbf{D}(V)$. 
\end{remark}

We further investigate the differential of the map $\mathcal{K}$.

\begin{proposition}\label{prop:selfadjoint_vertex}
	For any $v \in \mathcal{A}_{\alpha^{\dagger}}$, the differential of the map $\mathcal{K}$ at $v$, denoted by $d\mathcal{K}_v: \mathbf{D}(V) \to \mathbf{D}(V)^*$ satisfies for every $u, \tilde{u} \in \mathbf{D}(V)$,
	\[
	\langle \! \langle d\mathcal{K}_v(u), \tilde{u} \rangle \! \rangle = -\sum_{ij \in E} c_{ij} (u_i - u_j)(\tilde{u}_i - \tilde{u}_j) = \langle \! \langle u, d\mathcal{K}_v(\tilde{u}) \rangle \! \rangle.
	\]
	Here, $c_{ij}$ are the geometric edge weights on the primal graph associated with $v$
	\[
	c_{ij} = \frac{1}{2}\left( \cot\left(\alpha^\dagger_{\phi \psi} + \frac{v_j - v_i}{2}\right) + \cot\left(\alpha^\dagger_{\psi \phi} - \frac{v_j - v_i}{2}\right) \right) > 0.
	\]
\end{proposition}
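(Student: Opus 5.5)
The plan mirrors Proposition \ref{prop:selfadjoint}: compute the partial derivatives of each edge term $\mathcal{K}_{ij}$ with respect to $v_i$ and $v_j$, then assemble them into the pairing.

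First, fix an oriented edge $ij$ with left face $\phi$ and right face $\psi$, and write $x = v_j - v_i$. Then
\[
\mathcal{K}_{ij}(v) = \log\sin\Bigl(\alpha^\dagger_{\phi\psi}+\tfrac{x}{2}\Bigr) - \log\sin\Bigl(\alpha^\dagger_{\psi\phi}-\tfrac{x}{2}\Bigr) - \text{const}.
\]
Differentiating in $x$ gives
\[
\frac{d\mathcal{K}_{ij}}{dx} = \tfrac12\Bigl(\cot\bigl(\alpha^\dagger_{\phi\psi}+\tfrac{x}{2}\bigr)+\cot\bigl(\alpha^\dagger_{\psi\phi}-\tfrac{x}{2}\bigr)\Bigr) = c_{ij}.
\]
This is the same computation as $p'$ in Lemma \ref{lem:Kbounded}. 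Since $x = v_j - v_i$, it follows that $\partial\mathcal{K}_{ij}/\partial v_j = c_{ij}$ and $\partial\mathcal{K}_{ij}/\partial v_i = -c_{ij}$.

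Second, the weight is well defined and positive. Swapping the roles of $(i,j)$ and $(\phi,\psi)$ shows $c_{ij}=c_{ji}$. For positivity, use the identity already recorded in Lemma \ref{lem:Kbounded}:
\[
c_{ij} = \tfrac12\,\frac{\sin\Theta_{\phi\psi}}{\sin(\alpha_{\phi\psi})\sin(\alpha_{\psi\phi})},
\]
where the deformed angles satisfy $\alpha_{\phi\psi}+\alpha_{\psi\phi}=\pi-\Theta_{\phi\psi}$. This follows from $\cot a+\cot b=\sin(a+b)/(\sin a\sin b)$. On $\mathcal{A}_{\alpha^\dagger}$ both angles lie in $(0,\pi)$ and $\sin\Theta_{\phi\psi}>0$, so $c_{ij}>0$.

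Third, assemble the pairing. For $u,\tilde u\in\mathbf{D}(V)$, the chain rule gives
\[
\langle\!\langle d\mathcal{K}_v(u),\tilde u\rangle\!\rangle = \sum_{ij\in E}\bigl(c_{ij}u_j - c_{ij}u_i\bigr)(\tilde u_i-\tilde u_j) = -\sum_{ij\in E} c_{ij}(u_i-u_j)(\tilde u_i-\tilde u_j).
\]
The right-hand side is symmetric in $u$ and $\tilde u$, which gives the self-adjointness identity.

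The main obstacle is justifying that $\mathcal{K}:\mathcal{A}_{\alpha^\dagger}\to\mathbf{D}(V)^*$ is Fréchet differentiable with this derivative, rather than only differentiable edge by edge. The step I would check most carefully is the local uniform bound on the $c_{ij}$. As in Lemma \ref{lem:Kbounded}, for $v\in\mathcal{A}_{\alpha^\dagger}$ there is $\tilde\eta>0$ such that all deformed angles lie in $[\tilde\eta,\pi-\tilde\eta]$. A perturbation $v+w$ with $\|w\|$ small changes each $|v_j-v_i|$ by at most $\sqrt{\mathcal{E}(w)}$. Hence the angles stay in $[\tilde\eta/2,\pi-\tilde\eta/2]$, so $\mathcal{A}_{\alpha^\dagger}$ is open, and the first and second $x$-derivatives of each edge term are uniformly bounded, say by $K$.

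With that bound, Taylor's formula per edge gives
\[
\bigl|\mathcal{K}_{ij}(v+w)-\mathcal{K}_{ij}(v)-c_{ij}(w_j-w_i)\bigr| \le K|w_j-w_i|^2.
\]
Summing against $(\tilde u_i-\tilde u_j)$ and applying Cauchy–Schwarz bounds the remainder by $K\,\sup_{E}|w_j-w_i|\,\sqrt{\mathcal{E}(w)}\,\sqrt{\mathcal{E}(\tilde u)} \le K\,\mathcal{E}(w)\,\|\tilde u\|$. This is $o(\|w\|)$ in operator norm. Boundedness of the bilinear form follows from $c_{ij}\le K$, so the linear map $d\mathcal{K}_v$ is bounded.
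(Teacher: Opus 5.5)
Your proposal is correct and follows the paper's proof: you compute $\partial\mathcal{K}_{ij}/\partial v_j = c_{ij}$ and $\partial\mathcal{K}_{ij}/\partial v_i = -c_{ij}$ edge by edge, then assemble these into the pairing, which is visibly symmetric in $u$ and $\tilde u$. You also verify two things the paper leaves implicit: positivity of $c_{ij}$, via $\cot a+\cot b=\sin(a+b)/(\sin a\sin b)$, and Fr\'echet differentiability of $\mathcal{K}$, via the uniform angle bound from Lemma \ref{lem:Kbounded} and a per-edge Taylor estimate.
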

\begin{proof}
	We compute the partial derivative of $\mathcal{K}_{ij}(v)$ with respect to $v_j$. Let $\alpha_{ij} = \alpha^\dagger_{\phi \psi} + \frac{v_j - v_i}{2}$ and $\alpha_{ji} = \alpha^\dagger_{\psi \phi} - \frac{v_j - v_i}{2}$.
	\begin{align*}
		\frac{\partial \mathcal{K}_{ij}}{\partial v_j} &= \frac{\partial}{\partial v_j} \left( \log \sin \left(\alpha^\dagger_{\phi \psi} + \frac{v_j - v_i}{2}\right) - \log \sin \left(\alpha^\dagger_{\psi \phi} - \frac{v_j - v_i}{2}\right) \right) = c_{ij}. 
	\end{align*}
	Similarly, $\frac{\partial \mathcal{K}_{ij}}{\partial v_i} = -c_{ij}$.
	Thus, for any $\dot{v}, \tilde{v} \in \mathbf{D}(V)$,
	\begin{align*}
		\langle \! \langle d\mathcal{K}_v(u), \tilde{u} \rangle \! \rangle &= \sum_{ij \in E} \left( \frac{\partial \mathcal{K}_{ij}}{\partial v_j} u_j + \frac{\partial \mathcal{K}_{ij}}{\partial v_i} u_i \right) (\tilde{u}_i - \tilde{u}_j) \\
		&= \sum_{ij \in E} c_{ij} (u_j - u_i) (\tilde{u}_i - \tilde{u}_j).	\end{align*}
\end{proof}

\begin{proposition}\label{prop:hilbertV}
	The space $P(\Theta, \alpha^{\dagger})$ is a Hilbert submanifold of $\mathbf{D}(V)$.
\end{proposition}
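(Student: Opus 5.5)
We mirror the proof of Proposition \ref{prop:hilbert}, replacing $\mathcal{K}^*$ by $\mathcal{K}$ and $\mathbf{D}(F)$ by $\mathbf{D}(V)$. First we check that $\mathcal{A}_{\alpha^{\dagger}}$ is an open subset of $\mathbf{D}(V)$, so that $\mathcal{K}$ is a smooth map on an open set of a Hilbert space. Fix $v \in \mathcal{A}_{\alpha^{\dagger}}$ and let $\tilde{\eta}>0$ be the uniform lower bound on the deformed half-angles $\alpha^\dagger_{\phi\psi} + \frac{v_j-v_i}{2}$ and $\alpha^\dagger_{\psi\phi} - \frac{v_j-v_i}{2}$ from the proof of Lemma \ref{lem:Kbounded}. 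If $\|w-v\|$ is small, then $\sup_{ij}|(w_j-w_i)-(v_j-v_i)| \leq \sqrt{\mathcal{E}(w-v)} < \tilde{\eta}$, so $w \in \mathcal{A}_{\alpha^{\dagger}}$ with half-angles bounded below by $\tilde{\eta}/2$. The same uniform bound shows that the second derivatives of the edge functions $p(x)$ are uniformly bounded on such a neighbourhood. By the Cauchy--Schwarz estimate used in Proposition \ref{prop:gradient}, $\mathcal{K}:\mathcal{A}_{\alpha^{\dagger}} \to \mathbf{D}^{\star}(V)$ is therefore $C^1$ (indeed smooth), and its Fréchet derivative is the operator computed in Proposition \ref{prop:selfadjoint_vertex}.

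Next we show that the geometric weights $c_{ij}$ at $v$ are equivalent to the combinatorial weight. The upper bound follows from $\cot$ of angles in $[\tilde{\eta}, \pi-\tilde{\eta}]$ being bounded. For the lower bound,
\[
c_{ij} = \frac{\sin \Theta_{\phi\psi}}{2\sin \alpha_{ij}\sin\alpha_{ji}} \geq \frac{\sin \epsilon_0}{2} > 0,
\]
since the two half-angles sum to $\pi - \Theta_{\phi\psi}$. Then $-\iota^{\star}\circ d\mathcal{K}_v$ restricted to indicator functions is the weighted Laplacian $\Delta_c$, so its kernel in $\mathbf{D}(V)$ is $\mathbf{HD}_c(V)$. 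By Proposition \ref{prop:equivalentweight} and transience, $\mathcal{E}_c$ is a complete inner product on $\mathbf{D}_0(V)$. The Riesz representation theorem then gives surjectivity of $\iota^{\star}\circ d\mathcal{K}_v : \mathbf{D}(V) \to \mathbf{D}_0^{\star}(V)$, already when restricted to $\mathbf{D}_0(V)$. The Royden decomposition $\mathbf{D}(V) = \mathbf{D}_0(V) \oplus \mathbf{HD}_c(V)$ provides a closed complement to the kernel.

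Finally, by Proposition \ref{prop:Kedge} we have $P(\Theta,\alpha^{\dagger}) = (\iota^{\star}\circ\mathcal{K})^{-1}(0)$. The rank theorem \cite[Theorem 2.5.15]{Abraham1988}, applied on the open set $\mathcal{A}_{\alpha^{\dagger}}$, shows that this is a Hilbert submanifold with $T_v P(\Theta,\alpha^{\dagger}) = \mathbf{HD}_c(V)$.

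The main obstacle is the first step: regularity of $\mathcal{K}$ and openness of the domain. Unlike $\mathcal{K}^*$, the map $\mathcal{K}$ is only defined on the constrained set $\mathcal{A}_{\alpha^{\dagger}}$, and uniform control of the angles away from $0$ and $\pi$ is what makes both differentiability and the weight equivalence work. The key tool is that finite energy forces $\sup |v_j - v_i| \leq \|v\|$, so a small perturbation in $\mathbf{D}(V)$ moves every angle uniformly little.
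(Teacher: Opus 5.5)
Your proposal is correct and follows the same route as the paper. The paper simply writes $P(\Theta,\alpha^{\dagger})=(\iota^{\star}\circ\mathcal{K})^{-1}(0)$ and defers to the argument of Proposition \ref{prop:hilbert}, and your extra checks correctly supply the details it omits: openness of $\mathcal{A}_{\alpha^{\dagger}}$ via $\sup_{ij}|(w_j-w_i)-(v_j-v_i)|\le\sqrt{\mathcal{E}(w-v)}$, smoothness of $\mathcal{K}$, and weight equivalence from $c_{ij}=\sin\Theta_{\phi\psi}/(2\sin\alpha_{ij}\sin\alpha_{ji})$ (one cosmetic slip: under the paper's convention $(\Delta_c u)_i=\sum_j c_{ij}(u_j-u_i)$, it is $\iota^{\star}\circ d\mathcal{K}_v$ itself, not its negative, that agrees with $\Delta_c$ on indicator functions).
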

\begin{proof}
	Proposition \ref{prop:Kedge} establishes that $P(\Theta,\alpha^{\dagger})= \left( \iota^{\star}\circ \mathcal{K} \right)^{-1}(0)$. The rest of the proof is analogous to that of Proposition \ref{prop:hilbert} and so we omit the details.
\end{proof}

\subsection{A relative volume functional $\mathcal{W}$}

Proposition \ref{prop:selfadjoint_vertex} demonstrates that the differential $d\mathcal{K}_v$ is self-adjoint for every admissible function $v \in \mathcal{A}_{\alpha^{\dagger}}$. Consequently, we can integrate $\mathcal{K}$ as a 1-form to obtain a potential functional defined on the closure $\overline{\mathcal{A}}_{\alpha^{\dagger}}$.

\begin{definition}\label{def:Wfunctional}
	For every oriented primal edge $ij \in \vec{E}$ with circumcenters $\phi$ and $\psi$ on the left and right respectively, we define a functional $\mathcal{W}_{ij}: \overline{\mathcal{A}}_{\alpha^{\dagger}} \to \mathbb{R}$ by
	\begin{align*}
		\mathcal{W}_{ij}(v) =& 2 \left( \Lambda\left(\alpha^\dagger_{\phi \psi} + \frac{v_j - v_i}{2}\right) + \Lambda\left(\alpha^\dagger_{\psi \phi} - \frac{v_j - v_i}{2}\right) 
		 - \Lambda\left(\alpha^\dagger_{\phi \psi}\right) - \Lambda\left(\alpha^\dagger_{\psi \phi}\right) \right)\\+& (v_i - v_j)  \log \frac{\sin \alpha^\dagger_{\psi \phi}}{\sin \alpha^\dagger_{\phi \psi}} \\=& \mathcal{W}_{ji}(v)
	\end{align*}
	where $\Lambda(x) = -\int_0^x \log |2\sin t| dt$ is Milnor's Lobachevsky function.
	We define the functional $\mathcal{W}: \overline{\mathcal{A}}_{\alpha^{\dagger}} \to \mathbb{R}$ as
	\[
	\mathcal{W}(v) = \sum_{ij \in E} \mathcal{W}_{ij}(v).
	\]
\end{definition}

The Lobachevsky function $\Lambda:\mathbb{R} \to \mathbb{R}$ is continuous, odd and periodic with period $\pi$. It is smooth and analytic except at integer multiples of $\pi$ (See \cite{Milnor1994}).

\begin{proposition}\label{prop:potential_vertex}
    The functional $\mathcal{W}$ is well-defined and smooth on the admissible set $\mathcal{A}_{\alpha^{\dagger}} \subset \mathbf{D}(V)$. Furthermore, it satisfies the following properties:
     \begin{enumerate}
        \item $\mathcal{W}_{ij}(0) = 0$ for every edge $ij \in E$, and hence $\mathcal{W}(0)=0$.
        \item For every $v \in \mathcal{A}_{\alpha^{\dagger}}$, the differential is $d\mathcal{W}(v) = \mathcal{K}(v)$.
        \item We have $v \in P(\Theta,\alpha^{\dagger})$ if and only if $d\mathcal{W}(v)|_{\mathbf{D}_0(V)} = 0$.
        \item The origin $v=0$ is the unique critical point of $\mathcal{W}$ in $\mathcal{A}_{\alpha^{\dagger}}$.
        \item For every $v \in \mathbf{D}(V)$, the restriction of the functional $\mathcal{W}$ to the convex set 
        \[ \mathcal{A}_{\alpha^{\dagger}} \cap \left( v + \mathbf{D}_0(V) \right) \] 
        is strictly concave if the intersection is non-empty.
     \end{enumerate}
\end{proposition}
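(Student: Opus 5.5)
The plan mirrors the proof of Proposition \ref{prop:potential} for $\mathcal{W}^*$. The only new features are that $\Lambda$ replaces $\Im \Li$ and that we work on the admissible set $\mathcal{A}_{\alpha^{\dagger}}$ instead of all of $\mathbf{D}(V)$.

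\textbf{Well-definedness and smoothness.} Fix $v \in \overline{\mathcal{A}}_{\alpha^{\dagger}}$ and write $x=v_j-v_i$. The single-variable function
\[
g(x)=2\bigl(\Lambda(\alpha^\dagger_{\phi\psi}+\tfrac{x}{2})+\Lambda(\alpha^\dagger_{\psi\phi}-\tfrac{x}{2})-\Lambda(\alpha^\dagger_{\phi\psi})-\Lambda(\alpha^\dagger_{\psi\phi})\bigr)-x\log\frac{\sin\alpha^\dagger_{\psi\phi}}{\sin\alpha^\dagger_{\phi\psi}}
\]
satisfies $g(0)=0$. Since $\Lambda'(t)=-\log|2\sin t|$, we get
\[
g'(x)=-\log\sin(\alpha^\dagger_{\phi\psi}+\tfrac x2)+\log\sin(\alpha^\dagger_{\psi\phi}-\tfrac x2)+\log\frac{\sin\alpha^\dagger_{\phi\psi}}{\sin\alpha^\dagger_{\psi\phi}}=-p(x),
\]
where $p$ is the function from Lemma \ref{lem:Kbounded}. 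In particular $g'(0)=0$, so $|g(x)|\le \tfrac12 \sup|g''|\,x^2$ on the edges where $|x|$ is small.

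By Proposition \ref{prop:bound} applied at $u=0$, the angles satisfy $\alpha^\dagger>\eta$ uniformly. For all but finitely many edges we have $|x|<\eta$, and on those edges $g''=-2c_{ij}$ is uniformly bounded. So $\sum_{ij}|\mathcal{W}_{ij}(v)|\le C\,\mathcal{E}(v)+(\text{finitely many terms})<\infty$. On the closure the finitely many exceptional terms stay finite because $\Lambda$ is continuous. Smoothness on $\mathcal{A}_{\alpha^{\dagger}}$ follows by differentiating term by term. The second derivative is the bounded bilinear form $-\sum c_{ij}(u_i-u_j)(\tilde u_i-\tilde u_j)$, with $c_{ij}$ bounded near $v$; this is the same local bound as in Lemma \ref{lem:Kbounded}.

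\textbf{Properties (1)--(3).} Property (1) is $g(0)=0$. For (2), the sign convention for $\langle\!\langle\mathcal{K}(v),u\rangle\!\rangle=\sum\mathcal{K}_{ij}(v)(u_i-u_j)$ matters. We have $\partial\mathcal{W}_{ij}/\partial v_j=g'(x)=-\mathcal{K}_{ij}(v)$ and $\partial\mathcal{W}_{ij}/\partial v_i=\mathcal{K}_{ij}(v)$. Hence
\[
d\mathcal{W}(v)[u]=\sum_{ij}\mathcal{K}_{ij}(v)(u_i-u_j)=\langle\!\langle\mathcal{K}(v),u\rangle\!\rangle .
\]
Interchanging the sum with the derivative is justified by the bound $|\mathcal{K}_{ij}|\le C_2|v_i-v_j|$ from Lemma \ref{lem:Kbounded} together with Cauchy--Schwarz. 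Property (3) then follows from Proposition \ref{prop:Kedge}.

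\textbf{Properties (4) and (5).} For (5), the Hessian is
\[
d^2\mathcal{W}_v(u,u)=-\sum c_{ij}(u_i-u_j)^2,
\]
by Proposition \ref{prop:selfadjoint_vertex}. Here $c_{ij}=\frac{\sin\Theta_{\phi\psi}}{2\sin\alpha_{ij}\sin\alpha_{ji}}>0$ on $\mathcal{A}_{\alpha^{\dagger}}$. The set $\mathcal{A}_{\alpha^{\dagger}}\cap(v+\mathbf{D}_0(V))$ is convex, since it is defined by linear strict inequalities. Restricted to $\mathbf{D}_0(V)$, the form $\sum c_{ij}(u_i-u_j)^2$ vanishes only when $u$ is constant, and the only constant in $\mathbf{D}_0(V)$ is $0$. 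Therefore, along any segment in the convex set, the second derivative is strictly negative, which gives strict concavity.

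For (4), let $v$ be a critical point in $\mathcal{A}_{\alpha^{\dagger}}$, so $\mathcal{K}(v)=0$ in $\mathbf{D}(V)^*$. Pairing $\mathcal{K}(v)$ with indicator functions of edges is not directly available. Instead we take $u=v$ itself and use the monotonicity of $p$. Since $p$ is strictly increasing with $p(0)=0$, we have $p(x)x>0$ for $x\neq0$. This gives
\[
0=\langle\!\langle\mathcal{K}(v),v\rangle\!\rangle=-\sum_{ij}p(v_j-v_i)(v_j-v_i)\le 0,
\]
with equality only if $v$ is constant. This shows the critical points are exactly the constants. I expect this is the step where the statement needs care. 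If $\mathcal{W}$ is regarded on $\mathbf{D}(V)$ with constants, the claim "$v=0$ is the unique critical point" should be read modulo constants: $\mathcal{W}$ is invariant under adding constants, since each $\mathcal{W}_{ij}$ depends only on $v_j-v_i$. I would state it that way.

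Overall, the main technical obstacle is the first step: obtaining uniform control of $\Lambda$ near the boundary of admissibility. This is handled by the finiteness of exceptional edges together with the continuity of $\Lambda$ at $0$ and $\pi$.
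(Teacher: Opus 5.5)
Your proposal is correct. For the items the paper actually argues — the differential (2), the characterization of $P(\Theta,\alpha^{\dagger})$ (3), and strict concavity (5) — your route coincides with the paper's. You differentiate each edge term with $\Lambda'(x)=-\log|2\sin x|$ to get $\partial\mathcal{W}_{ij}/\partial v_j=-\mathcal{K}_{ij}(v)$, combine this with Proposition \ref{prop:Kedge}, and read off the Hessian $-\sum c_{ij}(u_i-u_j)^2$ from Proposition \ref{prop:selfadjoint_vertex}.

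Beyond that, you supply several things the paper's proof omits (its item labels are also misaligned with the statement):
\begin{itemize}
\item The paper never checks that the series defining $\mathcal{W}$ converges or is smooth. Your quadratic Taylor bound on all but finitely many edges handles convergence. The same bound, made uniform on a neighborhood, gives the Fr\'echet remainder estimate. This uses that $\mathcal{A}_{\alpha^{\dagger}}$ is open: every admissible $v$ has its angles bounded below by a positive constant, and $|(w_j-w_i)-(v_j-v_i)|\le\sqrt{\mathcal{E}(w-v)}$ on every edge. It is worth stating this explicitly.
\item The paper gives no argument for (4). Your trick of pairing $\mathcal{K}(v)$ with $v$ itself, using that $p$ is strictly increasing with $p(0)=0$, is clean and correct.
\item Your caveat on (4) is right. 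Each $\mathcal{W}_{ij}$ depends only on $v_j-v_i$, and $\mathbf{D}(V)$ contains the constants, so every constant is a critical point. Thus (4) holds only modulo constants, and the same remark applies to property (3) of Proposition \ref{prop:potential} for $\mathcal{W}^*$. This is harmless for the paper, since uniqueness in Theorem \ref{thm:pvhom} comes from strict concavity on $v+\mathbf{D}_0(V)$, not from (4).
\end{itemize}

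One small slip: $g''=-p'=-c_{ij}$, not $-2c_{ij}$, as your own Hessian formula shows. It does not affect any of your bounds.
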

\begin{proof}
We compute the partial derivatives of $\mathcal{W}_{ij}$ with respect to $v_i$ and $v_j$. Let $\alpha_{ij} = \alpha^\dagger_{\phi \psi} + \frac{v_j - v_i}{2}$ and $\alpha_{ji} = \alpha^\dagger_{\psi \phi} - \frac{v_j - v_i}{2}$. Using the property $\Lambda'(x) = -\log |2\sin x|$, we have:
\begin{align*}
	\frac{\partial \mathcal{W}_{ij}}{\partial v_j} &= 2 \left( -\log(2\sin \alpha_{ij}) \cdot \frac{1}{2} - \log(2\sin \alpha_{ji}) \cdot \left(-\frac{1}{2}\right) \right) - \log \frac{\sin \alpha^\dagger_{\psi \phi}}{\sin \alpha^\dagger_{\phi \psi}} \\
	&= \log \frac{\sin \alpha_{ji}}{\sin \alpha_{ij}} - \log \frac{\sin \alpha^\dagger_{\psi \phi}}{\sin \alpha^\dagger_{\phi \psi}} = -\mathcal{K}_{ij}(v).
\end{align*}
Similarly, $\frac{\partial \mathcal{W}_{ij}}{\partial v_i} = \mathcal{K}_{ij}(v)$.
Thus, for any $\tilde{v} \in \mathbf{D}(V)$,
\begin{align*}
	d\mathcal{W}(v)(\tilde{v}) &= \sum_{ij \in E} \left( \frac{\partial \mathcal{W}_{ij}}{\partial v_i} \tilde{v}_i + \frac{\partial \mathcal{W}_{ij}}{\partial v_j} \tilde{v}_j \right) \\
	&= \sum_{ij \in E} \mathcal{K}_{ij}(v) (\tilde{v}_i - \tilde{v}_j) = \langle \! \langle \mathcal{K}(v), \tilde{v} \rangle \! \rangle.
\end{align*}
This proves (1). Property (2) follows immediately from the definition of $P(\Theta, \alpha^{\dagger})$. For (3), observe that the Hessian of $\mathcal{W}$ restricted to $v + \mathbf{D}_0(V)$ is given by the operator $d\mathcal{K}_v$, which by Proposition \ref{prop:selfadjoint_vertex} is negative definite due to the positive edge weights $c_{ij}$. This ensures strict concavity.
\end{proof}

\subsection{Homeomorphism to $\mathbf{HD}(V)$}

The Royden decomposition yields an orthogonal decomposition of the space of Dirichlet-finite functions:
\[
\mathbf{D}(V) = \mathbf{D}_0(V) \oplus \mathbf{HD}(V).
\]
We denote by $p_V: \mathbf{D}(V) \to \mathbf{HD}(V)$ the projection onto the harmonic component, which is a bounded linear operator. Similar to the previous section, our goal is to show that the restriction of $p_V$ to the Hilbert manifold $P(\Theta,\alpha^\dagger)\subset \mathbf{D}(V)$ is a homeomorphism onto $\mathbf{HD}(V)$. The proof that $p_V|_{P(\Theta,\alpha^\dagger)}$ is a local homeomorphism proceeds exactly as in the previous section. To establish global bijectivity, we employ a variational method based on the functional $\mathcal{W}$ defined above, which requires different techniques compared to the analysis of the functional $\mathcal{W}^*$ used previously.

\begin{proposition}
The projection $p_V|_{P(\Theta,\alpha^\dagger)}: P(\Theta,\alpha^\dagger) \to \mathbf{HD}(V)$ is a local homeomorphism.
\end{proposition}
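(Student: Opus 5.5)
The argument follows Proposition \ref{prop:localhomHDF}, with the primal graph and the admissible set $\mathcal{A}_{\alpha^{\dagger}}$ in place of the dual graph and $\mathbf{D}(F)$. First, Proposition \ref{prop:hilbertV} shows that $P(\Theta,\alpha^\dagger)=(\iota^\star\circ\mathcal{K})^{-1}(0)$ is a Hilbert submanifold of $\mathbf{D}(V)$. Since $\mathcal{A}_{\alpha^\dagger}$ is open in $\mathbf{D}(V)$ (see the next paragraph), the rank theorem argument from Proposition \ref{prop:hilbert} identifies the tangent space at $v\in P(\Theta,\alpha^\dagger)$ as
\[
T_vP(\Theta,\alpha^\dagger)=\ker(\iota^\star\circ d\mathcal{K}_v)=\mathbf{HD}_c(V).
\]
Here $c$ is the geometric primal edge weight of Proposition \ref{prop:selfadjoint_vertex}: by that proposition, $\iota^\star\circ d\mathcal{K}_v$ is minus the weighted Laplacian $\Delta_c$ tested against indicator functions. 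Surjectivity onto $\mathbf{D}_0^\star(V)$ follows from the Riesz representation theorem for the inner product $\mathcal{E}_c$ on $\mathbf{D}_0(V)$.

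The key input is that $c$ is equivalent to the combinatorial weight. For the bounds I would use the uniform lower bound $\tilde\eta$ on the deformed half-angles $\alpha^\dagger_{\phi\psi}+\frac{v_j-v_i}{2}$ and $\alpha^\dagger_{\psi\phi}-\frac{v_j-v_i}{2}$ obtained in the proof of Lemma \ref{lem:Kbounded}. These two angles are positive and sum to $\pi-\Theta_{\phi\psi}\in(\epsilon_0,\pi-\epsilon_0)$, so both lie in $[\tilde\eta,\pi-\epsilon_0-\tilde\eta]$. The standard identity $c_{ij}=\frac{\sin\Theta_{\phi\psi}}{2\sin\alpha_{ij}\sin\alpha_{ji}}$ then gives
\[
\frac{\sin\epsilon_0}{2}\le c_{ij}\le\frac{1}{2\sin^2\tilde\eta}.
\]
The same uniform margin shows that $\mathcal{A}_{\alpha^\dagger}$ is open in $\mathbf{D}(V)$, since $\sup_{ij}|v_i-v_j|\le\sqrt{\mathcal{E}(v)}$. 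It also shows that $\mathcal{K}$ is $C^1$ near $v$, with $d\mathcal{K}$ depending continuously on $v$. By Proposition \ref{prop:equivalentweight}, $\mathbf{D}_c(V)=\mathbf{D}(V)$ and $\mathbf{D}_{c,0}(V)=\mathbf{D}_0(V)$, so the Royden decomposition gives
\[
\mathbf{D}(V)=\mathbf{D}_0(V)\oplus\mathbf{HD}_c(V)=\mathbf{D}_0(V)\oplus\mathbf{HD}(V).
\]

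Given these decompositions, $p_V|_{\mathbf{HD}_c(V)}$ is injective, since its kernel is $\mathbf{HD}_c(V)\cap\mathbf{D}_0(V)=0$. It is also surjective: for $h\in\mathbf{HD}(V)$, the $c$-harmonic part $p_c(h)$ differs from $h$ by an element of $\mathbf{D}_0(V)$, so $p_V(p_c(h))=h$. Hence $p_c|_{\mathbf{HD}(V)}$ is the inverse, and it is bounded. So $d(p_V|_{P})_v=p_V|_{T_vP}$ is a bounded linear isomorphism. Since $p_V$ is bounded linear and $P(\Theta,\alpha^\dagger)$ is a $C^1$ Hilbert submanifold, the inverse function theorem on Banach manifolds makes $p_V|_{P(\Theta,\alpha^\dagger)}$ a local diffeomorphism, and in particular a local homeomorphism.

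The main obstacle is confirming that the constant $\tilde\eta$ really depends only on $v$, and stays uniform on a neighbourhood of $v$, rather than degenerating. This matters because the admissibility constraint is what distinguishes this setting from the $\mathbf{D}(F)$ case. I would handle it by noting that only finitely many edges have $|v_j-v_i|\ge\eta/2$, and that a small perturbation in $\mathbf{D}(V)$ changes every difference $|v_j-v_i|$ by at most its norm, so the infimum $\eta_1$ persists under small perturbations.
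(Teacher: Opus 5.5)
Your proposal is correct and follows the paper's route. The paper's proof only says it is analogous to Proposition \ref{prop:localhomHDF}: $T_vP(\Theta,\alpha^\dagger)=\mathbf{HD}_c(V)$ with $c$ equivalent to the combinatorial weight, so $p_V$ restricts to an isomorphism on the tangent space with inverse $p_c|_{\mathbf{HD}(V)}$. Your explicit bounds $\frac{\sin\epsilon_0}{2}\le c_{ij}\le\frac{1}{2\sin^2\tilde\eta}$, the openness of $\mathcal{A}_{\alpha^\dagger}$, and the local uniformity of $\tilde\eta$ supply details the paper leaves implicit. The only slip is cosmetic: with the paper's convention $(\Delta_c u)_i=\sum_j c_{ij}(u_j-u_i)$, the operator $\iota^\star\circ d\mathcal{K}_v$ is $+\Delta_c$ rather than $-\Delta_c$, which does not affect the kernel.
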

\begin{proof}
	The proof is analogous to that of Proposition \ref{prop:localhomHDF}. 
\end{proof}

We observe that for $h \in \mathbf{HD}(V)$ and $v \in \mathbf{D}_0(V)$, we have $h+v \in \mathcal{A}_{\alpha^{\dagger}}$ if and only if $v \in (-h + \mathcal{A}_{\alpha^{\dagger}})$. We thus consider the functional $\mathcal{W}$ restricted to the affine set $\mathbf{D}_0(V) \cap \left( -h + \mathcal{A}_{\alpha^{\dagger}}\right)$.

\begin{definition}
	Given a uniformized circle pattern with half-angles $\alpha^{\dagger}$ and intersection angles $\Theta$. For every fixed $h \in \mathbf{HD}(V)$, we define the functional $\mathcal{W}_h: \mathbf{D}_0(V) \cap \left( -h + \mathcal{A}_{\alpha^{\dagger}}\right) \to \mathbb{R}$ as 
	\begin{align*}
		\mathcal{W}_h(v):=& \mathcal{W}(h+v) \\
		=& \sum_{ij \in E} 2 \left( \Lambda\left(\alpha^\dagger_{\phi \psi} + \frac{h_j + v_j - h_i - v_i}{2}\right) + \Lambda\left(\alpha^\dagger_{\psi \phi} - \frac{h_j + v_j - h_i - v_i}{2}\right) \right. \\
		&\left. - \Lambda\left(\alpha^\dagger_{\phi \psi}\right) - \Lambda\left(\alpha^\dagger_{\psi \phi}\right)  
		 + \log \frac{\sin \alpha^\dagger_{\psi \phi}}{\sin \alpha^\dagger_{\phi \psi}} (h_i+v_i - v_j-h_j)\right).
	\end{align*}
\end{definition}

We first show that for every $h \in \mathbf{HD}(V)$, the domain of the functional $\mathcal{W}_h$ is non-empty.

\begin{lemma}\label{lem:lipschitz}
For any $\epsilon >0$ and $ h \in \mathbf{D}(V)$, there exists a function $v:V \to \mathbb{R}$ with finite support such that the function $\tilde{h}:= h +v$ satisfies for every $x,y \in V$
\[
|\tilde{h}_x - \tilde{h}_y| < \epsilon \cdot d_G(x,y)
\]  
where the graph metric $d_G$ is defined as the minimal number of edges connecting $x$ and $y$.
\end{lemma}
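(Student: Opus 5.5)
The plan is to use a cutoff-and-McShane-extension argument. First observe that it suffices to produce $\tilde h = h+v$, with $v$ finitely supported, satisfying $|\tilde h_x - \tilde h_y| \le L\, d_G(x,y)$ for some fixed $L<\epsilon$ and all $x,y$. The strict inequality then follows for $x\neq y$, and $x=y$ is trivial. So fix $L:=\epsilon/2$. Choose constants $\epsilon',\epsilon''>0$ with $\epsilon'+\epsilon''\le L$. Fix a root $o\in V$ and write $B(o,r)$ for graph balls. Since the graph has uniformly bounded vertex degree, every ball is finite.

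\emph{Step 1 (good region outside a ball).} Since $\mathcal{E}(h)=\sum_{ij\in E}|h_i-h_j|^2<\infty$, only finitely many edges satisfy $|h_i-h_j|\ge \epsilon'$. Choose $M$ such that all of them have both endpoints in $B(o,M)$. Let $D:=\max_{a,b\in B(o,M)}|h_a-h_b|$, which is finite because the ball is finite. Then choose $N>M$ with $D\le 2\epsilon''(N-M)$, and set $W:=B(o,N)$.

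\emph{Step 2 ($h$ is Lipschitz off $W$).} I claim $|h_x-h_y|\le (\epsilon'+\epsilon'')\,d_G(x,y)$ for all $x,y\notin W$. Take a $d_G$-geodesic $\gamma$ from $x$ to $y$.
\begin{itemize}
\item If $\gamma$ avoids $B(o,M)$, every edge of $\gamma$ has increment $<\epsilon'$, so $|h_x-h_y|\le \epsilon' d_G(x,y)$.
\item Otherwise, let $a$ and $b$ be the first and last vertices of $\gamma$ in $B(o,M)$. The pieces $x\to a$ and $b\to y$ consist only of good edges, and $|h_a-h_b|\le D$. Moreover $d_G(x,y)\ge d_G(x,B(o,M))+d_G(y,B(o,M))\ge 2(N-M)$. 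Hence
\[ |h_x-h_y|\le \epsilon' d_G(x,y)+D\le \epsilon' d_G(x,y)+\epsilon''\cdot 2(N-M)\le (\epsilon'+\epsilon'')\,d_G(x,y). \]
\end{itemize}
This step is where the choice of radii matters. A naive argument would only control $h$ along edges of $V\setminus W$, and two far-out vertices might be close only through $W$. The estimate $d_G(x,y)\ge 2(N-M)$ for paths entering the inner ball is what absorbs the bounded oscillation $D$.

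\emph{Step 3 (McShane extension).} Define
\[ \tilde h_x := \inf_{y\in V\setminus W}\bigl(h_y + L\, d_G(x,y)\bigr). \]
The infimum is finite. Fixing any $y_0\notin W$, Step 2 gives $h_y+L d_G(x,y)\ge h_{y_0}-L d_G(y_0,y)+L d_G(x,y)\ge h_{y_0}-L d_G(x,y_0)$. As an infimum of $L$-Lipschitz functions, $\tilde h$ is $L$-Lipschitz for $d_G$. For $x\notin W$, Step 2 with $\epsilon'+\epsilon''\le L$ gives $h_y+Ld_G(x,y)\ge h_x$, and the choice $y=x$ attains this bound, so $\tilde h_x=h_x$. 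Therefore $v:=\tilde h-h$ is supported in the finite set $W$, and $\tilde h=h+v$ has the required property. The main obstacle is Step 2; Steps 1 and 3 are routine once it is in place.
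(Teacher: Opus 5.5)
Your proof is correct and follows essentially the same route as the paper: show that $h$ is Lipschitz off a large ball using the two-case geodesic argument, then take the McShane infimum over the complement of that ball. The differences are minor. For geodesics entering the inner ball, the paper uses the Cauchy--Schwarz bound $|h_x-h_y|\le\sqrt{d_G(x,y)\,\mathcal{E}(h)}$ instead of your bounded-oscillation estimate. Your choice $L=\epsilon/2$ gives the strict inequality for $x\neq y$, whereas the paper's argument ends with only $\le\epsilon\, d_G$. Note that at $x=y$ the strict inequality reads $0<0$, so that case is not ``trivial''; it must simply be excluded.
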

\begin{proof}
Fix $\epsilon >0$, $h \in \mathbf{D}(V)$ and a root vertex $o \in V$. For $R>0$, we denote by $B_R$ the ball of radius $R$ centered at $o$ in the graph metric
\[
B_R := \{ x \in V \mid d_G(o,x) \leq R\}.
\]

Since $h$ has finite Dirichlet energy, there are finitely many edges $ij$ such that $|h_i-h_j| \geq \epsilon$. Therefore, there exists $R_0>0$ such that for every $ij \in E$ with $i,j \notin B_{R_0}$,
\[|h_i - h_j| < \epsilon.
\]

On the other hand, for any $x,y \in V$ and any geodesic $\gamma$ connecting them. By the Cauchy-Schwarz inequality, we have
\[
|h_x - h_y| \leq \sum_{ij \in \gamma} |h_i - h_j| \leq \sqrt{d_G(x,y)} \left( \sum_{ij \in \gamma} |h_i - h_j|^2 \right)^{\frac{1}{2}} \leq \sqrt{d_G(x,y)} \sqrt{\mathcal{E}(h)}.
\] 
Let $R_1 = \frac{\mathcal{E}(h)}{\epsilon^2}$. Then for any $x,y \in V$ with $d_G(x,y) \geq R_1$, we have
\[
\frac{|h_x - h_y|}{d_G(x,y)} \leq \frac{\sqrt{\mathcal{E}(h)}}{R_1^{1/2}} = \epsilon.
\]
Take $R_2 = \max\{R_0, R_1\}$. Let $x,y \in V \setminus B_{2R_2}$ and let $\gamma$ be a geodesic connecting them. There are two cases:
\begin{enumerate}
	\item If $\gamma \cap B_{R_2} = \emptyset$, then all edges in $\gamma$ are outside $B_{R_0}$ and hence \[ |h_x- h_y| \leq \sum_{ij \in \gamma} |h_i - h_j| < \epsilon d_G(x,y).\] 
	\item If $\gamma \cap B_{R_2} \neq \emptyset$, then $d_G(x,y) \geq R_2 \geq R_1$ and we thus have
	\[|h_x - h_y| \leq \epsilon d_G(x,y).\]
\end{enumerate}
Combining the two cases, we conclude that for any $x,y \in V \setminus B_{2R_2}$,
\[|h_x - h_y| \leq \epsilon d_G(x,y).
\]

With the previous estimates, the construction of the function $\tilde{h}$ relies on the McShane extension theorem \cite{McShane1934}. We define the function $\tilde{h}$ as:
\[
\tilde{h}_x := \begin{cases} 
\inf_{y \in V \setminus B_{2R_2}} \big( h_y + \epsilon \cdot d_G(x, y) \big) & \text{if } x \in B_{2R_2} \\
h_x & \text{if } x \in V \setminus B_{2R_2}
\end{cases}
\]
 which coincides with $h$ outside $B_{2R_2}$. 
 
 Let $x \in B_{2R_2}$, we claim that $\tilde{h}_x$ is finite. Fix $y, y_0 \in V \setminus B_{2R_2}$, the previous estimates yield that
 \[
 h_y- h_{y_0} \geq -\epsilon d_G(y,y_0)
 \]
 and hence
\begin{align*}
h_y + \epsilon \cdot d_G(x, y) & \geq h_{y_0} - \epsilon \cdot d_G(y, y_0) + \epsilon \cdot d_G(x, y) \\
& \geq h_{y_0} - \epsilon \cdot d_G(x, y_0)
\end{align*}
Since $y$ is arbitrary, we conclude that $\tilde{h}_x$ is finite.

Next, we verify that $\tilde{h}$ is $\epsilon$-Lipschitz inside $B_{2R_2}$. Let $x_1, x_2 \in B_{2R_2}$ and fix an arbitrary $\delta > 0$. By the definition of the infimum for $\tilde{h}_{x_2}$, there exists $y \in V \setminus B_{2R_2}$ such that
\[
h_y + \epsilon d_G(x_2, y) < \tilde{h}_{x_2} + \delta.
\]
With this same $y$ and the triangle inequality, the definition of $\tilde{h}_{x_1}$ yields
\begin{align*}
\tilde{h}_{x_1} &\leq h_y + \epsilon d_G(x_1, y) \\
&\leq \big( h_y + \epsilon d_G(x_2, y) \big) + \epsilon d_G(x_1, x_2)\\
& < \tilde{h}_{x_2} + \delta + \epsilon d_G(x_1, x_2).
\end{align*}

Taking the limit as $\delta \to 0$ results in $\tilde{h}_{x_1} - \tilde{h}_{x_2} \leq \epsilon d_G(x_1, x_2)$. By symmetry, exchanging $x_1$ and $x_2$ provides $\tilde{h}_{x_2} - \tilde{h}_{x_1} \leq \epsilon d_G(x_1, x_2)$, which proves that $|\tilde{h}_{x_1} - \tilde{h}_{x_2}| \leq \epsilon d_G(x_1, x_2)$.
 
 Finally, we verify the Lipschitz condition across the boundary. Let $x_1 \in B_{2R_2}$ and $x_2 \in V \setminus B_{2R_2}$. By definition, $\tilde{h}_{x_2} = h_{x_2}$.

To establish the upper bound, we use the definition of $\tilde{h}_{x_1}$ as an infimum and evaluate it at the specific point $y = x_2$:
\[
\tilde{h}_{x_1} \leq h_{x_2} + \epsilon d_G(x_1, x_2) = \tilde{h}_{x_2} + \epsilon d_G(x_1, x_2).
\]
This directly yields $\tilde{h}_{x_1} - \tilde{h}_{x_2} \leq \epsilon d_G(x_1, x_2)$.

For the lower bound, let $y \in V \setminus B_{2R_2}$ be arbitrary. Because $h$ is $\epsilon$-Lipschitz on $V \setminus B_{2R_2}$ and by the triangle inequality, we have
\begin{align*}
h_{x_2} &\leq h_y + \epsilon d_G(x_2, y) \\
&\leq h_y + \epsilon \big( d_G(x_2, x_1) + d_G(x_1, y) \big) \\
&= \big( h_y + \epsilon d_G(x_1, y) \big) + \epsilon d_G(x_1, x_2).
\end{align*}
Rearranging gives $h_{x_2} - \epsilon d_G(x_1, x_2) \leq h_y + \epsilon d_G(x_1, y)$. Since this inequality holds for all $y \in V \setminus B_{2R_2}$, it must also hold for the infimum over $y$ on the right side, giving
\[
h_{x_2} - \epsilon d_G(x_1, x_2) \leq \tilde{h}_{x_1}.
\]
It yields $\tilde{h}_{x_2} - \tilde{h}_{x_1} \leq \epsilon d_G(x_1, x_2)$. Together with the upper bound, we conclude that $|\tilde{h}_{x_1} - \tilde{h}_{x_2}| \leq \epsilon d_G(x_1, x_2)$.
 
Finally, set $v := \tilde{h} - h$. Since $\tilde{h}$ and $h$ are identical outside $B_{2R_2}$, $v$ has finite support.
\end{proof}

\begin{proposition}
	For every $h \in \mathbf{HD}(V)$, the domain of the functional $\mathcal{W}_h$ has non-empty interior, i.e.
	\[
	\mathbf{D}_0(V) \cap \left( -h + \mathcal{A}_{\alpha^{\dagger}}\right) \neq \emptyset.
	\]
\end{proposition}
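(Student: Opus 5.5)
We will produce an explicit element $v \in \mathbf{D}_0(V)$ with $h+v \in \mathcal{A}_{\alpha^{\dagger}}$ by applying Lemma \ref{lem:lipschitz} with a suitably small $\epsilon$. Membership in $\mathcal{A}_{\alpha^{\dagger}}$ asks that for every oriented edge $ij$ with left face $\phi$ and right face $\psi$,
\[
\alpha^\dagger_{\phi \psi} + \frac{(h+v)_j - (h+v)_i}{2} > 0, \qquad \alpha^\dagger_{\psi \phi} - \frac{(h+v)_j - (h+v)_i}{2} > 0 .
\]
Both inequalities hold as soon as $|(h+v)_j-(h+v)_i| < 2\eta$ for every edge, where $\eta>0$ is a uniform lower bound on the half-angles of the uniformized pattern. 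Such an $\eta$ is supplied by Proposition \ref{prop:bound} applied with $u=0$, where $\mathcal{E}(0)=0$. It gives $\eta = \cot^{-1}\bigl(\frac{C+1}{\sin\epsilon_0}\bigr)$, with $C$ the Ring lemma constant, and $\eta<\alpha^\dagger_{\phi\psi},\alpha^\dagger_{\psi\phi}$ for all dual edges.

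The steps are then as follows. Fix $h\in\mathbf{HD}(V)\subset\mathbf{D}(V)$ and choose $\epsilon = \eta$; any $\epsilon\le 2\eta$ suffices. Lemma \ref{lem:lipschitz} gives a finitely supported $v$ such that $\tilde h = h+v$ satisfies $|\tilde h_x-\tilde h_y|<\epsilon\, d_G(x,y)$. For an edge $ij$ we have $d_G(i,j)=1$, so $|\tilde h_j-\tilde h_i|<\eta$. Hence
\[
\alpha^\dagger_{\phi \psi} + \tfrac{\tilde h_j-\tilde h_i}{2} > \eta - \tfrac{\eta}{2} > 0,
\]
and similarly for $\alpha^\dagger_{\psi\phi}$. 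It remains to check that $\tilde h \in \mathbf{D}(V)$ and $v\in\mathbf{D}_0(V)$. Since $v$ is finitely supported, it lies in $\mathbf{D}_0(V)$ by definition. Then $\tilde h = h+v$ is a sum of two Dirichlet-finite functions, so it lies in $\mathbf{D}(V)$. Therefore $v \in \mathbf{D}_0(V)\cap(-h+\mathcal{A}_{\alpha^\dagger})$.

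For the claim of non-empty interior, we note that the strict inequalities in fact hold with a uniform margin: every edge satisfies $\alpha^\dagger \pm \frac{\tilde h_j-\tilde h_i}{2} > \eta/2$. The edgewise bound $\sup_{ij}|w_i-w_j| \le \sqrt{\mathcal{E}(w)}$ then shows that every $w\in\mathbf{D}_0(V)$ with $\mathcal{E}(w)<(\eta/2)^2$ keeps $v+w$ inside the domain. So a whole $\mathbf{D}_0(V)$-ball around $v$ lies in the domain, which is open and non-empty.

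The only delicate point is obtaining the uniform positive lower bound on $\alpha^\dagger$ over the infinitely many edges. Without it, no finite perturbation could work, and this is exactly what the Ring lemma, through Proposition \ref{prop:bound}, provides. The real work, namely producing a finitely supported correction that makes $h$ uniformly $\epsilon$-Lipschitz, is already done in Lemma \ref{lem:lipschitz}.
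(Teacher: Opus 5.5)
Your proof is correct and follows the paper's own argument. Both take the uniform lower bound $\eta$ on the half-angles $\alpha^\dagger$ from Proposition \ref{prop:bound} (via the Ring lemma) and then apply Lemma \ref{lem:lipschitz} to get a finitely supported correction that makes $h+v$ edgewise small; the paper takes $\epsilon=\eta/2$ where you take $\epsilon=\eta$, which works equally well, and your explicit $\mathbf{D}_0(V)$-ball around $v$ is a valid addition that the paper leaves implicit.
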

\begin{proof}
	As a consequence of the Ring lemma, proposition \ref{prop:bound} yields that there exists some $\eta >0$ such that for every edge $ij \in E^*$ with left face $\phi$ and right face $\psi$, the half-angles are uniformly bounded away from $0$ and $\pi$:
	\[\eta < \alpha^\dagger_{\phi \psi} < \pi - \eta, \quad \text{and} \quad \eta < \alpha^\dagger_{\psi \phi} < \pi - \eta.\]
	By Lemma \ref{lem:lipschitz}, there exists some $v \in \mathbf{D}_0(V)$ such that the function $\tilde{h} := h + v$ satisfies for every edge $ij \in E$
	\[|\tilde{h}_j - \tilde{h}_i| < \frac{\eta}{2}.\]
	Hence, we have $h+v \in \mathcal{A}_{\alpha^{\dagger}}$ and thus $v \in \mathbf{D}_0(V) \cap \left( -h + \mathcal{A}_{\alpha^{\dagger}}\right) \neq \emptyset$.
\end{proof}

\begin{proposition}
	The functional $\mathcal{W}_h$ is coercive on $\mathbf{D}_0(V) \cap \left( -h + \mathcal{A}_{\alpha^{\dagger}}\right)$.
\end{proposition}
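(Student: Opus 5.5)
Since $\mathcal{W}$ is concave on the admissible set (Proposition \ref{prop:potential_vertex}), I read coercivity in the sense needed for a maximization argument: $\mathcal{W}_h(v) \to -\infty$ as $\|v\| \to \infty$ within $\mathbf{D}_0(V) \cap (-h + \mathcal{A}_{\alpha^{\dagger}})$. The plan is to expand around an admissible base point and prove a \emph{uniform} quadratic upper bound edge by edge. Unlike the $\mathcal{W}^*$ case, no split into ``large'' and ``small'' increments should be needed. The reason is that the Hessian weights $c_{ij}$ are bounded \emph{below} uniformly on the whole admissible set, not merely on bounded-gradient regions.

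\textbf{Step 1 (base point).} By the previous proposition, built on Lemma \ref{lem:lipschitz}, there is a finitely supported $v_0 \in \mathbf{D}_0(V)$ with $w_0 := h + v_0 \in \mathcal{A}_{\alpha^{\dagger}}$. By Lemma \ref{lem:Kbounded} and Proposition \ref{prop:Kedge}, $\mathcal{K}(w_0) \in \mathbf{D}(V)^{\star}$ is a bounded functional.

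\textbf{Step 2 (uniform concavity per edge).} Fix an oriented edge $ij$ with left face $\phi$ and right face $\psi$. Consider $g(x) = \mathcal{W}_{ij}$ as a function of the increment $x = w_j - w_i$, with $w$ admissible. Its admissible range is the open interval $-2\alpha^\dagger_{\phi\psi} < x < 2\alpha^\dagger_{\psi\phi}$, which is convex. By Proposition \ref{prop:selfadjoint_vertex},
\[
g''(x) = -c_{ij} = -\frac{\sin \Theta_{\phi\psi}}{2 \sin \alpha_{ij} \sin \alpha_{ji}} \le -\frac{\sin \epsilon_0}{2},
\]
because $\alpha_{ij} + \alpha_{ji} = \pi - \Theta_{\phi\psi}$ and $\sin \alpha_{ij} \sin \alpha_{ji} \le 1$. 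Taylor's formula along the segment from $w_0$ to $h + v$ then gives, edgewise,
\[
\mathcal{W}_{ij}(h+v) \le \mathcal{W}_{ij}(w_0) + \mathcal{K}_{ij}(w_0)\bigl((v-v_0)_i - (v-v_0)_j\bigr) - \frac{\sin\epsilon_0}{4}\bigl((v-v_0)_i - (v-v_0)_j\bigr)^2 .
\]
Points in the closure $\overline{\mathcal{A}}_{\alpha^{\dagger}}$ are handled by continuity of $\Lambda$.

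\textbf{Step 3 (summation).} Summing over $E$ gives
\[
\mathcal{W}_h(v) \le \mathcal{W}(w_0) + \langle\!\langle \mathcal{K}(w_0), v - v_0\rangle\!\rangle - \frac{\sin\epsilon_0}{4}\,\mathcal{E}(v - v_0)
\le C + \|\mathcal{K}(w_0)\|\sqrt{\mathcal{E}(v-v_0)} - \frac{\sin\epsilon_0}{4}\,\mathcal{E}(v-v_0).
\]
On $\mathbf{D}_0(V)$ the quantity $\sqrt{\mathcal{E}}$ is an equivalent norm, by transience and because $\mathbf{D}_0(V)$ contains no nonzero constants. Since $v_0$ is fixed, the right-hand side tends to $-\infty$ as $\|v\| \to \infty$, which is the desired coercivity.

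\textbf{Main obstacle.} The delicate point is justifying that the series may be summed termwise. Each per-edge remainder is nonpositive and the linear terms are absolutely summable by Lemma \ref{lem:Kbounded} and Cauchy--Schwarz. What still needs checking is that $\mathcal{W}(w_0)$ and $\mathcal{W}(h+v)$ converge absolutely. For this I would bound $|\mathcal{W}_{ij}(w)| \le C'(w_j - w_i)^2$ by a second-order Taylor estimate at $0$: the linear term vanishes thanks to the normalizing $\log$ term. However, near the boundary of $\mathcal{A}_{\alpha^{\dagger}}$ the constant $C'$ must not blow up. Only finitely many edges have large increments, so those edges contribute finitely, and on the remaining edges the angles are bounded away from $0$ and $\pi$ by Proposition \ref{prop:bound}.
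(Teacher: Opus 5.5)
Your proposal is correct and follows the paper's route: a uniform positive lower bound on the weights $c_{ij}$ over the entire admissible set makes $\mathcal{W}_h$ uniformly concave, which forces quadratic decay in the Dirichlet energy. You also make explicit what the paper's two-line argument leaves implicit --- an admissible base point $w_0$ to expand around (needed since $h$ itself need not lie in $\mathcal{A}_{\alpha^{\dagger}}$), control of the linear term through the bounded functional $\mathcal{K}(w_0)$, and absolute convergence of the edge sums --- and your direct estimate $c_{ij} = \sin\Theta_{\phi\psi}/(2\sin\alpha_{ij}\sin\alpha_{ji}) \geq \tfrac{1}{2}\sin\epsilon_0$ applies to all admissible angle data without going through Proposition~\ref{prop:bound}, which is stated for the radius parametrization.
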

\begin{proof}
  Proposition \ref{prop:bound} ensures that for circle patterns with intersection angles in $(\epsilon_0, \pi - \epsilon_0)$, the geometric edge weights $c_{ij}$ are uniformly bounded below by a constant $\tan \frac{\epsilon_0}{2} >0$. Consequently, the Hessian of $\mathcal{W}_h$ satisfies
  \[
  \langle \! \langle d^2\mathcal{W}_h(v)(u), u \rangle \! \rangle = - \sum_{ij \in E} c_{ij} (u_i - u_j)^2 \leq -\tan \frac{\epsilon_0}{2} \sum_{ij \in E} (u_i - u_j)^2.
  \]
  This inequality implies that $\mathcal{W}_h$ decays at least quadratically with the Dirichlet norm of $u$. Therefore, as $\|u\|_{\mathbf{D}(V)} \to \infty$, we have $\mathcal{W}_h(u) \to -\infty$, establishing the coercivity of $\mathcal{W}_h$.
\end{proof}

Since the functional $\mathcal{W}_h$ is coercive and strictly concave on the non-empty closed convex set $\mathbf{D}_0(V) \cap \left( -h + \overline{\mathcal{A}}_{\alpha^{\dagger}}\right)$, it attains a unique maximizer $v^*$ in this set. It remains to show that $v^*$ lies in the interior $\mathbf{D}_0(V) \cap \left( -h + \mathcal{A}_{\alpha^{\dagger}}\right)$ rather than on the boundary. Once this is established, Proposition \ref{prop:potential_vertex} implies that $h + v^* \in P(\Theta, \alpha^{\dagger})$ with the projection $p_V(h + v^*) = h$. This argument is similar to Rivin's proof in the finite-dimensional setting \cite{Rivin}.

\begin{proposition}\label{prop:maxinterior}
	The maximizer $v^*$ of the functional $\mathcal{W}_h$ lies in the interior $\mathbf{D}_0(V) \cap \left( -h + \mathcal{A}_{\alpha^{\dagger}}\right)$.
\end{proposition}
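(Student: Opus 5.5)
Writing $w=h+v^*$, suppose for contradiction that $v^*$ lies on the boundary, so some oriented edges $ij$ have $\alpha_{ij}:=\alpha^\dagger_{\phi\psi}+\frac{w_j-w_i}{2}=0$ or $\alpha_{ji}:=\alpha^\dagger_{\psi\phi}-\frac{w_j-w_i}{2}=0$. Since $\alpha_{ij}+\alpha_{ji}=\pi-\Theta_{\phi\psi}\geq\epsilon_0$, at most one of the two angles of each kite vanishes. Because $w\in\mathbf{D}(V)$, only finitely many edges have $|w_j-w_i|\geq\eta$, where $\eta$ is the uniform lower bound on $\alpha^\dagger$ from Proposition \ref{prop:bound}. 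Hence the set $E_0$ of degenerate edges is finite, and all other angles are bounded below by a positive constant. I follow Rivin's argument: exhibit a direction $u\in\mathbf{D}_0(V)$ of finite support such that $w+tu$ is admissible for small $t>0$ and the one-sided derivative $\frac{d}{dt}\big|_{t=0^+}\mathcal{W}_h(v^*+tu)=+\infty$. This contradicts maximality.

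The key computation uses the fact that $\Lambda'(x)=-\log|2\sin x|\to+\infty$ as $x\to0^+$. By Definition \ref{def:Wfunctional}, the $t$-derivative of the term $\mathcal{W}_{ij}$ is $2\bigl(\Lambda'(\alpha_{ij})\dot\alpha_{ij}+\Lambda'(\alpha_{ji})\dot\alpha_{ji}\bigr)$ plus a finite linear term, where $\dot\alpha_{ij}=\frac{u_j-u_i}{2}=-\dot\alpha_{ji}$. So on a degenerate edge with $\alpha_{ij}=0$, any $u$ with $u_j>u_i$ contributes $+\infty$ to the derivative. Non-degenerate edges contribute finitely, since only finitely many of them meet the support of $u$. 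I therefore need a finitely supported $u$ that strictly increases every zero angle along $E_0$.

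To construct $u$, consider the finite set of vertices incident to $E_0$ and orient each degenerate edge $i\to j$ so that $u_j>u_i$ is required. I would show that this directed graph has no directed cycle, since then a topological ordering gives the values of $u$ on these vertices, with $u$ set to zero elsewhere. The acyclicity argument I have in mind is geometric. At every vertex $i$ the angles $\alpha$ around the adjacent circumcenters still sum to the right total, because the linear constraints $\sum\alpha=\pi$ at each circumcenter are preserved by the deformation. A directed cycle of zero angles would then force, via the constraint (A2) (angle sum $>2\pi+\epsilon_0$ around loops enclosing several faces), a contradiction, in the spirit of Rivin's lemma that a degenerate configuration cannot be a limit of coherent angle systems.

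This acyclicity step is where I expect the main obstacle. If it resists, I would fall back on a simpler direction: take $u=-\chi_S$ or $u=\chi_S$ for a suitable finite vertex set $S$, for instance the set where $w$ attains an extreme value relative to its neighbours. I would then check that every degenerate edge crossing $\partial S$ has the favourable sign, while interior edges of $S$ are unaffected, since $u$ is constant on $S$. Whichever route works, the finite-support property is needed both so that $u\in\mathbf{D}_0(V)$ and so that only finitely many terms change. Given the resulting contradiction, $v^*$ is interior, and Proposition \ref{prop:potential_vertex} finishes the argument.
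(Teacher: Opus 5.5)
\textbf{Verdict: genuine gap, easily fixed.} Your overall strategy matches the Rivin-type argument in the paper: $E_0$ is finite, and an admissible direction along which $\Lambda'(0^+)=+\infty$ gives an infinite one-sided derivative. But you leave the crux unproved, namely constructing a finitely supported $u$ that increases every zero angle, and the route you sketch for it does not work.

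\textbf{Why your routes fail.} Condition (A2) only constrains the fixed intersection angles $\Theta$ along loops of the \emph{dual} graph. It is untouched by the deformation and says nothing about which half-angles $\alpha_{ij},\alpha_{ji}$ have collapsed. So it cannot rule out a directed cycle of degenerate primal edges. Your fallback $u=\pm\chi_S$ has a similar problem: a single indicator only moves the degenerate edges crossing $\partial S$. You give no choice of $S$ that guarantees those edges have the favourable sign.

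\textbf{The missing observation.} It is one line. On a saturated edge with $\alpha_{ij}(w)=0$ you have $w_j-w_i=-2\alpha^\dagger_{\phi\psi}<0$. If instead $\alpha_{ji}(w)=0$, then $w_j-w_i=2\alpha^\dagger_{\psi\phi}>0$. So in your orientation, $w$ strictly decreases along every directed degenerate edge, and acyclicity follows at once.

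\textbf{The explicit direction.} Let $S$ be the finite set of endpoints of $E_0$, and put $u=-w$ on $S$ and $u=0$ elsewhere.
\begin{itemize}
\item Every zero angle then has rate $\dot\alpha=\alpha^\dagger>\eta>0$. Geometrically, you are moving toward the admissible point $0$, the uniformized pattern.
\item Only finitely many other terms change, and their angles are positive at $t=0$.
\end{itemize}
With this choice your argument goes through.

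\textbf{Comparison with the paper.} This is the same mechanism the paper uses. It moves from $v^*$ toward an interior point $v$ of the domain, which exists by Lemma \ref{lem:lipschitz}. Since the angles are affine in $t$ and positive at the target, every saturated angle increases automatically, and no combinatorial acyclicity question arises. The cost is that $v-v^*$ is not finitely supported. The paper therefore needs a lower bound $\tilde\eta$ on the unsaturated angles, uniform in $t\in[0,1]$, plus a Cauchy--Schwarz estimate over $E_1$. Once you supply the direction, your truncation to $S$ avoids that estimate entirely.
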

\begin{proof}
	We take an element $v$ in the interior of the convex set $\mathbf{D}_0(V) \cap \left( -h + \overline{\mathcal{A}}_{\alpha^{\dagger}}\right)$ and for $t\in [0,1]$, we consider the linear interpolation $v^t = v^* + t (v-v^*)$ as well as the restriction of the function
	\[f(t) := \mathcal{W}_h(v^t).
	\]
    Our goal is to show that $f(t)> f(0)$ for sufficiently small $t>0$, which contradicts the maximality of $v^*$ on the set.

    We denote by $E_0 \subset E$ the set of edges where the angle constraints are saturated: For each $ij \in E_0$ with left face $\phi$ and right face $\psi$, we have either
\[\alpha^\dagger_{\phi \psi} + \frac{h_j + v^*_j - h_i - v^*_i}{2} = 0 \quad \text{or} \quad \alpha^\dagger_{\psi \phi} - \frac{h_j + v^*_j - h_i - v^*_i}{2} = 0. \]
And we denote by $E_1 = E \setminus E_0$ the set of edges where the angle constraints are strict. 

We observe that $E_0$ is a finite set. Since the half-angles $\alpha^\dagger$ are uniformly bounded between $\eta$ and $\pi - \eta$, the saturation of an angle constraint on an edge $ij$ implies that
\[|h_j + v^*_j - h_i - v^*_i| \geq \eta.\]
Given that $h+ v^*$ has finite Dirichlet energy, there can only be finitely many edges satisfying this inequality.

On the other hand, we claim that there exists $\tilde{\eta} >0$ such that for $t\in [0,1]$ and for every edge $ij \in E_1$,
\[\tilde{\eta} < \alpha^\dagger_{\phi \psi} + \frac{h_j + v^t_j - h_i - v^t_i}{2} < \pi - \tilde{\eta},\]
and
\[\tilde{\eta} < \alpha^\dagger_{\psi \phi} - \frac{h_j + v^t_j - h_i - v^t_i}{2} < \pi - \tilde{\eta}.\]
It suffices to establish the bounds for $t=0$ and $t=1$, since the angle expressions depend linearly on $t$. 

We focus on the half-angles corresponding to $v$. Since $v$ has finite Dirichlet energy, there are only finitely many edges $ij \in E_1$ such that the half-angles are not bounded away from $\eta/2$ and $\pi - \eta/2$. For the remaining finite edges, the half-angles can be uniformly bounded away from $0$ and $\pi$. A similar argument applies to the half-angles corresponding to $v^*$ over the edge set $E_1$. Thus, we conclude the existence of such $\tilde{\eta} >0$.

We compute the derivative:
\begin{align*}
	f'(t) 
	=& \sum_{ij \in E} \mathcal{K}_{ij}(h + v^t) \cdot ( (v_i - v^*_i) - (v_j - v^*_j) ) \\
	=& \sum_{ij \in E_1} \mathcal{K}_{ij}(h + v^t) \cdot ( (v_i - v^*_i) - (v_j - v^*_j) ) \\&+ \sum_{ij \in E_0} \mathcal{K}_{ij}(h + v^t) \cdot ( (v_i - v^*_i) - (v_j - v^*_j) ).
\end{align*}
Lemma \ref{lem:Kbounded} ensures that there exists some constant $C=C(\tilde{\eta},h,v,v^*)>0$ such that for every edge $ij \in E_1$ and $t \in [0,1]$,
\[|\mathcal{K}_{ij}(h + v^t)| \leq C| (v^t_i - v^t_j) + (h_i - h_j)| \leq C( |v_i - v_j| + |v^*_i - v^*_j| + |h_i - h_j|).\]
By Cauchy-Schwarz inequality, we have
\begin{align*}
	&\sum_{ij \in E_1} \mathcal{K}_{ij}(h + v^t) \cdot ( (v_i - v^*_i) - (v_j - v^*_j) ) \\ \geq & -C \left( \sum_{ij \in E_1} ( |v_i - v_j| + |v^*_i - v^*_j| + |h_i - h_j| )^2 \right)^{1/2}  \left( \sum_{ij \in E_1} ( (v_i - v^*_i) - (v_j - v^*_j) )^2 \right)^{1/2} \\
	\geq & - \sqrt{3} C \left( \mathcal{E}(v) + \mathcal{E}(v^*) + \mathcal{E}(h) \right)^{1/2} \cdot \left( \mathcal{E}(v - v^*) \right)^{1/2}.
\end{align*}

Observe that for every edge $ij \in E_0$, each term \[ \mathcal{K}_{ij}(h + v^t) \cdot ( (v_i - v^*_i) - (v_j - v^*_j) ) \to +\infty \quad \text{as } t \to 0^+ \]  since 
\[
\log \sin \alpha \to -\infty \quad \text{as } \alpha \to 0^+.
\]

Thus there exists some sufficiently small $t_0>0$ such that $f'(t) > 0$ for every $t \in (0,t_0)$. Hence by the mean value theorem, we have $f(t_0) - f(0) = f'(t_1) t_0 >0$ for some $t_1 \in (0,t_0)$, contradicting the maximality of $v^*$.
\end{proof}

Finally, we arrive at the main theorem of this section.

\begin{theorem}\label{thm:pvhom}
	The projection $p|_{P(\Theta,\alpha^\dagger)}: P(\Theta,\alpha^\dagger) \to \mathbf{HD}(V)$ is a homeomorphism.
\end{theorem}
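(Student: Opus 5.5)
The plan is to copy the proof of Theorem \ref{thm:homHDF}, now using the concave functional $\mathcal{W}$ and the results just proved. The argument has three steps: local homeomorphism, surjectivity, and injectivity. A bijective local homeomorphism is then a homeomorphism, since its inverse is continuous locally and hence globally.

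\textbf{Local homeomorphism.} By the preceding proposition, $p_V|_{P(\Theta,\alpha^\dagger)}$ is already a local homeomorphism. The reason is the same as before. By Proposition \ref{prop:hilbertV}, the tangent space at $v$ is $\mathbf{HD}_c(V)$, where $c$ is the geometric weight of Proposition \ref{prop:selfadjoint_vertex}. By Proposition \ref{prop:bound} and Corollary \ref{cor:equivalence}, this weight is equivalent to the combinatorial weight. Therefore $p_V$ restricted to the tangent space is an isomorphism onto $\mathbf{HD}(V)$, with inverse $p_c|_{\mathbf{HD}(V)}$.

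\textbf{Surjectivity.} Fix $h\in\mathbf{HD}(V)$.
\begin{enumerate}
\item Consider the closed convex set $\mathbf{D}_0(V)\cap(-h+\overline{\mathcal{A}}_{\alpha^\dagger})$. It is nonempty by Lemma \ref{lem:lipschitz} and the proposition following it.
\item On this set, $\mathcal{W}_h$ is continuous, strictly concave and coercive (tends to $-\infty$). Continuity holds because $\Lambda$ is continuous and the boundary is hit on only finitely many edges.
\item By the standard Hilbert-space argument (weak upper semicontinuity of concave continuous functionals, \cite[Corollary 3.23]{Brezis2011}), $\mathcal{W}_h$ attains a maximum at some $v^*$.
\item By Proposition \ref{prop:maxinterior}, $v^*$ lies in the open admissible set.
\item There the derivative vanishes on $\mathbf{D}_0(V)$. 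By Proposition \ref{prop:potential_vertex}(3), $h+v^*\in P(\Theta,\alpha^\dagger)$, and $p_V(h+v^*)=h$.
\end{enumerate}

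\textbf{Injectivity.} Suppose $v_1,v_2\in P(\Theta,\alpha^\dagger)$ with $p_V(v_1)=p_V(v_2)=h$. Then $v_1-h$ and $v_2-h$ both lie in $\mathbf{D}_0(V)\cap(-h+\mathcal{A}_{\alpha^\dagger})$ and are critical points of $\mathcal{W}_h$ there. This set is convex, as it is the intersection of a subspace with the translate of a set cut out by linear strict inequalities. By Proposition \ref{prop:potential_vertex}(5), $\mathcal{W}_h$ is strictly concave on it. A strictly concave function on a convex set has at most one critical point: along the segment joining the two points, the derivative of the restriction is strictly decreasing, yet vanishes at both ends. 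Hence $v_1=v_2$.

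\textbf{Main obstacle.} I expect the delicate step to be the existence of the maximizer, that is, making the coercivity rigorous. The coercivity proof given earlier only bounds the Hessian on the open set, and at the boundary $\Lambda$ is merely continuous. I would handle this in two parts:
\begin{itemize}
\item Integrate the Hessian bound along the segment from an interior point $v_0$, obtaining
\[
\mathcal{W}_h(v)\le \mathcal{W}_h(v_0)+\langle\!\langle d\mathcal{W}_h(v_0),v-v_0\rangle\!\rangle-\tfrac12\tan\tfrac{\epsilon_0}{2}\,\mathcal{E}(v-v_0).
\]
This bound makes superlevel sets bounded.
\item Extend the inequality to boundary points by continuity.
\end{itemize}
Bounded, closed, convex superlevel sets are weakly compact. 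Together with weak upper semicontinuity, this gives the maximizer.
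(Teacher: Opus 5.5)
Your proposal is correct and follows the paper's proof step for step. The ingredients match: the local homeomorphism from the preceding proposition; existence of a maximizer of the concave, coercive functional $\mathcal{W}_h$ on $\mathbf{D}_0(V)\cap(-h+\overline{\mathcal{A}}_{\alpha^\dagger})$; Proposition \ref{prop:maxinterior} to place that maximizer in the open admissible set; and strict concavity for injectivity. Your extra argument for coercivity is a genuine improvement, since the paper only asserts this step. You integrate the uniform Hessian bound $c_{ij}\ge\tan\frac{\epsilon_0}{2}$ from an interior point and extend the resulting inequality to boundary points by continuity.

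One citation should be adjusted. Proposition \ref{prop:bound} is stated only for $u\in\mathbf{D}(F)$, so for weight equivalence on $P(\Theta,\alpha^\dagger)$ you should instead use $c_{ij}=\sin\Theta_{\phi\psi}/(2\sin\alpha_{\phi\psi}\sin\alpha_{\psi\phi})$. The angles here are bounded away from $0$ exactly as in Lemma \ref{lem:Kbounded}.
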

\begin{proof}
	We have seen that $p|_{P(\Theta,\alpha^\dagger)}$ is a local homeomorphism. It remains to prove that it is a bijection.
	
	Fix an arbitrary $h \in \mathbf{HD}(V)$. We search for $v$ in $\mathbf{D}_0(V)$ such that $h+v \in P(\Theta, \alpha^{\dagger})$. This is equivalent to finding a critical point $v$ of the functional $\mathcal{W}_h$ in the open set $\mathbf{D}_0(V) \cap \left( -h + \mathcal{A}_{\alpha^{\dagger}}\right)$.
	
	We have established that the functional $\mathcal{W}_h$ is strictly concave on the convex set $\mathbf{D}_0(V) \cap \left( -h + \overline{\mathcal{A}}_{\alpha^{\dagger}}\right)$, is coercive (approaches $-\infty$ as $\|v\| \to \infty$), and that its domain has a non-empty interior. Therefore, there exists a unique maximizer $v^* \in \mathbf{D}_0(V) \cap \left( -h + \overline{\mathcal{A}}_{\alpha^{\dagger}}\right)$.
	
	By Proposition \ref{prop:maxinterior}, the maximizer $v^*$ lies in the interior $\mathbf{D}_0(V) \cap \left( -h + \mathcal{A}_{\alpha^{\dagger}}\right)$. Consequently, $v^*$ is a critical point of $\mathcal{W}_h$, which implies by Proposition \ref{prop:potential_vertex} that $h + v^* \in P(\Theta, \alpha^{\dagger})$. Since $p_V(h+v^*) = h$, the map is surjective.
	
	For injectivity, suppose $h + v_1, h + v_2 \in P(\Theta, \alpha^{\dagger})$ for some $v_1, v_2 \in \mathbf{D}_0(V)$. Both $v_1$ and $v_2$ are critical points of the strictly concave functional $\mathcal{W}_h$, which implies $v_1 = v_2$.
	
	Since $p|_{P(\Theta,\alpha^\dagger)}$ is a bijective local homeomorphism between Hilbert manifolds, it is a global homeomorphism.
\end{proof}

\subsection{Conjugation map and isometry}

Elements $u \in P(\Theta, R^{\dagger})$ and $v \in P(\Theta, \alpha^{\dagger})$ are said to be \emph{conjugate} if they describe the same circle pattern realization in the plane (up to global rigid motions and scaling). The relation is derived from the geometry of the Euclidean kites associated with edges. For an oriented edge $ij$ shared by faces $\phi$ and $\psi$, let $R = e^u R^\dagger$ be the new radii. The length of the common chord shared by the circles is given by \[ 2 R_\phi \sin \alpha_{\phi \psi} = 2 R_\psi \sin \alpha_{\psi \phi}.\] Taking the ratio $\frac{R_\psi}{R_\phi} = \frac{\sin \alpha_{\phi \psi}}{\sin \alpha_{\psi \phi}}$ and normalizing by the ratio from the uniformized circle pattern $\frac{R^\dagger_\psi}{R^\dagger_\phi} = \frac{\sin \alpha^\dagger_{\phi \psi}}{\sin \alpha^\dagger_{\psi \phi}}$, we obtain the logarithmic relation:
\begin{align}\label{eq:conjugation}
	u_{\psi} - u_{\phi}  = \log \frac{ \sin (\alpha^\dagger_{\phi \psi} + \frac{v_j - v_i}{2}) \sin \alpha^\dagger_{\psi \phi}  }{ \sin (\alpha^\dagger_{\psi \phi} + \frac{v_i - v_j}{2}) \sin \alpha^\dagger_{\phi \psi} }.
\end{align}
Its linearization leads to the following relation.
\begin{proposition} \label{prop:conjugation_isometry}
	The conjugation relation \eqref{eq:conjugation} defines a homeomorphism $\mathcal{C}: P(\Theta, R^{\dagger})/\mathbb{R} \to P(\Theta, \alpha^{\dagger})/\mathbb{R}$. The differential of $\mathcal{C}$ at $u$ maps $\mathfrak{u} \in \mathbf{HD}_{c^*}(F)$ to its harmonic conjugate $\mathfrak{v} \in \mathbf{HD}_{c}(V)$ satisfying the linear equations:
	\[
	\mathfrak{u}_{\psi} - \mathfrak{u}_{\phi} = c_{ij} (\mathfrak{v}_j - \mathfrak{v}_i).
	\]
	Furthermore, we have $\mathcal{E}_{c^*}(\mathfrak{u}) = \mathcal{E}_c(\mathfrak{v})$.
\end{proposition}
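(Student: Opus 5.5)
The plan is to build $\mathcal{C}$ geometrically, check that it lands in the right spaces, and then prove the linear statements by differentiating \eqref{eq:conjugation}.

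\emph{Construction and well-definedness.} Take $u \in P(\Theta,R^{\dagger})$ and let $R=e^uR^\dagger$. Its half-angles $\alpha_{\phi\psi}$ are given by Lemma~\ref{lemma:cotweight}, and the angle sum around each circumcenter is $\pi$. Set
\[
\beta_{ij} := 2(\alpha_{\phi\psi}-\alpha^\dagger_{\phi\psi}) = -\mathcal{K}^*_{\phi\psi}(u),
\]
where $\phi$ and $\psi$ are the left and right faces of the oriented edge $ij$. This is a $1$-form on oriented primal edges: $\beta_{ji}=-\beta_{ij}$ because $\alpha_{\phi\psi}+\alpha_{\psi\phi}=\pi-\Theta_{\phi\psi}$.

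Next I show $\beta$ is closed. Around a primal face $\phi$, the boundary edges $ij$ all have $\phi$ on the same side, so the sum of $\beta$ equals $2\sum_\psi(\alpha_{\phi\psi}-\alpha^\dagger_{\phi\psi})=2(\pi-\pi)=0$. Since the disk is simply connected, $\beta=dv$ for some $v:V\to\mathbb{R}$, unique up to an additive constant. By construction, $\alpha_{\phi\psi}=\alpha^\dagger_{\phi\psi}+\frac{v_j-v_i}{2}$ with both half-angles in $(0,\pi)$, so $v\in\mathcal{A}_{\alpha^\dagger}$.

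By Lemma~\ref{lem:gedge}, $|v_j-v_i|\le C_1|u_\phi-u_\psi|$, which gives $v\in\mathbf{D}(V)$. The sine-law ratio $R_\psi/R_\phi=\sin\alpha_{\phi\psi}/\sin\alpha_{\psi\phi}$ is exactly \eqref{eq:conjugation}. Summing its logarithm around a vertex $i$ telescopes the $u$-differences to zero, which is \eqref{eq:nonv_vertex}, so $v\in P(\Theta,\alpha^\dagger)$.

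\emph{Inverse map.} Given $v\in P(\Theta,\alpha^\dagger)$, define $u$ by integrating the right-hand side of \eqref{eq:conjugation}. This $1$-form on dual edges is closed around every dual face (a primal vertex) precisely because of \eqref{eq:nonv_vertex}. Lemma~\ref{lem:Kbounded} gives $u\in\mathbf{D}(F)$. The new half-angles sum to $\pi$ at every circumcenter, since $\sum_j (v_j-v_i)$ around a face telescopes; so $u\in P(\Theta,R^\dagger)$. Both maps are defined modulo constants, and they are mutually inverse.

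\emph{Continuity.} The map $u\mapsto\beta$ is the restriction of $-\mathcal{K}^*$, which is continuous (indeed smooth) from $\mathbf{D}(F)$ into $\ell^2(E)$ by the mean-value estimates, uniform on bounded energy sets. Integrating an $\ell^2$ closed $1$-form is an isometry onto $\mathbf{D}(V)/\mathbb{R}$. The same argument applies to the inverse.

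\emph{Differential and isometry.} Differentiating \eqref{eq:conjugation} along a tangent vector $\mathfrak{u}\in T_uP(\Theta,R^\dagger)=\mathbf{HD}_{c^*}(F)$ gives
\[
\mathfrak{u}_\psi-\mathfrak{u}_\phi=\tfrac12\big(\cot\alpha_{\phi\psi}+\cot\alpha_{\psi\phi}\big)(\mathfrak{v}_j-\mathfrak{v}_i)=c_{ij}(\mathfrak{v}_j-\mathfrak{v}_i),
\]
using the derivative already computed in Lemma~\ref{lem:Kbounded}. These are the discrete Cauchy--Riemann equations, so $\mathfrak{v}$ is the $c$-harmonic conjugate of $\mathfrak{u}$.

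For the energies, with $c^*=1/c$ we get
\[
\mathcal{E}_{c^*}(\mathfrak{u})=\sum c_{ij}^{-1}\,c_{ij}^2(\mathfrak{v}_j-\mathfrak{v}_i)^2=\mathcal{E}_c(\mathfrak{v}).
\]

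\emph{Main obstacle.} The delicate step is the topology of the quotients. I need continuity of $\mathcal{C}$ on $P(\Theta,R^\dagger)/\mathbb{R}$ in the Hilbert-manifold topology, and $\mathcal{A}_{\alpha^\dagger}$ is open only because the angles are uniformly bounded away from $0$ and $\pi$ outside a finite set. I would handle this with local uniform estimates of the form in Lemma~\ref{lem:Kbounded}, using the constant $\tilde\eta$ there, which is stable under small perturbations in $\mathbf{D}(V)$.
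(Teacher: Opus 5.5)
Your proposal is correct. On the one step the paper actually writes out, $\mathcal{E}_{c^*}(\mathfrak{u})=\mathcal{E}_c(\mathfrak{v})$ by substituting $c^*_{\phi\psi}=1/c_{ij}$ into the Cauchy--Riemann relation, your argument is identical to the paper's proof.

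The paper stops there. It leaves the homeomorphism claim and the formula for the differential to the kite-gluing discussion before \eqref{eq:conjugation} and to the remark that its linearization yields the stated relation. You actually prove both, reusing the existing machinery:
\begin{itemize}
\item $\beta=-\mathcal{K}^*(u)$ is closed around primal faces exactly because of the angle sums \eqref{eq:nonuR}.
\item Lemma~\ref{lem:gedge} puts its primitive $v$ in $\mathbf{D}(V)$.
\item Telescoping $u$ around a vertex gives \eqref{eq:nonv_vertex}.
\item Dually, $\mathcal{K}(v)$ is closed around dual faces by \eqref{eq:nonv_vertex}, and Lemma~\ref{lem:Kbounded} controls the energy of its primitive.
\end{itemize}
This makes the homeomorphism claim genuinely proved rather than asserted, at the price of length.

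Two sharpenings are worth making.

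\emph{Continuity.} $\beta_{ij}$ depends only on $u_\phi-u_\psi$, with derivative $-c^*_{\phi\psi}$. Proposition~\ref{prop:bound} gives $c^*_{\phi\psi}\le\cot\frac{\epsilon_0}{2}$ for all radii. So the forward map is globally Lipschitz into $\ell^2(E)$. The $\tilde\eta$-stability argument in your last paragraph is therefore needed only for $\mathcal{C}^{-1}$, where $\mathcal{K}_{ij}$ blows up at the boundary of $\mathcal{A}_{\alpha^\dagger}$.

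\emph{The differential.} Implicitly differentiating \eqref{eq:conjugation} presupposes that $\mathcal{C}$ is differentiable. Instead, read the differential off Proposition~\ref{prop:selfadjoint}. The forward map is the smooth map $u\mapsto-\mathcal{K}^*(u)$ followed by the linear isometry of integration. This gives $\mathfrak{v}_j-\mathfrak{v}_i=c^*_{\phi\psi}(\mathfrak{u}_\psi-\mathfrak{u}_\phi)$ directly, and establishes differentiability at the same time.
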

\begin{proof}	
We observe that the geometric edge weights on the primal and dual graphs are reciprocal, $c_{ij} = 1/c^*_{\phi\psi}$. The equality of the Dirichlet energies follows from a direct computation
\[
\sum_{\phi\psi} c^*_{\phi\psi} (\mathfrak{u}_{\psi} - \mathfrak{u}_{\phi})^2 = \sum_{ij} \frac{1}{c_{ij}} c_{ij}^2 (\mathfrak{v}_j - \mathfrak{v}_i)^2 = \sum_{ij} c_{ij} (\mathfrak{v}_j - \mathfrak{v}_i)^2.
\]
\end{proof}

\begin{proof}[Proof of Theorem \ref{thm:hilbertcentral}]
	It is a direct consequence of Theorem \ref{thm:pvhom} and Proposition \ref{prop:conjugation_isometry}.
\end{proof}

\section{Boundary-value mapping and the Hilbert transform}

In this section, we define an analogue of the Hilbert transform as a mapping between the boundary values of conjugate functions. This construction mirrors the classical Hilbert transform on the unit disk, which maps the boundary values of a harmonic function to the boundary values of its conjugate harmonic function.

\subsection{Homeomorphisms to $\mathbf{HD}(\mathbb{D})$ and $\hf$}\label{subsec:restriction}

We show that the spaces of circle patterns $P(\Theta,R^{\dagger})$ and $P(\Theta,\alpha^{\dagger})$ are each homeomorphic to the space of classical harmonic Dirichlet functions $\mathbf{HD}(\mathbb{D})$ on the unit disk, and hence parameterized by the boundary values on the unit circle as half-differentiable functions. Particularly, we prove Theorem \ref{thm:homeoHD} stated in the introduction.

The homeomorphisms are based on Hutchcroft's results \cite{Hutchcroft2019}.

\begin{definition}
	Let $G = (V, E, F)$ be a cell decomposition satisfying the conditions of Definition \ref{def:infintheta}. A straight-line embedding of $G$ into the Euclidean plane is called a $(D,\eta)$-good embedding if there exist global constants $D \ge 1$ and $\eta > 0$ such that the following geometric constraints hold
	\begin{enumerate}
		\item Angle Bound: For every face, all interior angles are within $[\eta,\pi - \eta]$.
		\item Comparable Edges: For any two adjacent edges sharing a common vertex, the ratio of their Euclidean lengths is within $[D^{-1}, D]$.
	\end{enumerate}
	An embedding is \emph{good} if it is a $(D,\eta)$-good embedding for some constants $D$ and $\eta$.
\end{definition}
It follows that every face in a good embedding is convex and has a uniformly bounded number of edges. 

The following proposition follows from Hutchcroft's work \cite[Theorem 1.5]{Hutchcroft2019}. The original statement in \cite{Hutchcroft2019} is specific to the case of circle packings, but in the footnote afterwards, Hutchcroft pointed out that the same results and proofs hold for good embeddings.

\begin{theorem}\cite{Hutchcroft2019} \label{thm:Hutchcroft}
    Let $G = (V, E, F)$ be a cell decomposition satisfying the conditions of Definition \ref{def:infintheta}. Suppose it is realized as a geodesic decomposition of a bounded simply-connected open domain $\Omega \subset \mathbb{C}$ with vertices $z:V \to \Omega$, and the embedding is good. Then the following hold:
	\begin{enumerate}
\item For every discrete harmonic Dirichlet function $h\in \mathbf{HD}(V)$, there exists a unique harmonic Dirichlet function $H\in \mathbf{HD}(\Omega)$ such that $h-H \circ z_{V}\in \mathbf{D}_{0}(V)$. We denote this function $H$ by $\Psi_{V}(h)$.
\item For every classical harmonic Dirichlet function $H\in \mathbf{HD}(\Omega)$, there exists a unique discrete harmonic Dirichlet function $h\in \mathbf{HD}(V)$ such that $h-H \circ z_{V}\in \mathbf{D}_{0}(V)$. We denote this function $h$ by $\Psi_{V}^{-1}(H)$.
\end{enumerate}
Moreover, the mappings $\Psi_{V}:\mathbf{HD}(V)\rightarrow \mathbf{HD}(\Omega)$ and $\Psi_{V}^{-1}:\mathbf{HD}(\Omega)\rightarrow \mathbf{HD}(V)$ are bounded linear isomorphisms.
\end{theorem}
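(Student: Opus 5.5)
The plan is to follow Hutchcroft's argument for circle packings and check that each step uses only the two features of a good embedding: uniform angle bounds and uniform comparability of adjacent edge lengths. The whole construction rests on two transfer operators between discrete and continuum Dirichlet spaces. Both must be bounded, must preserve the ``$\mathbf{D}_0$'' subspaces, and must be mutually inverse modulo $\mathbf{D}_0$. Once this is in place, the Royden decompositions $\mathbf{D}(V)=\mathbf{HD}(V)\oplus\mathbf{D}_0(V)$ and $\mathbf{D}(\Omega)=\mathbf{HD}(\Omega)\oplus\mathbf{D}_0(\Omega)$ give $\Psi_V$ and $\Psi_V^{-1}$ directly.

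\emph{Discrete to continuum.} First triangulate each face by adding diagonals from one vertex. Since faces are convex with a bounded number of edges and angles in $[\eta,\pi-\eta]$, the resulting triangles have angles bounded below by a constant depending on $(D,\eta)$ and the degree bound. For $h:V\to\mathbb{R}$, let $Ph$ be the piecewise linear interpolation on this triangulation. The cotangent formula $\iint|\nabla Ph|^2=\sum\frac12(\cot\beta+\cot\beta')(h_i-h_j)^2$ has weights bounded above and below on original edges. Diagonal differences are controlled by sums of boundary-edge differences of the same face. Together these give $\mathcal{E}(Ph)\asymp\mathcal{E}(h)$. Finitely supported $h$ give compactly supported $Ph$, so by continuity $P$ maps $\mathbf{D}_0(V)$ into $\mathbf{D}_0(\Omega)$. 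Define $\Psi_V(h)$ as the harmonic part of $Ph$ in the continuum Royden decomposition.

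\emph{Continuum to discrete.} Here $H\circ z$ cannot be used naively, because point values of a Dirichlet function are not controlled. For harmonic $H$, though, I would use the mean value property on disks $B_v$ centred at $z(v)$ of radius a fixed fraction of the shortest incident edge. Because $\Omega$ is bounded and the vertices accumulate only at $\partial\Omega$ (via transience, one end, and local finiteness), a good embedding should force these disks to lie in $\Omega$ with bounded overlap. A Poincaré-type estimate on the union of the two disks and the triangles joining adjacent vertices should then give $\sum_{ij}(H(z_i)-H(z_j))^2\le C\iint_\Omega|\nabla H|^2$.

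For general $F\in\mathbf{D}(\Omega)$, define $(QF)_v$ as the average of $F$ over the star of $v$. The same Poincaré argument gives $\mathcal{E}(QF)\le C\,\mathcal{E}(F)$. Compactly supported smooth $F$ give finitely supported $QF$, so $Q$ maps $\mathbf{D}_0(\Omega)$ into $\mathbf{D}_0(V)$. For harmonic $H$, $QH-H\circ z$ is a bounded-energy correction that should lie in $\mathbf{D}_0(V)$. Finally, $QPh-h\in\mathbf{D}_0(V)$ and $PQF-F\in\mathbf{D}_0(\Omega)$: on finitely supported and compactly supported dense classes the differences are supported where the input is, and the statement then extends by boundedness.

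\emph{Conclusion.} With these facts, existence and uniqueness in (1) and (2) follow formally. If $h\in\mathbf{HD}(V)$ and $H_1,H_2$ both satisfy $h-H_k\circ z\in\mathbf{D}_0(V)$, then $P$ of the difference shows $H_1-H_2\in\mathbf{HD}(\Omega)\cap\mathbf{D}_0(\Omega)=\{\text{const}\}$. The constant is then fixed to be zero, since a nonzero constant is not in $\mathbf{D}_0(V)$ on a transient graph. The symmetric argument handles (2). Boundedness of both maps comes from the energy comparisons.

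The main obstacle is the continuum-to-discrete step. Showing that $Q$ and $H\mapsto H\circ z$ differ by an element of $\mathbf{D}_0(V)$, and that $Q$ preserves $\mathbf{D}_0$, requires geometric control near $\partial\Omega$. Specifically, one needs that edge lengths are comparable to the distance to the boundary and that stars do not pile up. This is where Hutchcroft uses the Ring lemma and where I would rely on his footnote that goodness suffices.
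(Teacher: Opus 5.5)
The paper does not reprove this statement. It imports it from Hutchcroft's Theorem~1.5, together with his footnote that goodness of the embedding is all the proof uses. Your architecture is the right one: a piecewise-linear extension $P$, an averaging operator $Q$, and the two Royden decompositions.

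\textbf{The gap.} The step that carries the whole theorem is not established. You justify $QPh-h\in\mathbf{D}_0(V)$ and $PQF-F\in\mathbf{D}_0(\Omega)$ by checking them on finitely supported (resp.\ compactly supported) functions and ``extending by boundedness''. Those classes are dense only in $\mathbf{D}_0(V)$ and $\mathbf{D}_0(\Omega)$, not in $\mathbf{D}(V)$ and $\mathbf{D}(\Omega)$. On a transient graph $\mathbf{D}_0(V)\neq\mathbf{D}(V)$, which is exactly why $\mathbf{HD}(V)\neq 0$. So the argument only shows again that $QP$ and $PQ$ preserve $\mathbf{D}_0$, which you already had. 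It says nothing about harmonic inputs, and those are the only ones that matter here:
\begin{itemize}
\item existence in (1) needs $QPh-h\in\mathbf{D}_0(V)$ for $h\in\mathbf{HD}(V)$;
\item your uniqueness argument in (1) needs $K-P(K\circ z)\in\mathbf{D}_0(\Omega)$ for the harmonic function $K=H_1-H_2$.
\end{itemize}

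\textbf{The fix.} What is missing is a criterion for membership in $\mathbf{D}_0$ that is not a density argument, fed by local estimates.

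On the graph side, on the faces around $v$ the function $Ph$ is a convex combination of values of $h$ on those faces. Hence $|QPh(v)-h(v)|\le\sum|h_i-h_j|$, summed over the boundedly many edges of faces at $v$. This gives $QPh-h\in\ell^2(V)$ with $\|QPh-h\|_{\ell^2}^2\le C\,\mathcal{E}(h)$. For bounded degree, $\ell^2(V)\subset\mathbf{D}_0(V)$, because the truncations $g\mathbf{1}_{B_n}$ converge to $g$ in energy.

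On the continuum side, apply the Poincar\'e inequality on the union $U_T$ of the stars of the vertices of each triangle $T$. This gives $\sum_T\operatorname{diam}(T)^{-2}\int_T|\varphi-PQ\varphi|^2\le C\,\mathcal{E}(\varphi)$. Every $G\in\mathbf{D}(\Omega)$ satisfying such a weighted bound lies in $\mathbf{D}_0(\Omega)$: multiply by the cutoffs $\chi_n=P(\mathbf{1}_{B_n})$, whose gradient is $O(\operatorname{diam}(T)^{-1})$ on the triangles where they drop from $1$ to $0$, and the error tends to zero.

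The same $\ell^2$ trick handles what you call the main obstacle. $QH(v)$ and $H(z_v)=|B_v|^{-1}\int_{B_v}H$ are averages over comparable subsets of the star of $v$, so their difference lies in $\ell^2(V)$ by Poincar\'e. Alternatively, define $Q$ by averages over $B_v$; then $QH=H\circ z$ exactly by the mean value property.

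Everything happens at the scale of the faces, and that matters. Comparability of edge lengths with the distance to $\partial\Omega$, which you say one needs, is not available for good embeddings in general. For the uniformized pattern it amounts to a uniform lower bound on hyperbolic radii, which the paper must assume separately in the converse part of Theorem~\ref{thm:projectionWP}. The correct argument never uses it.

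\textbf{A smaller point.} In a fan triangulation the cotangent weights can be negative, since the triangles may be obtuse. Replace ``weights bounded above and below'' by the triangle-by-triangle comparison $\iint_T|\nabla Ph|^2\asymp\sum_{ij\subset T}(h_i-h_j)^2$, which holds whenever the angles of $T$ are bounded below.
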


In order to apply Theorem \ref{thm:Hutchcroft}, we first show that our uniformized circle patterns in the unit disk yield good embeddings for the primal graph and the dual graph.

\begin{proposition}\label{prop:primalgood}
	Under the uniformized circle pattern, the primal geodesic decomposition $(V,E,F)$ of the disk  with vertices realized as intersection points of circles is a good embedding. Similarly, the dual geodesic decomposition $(V^*,E^*,F^*)$ of the disk with vertices realized as circumcenters of circles is also a good embedding.
\end{proposition}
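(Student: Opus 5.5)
The plan is to read everything off the kite decomposition of the uniformized pattern and then use the Ring lemma (Proposition \ref{lem:ring}) together with Proposition \ref{prop:bound} at $u=0$. Each primal edge $ij$ with adjacent faces $\phi,\psi$ determines a kite with vertices $z_i$, $z_\phi$, $z_j$, $z_\psi$. Its two sides at $z_\phi$ have length $R^\dagger_\phi$ and its two sides at $z_\psi$ have length $R^\dagger_\psi$. Its angles are $2\alpha^\dagger_{\phi\psi}$ at $z_\phi$, $2\alpha^\dagger_{\psi\phi}$ at $z_\psi$, and $\pi-\Theta_{\phi\psi}$ at each of $z_i,z_j$. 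By Proposition \ref{prop:bound} with $u=0$, every half-angle satisfies $\eta<\alpha^\dagger<\pi-\eta$, where $\eta=\cot^{-1}((C+1)/\sin\epsilon_0)$. By the Ring lemma, $R^\dagger_\phi/R^\dagger_\psi\in[C^{-1},C]$.

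For the primal embedding, a face $\phi$ is the polygon inscribed in the circle $C_\phi$ with vertices $z_i$. Its edge $ij$ has length $2R^\dagger_\phi\sin\alpha^\dagger_{\phi\psi}$, which lies in $[2R^\dagger_\phi\sin\eta,\,2R^\dagger_\phi]$. Two adjacent edges at a common vertex $i$ therefore have comparable lengths. If they bound the same face $\phi$, both lengths are comparable to $R^\dagger_\phi$. If they are consecutive around $i$ but lie on different faces, both lengths are comparable to the radii of faces adjacent to $i$. These radii are pairwise comparable because the vertex degree is bounded, so the Ring lemma can be chained around $i$. This gives the constant $D$.

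For the interior angles of the primal face at $z_i$, I decompose the face into the isosceles triangles $z_\phi z_i z_j$. This works because all central angles $2\alpha^\dagger_{\phi\psi}$ lie in $(0,2\pi)$ and sum to $2\pi$ by \eqref{eq:nonuR}, so the circumcenter lies strictly inside the face. The interior angle at $z_i$ is then $(\pi/2-\alpha^\dagger_{\phi\psi})+(\pi/2-\alpha^\dagger_{\phi\psi'})$, and the bounds on the $\alpha$'s place it in $[2\eta,\pi-2\eta]$. One caveat: this requires $\alpha^\dagger<\pi/2$, which fails when the circumcenter lies outside a chord. In that case the angle should be computed directly as the inscribed angle, which is half of the central angle subtending the opposite arc. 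That central angle is a sum of $2\alpha^\dagger$'s over the other edges, so it lies in $[2\eta,\,2\pi-2\eta]$ as long as the face has at least three edges. The inscribed angle is therefore in $[\eta,\pi-\eta]$. I will use this inscribed-angle formulation throughout, since it is cleaner.

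For the dual embedding, a dual face corresponds to a vertex $i$ and is the polygon with vertices $z_\phi$ for the faces $\phi\ni i$. Its edges $z_\phi z_\psi$ are the diagonals of the kites. By the law of cosines in the triangle $z_\phi z_i z_\psi$, whose angle at $z_i$ is $\pi-\Theta_{\phi\psi}\in(\epsilon_0,\pi-\epsilon_0)$, the length $|z_\phi-z_\psi|$ is comparable to $R^\dagger_\phi$, and hence to $R^\dagger_\psi$. Adjacent dual edges share a circumcenter $z_\phi$, so both are comparable to $R^\dagger_\phi$, giving $D$ for the dual.

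The interior angle of the dual face at $z_\phi$ is the sum of two angles of the triangles $z_\phi z_i z_\psi$ and $z_\phi z_i z_{\psi'}$ at $z_\phi$. These are the half-kite angles $\alpha^\dagger_{\phi\psi}$ and $\alpha^\dagger_{\phi\psi'}$, so the interior angle is $\alpha^\dagger_{\phi\psi}+\alpha^\dagger_{\phi\psi'}\ge 2\eta$. The matching upper bound $\le\pi-\eta'$ is the main obstacle, since convexity of the dual face at $z_\phi$ is not automatic. Bounding each term by $\pi-\eta$ gives nothing useful. The angle sum at $z_i$ is $2\pi=\sum(\pi-\Theta)$ over the kites around $i$, so the dual face is star-shaped with respect to $z_i$. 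I expect that a uniform strict upper bound below $\pi$ needs more: either (A2) applied to a loop around adjacent dual faces, or a refined Ring-lemma argument showing that $\alpha^\dagger_{\phi\psi}$ cannot approach $\pi/2$ on both sides. If this fails, my fallback is to triangulate the dual face from $z_i$ and add diagonals, which still gives a good embedding. The triangles $z_\phi z_i z_\psi$ have all angles bounded below, by $\eta$ and $\epsilon_0$, and comparable sides, and Hutchcroft's argument only needs such a good triangulation refining the graph.
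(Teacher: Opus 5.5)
Your edge-length comparisons and the primal angle bound are essentially the paper's argument. For $|z_\phi-z_\psi|$ you use the law of cosines, whereas the paper writes $|z_\phi-z_\psi|=c_{ij}|z_i-z_j|$ and invokes the bounds on $c_{ij}$; the two are equivalent. The gap is the upper bound on the interior angle $\alpha^\dagger_{\phi\psi}+\alpha^\dagger_{\phi\psi'}$ of the dual face at $z_\phi$. You leave it open and expect it to need (A2) or a refined Ring lemma, but neither is needed. By \eqref{eq:nonuR} the half-angles at $z_\phi$ sum to $\pi$. The face $\phi$ has at least three edges, and each contributes a half-angle $>\eta$, so
\[
\alpha^\dagger_{\phi\psi}+\alpha^\dagger_{\phi\psi'}=\pi-\sum_{\chi\neq\psi,\psi'}\alpha^\dagger_{\phi\chi}<\pi-\eta .
\]
This is the same inequality you already used for the primal inscribed angle. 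Since $z_\phi z_\psi\perp ij$ and $z_\phi z_{\psi'}\perp ik$, the dual angle at $z_\phi$ equals $\pi-\angle kij$, and the paper obtains both angle bounds at once from this observation. So the dual faces are convex with angles in $[2\eta,\pi-\eta]$. Your fallback of triangulating the dual faces from $z_i$ is therefore unnecessary. As stated, it would not prove the proposition anyway: it inserts the primal vertices into the dual graph and so gives a good embedding of a different graph. Applying Theorem~\ref{thm:Hutchcroft} to that graph would need a separate comparison of its $\mathbf{HD}$ and $\mathbf{D}_0$ spaces with those of the dual graph.

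Two smaller points. In the paper's convention the triangle $z_\phi z_i z_\psi$ has angles $\alpha^\dagger_{\phi\psi},\alpha^\dagger_{\psi\phi},\Theta_{\phi\psi}$. So the kite angle at $z_i$ is $\Theta_{\phi\psi}$, and (A1) says these angles sum to $2\pi$ around $i$. Your $\pi-\Theta_{\phi\psi}$ contradicts (A1), though it does no harm to your estimates since both lie in $(\epsilon_0,\pi-\epsilon_0)$. Also, consecutive edges around $i$ always bound a common face, so your ``different faces'' case is vacuous. Chaining the Ring lemma using bounded degree is only needed if ``adjacent edges'' means any two edges at $i$; the paper reads it as two edges of a common face.
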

\begin{proof}
The uniformized circle pattern determines straight-line embeddings of the primal and dual graphs by realizing vertices as intersection points and circumcenters of circles, respectively. We first verify that there are uniform bounds on the interior angles of faces.

\begin{figure}[htbp]
	\centering
	\includegraphics[width=0.7\textwidth]{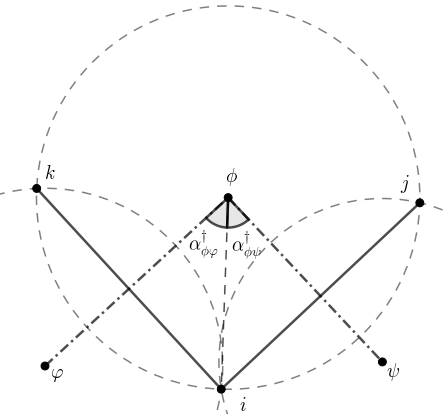}
	\caption{The primal and dual geodesic decompositions of the uniformized circle pattern.}
	\label{fig:primal_dual}
\end{figure}

We focus on two adjacent edges $ij$ and $ik$ sharing a common vertex $i$ in the primal graph (See Figure \ref{fig:primal_dual}). The three vertices lie on the same circle corresponding to the face $\phi$. We write $\phi \psi$ and $\phi \varphi$ for the edges dual to $ij$ and $ik$, respectively. The interior angle $\angle \varphi \phi \psi$ is the sum of half-angles $\alpha^\dagger_{\phi \psi}$ and $\alpha^\dagger_{\phi \varphi}$, which is bounded below by $2\eta$ because of the Ring lemma (Proposition \ref{prop:bound}). On the other hand, the sum of all half-angles around $\phi$ is $\pi$ and there are at least three half-angles around $\phi$. Hence, the interior angle $\angle \varphi \phi \psi$ is also bounded above by $\pi - \eta$. We observe that the dual edges are always perpendicular to the primal edges, and hence the interior angle $\angle k i j$ is equal to $\pi - \angle \varphi \phi \psi$, which is also uniformly bounded within $[\eta, \pi - 2\eta]$.

It remains to verify that adjacent edges have comparable lengths. We observe that the length ratio of adjacent edges $ij$ and $ik$ 
\[
\frac{|z_i-z_j|}{|z_i-z_k|} = \frac{\sin \alpha^\dagger_{\phi \psi}}{\sin \alpha^\dagger_{\phi \varphi}}
\]
which is uniformly bounded within $[D^{-1}, D]$ for $D = 1/\sin \eta$. On the other hand, the length ratio of adjacent dual edges $\phi \psi$ and $\phi \varphi$ is
\[
\frac{|z_\phi - z_\psi|}{|z_\phi - z_\varphi|} = \frac{c_{ij} |z_i-z_j|}{c_{ik}|z_i-z_k|} 
\]
where $c_{ij}$ and $c_{ik}$ are the geometric edge weights defined in Definition \ref{def:geoedgeweight}. By Proposition \ref{prop:bound}, there exists some constant $\tilde{C}>0$ such that for every edge $ij \in E$, the edge weight $c_{ij}$ is uniformly bounded within $[\tilde{C}^{-1}, \tilde{C}]$. Hence, the length ratio of adjacent dual edges $\phi \psi$ and $\phi \varphi$ is also uniformly bounded within $[D^{-1}\tilde{C}^{-2}, D \tilde{C}^2]$.

Thus, the primal and dual geodesic decompositions under the uniformized circle pattern yield good embeddings.
\end{proof}

We are ready to prove that the space of circle patterns $P(\Theta,R^{\dagger})$ and $P(\Theta,\alpha^{\dagger})$ are respectively homeomorphic to the space of classical harmonic Dirichlet functions $\mathbf{HD}(\mathbb{D})$ on the unit disk.
\begin{proof}[Proof of Theorem~\ref{thm:homeoHD}]
	We construct the homeomorphisms $\Phi_V$ and $\Phi_F$ by composing the projections to the discrete harmonic spaces with the isomorphisms provided by Theorem \ref{thm:Hutchcroft}.

	For the primal graph, Proposition \ref{prop:primalgood} establishes that the uniformized circle pattern yields a good embedding. By Theorem \ref{thm:Hutchcroft}, there exists a bounded linear isomorphism $\Psi_V: \mathbf{HD}(V) \to \mathbf{HD}(\mathbb{D})$ such that for any $h \in \mathbf{HD}(V)$, we have $\Psi_V(h) \circ z_V - h \in \mathbf{D}_0(V)$. We define the map $\Phi_V$ as the composition
	\[
	\Phi_V := \Psi_V \circ p_V|_{P(\Theta,\alpha^{\dagger})}.
	\]
	Since $p_V|_{P(\Theta,\alpha^{\dagger})}: P(\Theta,\alpha^{\dagger}) \to \mathbf{HD}(V)$ is a homeomorphism, $\Phi_V$ is a homeomorphism. For any $u \in P(\Theta,\alpha^{\dagger})$, let $h = p_V(u) \in \mathbf{HD}(V)$. By the definition of the projection $p_V$, we have $u - h \in \mathbf{D}_0(V)$. Furthermore, $\Phi_V(u) \circ z_V - h = \Psi_V(h) \circ z_V - h \in \mathbf{D}_0(V)$. Taking the difference yields
	\[
	\Phi_V(u) \circ z_V - u = (\Phi_V(u) \circ z_V - h) - (u - h) \in \mathbf{D}_0(V).
	\]

	For the dual graph, Proposition \ref{prop:primalgood} also establishes that the dual geodesic decomposition yields a good embedding. Applying Theorem \ref{thm:Hutchcroft} to this good embedding, we obtain a bounded linear isomorphism $\Psi_F: \mathbf{HD}(F) \to \mathbf{HD}(\mathbb{D})$ such that $\Psi_F(h) \circ z_F - h \in \mathbf{D}_0(F)$ for any $h \in \mathbf{HD}(F)$. We define $\Phi_F$ as the composition
	\[
	\Phi_F := \Psi_F \circ p_F|_{P(\Theta,R^{\dagger})}.
	\]
	This composition is a homeomorphism since each constituent map is a homeomorphism. For any $v \in P(\Theta,R^{\dagger})$, let $h = p_F(v) \in \mathbf{HD}(F)$. We have $v - h \in \mathbf{D}_0(F)$. Since $\Phi_F(v) \circ z_F - h = \Psi_F(h) \circ z_F - h \in \mathbf{D}_0(F)$, we conclude that
	\[
	\Phi_F(v) \circ z_F - v = (\Phi_F(v) \circ z_F - h) - (v - h) \in \mathbf{D}_0(F).
	\]
\end{proof}

Finally, it leads to an analogue of the Hilbert transform via circle patterns.

\begin{definition}
    We define the boundary value mappings
    \begin{align*}
        \mathcal{B}_V &:= \mathcal{B} \circ \Phi_V \colon P(\Theta,\alpha^{\dagger}) \to \hf, \\
        \mathcal{B}_F &:= \mathcal{B} \circ \Phi_F \colon P(\Theta,R^{\dagger}) \to \hf,
    \end{align*}
    where $\mathcal{B} \colon \mathbf{HD}(\mathbb{D}) \to \hf$ denotes the classical boundary value isomorphism. Because $\mathcal{B}_V$ and $\mathcal{B}_F$ map constant functions to constant functions, they naturally induce homeomorphisms $\overline{\mathcal{B}}_V$ and $\overline{\mathcal{B}}_F$ on the respective quotient spaces modulo constants. 
    
    We define $\mathfrak{H}^{\Theta} \colon \hf/\mathbb{R} \to \hf/\mathbb{R}$, an analogue of the Hilbert transform, as the composition
    \[
    \mathfrak{H}^{\Theta} := \overline{\mathcal{B}}_V \circ \mathcal{C}^{\Theta} \circ \overline{\mathcal{B}}_F^{-1},
    \]
    where $\mathcal{C}^{\Theta} \colon P(\Theta, R^{\dagger})/\mathbb{R} \to P(\Theta, \alpha^{\dagger})/\mathbb{R}$ is the conjugation map.
\end{definition}
\section{Pairing between discrete functions}

We equip the deformation space $P(\Theta, R^{\dagger})/\mathbb{R}$ with a Riemannian metric induced from the Hessian of the functional $\mathcal{W}^*$, which is the Dirichlet energy with respect to the geometric edge weights. With the mapping $\mathfrak{H}^{\Theta}$, we relate this Riemannian metric to the symplectic form on $\hf$, thereby proving Theorem \ref{thm:discretesym} and Corollary \ref{cor:HilbertWP}. 

\begin{definition}
    We define a bilinear form $b \colon \mathbf{D}(F) \times \mathbf{D}(V) \to \mathbb{R}$ by setting, for any $u \in \mathbf{D}(F)$ and $v \in \mathbf{D}(V)$,
    \[
    b(u,v) := \frac{1}{2}\sum_{ij \in \vec{E}} (u_{\psi}-u_{\phi})(v_j-v_i) =  \sum_{ij \in E} (u_{\psi}-u_{\phi})(v_j-v_i),
    \]
    where $\phi$ and $\psi$ are the left and right faces of the oriented edge $ij$, respectively.
\end{definition}

 The pairing is well-defined and acts as a bounded bilinear form due to the Cauchy-Schwarz inequality
\[
b(u,v) \leq \sqrt{\left( \sum_{\phi \psi \in E^*}   (u_{\psi}-u_{\phi})^2 \right) \left(\sum_{ij \in E} (v_j-v_i)^2 \right) }.
\]

To prove the identity in Theorem \ref{thm:discretesym}, we first focus on functions whose boundary values are smooth. We observe that if $\Omega$ is a finite subcomplex of the cell decomposition $G=(V,E,F)$ with $\partial\Omega$ as a polygonal curve oriented in such a way that $\Omega$ is on the left, then
\[
\sum_{ij \in \Omega} (u_{\psi}-u_{\phi})(v_j-v_i) = \sum_{ij \in \partial \Omega} u_{\psi}(v_j-v_i).
\]
The right-hand side is expected to converge to the line integral $\int u dv$ on the unit circle as $\Omega$ exhausts the entire $G$.

In the following two propositions, we construct the necessary sequence of finite subcomplexes $\Omega_k$ that exhausts $G$. We recall that the uniformized circle pattern determines a geodesic cell decomposition of the open unit disk with vertices $z:V \to \mathbb{D}$ such that every face is a cyclic polygon and the central angles are uniformly bounded between $2\eta$ and $\pi-2\eta$ for some $\eta>0$ (Proposition \ref{prop:bound}). The following is a special case of a more general estimate for good embeddings in \cite[Proposition 2.5]{Angel2016}. We sketch an elementary proof for the special case in order to be self-contained.

\begin{proposition}\label{prop:Angel}
   Fix integer $K>0$ and a small constant $\eta>0$. There exists a constant $C_1=C_1(K,\eta)$ such that for any cyclic $k$-polygon with $k \leq K$ and central angles uniformly bounded between $2\eta$ and $\pi-2\eta$, we have for any points $p,q$ on the polygon,
	\[
	d_0(p,q) \leq C_1 |p-q|
	\]
	where $|p-q|$ is the Euclidean distance and $d_0(p,q)$ is the Euclidean length of the shorter polygonal path connecting $p$ and $q$.
\end{proposition}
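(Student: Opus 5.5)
We argue by scaling and compactness. Both $d_0(p,q)$ and $|p-q|$ scale linearly, so we may normalize the circumradius to $1$. A cyclic $k$-polygon with central angles $\beta_1,\dots,\beta_k$ is then determined up to rotation by the tuple $(\beta_1,\dots,\beta_k)$. This tuple lies in the compact set
\[
\Big\{\beta\in[2\eta,\pi-2\eta]^k : \textstyle\sum_m \beta_m = 2\pi\Big\}, \qquad 3\le k\le K .
\]
Since every central angle is less than $\pi$, the center lies in the interior of the polygon, so the polygon is a convex Jordan curve. The ratio $d_0(p,q)/|p-q|$ is scale- and rotation-invariant, so it suffices to bound it uniformly on this compact family.

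\textbf{Case 1: $p,q$ on the same edge or on two adjacent edges.} If they lie on the same edge, then $d_0=|p-q|$. If they lie on adjacent edges meeting at a vertex $w$ with interior angle $\gamma$, then $\gamma=\pi-\tfrac{1}{2}(\beta_m+\beta_{m+1})$. This gives $\gamma\in[\pi/2+\eta,\,\pi-2\eta]$, but we only need $\gamma\le\pi-2\eta$. Set $a=|p-w|$ and $b=|q-w|$. The law of cosines gives
\[
|p-q|^2=a^2+b^2-2ab\cos\gamma \ \ge\ (1-\cos(\pi-\gamma))\frac{(a+b)^2}{2}\cdot c
\]
for some $c>0$. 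More directly, $|p-q|\ge \sin(\gamma/2)\,(a+b)\ge \sin\eta\cdot(a+b)$, since $\gamma/2\in[\pi/4,\pi/2)$. Hence $d_0\le a+b\le |p-q|/\sin\eta$.

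\textbf{Case 2: $p,q$ on non-adjacent edges.} The two edges are then separated on each side by at least one full edge. We bound $|p-q|$ from below by the distance between the two closed non-adjacent edges. This distance is a continuous positive function of $\beta$: the edges are disjoint compact segments because the polygon is convex and simple. By compactness over the family above, there is a uniform lower bound $\delta(K,\eta)>0$ for the normalized polygon. Meanwhile $d_0$ is at most half the perimeter, and the perimeter is at most $2\pi$ since each chord is shorter than its arc. So $d_0/|p-q|\le \pi/\delta$.

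Taking $C_1=\max\{1/\sin\eta,\ \pi/\delta\}$ finishes the proof.

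The main obstacle is making the positivity of $\delta$ honest: the constant must not degenerate when the family is allowed to vary. This is exactly what the uniform bounds $[2\eta,\pi-2\eta]$ and $k\le K$ provide, since together they make the parameter set compact.

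An explicit alternative is available. The distance between non-adjacent edges is at least the distance from any intermediate vertex to the chord through $p,q$. That distance is bounded below by something like $\sin$ of the intermediate central angle times $\sin\eta$. This would give an explicit constant, but the compactness argument suffices.
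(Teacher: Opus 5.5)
Your overall strategy is sound, but Case 1 rests on a false claim that you need to repair. From $\beta_m,\beta_{m+1}\in[2\eta,\pi-2\eta]$ you get $\tfrac12(\beta_m+\beta_{m+1})\in[2\eta,\pi-2\eta]$, hence $\gamma\in[2\eta,\pi-2\eta]$, not $[\pi/2+\eta,\pi-2\eta]$. Two examples show this:
\begin{itemize}
\item an inscribed triangle containing its circumcenter is acute, and the equilateral triangle has $\gamma=\pi/3$;
\item central angles $(\pi-2\eta,\pi-2\eta,4\eta)$ produce $\gamma=2\eta$.
\end{itemize}
Your remark that ``we only need $\gamma\le\pi-2\eta$'' is therefore backwards. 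The estimate $|p-q|\ge\sin(\gamma/2)(a+b)$ follows from the identity $|p-q|^2-\sin^2(\gamma/2)(a+b)^2=\cos^2(\gamma/2)(a-b)^2\ge 0$, and it only degenerates as $\gamma\to 0$. So what you need is the \emph{lower} bound $\gamma\ge 2\eta$. It comes from the \emph{upper} bound $\beta\le\pi-2\eta$ on the central angles, and this is exactly where that hypothesis enters. With the correct range $\gamma/2\in[\eta,\pi/2-\eta]$ you still get $\sin(\gamma/2)\ge\sin\eta$, so your constant $1/\sin\eta$ survives, and the second example shows it is sharp. You should also delete the first displayed inequality. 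Its factor $1-\cos(\pi-\gamma)=1+\cos\gamma$ behaves the wrong way in $\gamma$: it vanishes at $\gamma=\pi$ and is largest near $\gamma=0$, where $|p-q|$ can actually be small.

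With this repair your proof is correct, and it organizes the compactness argument differently from the paper. The paper applies compactness directly to the ratio $d_0(p,q)/|p-q|$ on the moduli space. It controls the diagonal $p\to q$ only through pointwise limits: $1$ along an edge, and at most $1/\sin(\alpha/2)$ at a vertex with interior angle $\alpha$. Turning these limits into a global maximum tacitly requires uniformity near the diagonal. Your split handles the near-diagonal regime by an explicit uniform estimate (Case 1). You then use compactness only for the distance between non-adjacent closed edges (Case 2), which is a continuous positive function on a compact parameter set. This avoids any limiting argument and gives the explicit constant $\max\{1/\sin\eta,\pi/\delta\}$. Both proofs use the same parameter space and the same vertex estimate, so yours is best read as a rigorous version of the paper's sketch.
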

\begin{proof}
	We outline the proof using a standard compactness argument. Up to uniform scaling, we may assume the vertices of the polygons lie on the unit circle. For a fixed integer $K$ and a constant $\eta > 0$, the moduli space of cyclic $k$-gons with $k \leq K$ and central angles in the closed interval $[2\eta, \pi-2\eta]$ is compact. The ratio $\frac{d_0(p,q)}{|p-q|}$ is continuous and well-defined whenever $p \neq q$. As $p$ approaches $q$, the limit of this ratio is $1$ if the points converge along the same edge. If they converge to a vertex with interior angle $\alpha$ from adjacent edges, the limit is bounded above by $\frac{1}{\sin(\alpha/2)}$. Because the central angles are bounded above by $\pi-2\eta$, the interior angles are uniformly bounded below by $2\eta$, which ensures $\frac{1}{\sin(\alpha/2)} \leq \frac{1}{\sin(\eta)}$. By the compactness of the polygon boundaries and the finite union of the moduli spaces for all $k \leq K$, the ratio attains a finite global maximum $C_1(K, \eta)$.
\end{proof}

We are now ready to construct a sequence of finite subcomplexes $\Omega_n$.

\begin{proposition}\label{prop:seqcomplex}
    For the uniformized circle pattern, there exists a sequence of finite subcomplexes $\Omega_1 \subset \Omega_2 \subset \dots \subset \Omega_n \subset \dots $ that exhausts the cell decomposition $G=(V,E,F)$ and satisfies the following properties:
    \begin{enumerate}
        \item Each $\Omega_n$ is a finite cell decomposition of a topologically closed disk. 
        \item There exists a constant $C_1>1$ such that for all $n$, the boundary $\partial \Omega_n$ has a total Euclidean length bounded above by $2\pi C_1$. 
    \end{enumerate}
\end{proposition}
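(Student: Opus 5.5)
The idea is to build $\Omega_n$ from concentric Euclidean circles $|z| = r_n$ with $r_n \uparrow 1$, and to push each circle out to a nearby polygonal path in the primal 1-skeleton using Proposition \ref{prop:Angel}. Fix $r_n \uparrow 1$. Let $\tilde\Omega_n$ be the union of all closed faces (cyclic polygons of the uniformized pattern) that meet the closed disk $\{|z|\le r_n\}$. Since circles accumulate only at $\partial\mathbb{D}$, only finitely many faces meet a compact subdisk, so $\tilde\Omega_n$ is a finite subcomplex. It is connected because the disk is connected and covered by these faces.

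Next I would make $\tilde\Omega_n$ a topological closed disk. To do this, define $\Omega_n$ as the complement of the unbounded component of $\mathbb{C}\setminus \tilde\Omega_n$; this fills in holes. We are inside $\mathbb{D}$, and holes consist of finitely many faces since they lie in a compact region away from $\partial\mathbb D$. I would also remove pinch points by adding the finitely many faces around any vertex where the boundary touches itself. The result is a finite subcomplex whose boundary is a simple closed polygonal curve, and hence, by the Jordan–Schoenflies theorem, a closed disk cell decomposition. To get nestedness, I would pass to a subsequence or replace $\Omega_n$ by the filled union $\Omega_1\cup\dots\cup\Omega_n$; I would check that the length bound below survives, or else choose $r_n$ increasing fast enough. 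Exhaustion holds since every face eventually meets $\{|z|\le r_n\}$.

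For the length bound, every boundary edge of $\Omega_n$ belongs to a face $\phi$ that meets the circle $|z|=r_n$ (or a face added in the filling step). The plan is to map boundary arcs to the circle $|z|=r_n$. Each face $\phi$ meeting the circle is crossed by arcs of that circle, and the two points where an arc enters and exits $\phi$ lie on $\partial\phi$. By Proposition \ref{prop:Angel}, the shorter polygonal path along $\partial\phi$ between them has length at most $C_1$ times their chord, which is at most the arc length. Replacing each circle arc inside a face by this shorter boundary path gives a closed curve in the 1-skeleton of length at most $C_1\cdot 2\pi r_n\le 2\pi C_1$. I would then define $\Omega_n$ as the region enclosed by this curve, taking its outermost simple loop, which has no greater length. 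This region contains the disk of radius roughly $r_n$ minus boundary faces, so exhaustion still holds.

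The main obstacle is making this replacement rigorous. The rerouted curve must stay outside, or at least enclose, $\{|z|<r_n\}$ up to bounded error. Choosing the shorter path in each face may cut inward, so nestedness and exhaustion need the bounded face sizes near the boundary together with a diagonal subsequence argument. If this fails, I would use the longer-side control available from the bounded number of sides ($K$) and the angle bounds: both boundary paths of a face have length comparable to its diameter. Then I would instead bound the perimeter by summing face perimeters, but that needs face diameters controlled by arc length, which holds only for faces the circle crosses substantially. Handling faces the circle only grazes would be the delicate point; I would treat them by merging them with a neighbouring crossed face.
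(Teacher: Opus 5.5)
Your rerouting step (replacing each arc of the circle $|z|=\rho$ inside a face by the shorter boundary path of that face, bounding it by $C_1$ times the chord via Proposition~\ref{prop:Angel}, and taking the enclosed region containing the origin) is exactly the paper's proof, and the preliminary construction from faces meeting $\{|z|\le r_n\}$, with its filling of holes and removal of pinches, is not needed. The nestedness obstacle you flag is settled in the paper by building $\Omega_{n+1}$ from a circle of radius $\rho_n$ that misses $\Omega_n$ entirely: the rerouted curve uses only edges of faces crossed by that circle, none of which is a face of $\Omega_n$, so it cannot enter the interior of $\Omega_n$, and $\Omega_n\subset\Omega_{n+1}$ follows without a subsequence or face-size argument (replacing $\Omega_n$ by $\Omega_1\cup\dots\cup\Omega_n$, by contrast, would in general only give a length bound growing like $n$).
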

\begin{proof}
	We construct the sequence by induction. Take $\Omega_1$ to be any face that contains the origin. Suppose a finite subcomplex $\Omega_n$ is defined. There exists $\rho_n \in (1-\frac{1}{n},1)$ such that the circle centered at the origin with radius $\rho_n$ does not intersect any vertex or edge of ${\Omega}_n$. The circle intersects the 1-skeleton graph of $G$ at finitely many points, labeled consecutively as $q_0,q_1,q_2,\dots,q_k=q_0$. We perturb the circle to a polygonal curve as follows. For every pair of consecutive points $q_i, q_{i+1}$, they lie on some common face and can be joined by the shortest path running along the face, whose total length is $d_0(q_i, q_{i+1})\leq C_1 |q_{i+1}-q_i|$ by Proposition \ref{prop:Angel}. Concatenating all these paths and erasing repetitions, we get a polygonal curve consisting of edges in the 1-skeleton graph. We take $\Omega_{n+1}$ to be the bounded component containing the origin. Notice that
	\[
	\text{total length of } \partial \Omega_{n+1} \leq C_1 \sum_i |q_{i+1}-q_i| <C_1 \cdot 2\pi \rho_n <2\pi C_1.
	\]
	By construction, $\Omega_n \subset \Omega_{n+1}$ and the sequence $\{ \Omega_n\}_{n\in \mathbb{N}}$ exhausts the cell decomposition.
\end{proof}

\begin{proposition}\label{prop:riemannappro}
    Suppose $u,v \in \mathbf{HD}(\mathbb{D})$ have boundary values in $C^{\infty}(\partial  \mathbb{D})$ and let $L>0$. For any $\epsilon>0$, there exists $\delta>0$ such that if $\Gamma$ is a loop in the 1-skeleton graph $(V,E)$ where every edge $ij \in \Gamma$ has length $|z_i-z_j| < \delta$ and the total length satisfies $\sum_{ij \in \Gamma} |z_j-z_i| < L$, then
    \[
    \left| \int_{\Gamma} u \, dv - \sum_{ij \in \Gamma} u_{\psi}(v_j-v_i) \right| < \epsilon,
    \]
    where $\psi$ is the face on the left of the oriented edge from $i$ to $j$.
\end{proposition}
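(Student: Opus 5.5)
The plan is to reduce the statement to a per-edge Riemann-sum estimate, using two inputs: uniform Lipschitz bounds for $u$ and $v$ on the closed disk, and the fact that short edges have small circumcircles.

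\emph{Step 1: Lipschitz bounds.} Since the boundary values of $u$ and $v$ lie in $C^{\infty}(\partial\mathbb{D})$, their Fourier coefficients decay faster than any power of $n$. Differentiating the Poisson series term by term, the gradient series $\sum |n|\,|u_n| r^{|n|-1}$ converges uniformly on $\overline{\mathbb{D}}$, and likewise for $v$. Hence $u$ and $v$ extend to $C^1$ functions on the closed disk. In particular
\[
M_u:=\sup_{\mathbb{D}}|\nabla u|<\infty, \qquad M_v:=\sup_{\mathbb{D}}|\nabla v|<\infty.
\]
Because $\overline{\mathbb{D}}$ is convex, $u$ is $M_u$-Lipschitz on all of $\overline{\mathbb{D}}$. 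Here $u_\psi$ and $v_j$ mean $u(z_\psi)$ and $v(z_j)$, and both $z_\psi$ and $z_j$ lie in $\mathbb{D}$.

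\emph{Step 2: per-edge error.} Write $\int_\Gamma u\,dv=\sum_{ij\in\Gamma}\int_{[z_i,z_j]} u\,\nabla v\cdot dz$. Since $v(z_j)-v(z_i)=\int_{[z_i,z_j]}\nabla v\cdot dz$, each edge contributes an error
\[
\Bigl|\int_{[z_i,z_j]}(u-u(z_\psi))\,\nabla v\cdot dz\Bigr| \le M_v\,|z_j-z_i|\sup_{x\in[z_i,z_j]}|u(x)-u(z_\psi)|.
\]
The segment $[z_i,z_j]$ is a chord of the circumcircle of the face $\psi$. So every point $x$ on it satisfies $|x-z_\psi|\le R^{\dagger}_\psi$, and therefore
\[
|u(x)-u(z_\psi)|\le M_u R^{\dagger}_\psi.
\]

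\emph{Step 3: small edges force small circumradii.} This is the key geometric input. The chord length is $|z_i-z_j|=2R^{\dagger}_\psi\sin\alpha^{\dagger}_{\psi\phi}$. By Proposition \ref{prop:bound} applied with $u=0$ (via the Ring lemma), every half-angle satisfies $\alpha^{\dagger}>\eta$. Hence
\[
R^{\dagger}_\psi\le \frac{|z_i-z_j|}{2\sin\eta}<\frac{\delta}{2\sin\eta}.
\]

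\emph{Step 4: summing.} Summing the per-edge bounds over $\Gamma$ gives a total error of at most
\[
M_uM_v\,\frac{\delta}{2\sin\eta}\sum_{ij\in\Gamma}|z_j-z_i| < \frac{M_uM_vL}{2\sin\eta}\,\delta .
\]
Choosing $\delta<2\epsilon\sin\eta/(M_uM_vL)$ makes this less than $\epsilon$.

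I expect Step 1 to be the main obstacle, namely justifying that smooth boundary data give a harmonic extension that is uniformly $C^1$ up to the boundary; the Fourier-series argument above is what I would use. Step 3 is the main geometric point and depends only on the Ring-lemma bound already established. The rest of the argument is routine bookkeeping.
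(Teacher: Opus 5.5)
Your proof is correct and follows the same route as the paper. You compare $u$ with $u(z_\psi)$ edge by edge, control $|x-z_\psi|$ on each chord by $R^{\dagger}_\psi\le |z_i-z_j|/(2\sin\eta)$, and sum against the total length $L$. That radius bound needs both $\eta<\alpha^{\dagger}<\pi-\eta$ from Proposition \ref{prop:bound}, not only the lower bound you quote. Your Lipschitz bound in place of uniform continuity, and the exact identity $v_j-v_i=\int_{[z_i,z_j]}\nabla v\cdot dz$ in place of the paper's second-order Taylor estimate, are harmless simplifications that also give an explicit $\delta$.
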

\begin{proof}
	Since $u,v \in C^{\infty}(\overline{\mathbb{D}})$, all their partial derivatives are bounded.
	
	Since $u$ is uniformly continuous and $\partial_x v, \partial_y v$ are bounded on $\mathbb{D}$, there exists $\delta_1>0$ such that for all $p,q \in \mathbb{D}$ with $|p-q|< \delta_1$
	\[
	|u(p)-u(q)| < \frac{\epsilon}{4L \cdot \max\{ |\partial_x v|, |\partial_y v|,1\}}.
	\]
	Observe that for any edge $ij$, the neighbouring circle has radius $R^{\dagger}_{\psi} = \frac{|z_j-z_i|}{2 \sin \alpha^{\dagger}_{\psi \phi}} <  \frac{|z_j-z_i|}{2 \sin \eta}$. Then for any edge segment $ij$ with length $|z_j-z_i| < 2 \delta_1 \sin  \eta$, the distance of every point on the line segment $[z_i,z_j]$ from the neighboring circumcenter $z_{\psi}$ is less than $R^{\dagger}_{\psi}<\delta_1$. Thus
	\begin{align*}
		|\int_{[z_i,z_j]} u \frac{dv}{ds} ds - 	\int_{[z_i,z_j]} u_{\psi} \frac{dv}{ds} ds| &\leq 	2\max\{ |\partial_x v|, |\partial_y v|\} (\int_{[z_i,z_j]} |u- u_{ij,r} | ds)  \\ &<  \frac{|z_i-z_j|}{2L} \epsilon
	\end{align*}
	where $s$ is the arc-length parametrization of the line segment.
	
	On the other hand, $\partial_{xx} v, \partial_{xy} v$ and $\partial_{yy}v$ are bounded over $\mathbb{D}$. Take 
	\[
	\delta_2 = \frac{\epsilon}{2L \max\{|\partial_{xx} v|, |\partial_{xy} v|, |\partial_{yy}v|,1\} \cdot \max |u|}.
	\]
	Then along the line segment $[z_i,z_j]$ with $|z_i-z_j| <\delta_2$, we have by Taylor's theorem, for $p \in [z_i,z_j]$,
	\[
	|\frac{v_j-v_i}{|z_j-z_i|} - \frac{dv}{ds}(p)| < |z_j-z_i| \max\{|\partial_{xx} v|, |\partial_{xy} v|, |\partial_{yy}v|\} < \frac{\epsilon}{2L \max |u|}.
	\]
	Thus
	\begin{align*}
		|	\int_{[z_i,z_j]} u_{\psi} \frac{dv}{ds} ds - u_{\psi}(v_j-v_i)| &< |u_{\psi}| \int_{[z_i,z_j]} |\frac{dv}{ds}- \frac{v_j-v_i}{|z_j-z_i|}| ds \\
		&< \frac{|z_i-z_j|}{2L} \epsilon.
	\end{align*}
	Hence, for a polygon curve $\Gamma$ with every edge length $|z_j-z_i| < \min \{2 \delta_1 \sin  \eta,  \delta_2\}$, one has
	\[
	|\int_{[z_i,z_j]} u dv    - u_{\psi}(v_j-v_i)| < \frac{|z_j-z_i|}{L} \epsilon
	\]
	and
	\[
	|\int_{\Gamma} u dv -  \sum_{ij \in \Gamma} u_{\psi}(v_j-v_i)| < \epsilon.
	\]
\end{proof}

\begin{lemma}\label{cor:smalllengths}
	Fix $\delta>0$. Under the uniformized circle pattern, the set of edges with Euclidean length  $|z_j-z_i| > \delta$ is a finite set.
\end{lemma}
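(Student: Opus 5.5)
The plan is an area-counting argument. Because the uniformized circle pattern lies inside the unit disk, long edges must come with large faces, and only finitely many large faces fit.

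Let $ij$ be an edge with $|z_j-z_i|>\delta$, and let $\phi$ be one of its adjacent faces. The chord $ij$ of the circumcircle $C_\phi$ has length $2R^\dagger_\phi \sin\alpha^\dagger_{\phi\psi}\le 2R^\dagger_\phi$. Hence $R^\dagger_\phi > \delta/2$.

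Next I bound the area of such a face from below. Its kite with $\psi$ contains the triangle $z_\phi z_i z_j$, whose area is
\[
\tfrac12 (R^\dagger_\phi)^2 \sin(2\alpha^\dagger_{\phi\psi}).
\]
By Proposition \ref{prop:bound}, the half-angle satisfies $\alpha^\dagger_{\phi\psi}\in(\eta,\pi-\eta)$, and the interior angle at $z_\phi$ is at most $\pi-2\eta$. So $2\alpha^\dagger_{\phi\psi}\in[2\eta,\pi-2\eta]$, and this triangle has area at least $\tfrac12(\delta/2)^2\sin(2\eta)$.

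It is cleaner to count polygon faces of the primal decomposition. The triangle $z_\phi z_i z_j$ lies in the primal face $\phi$: it is the cyclic polygon's sub-triangle from the circumcenter, and the central angles are less than $\pi$, so the center lies inside the polygon. The faces of the primal geodesic decomposition are disjoint and contained in $\mathbb{D}$. Therefore at most
\[
\frac{\pi}{\tfrac18 \delta^2 \sin 2\eta}
\]
faces can contain an edge of length greater than $\delta$.

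Each face has uniformly bounded degree, so only finitely many edges have length greater than $\delta$. The step I expect to need the most care is confirming that these triangles are genuinely disjoint, so that their areas can be summed inside $\mathbb{D}$. This holds because the primal decomposition is embedded (it is the uniformized pattern, by Theorem \ref{thm:discrete_uniformization}) and each cyclic face contains its circumcenter.
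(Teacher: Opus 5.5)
Your overall strategy is the same area count as the paper's: a long edge forces $R^\dagger_\phi>\delta/2$, hence a cell of area $\gtrsim\delta^2$, these cells are disjoint in $\mathbb{D}$, and face degrees are bounded. However, your local area bound has a genuine gap.

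The claim $2\alpha^\dagger_{\phi\psi}\in[2\eta,\pi-2\eta]$, and with it ``each cyclic face contains its circumcenter,'' does not follow from Proposition \ref{prop:bound}. That proposition only gives $\alpha^\dagger_{\phi\psi}\in(\eta,\pi-\eta)$. Combined with $\sum_\psi\alpha^\dagger_{\phi\psi}=\pi$, you get $\alpha^\dagger_{\phi\psi}\le\pi-2\eta$, so the kite angle $2\alpha^\dagger_{\phi\psi}$ at $z_\phi$ is bounded only by $2\pi-4\eta$, not by $\pi$.

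By Lemma \ref{lemma:cotweight}, $\alpha^\dagger_{\phi\psi}\ge\pi/2$ exactly when $R^\dagger_\phi/R^\dagger_\psi\le\cos\Theta_{\phi\psi}$. Definition \ref{def:infintheta} does not exclude this once $\Theta_{\phi\psi}<\pi/2$; think of a Delaunay triangulation containing an obtuse triangle, whose circumcenter lies outside it. Two things then break:
\begin{enumerate}
\item At $\alpha^\dagger_{\phi\psi}=\pi/2$ your triangle $z_\phi z_iz_j$ is degenerate, so $\tfrac12(R^\dagger_\phi)^2\sin(2\alpha^\dagger_{\phi\psi})$ has no positive lower bound.
\item For $\alpha^\dagger_{\phi\psi}>\pi/2$ the point $z_\phi$ lies on the far side of the chord $ij$. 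The triangle is then not contained in the face $\phi$, and the disjointness you invoke at the end fails.
\end{enumerate}
Your argument works only in special cases such as $\Theta\ge\pi/2$. There $\cot\alpha^\dagger_{\phi\psi}\ge 1/C$, which keeps all central angles uniformly below $\pi$.

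The repair is short. One option is to replace $z_\phi z_iz_j$ by the half-kite $z_\phi z_\psi z_i$. Its angle at $z_i$ is $\Theta_{\phi\psi}$, so by the Ring lemma (Proposition \ref{lem:ring}) its area is $\tfrac12R^\dagger_\phi R^\dagger_\psi\sin\Theta_{\phi\psi}\ge\frac{\sin\epsilon_0}{2C}(R^\dagger_\phi)^2>\frac{\sin\epsilon_0}{8C}\delta^2$. These half-kites are exactly the cells on which the map $f_u$ of Theorem \ref{thm:projectionWP} is affine. Since the developing map of $\sigma(\Theta,R^\dagger)$ is an isometry onto $\mathbb{D}$ (Theorem \ref{thm:discrete_uniformization}), they have disjoint interiors in $\mathbb{D}$, whether or not circumcenters lie in their faces. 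Distinct edges give distinct half-kites, so there are at most $8\pi C/(\delta^2\sin\epsilon_0)$ long edges, and you do not even need the degree bound.

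The other option is to keep your face count, as the paper does, but bound the area of the whole face $\phi$. That face is a convex cyclic polygon with interior angles $\pi-\alpha^\dagger_{\phi\psi}-\alpha^\dagger_{\phi\varphi}\in[\eta,\pi-2\eta]$, and the paper's compactness argument gives area at least $C_2|z_j-z_i|^2$. The paper's phrase ``central angles in $[2\eta,\pi-2\eta]$'' is loose in the same way as your claim. Its compactness argument still goes through with the correct range (half-angles in $[\eta,\pi-2\eta]$), because it never needs the circumcenter to lie inside the face.
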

\begin{proof}
    We observe that there exists a constant $C_2>0$ such that for any edge $ij$, the area of the cyclic face containing $ij$ is at least $C_2 |z_j-z_i|^2$. Indeed, since the ratio of the area of the face to $|z_j-z_i|^2$ is invariant under scaling, we can apply the compactness argument as in the proof of Proposition \ref{prop:Angel} to show that there is a uniform lower bound $C_2$ for this ratio for all cyclic $k$-polygon with $k \leq K$ and central angles in the closed interval $[2\eta, \pi-2\eta]$. Since the total area of the unit disk is $\pi$, there can only be finitely many edges with length greater than a fixed $\delta$.
\end{proof}

\begin{proposition}
	Let $u,v \in  \mathbf{HD}(\mathbb{D})$ with boundary values in $C^{\infty}(\partial \mathbb{D})$. Under the uniformized circle pattern, we denote by $z_F: F \to \mathbb{D}$ the circumcenter of the circles and by $z_V: V \to \mathbb{D}$ the intersection points of circles. Then
	\[
	b(u\circ z_F,v\circ z_V) = \int_{\partial \mathbb{D}} u dv = 2 \pi  \omega(u,v).
	\]
\end{proposition}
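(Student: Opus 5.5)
The plan is to compute $b(u\circ z_F, v\circ z_V)$ as a limit of partial sums over the exhausting subcomplexes $\Omega_n$ of Proposition \ref{prop:seqcomplex}. Since $u\circ z_F\in\mathbf{D}(F)$ and $v\circ z_V\in\mathbf{D}(V)$, the defining series of $b$ converges absolutely by the Cauchy–Schwarz bound. Finiteness of the two discrete energies follows from Theorem \ref{thm:Hutchcroft}, or directly from boundedness of $\nabla u,\nabla v$ on $\overline{\mathbb{D}}$ together with good-embedding comparability of edge lengths and face areas. Absolute convergence gives
\[
b(u\circ z_F,v\circ z_V)=\lim_{n\to\infty}\sum_{ij\in\Omega_n}(u_\psi-u_\phi)(v_j-v_i).
\]
For each finite disk $\Omega_n$, I sum by parts as in the observation before Proposition \ref{prop:Angel}. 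Around each interior vertex $i$ the differences $v_j-v_i$ against the face values telescope, and similarly around each face. This leaves
\[
\sum_{ij\in\Omega_n}(u_\psi-u_\phi)(v_j-v_i)=\sum_{ij\in\partial\Omega_n}u_\psi(v_j-v_i),
\]
where $\psi$ is the face adjacent to the boundary edge on the appropriate side. I will check the orientation conventions carefully here; they match those of Proposition \ref{prop:riemannappro}.

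Next I compare with the line integral. By Proposition \ref{prop:seqcomplex} the loops $\Gamma_n=\partial\Omega_n$ have total length at most $L=2\pi C_1$. The curves $\Gamma_n$ are built from faces meeting the circle of radius $\rho_n\to 1$, and the shortcut paths lie within those faces. So every edge of $\Gamma_n$ lies outside the ball of radius $\rho_{n-1}$, or close to it. Lemma \ref{cor:smalllengths} says only finitely many edges exceed length $\delta$, and these lie in a compact subset of $\mathbb{D}$. Hence for $n$ large every edge of $\Gamma_n$ has length $<\delta$. Proposition \ref{prop:riemannappro} then gives $\bigl|\int_{\Gamma_n}u\,dv-\sum_{ij\in\Gamma_n}u_\psi(v_j-v_i)\bigr|\to 0$.

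Finally I show $\int_{\Gamma_n}u\,dv\to\int_{\partial\mathbb{D}}u\,dv$. By Stokes, $\int_{\partial\mathbb{D}}u\,dv-\int_{\Gamma_n}u\,dv=\iint_{\mathbb{D}\setminus\Omega_n}du\wedge dv$. The form $du\wedge dv$ is bounded because $u,v$ are smooth up to the boundary. The region $\mathbb{D}\setminus\Omega_n$ lies within the annulus $\{|z|>\rho_n-\text{(max size of faces crossed)}\}$, whose area tends to $0$ by the same finiteness argument. The last equality, $\int_{\partial\mathbb{D}}u\,dv=2\pi\omega(u,v)$, is the smooth formula for $\omega$ recalled in the background.

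The main obstacle is the geometric control of $\Gamma_n$: showing that its edges become uniformly short and that it converges to $\partial\mathbb{D}$ in the area sense. The length bound $L$ alone does not give this, so I must use that every face touched by $\Gamma_n$ meets the circle of radius $\rho_n$, and that faces near the boundary are small by Lemma \ref{cor:smalllengths}.
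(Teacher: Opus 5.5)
Your proposal is correct and follows the paper's proof. The steps match: exhaust by the $\Omega_n$ of Proposition \ref{prop:seqcomplex}, sum by parts to the boundary sum, use Proposition \ref{prop:riemannappro} together with Lemma \ref{cor:smalllengths} for the short boundary edges, and control $\iint_{\mathbb{D}\setminus\Omega_n}du\wedge dv$ via Stokes. Your extra justifications (absolute convergence of $b$, shortness of the boundary edges, vanishing area of $\mathbb{D}\setminus\Omega_n$) are sound, though the last two already follow from exhaustion alone: any finite set of edges is eventually interior to $\Omega_n$, and $\mathrm{area}(\mathbb{D}\setminus\Omega_n)\to 0$ by monotone convergence.
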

\begin{proof}
   In the following, we write $u_\phi = u(z_\phi)$ and $v_i = v(z_i)$ for simplicity. We take the sequence of finite subcomplexes $\Omega_k$ as in Proposition \ref{prop:seqcomplex}.
	
	Let $\epsilon >0$. Then there exists an integer $N_1$ such that for $n>N_1$,
	\[
	|\int_{\partial \mathbb{D}} u dv - \int_{\partial \Omega_n} u dv| = | \iint_{\mathbb{D}-\Omega_n} du \wedge dv| < \epsilon.
	\]
	We take $L= 2 \pi C_1$. Applying Proposition \ref{prop:riemannappro} and Lemma \ref{cor:smalllengths} implies that there exists another integer $N_2$ such that for $n>N_2$, 
	\[
	|\int_{\partial \Omega_n} u dv - \sum_{ij \in \partial \Omega_n} u_{\psi}(v_j-v_i)| < \epsilon.
	\]
	Thus for $n> \max\{N_1,N_2\}$, we have
	\[
	|2\pi \omega(u,v) -  \sum_{ij \in \Omega_n} (u_{\psi}-u_{\phi})(v_j-v_i) | = |\int_{\partial \mathbb{D}} u dv - \sum_{ij \in \partial \Omega_n} u_{\psi}(v_j-v_i)| < 2 \epsilon.
	\] 
	Hence
	\[
	b(u\circ z_F,v\circ z_V) = \lim_{n \to \infty} \sum_{ij \in \Omega_n} (u_{\psi}-u_{\phi})(v_j-v_i) = 2\pi \omega(u,v).
	\]
\end{proof}

\begin{lemma}\label{lem:bD0}
	If $u \in \mathbf{D}_0(F)$ and $v\in \mathbf{D}(V)$, then
	\[
	b(u,v)=0.
	\]
	Similarly, if $u \in \mathbf{D}(F)$ and $v\in \mathbf{D}_0(V)$, then 
	\[
	b(u,v)=0.
	\]
\end{lemma}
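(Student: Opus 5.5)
The plan is to prove the first statement for finitely supported $u$, extend it by density, and then obtain the second statement by the symmetric argument.

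\emph{Finitely supported $u$.} Fix $v \in \mathbf{D}(V)$ and let $u$ be the indicator function $\chi^{\phi}$ of a single face $\phi$. In $b(\chi^\phi, v) = \sum_{ij \in E} (u_\psi - u_\phi)(v_j - v_i)$, only the edges on the boundary of the face $\phi$ contribute. Orient every boundary edge of $\phi$ so that $\phi$ lies on its left. Then each such edge contributes $-(v_j - v_i)$. The sum telescopes around the closed boundary cycle of $\phi$, so $b(\chi^\phi, v) = 0$. By linearity, $b(u,v) = 0$ for every finitely supported $u$.

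\emph{Density.} The Cauchy--Schwarz estimate stated right after the definition of $b$ gives
\[
|b(u,v)| \le \sqrt{\mathcal{E}(u)\,\mathcal{E}(v)}.
\]
Hence $u \mapsto b(u,v)$ is a bounded linear functional on $\mathbf{D}(F)$. It is bounded in the norm $\|\cdot\|$, since $\mathcal{E}(u) \le \|u\|^2$, and it vanishes on constants. Finitely supported functions are dense in $\mathbf{D}_0(F)$ by definition, so continuity gives $b(u,v) = 0$ for all $u \in \mathbf{D}_0(F)$.

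\emph{Second statement.} The argument is symmetric. Take $v = \chi_i$, the indicator of a vertex $i$. Then $b(u,\chi_i)$ is a sum over the edges $ij$ incident to $i$. Orient each as $j \to i$, so that $v_i - v_j = 1$. The contributions $(u_\psi - u_\phi)$ are then differences of the values of $u$ on consecutive faces around $i$, going around the vertex. This sum telescopes around the closed cycle of faces surrounding $i$ and vanishes. Linearity and the same continuity argument, now in $v$, extend the result to $v \in \mathbf{D}_0(V)$.

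The only point needing care is sign bookkeeping: one must check that with the left/right convention the consecutive dual edges around $\phi$ (respectively the faces around $i$) really telescope. Once the orientations are fixed consistently this is a direct verification. The density step is routine given the bound above.
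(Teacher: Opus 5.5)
Your proposal is correct and follows the paper's argument. The paper also computes $b(u,v) = -\sum_{\phi} u_\phi \sum_{ij\in\partial\phi}(v_j-v_i) = 0$ for finitely supported $u$, which is your indicator-plus-linearity computation, and then extends to all of $\mathbf{D}_0(F)$ by continuity; it dismisses the vertex case as ``similar'', and your explicit telescoping around the faces incident to $i$ fills that in correctly.
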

\begin{proof}
	Suppose $u \in \mathbf{D}_0(F)$ has finite support. Then
   \[
 b(u,v)= - \sum_{\phi \in F} u_{\phi} \sum_{ij \in \partial \phi} (v_j-v_i) =0.  
   \]
   By continuity, the same holds for any $u \in \mathbf{D}_0(F)$. The other case is similar.
\end{proof}

\begin{lemma}\label{lem:bHDD}
Let $u,v \in \hf$ and $\hat{u},\hat{v} \in \mathbf{HD}(\mathbb{D})$ be their harmonic extensions. Then
	\[
	b(\hat{u}\circ z_F,\hat{v}\circ z_V) = 2 \pi \omega(u,v).
	\]
\end{lemma}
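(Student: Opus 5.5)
The plan is a density argument. The identity already holds when the boundary values are smooth (the preceding proposition), and both sides should be continuous in the $\hf$-topology. Both sides are bilinear in $(u,v)$, and constants contribute nothing to either. Indeed, a constant $u$ makes every difference $u_\psi-u_\phi$ vanish, a constant $v$ makes every $v_j-v_i$ vanish, and $\omega$ only involves the Fourier modes $n\neq 0$. So we may work modulo constants. Choose sequences $u_n, v_n \in C^\infty(\partial \mathbb{D})$ converging to $u, v$ in $\hf$; such sequences exist because $C^{\infty}(\partial\mathbb{D})$ is dense, and truncating the Fourier series gives explicit ones. By the preceding proposition,
\[
b(\hat{u}_n\circ z_F,\hat{v}_n\circ z_V)=2\pi\,\omega(u_n,v_n)
\]
for every $n$. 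The right-hand side converges to $2\pi\,\omega(u,v)$, since $|\omega(u,v)|\le \langle u,u\rangle_{\hf}^{1/2}\langle v,v\rangle_{\hf}^{1/2}$ by Cauchy–Schwarz on the Fourier coefficients.

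For the left-hand side, $b$ is a bounded bilinear form on $\mathbf{D}(F)\times\mathbf{D}(V)$ by the Cauchy–Schwarz estimate stated after its definition. It therefore suffices to show that the restriction maps $\mathbf{HD}(\mathbb{D})\to \mathbf{D}(F)/\mathbb{R}$, $H\mapsto H\circ z_F$, and $\mathbf{HD}(\mathbb{D})\to \mathbf{D}(V)/\mathbb{R}$, $H\mapsto H\circ z_V$, are bounded in the Dirichlet seminorm:
\[
\mathcal{E}(H\circ z_F)\le C\,\mathcal{E}_{\mathbb{D}}(H),\qquad \mathcal{E}(H\circ z_V)\le C\,\mathcal{E}_{\mathbb{D}}(H).
\]
Since the harmonic extension is an isometry up to the factor $2\pi$, i.e. $\mathcal{E}_{\mathbb{D}}(\hat u)=2\pi\langle u,u\rangle_{\hf}$, this gives $\hat{u}_n\circ z_F\to \hat u\circ z_F$ in $\mathbf{D}(F)$ modulo constants, and likewise on $V$. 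Passing to the limit then yields the identity.

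The main step is this energy bound for restrictions of harmonic functions to the vertices of a good embedding, and I would prove it with a standard local estimate. Let $\phi\psi$ be a dual edge. The points $z_\phi$ and $z_\psi$ lie at distance comparable to the local scale $r$, which is comparable to the radius $R^\dagger_\phi$ by the Ring lemma (Proposition \ref{lem:ring}) and the goodness of the embedding (Proposition \ref{prop:primalgood}). Moreover, a disk of radius comparable to $r$ around the pair is contained in $\mathbb{D}$, because the circumdisks lie inside $\mathbb{D}$; if needed, one can use the union of the two adjacent kites together with a slightly shrunk neighbourhood. Interior gradient estimates for harmonic functions then give
\[
|H(z_\phi)-H(z_\psi)|^2\le C\iint_{B_\phi}|\nabla H|^2,
\]
where $B_\phi$ is a disk of radius $c\,r$ inside the union of the nearby faces. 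I expect this containment to be the main obstacle: the circumcenter need not lie in its own face, and near the boundary of $\mathbb{D}$ one must make sure the disk stays inside $\mathbb{D}$.

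Bounded degree and comparable scales (Proposition \ref{prop:primalgood} together with Corollary \ref{cor:equivalence}) then ensure that these disks overlap with bounded multiplicity. Summing over all dual edges gives the bound $\mathcal{E}(H\circ z_F)\le C\,\mathcal{E}_{\mathbb{D}}(H)$, and the same argument on the primal vertices gives the bound for $z_V$. Alternatively, one could invoke the corresponding estimate for good embeddings implicit in Theorem \ref{thm:Hutchcroft}, namely \cite{Hutchcroft2019}, which states that $H\circ z$ has finite discrete energy controlled by $\mathcal{E}_{\mathbb{D}}(H)$.
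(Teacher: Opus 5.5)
Your argument is correct and uses the same density scheme as the paper: approximate by smooth boundary data, apply the smooth-case proposition, and pass to the limit. The difference is how you control the left-hand side in the limit.

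\textbf{The paper's route.} It never uses continuity of the restriction $H\mapsto H\circ z$ on its own. It replaces $\hat u\circ z_F$ and $\hat v\circ z_V$ by their harmonic parts $p_F(\hat u\circ z_F)=\Psi_F^{-1}(\hat u)$ and $p_V(\hat v\circ z_V)=\Psi_V^{-1}(\hat v)$. This is harmless because $b$ vanishes whenever either argument lies in $\mathbf{D}_0$ (Lemma \ref{lem:bD0}). Convergence of the approximants then comes from the boundedness of Hutchcroft's isomorphisms (Theorem \ref{thm:Hutchcroft}).

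\textbf{Your route.} You bypass the Royden projections and Lemma \ref{lem:bD0} entirely. You need only the seminorm bound $\mathcal{E}(H\circ z)\le C\,\mathcal{E}_{\mathbb{D}}(H)$ and the Cauchy--Schwarz bound on $b$. You also make explicit that $\hat u\circ z_F$ and $\hat v\circ z_V$ have finite energy. The paper needs this both to make sense of the left-hand side and to form $p_F(\hat u\circ z_F)$, but takes it implicitly from Theorem \ref{thm:Hutchcroft}(2).

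\textbf{Trade-off.} The paper's route costs nothing new, since both of its ingredients are needed elsewhere. Yours is more elementary.

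\textbf{Your fallback citation is sound.} Theorem \ref{thm:Hutchcroft}(2) gives $H\circ z_V\in\mathbf{D}(V)$ for every $H\in\mathbf{HD}(\mathbb{D})$. Convergence in both spaces implies pointwise convergence, so the restriction map has closed graph and is therefore bounded.

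\textbf{If you prove the bound directly,} replace the single disk $B_\phi$: a disk of radius $c\,r$ with $c$ small cannot contain both $z_\phi$ and $z_\psi$. Instead, cover the whole segment, working in the triangulation of $\mathbb{D}$ by the triangles $\phi\psi i$. These triangles have angles bounded away from $0$ and $\pi$ by Proposition \ref{prop:bound} and the bounds on $\Theta$, and the triangulation has bounded degree. In it, the $c\,\ell$-neighbourhood of an edge of length $\ell$ lies in the union of the stars of its endpoints. This gives four things at once:
\begin{enumerate}
\item containment in $\mathbb{D}$ of every disk used in the gradient estimate;
\item bounded overlap of the neighbourhoods;
\item irrelevance of where the circumcenter sits relative to its face, which resolves the obstacle you flagged;
\item control of primal edges too, via $H(z_i)-H(z_j)=(H(z_i)-H(z_\phi))+(H(z_\phi)-H(z_j))$, since $[z_i,z_\phi]$ and $[z_\phi,z_j]$ are edges of this triangulation.
\end{enumerate}
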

\begin{proof}
    The space of smooth functions on the unit circle is dense in $\hf$. We can find sequences of smooth functions $\{u_m\}_{m \in \mathbb{N}}$ and $\{v_n\}_{n \in \mathbb{N}}$ that converge to $u$ and $v$ in $\hf$. The corresponding harmonic extensions $\hat{u}_m, \hat{v}_n$ and $\hat{u}, \hat{v}$ converge in the classical Dirichlet energy norm. By Hutchcroft's results (Theorem \ref{thm:Hutchcroft}), the discrete harmonic functions $p_F(\hat{u}_m \circ z_F)$ and $p_V(\hat{v}_n \circ z_V)$ converge to $p_F(\hat{u} \circ z_F)$ and $p_V(\hat{v} \circ z_V)$ in the combinatorial Dirichlet energy norm. Hence
	\begin{align*}
		b(\hat{u}\circ z_F,\hat{v}\circ z_V) =& b(p_F(\hat{u} \circ z_F), p_V(\hat{v} \circ z_V))\\=& \lim_{m,n \to \infty} b(p_F(\hat{u}_m \circ z_F), p_V(\hat{v}_n \circ z_V)) \\
		=&\lim_{m,n \to \infty} b((\hat{u}_m \circ z_F), (\hat{v}_n \circ z_V)) \\=& 2\pi \lim_{m,n \to \infty} \omega(u_m,v_n) \\=& 2\pi \omega(u,v).
	\end{align*}
\end{proof}

\begin{proof}[Proof of Theorem \ref{thm:discretesym}]
Suppose $u \in \mathbf{D}(F)$ and $v \in \mathbf{D}(V)$ have boundary values $u_{\partial \mathbb{D}}, v_{\partial \mathbb{D}} \in \hf$ respectively. By the construction of the boundary value mapping, there exist harmonic extensions $\hat{u}, \hat{v} \in \mathbf{HD}(\mathbb{D})$ of $u_{\partial \mathbb{D}}$ and $v_{\partial \mathbb{D}}$ respectively, such that $u - \hat{u} \circ z_F \in \mathbf{D}_0(F)$ and $v - \hat{v} \circ z_V \in \mathbf{D}_0(V)$. 
Using the bilinearity of $b$, Lemma \ref{lem:bD0} and Lemma \ref{lem:bHDD} implies that
\begin{align*}
	b(u,v) &= b(\hat{u} \circ z_F + (u - \hat{u} \circ z_F), \hat{v} \circ z_V + (v - \hat{v} \circ z_V)) \\
	&= b(\hat{u} \circ z_F, \hat{v} \circ z_V) \\
	&= 2\pi \omega(u_{\partial \mathbb{D}}, v_{\partial \mathbb{D}}).
\end{align*}
\end{proof}

\begin{proof}[Proof of Corollary \ref{cor:HilbertWP}]
By Proposition \ref{prop:conjugation_isometry}, the differential of the conjugation map at $u$ maps $w \in T_u P(\Theta, R^\dagger) = \mathbf{HD}_{c^*}(F)$ to its harmonic conjugate $\tilde{w} \in T_{[v]} (P(\Theta, \alpha^\dagger)/\mathbb{R}) = \mathbf{HD}_c(V)$, which satisfies
\[
w_\psi - w_\phi = c_{ij} (\tilde{w}_j - \tilde{w}_i).
\]
Using this relation and the fact that $c^*_{\phi\psi} = 1/c_{ij}$, we can rewrite the geometric Dirichlet inner product as
\begin{align*}
	\sum_{\phi\psi \in E^*} c^*_{\phi\psi} (v_\phi - v_\psi)(w_\phi - w_\psi) &= \sum_{ij \in E} (v_\psi - v_\phi) c^*_{\phi\psi} (w_\psi - w_\phi) \\
	&= \sum_{ij \in E} (v_\psi - v_\phi) (\tilde{w}_j - \tilde{w}_i) \\
	&= b(v, \tilde{w}).
\end{align*}
By Theorem \ref{thm:discretesym}, we have
\[
b(v, \tilde{w}) = 2\pi \cdot \omega(v_{\partial \mathbb{D}}, \tilde{w}_{\partial \mathbb{D}})
\]
and $\tilde{w}_{\partial \mathbb{D}} = d\mathfrak{H}^{\Theta}_{u_{\partial \mathbb{D}}}(w_{\partial \mathbb{D}})$, which completes the proof.
\end{proof}

\section{Projection to the universal Teichm\"{u}ller space}

We show that every circle pattern in $P(\Theta,R^{\dagger})$ yields a quasiconformal homeomorphism of the unit disk. This induces a natural projection from $P(\Theta,R^{\dagger})$ to the universal Teichm\"{u}ller space $\UT$. We further prove that the image of $P(\Theta,R^{\dagger})$ lies within the Weil-Petersson subclass $\WT$. Our proof is motivated by the arguments in \cite{Wang2024}.

For any circle pattern $u \in P(\Theta,R^{\dagger})$ (or, more generally, $u \in \tilde{P}(\Theta)$), the Euclidean structure $(\Omega,\sigma(\Theta,e^u R^{\dagger}))$ naturally admits a geodesic cell decomposition. In particular, the uniformized circle pattern induces a geodesic cell decomposition of the unit disk $\mathbb{D} \cong (\Omega,\sigma(\Theta,R^{\dagger}))$. 

These decompositions allow us to construct a piecewise-linear map $f_{u}:\mathbb{D} \to (\Omega,\sigma(\Theta,e^u R^{\dagger}))$. Specifically, we define $f_{u}$ so that it maps the intersection points and circumcenters of the uniformized pattern to the corresponding points in the deformed circle pattern. For every edge $ij \in E$ with adjacent faces $\phi$ and $\psi$, we uniquely extend $f_{u}$ to an affine map over the triangles $\phi \psi i$ and $\psi \phi j$. In the following, we demonstrate that $f_{u}$ is quasiconformal and analyze its Beltrami differential.

We let $\alpha^{\dagger}_{\phi \psi}, \alpha^{\dagger}_{\psi \phi}, \Theta_{\phi\psi}$ denote the interior angles of the triangles $\phi \psi i$ and $\psi \phi j$ under the uniformized circle pattern, while let $\alpha_{\phi \psi}, \alpha_{\psi \phi}, \Theta_{\phi\psi}$ denote the corresponding interior angles under the deformed circle pattern with Euclidean radii $e^u R^{\dagger}$.

\begin{lemma}\label{lem:beltri}
	Suppose $L$ is a linear map that maps a Euclidean triangle with inner angles $\alpha,\beta,\gamma= \pi - \alpha -\beta$ to another Euclidean triangle with inner angles $\tilde{\alpha},\tilde{\beta},\tilde{\gamma}=\pi - \tilde{\alpha} - \tilde{\beta}$. Then its Beltrami differential $\mu(L) := \frac{\partial_{\bar{z}} L}{\partial_z L}$ is constant on the interior of the triangle and 
    \begin{align*}
    		\mu(L)=  \frac{ \frac{\sin \tilde{\gamma}}{\sin \gamma} - \frac{\sin \tilde{\alpha} \sin \tilde{\beta}}{\sin \alpha \sin \beta} + \mathbf{i} (\frac{\cos \tilde{\alpha} \sin \tilde{\beta}}{\sin \alpha \sin \beta}- \frac{\cos \alpha  \sin \tilde{\gamma}}{\sin \alpha \sin \gamma})}{\frac{\sin \tilde{\gamma}}{\sin \gamma} + \frac{\sin \tilde{\alpha} \sin \tilde{\beta}}{\sin \alpha \sin \beta} - \mathbf{i} (\frac{\cos \tilde{\alpha} \sin \tilde{\beta}}{\sin \alpha \sin \beta}- \frac{\cos \alpha  \sin \tilde{\gamma}}{\sin \alpha \sin \gamma})}.
    \end{align*}	
  Furthermore,
  \begin{align*}
  	|\mu(L)|^2= 1-  \frac{4 \cdot \frac{\sin \tilde{\alpha} \sin \tilde{\beta} \sin \tilde{\gamma}}{\sin \alpha \sin \beta \sin \gamma} }{|\frac{\sin \tilde{\gamma}}{\sin \gamma} + \frac{\sin \tilde{\alpha} \sin \tilde{\beta}}{\sin \alpha \sin \beta}|^2 + |\frac{\cos \tilde{\alpha} \sin \tilde{\beta}}{\sin \alpha \sin \beta}- \frac{\cos \alpha  \sin \tilde{\gamma}}{\sin \alpha \sin \gamma}|^2}.
  \end{align*}
\end{lemma}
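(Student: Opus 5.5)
We will place the source triangle in a normalized position and compute the affine map explicitly. Put the vertex with angle $\gamma$ at the origin and let the side between the vertices with angles $\gamma$ and $\alpha$ lie along the positive real axis. Scale this side to have length $\sin\beta/\sin\gamma$, so the triangle is $\{0,\ \tfrac{\sin\beta}{\sin\gamma},\ w\}$. By the sine law, $|w| = \tfrac{\sin\alpha}{\sin\gamma}$ and $w = \tfrac{\sin\alpha}{\sin\gamma}e^{\mathbf{i}\gamma}$. The target triangle is normalized the same way, with vertices $\{0,\ \tfrac{\sin\tilde\beta}{\sin\tilde\gamma},\ \tfrac{\sin\tilde\alpha}{\sin\tilde\gamma}e^{\mathbf{i}\tilde\gamma}\}$.

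Since $\mu$ is invariant under post-composition with similarities, this normalization of the target costs nothing. Pre-composition with a similarity of the source only multiplies $\mu$ by a unimodular constant, and an overall real rescaling leaves it unchanged. So we may compute $\mu$ in this frame and then check that the resulting expression is the one stated, which is written in intrinsic quantities.

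The $\mathbb{R}$-linear map has the form $L(z) = az + b\bar z$ with $\mu = b/a$. The two conditions $L(x_1)=\tilde x_1$ and $L(w)=\tilde w$, where $x_1,\tilde x_1$ are the real vertices, give
\[
a + b = \tilde x_1/x_1, \qquad a w + b\bar w = \tilde w .
\]
Solving this system yields
\[
a = \frac{\tilde w - \bar w\,\tilde x_1/x_1}{w-\bar w}, \qquad b = \frac{w\,\tilde x_1/x_1 - \tilde w}{w-\bar w}.
\]
Then $\mu = b/a$, and the common denominator $2\mathbf{i}\,\Im w$ cancels.

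Next we substitute the explicit expressions, using $\tilde x_1/x_1 = \frac{\sin\tilde\beta\sin\gamma}{\sin\beta\sin\tilde\gamma}$. We multiply numerator and denominator by a common real factor, such as $\frac{\sin\tilde\gamma}{\sin\alpha\sin\gamma}$ up to rearrangement, and expand $e^{\pm\mathbf{i}\gamma}$ and $e^{\mathbf{i}\tilde\gamma}$ into cosines and sines. The real and imaginary parts of numerator and denominator then take the form $X \mp Y$ and $\pm\mathbf{i}Z$, with
\[
X = \tfrac{\sin\tilde\gamma}{\sin\gamma}, \qquad Y = \tfrac{\sin\tilde\alpha\sin\tilde\beta}{\sin\alpha\sin\beta}, \qquad Z = \tfrac{\cos\tilde\alpha\sin\tilde\beta}{\sin\alpha\sin\beta} - \tfrac{\cos\alpha\sin\tilde\gamma}{\sin\alpha\sin\gamma}.
\]
This step uses the identity $\sin\gamma\cos\tilde\gamma$-type rearrangements together with $\gamma = \pi-\alpha-\beta$, which lets us express $\cot\gamma$ and $\cot\tilde\gamma$ via $\alpha,\beta$ and $\tilde\alpha,\tilde\beta$. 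The main obstacle is purely bookkeeping: matching the imaginary parts to exactly the combination $Z$ requires choosing which vertex sits where, so that $\cos\tilde\alpha$ and $\cos\alpha$ appear rather than $\cos\tilde\beta$. If the first placement gives the mirrored combination, we swap the roles of the vertices at angles $\alpha$ and $\beta$, or use the complex-conjugate frame; such changes only alter $\mu$ by a unimodular factor, which has to be tracked. Before committing to the full expansion, we will sanity-check the formula on the identity map, where $X=Y=1$, $Z=0$ and hence $\mu=0$.

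For the modulus, write $\mu = \frac{(X-Y)+\mathbf{i}Z}{(X+Y)-\mathbf{i}Z}$. Then
\[
|\mu|^2 = \frac{(X-Y)^2+Z^2}{(X+Y)^2+Z^2} = 1 - \frac{4XY}{(X+Y)^2+Z^2}.
\]
Finally, $XY = \frac{\sin\tilde\alpha\sin\tilde\beta\sin\tilde\gamma}{\sin\alpha\sin\beta\sin\gamma}$, which is exactly the stated formula. Note that $XY>0$, so $|\mu|<1$ and $L$ is orientation-preserving, as expected. Constancy of $\mu$ on the interior is immediate because $L$ is affine.
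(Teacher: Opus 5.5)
Your strategy is the paper's (normalize both triangles, write $L(z)=az+b\bar z$, solve two linear conditions, take $b/a$, then the modulus). However, the normalization you chose does not produce the stated formula, so the step claiming the real and imaginary parts "take the form $X\mp Y$ and $\pm\mathbf{i}Z$" fails.

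As you note, pre-composing with a rotation by $\theta$ multiplies $\mu$ by $e^{2\mathbf{i}\theta}$. For exactly this reason, an expression depending only on the angles can hold in only one position of the source triangle. You cannot compute in an arbitrary frame and then check that you land on it. The paper's (implicit) position puts the $\alpha$-vertex at $0$, the $\beta$-vertex at $\sin\gamma$, and the $\gamma$-vertex at $\sin\beta\,e^{\mathbf{i}\alpha}$, with the target placed the same way.

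Your frame ($\gamma$-vertex at $0$, $\alpha$-vertex on the positive real axis) is the paper's rotated by $-(\alpha+\pi)$. Equivalently, it computes the stated formula with $(\alpha,\beta,\gamma)\mapsto(\gamma,\alpha,\beta)$ (tildes likewise). So you actually obtain
\[
\mu = e^{-2\mathbf{i}\alpha}\,\frac{(X-Y)+\mathbf{i}Z}{(X+Y)-\mathbf{i}Z},
\]
and no real rescaling or trigonometric rearrangement removes $e^{-2\mathbf{i}\alpha}$. For example, take an equilateral source and a target with $\tilde\alpha=\pi/2$, $\tilde\beta=\tilde\gamma=\pi/4$. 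The stated formula gives $\mu\approx 0.268\,e^{-2\pi\mathbf{i}/3}$, while your frame gives $0.268\,e^{2\pi\mathbf{i}/3}$.

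Your fallbacks (swapping the $\alpha$- and $\beta$-vertices, or passing to the conjugate frame) only replace this by another unimodular multiple of $\mu$ or of $\bar\mu$. Your identity-map sanity check cannot detect the problem, because $\mu=0$ there in every frame.

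The fix is to use the paper's position. Then $a+b=X$ and $a e^{\mathbf{i}\alpha}+b e^{-\mathbf{i}\alpha}=\frac{\sin\tilde\beta}{\sin\beta}e^{\mathbf{i}\tilde\alpha}$ give directly $2a=(X+Y)-\mathbf{i}Z$ and $2b=(X-Y)+\mathbf{i}Z$, with no identities involving $\gamma=\pi-\alpha-\beta$. Your modulus computation is correct, and since $|\mu|$ is frame-independent, the second formula (the one used in Proposition \ref{prop:quasiLinfin}) survives. As written, though, you derive it from a form of $\mu$ you have not established in your frame.
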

\begin{proof}
	The Beltrami differential is invariant under scaling and translation. Given the interior angles, the lengths of the triangle are $\sin \alpha, \sin \beta, \sin \gamma$ up to scaling. We can position the vertices as $(0,0)$, $(\sin \gamma,0)$, $(\sin \beta \cos \alpha, \sin \beta \sin \alpha)$. With these coordinates, the linear map $L$ mapping $(0,0)$, $(\sin \gamma,0)$, $(\sin \beta \cos \alpha, \sin \beta \sin \alpha)$ to $(0,0)$, $(\sin \tilde{\gamma},0)$, $(\sin \tilde{\beta} \cos \tilde{\alpha}, \sin \tilde{\beta} \sin \tilde{\alpha})$ is given by
	\[
  L=\left(\begin{array}{cc}
  	\sin \tilde{\gamma} & \sin \tilde{\beta} \cos \tilde{\alpha} \\ 0 & \sin \tilde{\beta} \sin \tilde{\alpha}
  \end{array}\right)  \left(\begin{array}{cc}
  \sin \gamma & \sin \beta \cos \alpha \\ 0 & \sin \beta \sin \alpha
\end{array}\right) ^{-1} 
	\]
	and the Beltrami differential can be computed explicitly.
\end{proof}

\begin{proposition}\label{prop:quasiLinfin}
	Suppose $u \in \tilde{P}(\Theta)$. Then we have $\sup_{\phi \psi \in E^*} |u_{\phi}-u_{\psi}| < \infty$ if and only if the piecewise-linear map $f_{u}$ is a quasiconformal map.
\end{proposition}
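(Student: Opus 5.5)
The plan is to reduce quasiconformality of $f_u$ to a uniform bound on $|\mu(f_u)|$ over the triangles $\phi\psi i$ and $\psi\phi j$, and then to show that this bound is equivalent to a uniform bound on the angles. Since $f_u$ is a piecewise-affine homeomorphism onto its image, it is $K$-quasiconformal if and only if $\sup|\mu| \le \frac{K-1}{K+1}<1$. The homeomorphism property holds in the sense of the metric structures, since it maps a geodesic cell decomposition cellwise onto another; this step is routine. On each triangle, $f_u$ is the affine map from the triangle with angles $(\alpha^\dagger_{\phi\psi},\alpha^\dagger_{\psi\phi},\Theta_{\phi\psi})$ to the triangle with angles $(\alpha_{\phi\psi},\alpha_{\psi\phi},\Theta_{\phi\psi})$. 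Lemma \ref{lem:beltri} gives $|\mu|^2$ explicitly as $1-4Q/D$, where $Q=\frac{\sin\tilde\alpha\sin\tilde\beta\sin\tilde\gamma}{\sin\alpha\sin\beta\sin\gamma}$ and $D$ is the displayed denominator.

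\textbf{Source angles.} By Proposition \ref{prop:bound} applied to $u=0$ (equivalently, the Ring lemma for the embedded uniformized pattern), the source angles $\alpha^\dagger$ lie in $(\eta,\pi-\eta)$ and $\Theta\in(\epsilon_0,\pi-\epsilon_0)$. Hence the source triangles range over a compact family of nondegenerate triangles. Under this compactness, $\sup|\mu|<1$ holds if and only if the target triangles also stay in a compact family of nondegenerate triangles. More precisely, this means there is $\tilde\eta>0$ with $\alpha_{\phi\psi},\alpha_{\psi\phi}\in[\tilde\eta,\pi-\tilde\eta]$ for all edges; the third angle $\Theta$ is already controlled.

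\textbf{Forward direction.} Assume $T:=\sup|u_\phi-u_\psi|<\infty$. Corollary \ref{cor:generalring}, in its middle inequalities, involves only the sup of the gradient. It gives $R_\phi/R_\psi\in[(Ce^{T})^{-1},Ce^{T}]$, and the cotangent formula of Lemma \ref{lemma:cotweight} then bounds $|\cot\alpha_{\phi\psi}|\le (Ce^T+1)/\sin\epsilon_0$, exactly as in Proposition \ref{prop:bound}. So all target angles lie in a fixed compact subinterval of $(0,\pi)$. The map $(\text{source angles},\text{target angles})\mapsto|\mu|$ from Lemma \ref{lem:beltri} is continuous and $<1$ on the product of these compact sets, because $Q>0$ there and $D$ is bounded. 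Therefore $\sup|\mu|<1$.

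\textbf{Converse.} Suppose $\sup|\mu|\le k<1$. On each triangle the affine map then distorts angles by a bounded amount. The cleanest route is the distortion bound itself: a $K$-quasiconformal affine map cannot send an angle bounded below to an angle tending to $0$. Concretely, I would argue by contradiction using Lemma \ref{lem:beltri}. If $\alpha_{\phi\psi}\to0$ (or $\alpha_{\psi\phi}\to 0$, i.e.\ $\alpha_{\phi\psi}\to\pi-\Theta$) along a sequence of edges, then $Q\to0$ while $D$ stays bounded below, since the source data are compact. Hence $|\mu|\to1$, a contradiction. Thus the target angles are bounded away from $0$ and $\pi$. The sine law $R_\psi/R_\phi=\sin\alpha_{\phi\psi}/\sin\alpha_{\psi\phi}$ then bounds $R_\phi/R_\psi$. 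Dividing by the bounded ratio $R^\dagger_\phi/R^\dagger_\psi$ gives the bound on $|u_\phi-u_\psi|$.

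\textbf{Main obstacle.} The delicate point is the converse computation, namely showing that $D$ does not blow up in a way that keeps $Q/D$ away from $0$ as a target angle degenerates. Since $D$ involves $\sin$ and $\cos$ of the target angles divided by fixed positive quantities, $D$ is bounded above as well as below. So $4Q/D\to0$ forces $|\mu|\to1$, and I expect this to go through with a compactness argument.
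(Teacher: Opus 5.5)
Your proposal is correct. Its forward direction is the paper's own argument: you bound the target angles using the Ring lemma together with the gradient bound, then feed them into the formula of Lemma \ref{lem:beltri}. Where the paper writes an explicit lower bound on $4Q/D$ in terms of $\eta$, you use continuity and compactness instead. You also correctly use the $\sup$-version of Corollary \ref{cor:generalring}. The paper instead cites Proposition \ref{prop:bound}, whose angle bound is stated via $\sqrt{\mathcal{E}(u)}$ and so says nothing for a $u$ whose gradient is merely bounded.

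The real difference is the converse. The paper's proof stops after showing that bounded gradient implies quasiconformality; it never addresses the other direction, and your contradiction argument supplies it. That argument works, and it can be shortened to one line. Since $\tilde\gamma=\gamma=\Theta_{\phi\psi}$, in the notation of Lemma \ref{lem:beltri} you have $Q=d$, where $d:=\frac{\sin\alpha_{\phi\psi}\sin\alpha_{\psi\phi}}{\sin\alpha^\dagger_{\phi\psi}\sin\alpha^\dagger_{\psi\phi}}$. You also have $D=(1+d)^2+b^2\ge 1$, where $b$ is the real coefficient of $\mathbf{i}$ in the lemma. So $|\mu|\le k<1$ gives $d\ge (1-k^2)/4$. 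Hence $\sin\alpha_{\phi\psi}\sin\alpha_{\psi\phi}\ge (1-k^2)\sin^2\eta/4$ uniformly over edges, and the sine law finishes the argument.

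One correction to your ``main obstacle'' paragraph. $D$ blowing up would only help, since it makes $Q/D$ smaller. What the converse actually needs is that $D$ stays away from $0$, and this is immediate from $D\ge 1$. The upper bound on $D$ is what the \emph{forward} direction uses.
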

\begin{proof}
	Proposition \ref{prop:bound} implies that there is an arbitrarily small $\eta=\eta(\Theta,\epsilon,u)>0$ such that the interior angles of the triangles are within $\eta$ and $\pi- \eta$. Notice that for any angle $\alpha,\beta,\gamma,\tilde{\alpha}, \tilde{\beta},\tilde{\gamma} \in (\eta, \pi - \eta)$
	\begin{align*}
		 \frac{4 \cdot \frac{\sin \tilde{\alpha} \sin \tilde{\beta} \sin \tilde{\gamma}}{\sin \alpha \sin \beta \sin \gamma} }{|\frac{\sin \tilde{\gamma}}{\sin \gamma} + \frac{\sin \tilde{\alpha} \sin \tilde{\beta}}{\sin \alpha \sin \beta}|^2 + |\frac{\cos \tilde{\alpha} \sin \tilde{\beta}}{\sin \alpha \sin \beta}- \frac{\cos \alpha  \sin \tilde{\gamma}}{\sin \alpha \sin \gamma}|^2} \geq \frac{4 \cdot\frac{\sin^3 \eta}{1}}{ 2 (\frac{1}{\sin^2 \eta} + 3 \frac{1}{\sin^4 \eta})}.
	\end{align*}
	Hence by Lemma \ref{lem:beltri}, $\mu(f)$ is uniformly bounded away from $1$ and hence $f$ is quasiconformal.
\end{proof}

Since $(\Omega,\sigma(\Theta,e^u R^\dagger))$ is a simply connected Riemann surface and admits a quasiconformal homeomorphism $f_{u}$ from the unit disk, we deduce that $(\Omega,\sigma(\Theta,e^u R^\dagger))$ is conformally equivalent to the unit disk and there exists a Riemann mapping $g_u: \mathbb{D} \to (\Omega,\sigma(\Theta,e^u R^\dagger))$. It induces a quasiconformal homeomorphism of the unit disk $h_u : \mathbb{D} \to \mathbb{D}$ 
\[
h_u := g_u^{-1} \circ f_{u}
\]
and its extension to the unit circle is a quasisymmetric homeomorphism, which represents an element in the universal Teichm\"{u}ller space. We claim that it indeed lies in the Weil-Petersson class. Since the Beltrami differential is invariant under post composition of a conformal map, it suffices to show that the Beltrami differential of $f_{\mathrm{PL}}$ is $L^2$-integrable over the unit disk with respect to the hyperbolic metric.

\begin{lemma}\label{lem:area}
	For the uniformized circle pattern on the unit disk, every circumcircle is compact. Furthermore there exists $A_0 >0$ such that the hyperbolic area of every circumdisk is bounded above by $A_0$. 
\end{lemma}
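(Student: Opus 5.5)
The plan is to prove the two assertions separately. Compactness comes from an empty-circle property of the uniformized pattern. The uniform area bound comes from a quantitative margin between each circumcircle and $\partial\mathbb{D}$, obtained from the Ring lemma and compactness.

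\emph{Compactness.} Since $\Theta<\pi$ everywhere, the uniformized pattern is locally Delaunay: for each edge $ij$ with adjacent faces $\phi,\psi$, the vertices of $\psi$ other than $i,j$ do not lie in the open disk $D_\phi$. The pattern is also embedded, because the developing map is an isometry onto $\mathbb{D}$. I would run the standard ``locally Delaunay implies globally Delaunay'' argument, as in the finite setting of Bobenko--Springborn, to conclude that no open circumdisk $D_\phi$ contains a vertex $z_k$. Concretely: walk along the segment from a point of face $\phi$ to a putative vertex inside $D_\phi$. The power of the point with respect to the circumcircles of the faces crossed is monotone across each locally Delaunay edge, which gives a contradiction. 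This argument is local, so it should go through despite the pattern being infinite.

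Now suppose $D_\phi\not\subset\mathbb{D}$. Then the open disk $D_\phi$ contains a point $\xi\in\partial\mathbb{D}$ together with a small neighborhood of $\xi$. By Lemma \ref{cor:smalllengths}, only finitely many edges have length $>\delta$, and the faces tile $\mathbb{D}$. Hence vertices accumulate at every boundary point, so some vertex lies in $D_\phi$. This contradicts the empty-circle property, so $\overline{D_\phi}$ is contained in the closed disk. A touching point on $\partial\mathbb{D}$ is excluded the same way, using the quantitative margin from the next step.

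\emph{Area bound.} The hyperbolic area of a Euclidean disk of center $c$ and radius $R$ inside $\mathbb{D}$ depends only on its hyperbolic radius. That radius is bounded as soon as $1-|c|-R\geq \kappa R$ for a uniform $\kappa>0$, i.e.\ as soon as $\operatorname{dist}(C_\phi,\partial\mathbb{D})\geq\kappa R_\phi$. To get this margin, I would show that every point $p\in C_\phi$ has a Euclidean ball of radius $\kappa R_\phi$ around it covered by the union of closed circumdisks of faces adjacent to $\phi$ or to the vertices of $\phi$. All of these disks lie in $\overline{\mathbb{D}}$ by the first step. There are two cases for $p$.
\begin{enumerate}
\item If $p$ lies on the arc of $C_\phi$ between consecutive vertices $i,j$, then $p$ is interior to $D_\psi$ for the neighbor $\psi$ across $ij$, at depth comparable to $R_\phi$, except near $i$ and $j$.
\item If $p$ is near a vertex $i$, the disks of the faces around $i$ have intersection angles $\Theta\in(\epsilon_0,\pi-\epsilon_0)$ summing to $2\pi$ by (A1). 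Their union therefore covers a full neighborhood of $i$ whose size is proportional to the smallest radius around $i$.
\end{enumerate}
By the Ring lemma (Proposition \ref{lem:ring}) and Proposition \ref{prop:bound}, this local configuration has bounded degree, angles in a compact range, and radius ratios in $[C^{-1},C]$. A compactness argument over such normalized configurations, in the style of Proposition \ref{prop:Angel}, then yields a uniform $\kappa=\kappa(\Theta,\epsilon_0)$. We get $\operatorname{dist}(C_\phi,\partial\mathbb{D})\geq\kappa R_\phi$, hence a uniform bound on the hyperbolic radius, and $A_0$ follows.

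I expect the main obstacle to be the global empty-circle step for an infinite pattern. The fallback is to run the segment-walking argument inside the finite exhaustion $\Omega_n$ of Proposition \ref{prop:seqcomplex}. Once that holds, the rest is local.
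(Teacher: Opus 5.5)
Your proof is correct, and it takes a different route from the paper's.

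\textbf{How the two proofs differ.} The paper's proof is brief and hyperbolic. It asserts compactness from the fact that each circle is surrounded by finitely many circles. For the area bound, it isolates $C_\phi$ together with its neighbours and uses a monotonicity property of hyperbolic circle patterns: with the intersection angles fixed, the hyperbolic radius of $C_\phi$ is largest when all neighbouring circles degenerate to horocycles. That extremal radius depends continuously on angles ranging over $[\epsilon_0,\pi-\epsilon_0]$ with bounded face degree, which gives the bound. You work in the Euclidean picture instead:
\begin{enumerate}
\item a global empty-circle property;
\item accumulation of vertices at $\partial\mathbb{D}$ via Lemma \ref{cor:smalllengths}, giving $D_\phi\subset\mathbb{D}$;
\item a flower-covering margin $\operatorname{dist}(C_\phi,\partial\mathbb{D})\geq\kappa R_\phi$ from the Ring lemma, which converts into a bound on the hyperbolic radius.
\end{enumerate}

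\textbf{What each route buys.} Your route is more elementary, and it actually proves the compactness that the paper only asserts. In particular it shows that circumdisks cannot leave $\mathbb{D}$, which is not a purely local fact. The paper's route needs neither the Ring lemma nor a global Delaunay argument, and its bound depends only on $\Theta$ and the face degree. The price is that it relies on a hyperbolic monotonicity principle it does not spell out.

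\textbf{Two small remarks on your write-up.}
\begin{itemize}
\item The segment walk goes through for the infinite pattern because $\mathbb{D}$ is convex. The segment therefore stays inside the tiled region and, being compact in $\mathbb{D}$, crosses only finitely many faces. Convexity, not locality, is the real point, and no exhaustion fallback is needed.
\item Covering radii of unions of closed disks are not semicontinuous under perturbation in general, so treat neighbourhoods of vertices explicitly rather than by compactness. A disk of radius $r$ through $z_i$ with inward unit normal $n$ contains $z_i+x$ whenever $|x|<2r\langle x/|x|,n\rangle$. Consecutive inward normals at $z_i$ are $\Theta<\pi-\epsilon_0$ apart, with total $2\pi$ by (A1). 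Hence the ball of radius $2\sin(\epsilon_0/2)\min r$ about $z_i$ is covered. Reserve the compactness argument for arc points at a definite distance from the vertices.
\end{itemize}
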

\begin{proof}
	Since each circle is surrounded by finitely many circles, the central circle has to be compact, i.e. not touching the unit circle nor intersecting it. In order to obtain an upper bound on the hyperbolic area of the circumcircles, we focus on one face together with its neighbouring faces and disregard all other faces. In this case, the central circumcircle achieves its maximum hyperbolic radius with given intersection angles when all the neighboring circumcircles become horocycles. In this configuration, the hyperbolic area of the central circumdisk is a continuous function of the intersection angles. Since the intersection angles vary in a compact set $[\epsilon_0, \pi-\epsilon_0]$ and the face degree is uniformly bounded, there exists an upper bound $A>0$ on the hyperbolic area of the circumdisks in the uniformized circle pattern on the disk.
\end{proof}

For the deformed circle pattern corresponding to $u \in P(\Theta,R^\dagger)$, we recall that the interior angles of the triangles are $\alpha_{\phi \psi}, \alpha_{\psi \phi}, \Theta_{\phi \psi}$. The angles $\alpha_{\phi \psi}, \alpha_{\psi \phi}$ depend on the ratios of the radii of the neighbouring circumcircles and the difference of $u$
\begin{align*}
	 \alpha_{\phi \psi} &=\cot^{-1}\left(\frac{e^{u_{\phi}-u_{\psi}}\frac{R_{\phi}}{R_{\psi}} - \cos \Theta_{\phi\psi}}{\sin \Theta_{\phi\psi}}\right), \\
	 \alpha_{\psi \phi} &= \cot^{-1}\left(\frac{e^{u_{\psi}-u_{\phi}}\frac{R_{\psi}}{R_{\phi}} - \cos \Theta_{\phi\psi}}{\sin \Theta_{\phi\psi}}\right) = \pi - \Theta_{\phi\psi} - \alpha_{\phi \psi},
\end{align*}
where 
\begin{align*}
\Theta_{\phi\psi} \in [\epsilon_0,\pi-\epsilon_0], \quad u_{\phi}-u_{\psi}\in [-\sqrt{\mathcal{E}(u)},\sqrt{\mathcal{E}(u)}], \quad \frac{R_{\phi}}{R_{\psi}} \in [\frac{1}{C},C]
\end{align*}
with $C=C(\Theta,\epsilon_0)$ being the constant in the Ring lemma \ref{lem:ring}. In particular, the constants $\sqrt{\mathcal{E}(u)}, C, \epsilon_0$ are independent of edges $ij$.

\begin{lemma}\label{lem:smalldv}
	We consider the following analytic function for $t \in [-\sqrt{\mathcal{E}(u)},\sqrt{\mathcal{E}(u)}]$, $r \in [\frac{1}{C},C]$ and $\theta \in [\epsilon_0,\pi-\epsilon_0]$
	\begin{align}
		m(t,r,\theta):= \frac{ \frac{\sin \tilde{\gamma}}{\sin \gamma} - \frac{\sin \tilde{\alpha} \sin \tilde{\beta}}{\sin \alpha \sin \beta} + \mathbf{i} (\frac{\cos \tilde{\alpha} \sin \tilde{\beta}}{\sin \alpha \sin \beta}- \frac{\cos \alpha  \sin \tilde{\gamma}}{\sin \alpha \sin \gamma})}{\frac{\sin \tilde{\gamma}}{\sin \gamma} + \frac{\sin \tilde{\alpha} \sin \tilde{\beta}}{\sin \alpha \sin \beta} - \mathbf{i} (\frac{\cos \tilde{\alpha} \sin \tilde{\beta}}{\sin \alpha \sin \beta}- \frac{\cos \alpha  \sin \tilde{\gamma}}{\sin \alpha \sin \gamma})} 
	\end{align}
where 
\begin{align*}
	\tilde{\alpha} &= \cot^{-1}\left(\frac{e^{t}r - \cos \theta}{\sin \theta}\right), \quad
	\tilde{\beta} = \pi-\theta-\tilde{\alpha}, \quad
	\tilde{\gamma} = \theta \\
	\alpha &= \cot^{-1}\left(\frac{r - \cos \theta}{\sin \theta}\right), \quad
	\beta = \pi-\theta-\alpha, \quad
	\gamma = \theta. 
\end{align*}
Then there exists a constant $C_1,C_2$ depending on $\sqrt{\mathcal{E}(u)},\Theta,\epsilon_0$ such that  for $t \in [-\sqrt{\mathcal{E}(u)},\sqrt{\mathcal{E}(u)}]$, $r \in [\frac{1}{C},C]$ and $\theta \in [\epsilon_0,\pi-\epsilon_0]$
\[
	C_1 |t| \geq |m(t,r,\theta)| \geq C_2 |t|.
\]
\end{lemma}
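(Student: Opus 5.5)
The plan is a compactness argument in the spirit of Lemma \ref{lem:gedge}, with the lower bound coming from a nonvanishing first derivative in $t$. Set $K := [-\sqrt{\mathcal{E}(u)},\sqrt{\mathcal{E}(u)}]\times[\frac1C,C]\times[\epsilon_0,\pi-\epsilon_0]$, which is compact. On $K$ all angles $\alpha,\beta,\tilde\alpha,\tilde\beta,\gamma=\tilde\gamma=\theta$ lie in $[\eta,\pi-\eta]$ for some $\eta>0$ depending only on $\sqrt{\mathcal{E}(u)},C,\epsilon_0$, by the argument of Proposition \ref{prop:bound}. Consequently the denominator of $m$ is bounded away from zero, since its real part is at least $\frac{\sin\tilde\gamma}{\sin\gamma}>0$. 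Hence $m$ is real-analytic in a neighbourhood of $K$.

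First I note that $m(0,r,\theta)=0$. When $t=0$ the two triangles coincide, so $L$ is the identity and $\mu=0$. The same follows directly from the formula: the real part of the numerator is $1-1=0$. The imaginary part is $\frac{\cos\alpha\sin\beta}{\sin\alpha\sin\beta}-\frac{\cos\alpha\sin\gamma}{\sin\alpha\sin\gamma}=0$.

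For the upper bound, the mean value theorem gives
\[
|m(t,r,\theta)|\le |t|\cdot\sup_{K}|\partial_t m|,
\]
so I take $C_1:=\sup_K|\partial_t m|<\infty$.

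For the lower bound, I write $m(t,r,\theta)=t\,g(t,r,\theta)$ with
\[
g(t,r,\theta)=\int_0^1\partial_t m(st,r,\theta)\,ds,
\]
which is continuous on $K$. It then suffices to show that $g$ never vanishes on $K$, since compactness then yields $C_2:=\min_K|g|>0$. For $t\neq0$ we have $g=m/t$. Here $m\neq 0$ because $\mu(L)=0$ exactly when $L$ is a similarity. The triangle $(\tilde\alpha,\tilde\beta,\theta)$ is similar to $(\alpha,\beta,\theta)$ with the same vertex correspondence only if $\tilde\alpha=\alpha$. Moreover, $\tilde\alpha$ is strictly monotone in $t$, because $\cot^{-1}$ is strictly decreasing and $e^tr$ is strictly increasing in $t$. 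Hence $\tilde\alpha\neq\alpha$ for $t\neq0$.

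For $t=0$ I must check that $\partial_t m(0,r,\theta)\neq0$, and I expect this to be the main computation. At $t=0$ the denominator equals $2$, so $\partial_t m=\frac12\,\partial_t(\text{numerator})$. Differentiating with $\tilde\beta=\pi-\theta-\tilde\alpha$ and $\tilde\gamma=\theta$ fixed, the real part of the derivative is
\[
-\frac{\partial_t(\sin\tilde\alpha\sin\tilde\beta)}{\sin\alpha\sin\beta}=-\frac{\sin(\tilde\beta-\tilde\alpha)}{\sin\alpha\sin\beta}\,\partial_t\tilde\alpha .
\]
The imaginary part is
\[
\frac{\partial_t(\cos\tilde\alpha\sin\tilde\beta)}{\sin\alpha\sin\beta}=-\frac{\cos(\tilde\alpha-\tilde\beta)}{\sin\alpha\sin\beta}\,\partial_t\tilde\alpha ,
\]
using $\frac{d}{d\tilde\alpha}\bigl(\cos\tilde\alpha\sin(\pi-\theta-\tilde\alpha)\bigr)=-\cos(\pi-\theta-2\tilde\alpha)$. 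The modulus of the complex derivative is therefore
\[
\frac{|\partial_t\tilde\alpha|}{\sin\alpha\sin\beta}\sqrt{\sin^2(\beta-\alpha)+\cos^2(\beta-\alpha)}=\frac{|\partial_t\tilde\alpha|}{\sin\alpha\sin\beta}.
\]
This is nonzero because $\partial_t\tilde\alpha=-\frac{e^tr\sin\theta}{(e^tr-\cos\theta)^2+\sin^2\theta}$, which is bounded away from $0$ on $K$; it is precisely $-\frac12 c^*$ from Proposition \ref{prop:selfadjoint}. If the sign bookkeeping here turns out messy, the fallback is geometric. The derivative of $\mu$ at the identity along a family of affine maps equals the $\bar z$-part of the infinitesimal deformation. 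This part vanishes only for infinitesimal similarities, and changing one angle of the triangle with a fixed vertex correspondence is not such a deformation. Combining the two cases, $g$ is nonvanishing on the compact set $K$, which gives the constant $C_2$.
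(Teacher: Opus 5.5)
Your proposal is correct. The upper bound matches the paper's: the mean value inequality with $C_1=\sup_K|\partial_t m|$. The lower bound, however, follows a genuinely different and more careful route.

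\textbf{The paper's lower bound.} The paper sets $C_2=\min_K|\partial_t m|$ and says the lower bound is obtained ``similarly'' from the mean value theorem. As written this does not work, for two reasons. First, $m$ is complex-valued, so there is no equality of the form $m(t,r,\theta)=t\,\partial_t m(\xi,r,\theta)$. For example, $e^{\mathbf{i}t}-1$ has derivative of modulus one everywhere, yet it vanishes at $t=2\pi$. Second, positivity of that minimum is never checked.

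\textbf{Your lower bound.} You factor $m=t\,g$ with $g(t,r,\theta)=\int_0^1\partial_t m(st,r,\theta)\,ds$. This reduces the claim to nonvanishing of the continuous function $g$ on the compact set $K$, and your two cases supply exactly the missing input.
\begin{enumerate}
\item For $t\neq 0$, vanishing of the numerator forces $\sin\tilde\alpha\sin\tilde\beta=\sin\alpha\sin\beta$ and $\cos\tilde\alpha\sin\tilde\beta=\cos\alpha\sin\beta$, hence $\tilde\alpha=\alpha$. You can check this directly from the formula without appealing to the identification $m=\mu(L)$. Strict monotonicity of $\tilde\alpha$ in $t$ then rules this out.
\item At $t=0$ the denominator equals $2$. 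Your computation $|\partial_t m(0,r,\theta)|=\frac{|\partial_t\tilde\alpha|}{2\sin\alpha\sin\beta}$, with $\partial_t\tilde\alpha=-\tfrac12 c^*$, is correct, so this derivative is nonzero.
\end{enumerate}

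\textbf{Comparison.} The paper's version is shorter, but only its upper bound is actually justified. Yours is a complete proof of both inequalities. It also shows that the linearization of the Beltrami coefficient at $t=0$ is a nonzero multiple of the same geometric weight $c^*$ that appears as the Hessian of $\mathcal{W}^*$.
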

\begin{proof}
	Since $m$ is analytic on a compact set, we consider the values
	\begin{align*}
	C_1= \max_{t \in [-\sqrt{\mathcal{E}(u)},\sqrt{\mathcal{E}(u)}], r \in [\frac{1}{C},C], \theta \in [\epsilon_0,\pi-\epsilon_0]} |\partial_t m(t,r,\theta)|,\\
	C_2= \min_{t \in [-\sqrt{\mathcal{E}(u)},\sqrt{\mathcal{E}(u)}], r \in [\frac{1}{C},C], \theta \in [\epsilon_0,\pi-\epsilon_0]} |\partial_t m(t,r,\theta)|.	
	\end{align*}
	By the mean value theorem, we have for all $t \in [-\sqrt{\mathcal{E}(u)},\sqrt{\mathcal{E}(u)}]$, $r \in [\frac{1}{C},C]$ and $\theta \in [\epsilon_0,\pi-\epsilon_0]$,
	\[
	|m(t,r,\theta)-m(0,r,\theta)| = |t| \cdot |\partial_t m(\xi,r,\theta)| \leq C_1 |t|
	\]
	where $\xi$ is between $0$ and $t$. We observe that $m(0,r,\theta)=0$ and thus obtain the desired estimate. The lower bound can be obtained similarly.
\end{proof}

\begin{proof}[Proof of Theorem \ref{thm:projectionWP}]
    The first equivalence asserted in the theorem follows from Proposition \ref{prop:quasiLinfin}. 

	We now show that if $u$ has finite combinatorial Dirichlet energy, then the Beltrami differential of $f_{u}$ is $L^2$-integrable over the unit disk with respect to the hyperbolic metric. Since every Euclidean triangular face $\phi\psi i$ is covered by the adjacent circumdisks, its hyperbolic area $A_{\mathrm{hyp}}(\phi\psi i)$ is bounded above by $2A_0$ by Lemma \ref{lem:area}. Then with respect to the hyperbolic metric over the unit disk and with the constant $C_1$ from Lemma \ref{lem:smalldv}, we have
	\begin{align*}
		\iint_{\mathbb{D}} |\mu(f_u)|^2 d A_{\mathrm{hyp}}&=\sum_{\phi\psi \in E^*} \left( |\mu(f_u)_{\phi\psi i}|^2 A_{\mathrm{hyp}}(\phi\psi i) + |\mu(f_u)_{\phi\psi j}|^2 A_{\mathrm{hyp}}(\phi\psi j) \right)\\
		&\leq \sum_{\phi\psi \in E^*} C_1^2 |u_{\phi}-u_{\psi}|^2 \cdot 2A_0 \cdot 2 \\
		&= 4C_1^2 A_0 \sum_{\phi\psi \in E^*} |u_{\phi}-u_{\psi}|^2 
	\end{align*}
which is finite since $u \in \mathbf{D}(F)$.

    We now prove the converse direction with an additional assumption that the uniformized circle pattern has a uniform lower bound on the hyperbolic radii $r_{\min}>0$. Assume the Beltrami differential $\mu(f_{u})$ is $L^2$-integrable with respect to the hyperbolic metric on $\mathbb{D}$. 
	
	First, Lemma \ref{lem:smalldv} implies that there exists a constant $C_2 > 0$ such that $|\mu(f_u)_{\phi\psi i}| \geq C_2 |u_\phi - u_\psi|$ for all edges $\phi\psi \in E^*$.  
    
    Second, we require a lower bound for the hyperbolic area of the Euclidean triangles $\phi\psi i$. Using the explicit formulas for the Euclidean center $c$ and radius $R$ of a circle in Poincaré disk with hyperbolic center $a$ and hyperbolic radius $r$ \cite[Exercise 3]{BeardonMinda2005}, we can derive the ratio of the Euclidean radius to the Euclidean distance from the boundary
\[
\frac{R}{1-|c|} = \tanh(\frac{r}{2}) \frac{1+|a|}{1+|a|\tanh^2(\frac{r}{2})} .
\]
Observing the monotonicity of this function with respect to the location of the hyperbolic center $|a|$, one obtains the following lower bound
\[
 \frac{R}{1-|c|} \geq \tanh(r/2) \geq \tanh(r_{\min}/2).
\]
    
    Now consider a point $z$ in the Euclidean triangle $\phi\psi i$ under the uniformized circle pattern. Since the triangle is contained in the union of circumdisks, and the circumdisks are subsets of $\mathbb{D}$, we have \[ |z-z_\phi| \leq R_{\phi}+ R_{\psi} \leq (1+C)R_{\phi} \leq (C+1)(1-|z_\phi|).\] This implies \[ 1-|z| \leq 1-|z_\phi| + |z-z_\phi| \leq (C+2)(1-|z_\phi|).\] Consequently, the hyperbolic density satisfies a lower bound:
    \[ \frac{4}{(1-|z|^2)^2} \geq \frac{1}{(1-|z|)^2} \geq \frac{1}{(C+2)^2(1-|z_\phi|)^2}. \]
    On the other hand, the Euclidean area of $\triangle \phi\psi i$ is given by \[
	\frac{1}{2} R_\phi R_\psi \sin \Theta_{\phi\psi} \geq \frac{\sin \epsilon_0}{2C}  R^2_\phi.  \] 
    Combining these estimates, we obtain the lower bound for the hyperbolic area:
    \[
        A_{\mathrm{hyp}}(\phi\psi i)  = \iint_{\triangle \phi\psi i} \frac{4}{(1-|z|^2)^2} dx dy \geq \frac{\sin \epsilon_0}{2C(C+2)^2} \left(\frac{R_\phi}{1-|z_\phi|}\right)^2 \geq A_{\min}
    \]
    where $A_{\min} = \frac{\sin \epsilon_0}{2C(C+2)^2} \tanh^2(r_{\min}/2)$ is a positive constant independent of the edge $\phi\psi$. Thus, we have
	\begin{align*}
		\iint_{\mathbb{D}} |\mu(f_u)|^2 d A_{\mathrm{hyp}} &= \sum_{\phi\psi \in E^*} \left( |\mu(f_u)_{\phi\psi i}|^2 A_{\mathrm{hyp}}(\phi\psi i) + |\mu(f_u)_{\phi\psi j}|^2 A_{\mathrm{hyp}}(\psi\phi j) \right) \\ &\geq 2 C_2^2 A_{\min} \sum_{\phi\psi \in E^*} |u_\phi - u_\psi|^2.
	\end{align*}
    Hence,
	\[
	\sum_{\phi\psi \in E^*} |u_\phi - u_\psi|^2 \leq \frac{1}{2 C_2^2 A_{\min}} \iint_{\mathbb{D}} |\mu(f_u)|^2 d A_{\mathrm{hyp}} < \infty.
	\]
\end{proof}

\section*{Acknowledgment}
Figures \ref{fig:circle_patterns_disk}, \ref{fig:circle_patterns_deformed} and \ref{fig:unbounded} are generated by Ken Stephenson's software CirclePack \cite{Stephenson2005}. The author would like to thank Jean-Marc Schlenker and Yilin Wang for helpful discussions.

\bibliographystyle{siam}
\bibliography{uniteich}

\end{document}